\newtheorem{theorem}{Theorem}[section]
\newtheorem{lemma}[theorem]{Lemma}
\newtheorem{proposition}[theorem]{Proposition}
\newtheorem{corollary}[theorem]{Corollary}
\newtheorem{example}[theorem]{Example}
\theoremstyle{plain}
\theoremstyle{definition}
\newtheorem{definition}[theorem]{Definition}
\newtheorem{remark}[theorem]{Remark}
\numberwithin{equation}{section}
\renewcommand{\labelenumi}{\textup{(\theenumi)}}
\renewcommand{\phi}{\varphi}
\newcommand{\Homeo}{\operatorname{Homeo}}
\newcommand{\id}{\operatorname{id}}
\newcommand{\Ker}{\operatorname{Ker}}
\newcommand{\Ad}{\operatorname{Ad}}
\newcommand{\KSSEOA}{{K_0^{\operatorname{SSE}}({\mathcal{O}}_A)}}
\newcommand{\KSSEOB}{{K_0^{\operatorname{SSE}}({\mathcal{O}}_B)}}
\newcommand{\K}{\mathcal{K}}
\newcommand{\A}{\mathcal{A}}
\newcommand{\B}{\mathcal{B}}
\newcommand{\N}{\mathbb{N}}
\newcommand{\C}{\mathbb{C}}
\newcommand{\R}{\mathbb{R}}
\newcommand{\T}{\mathbb{T}}
\newcommand{\Z}{\mathbb{Z}}
\newcommand{\Zp}{{\mathbb{Z}}_+}
\def\OA{{{\mathcal{O}}_A}}
\def\DA{{{\mathcal{D}}_A}}
\def\OB{{{\mathcal{O}}_B}}
\def\DB{{{\mathcal{D}}_B}}
\title{Topological conjugacy of topological Markov shifts and Cuntz--Krieger algebras}
\author{Kengo Matsumoto \\
Department of Mathematics \\
Joetsu University of Education \\
Joetsu, 943-8512, Japan
}
\date{}
\begin{document}
\maketitle

\def\det{{{\operatorname{det}}}}

\begin{abstract}
For  an irreducible non-permutation matrix $A$,
the triplet
$(\OA,\DA,\rho^A)$ 
for  the Cuntz-Krieger algebra $\OA$, 
its canonical maximal abelian $C^*$-subalgebra $\DA$, 
and its gauge action $\rho^A$
is called the Cuntz--Krieger triplet.
We introduce a notion of strong Morita equivalence in the Cuntz--Krieger triplets,
and prove that 
two Cuntz--Krieger triplets
$(\OA,\DA,\rho^A)$ and
$(\OB,\DB,\rho^B)$ 
are strong Morita equivalent 
if and only if
$A$ and $B$ are strong shift equivalent.
 We also show that
the generalized gauge actions on 
the stabilized Cuntz--Krieger algebras are cocycle conjugate if the underlying
matrices are strong shift equivalent.
By clarifying K-theoretic behavior of the cocycle conjugacy, 
we  investigate a relationship between 
cocycle conjugacy of the gauge actions on the stabilized Cuntz--Krieger algebras
and 
topological conjugacy of the underlying topological Markov shifts.
\end{abstract}




\def\OA{{{\mathcal{O}}_A}}
\def\OB{{{\mathcal{O}}_B}}
\def\OZ{{{\mathcal{O}}_Z}}
\def\OTA{{{\mathcal{O}}_{\tilde{A}}}}
\def\SOA{{{\mathcal{O}}_A}\otimes{\mathcal{K}}}
\def\SOB{{{\mathcal{O}}_B}\otimes{\mathcal{K}}}
\def\SOZ{{{\mathcal{O}}_Z}\otimes{\mathcal{K}}}
\def\SOTA{{{\mathcal{O}}_{\tilde{A}}\otimes{\mathcal{K}}}}
\def\DA{{{\mathcal{D}}_A}}
\def\DB{{{\mathcal{D}}_B}}
\def\DZ{{{\mathcal{D}}_Z}}
\def\DTA{{{\mathcal{D}}_{\tilde{A}}}}
\def\SDA{{{\mathcal{D}}_A}\otimes{\mathcal{C}}}
\def\SDB{{{\mathcal{D}}_B}\otimes{\mathcal{C}}}
\def\SDZ{{{\mathcal{D}}_Z}\otimes{\mathcal{C}}}
\def\SDTA{{{\mathcal{D}}_{\tilde{A}}\otimes{\mathcal{C}}}}
\def\BC{{{\mathcal{B}}_C}}
\def\BD{{{\mathcal{B}}_D}}
\def\OAG{{\mathcal{O}}_{A^G}}
\def\OBG{{\mathcal{O}}_{B^G}}
\def\Max{{{\operatorname{Max}}}}
\def\Per{{{\operatorname{Per}}}}
\def\PerB{{{\operatorname{PerB}}}}
\def\Homeo{{{\operatorname{Homeo}}}}
\def\HA{{{\frak H}_A}}
\def\HB{{{\frak H}_B}}
\def\HSA{{H_{\sigma_A}(X_A)}}
\def\Out{{{\operatorname{Out}}}}
\def\Aut{{{\operatorname{Aut}}}}
\def\Ad{{{\operatorname{Ad}}}}
\def\Inn{{{\operatorname{Inn}}}}
\def\det{{{\operatorname{det}}}}
\def\exp{{{\operatorname{exp}}}}
\def\cobdy{{{\operatorname{cobdy}}}}
\def\Ker{{{\operatorname{Ker}}}}
\def\ind{{{\operatorname{ind}}}}
\def\id{{{\operatorname{id}}}}
\def\supp{{{\operatorname{supp}}}}
\def\co{{{\operatorname{co}}}}
\def\Sco{{{\operatorname{Sco}}}}
\def\ActA{{{\operatorname{Act}_{\DA}(\mathbb{T},\OA)}}}
\def\ActB{{{\operatorname{Act}_{\DB}(\mathbb{T},\OB)}}}
\def\U{{{\mathcal{U}}}}

\section{Introduction and Preliminaries}
Let
$A=[A(i,j)]_{i,j=1}^N$
be an irreducible matrix  
with entries in $\{0,1\}$ with $1<N\in \N$.
We assume that $A$ is not any permutation matrix. 
In \cite{CK}, J. Cuntz and W. Krieger 
have introduced a $C^*$-algebra 
$\OA$ associated to topological Markov shift $(X_A,\sigma_A)$.
The $C^*$-algebra is called the Cuntz--Krieger algebra,
which is a universal unique purely infinite simple $C^*$-algebra generated by
partial isometries
$S_1,\dots,S_N$
subject to the relations:
\begin{equation} 
\sum_{j=1}^N S_j S_j^* = 1, \qquad
S_i^* S_i = \sum_{j=1}^N A(i,j) S_jS_j^*, 
\quad i=1,\dots,N. \label{eq:CK}
\end{equation} 
For $t \in {\mathbb{R}}/\Z = {\mathbb{T}}$,
the correspondence
$S_i \rightarrow e^{2 \pi\sqrt{-1}t}S_i,
\, i=1,\dots,N$
gives rise to an automorphism
of $\OA$ denoted by
$\rho^A_t$.
The automorphisms
$\rho^A_t, t \in {\mathbb{T}}$
yield an action of ${\mathbb{T}}$
on $\OA$ called the gauge action.
Cuntz and Krieger in \cite{CK} have shown that the algebra 
$\OA$ has close relationships with the underlying dynamical system 
called topological Markov shift.
Let us denote by
$X_A$ 
 the shift space 
\begin{equation}
X_A = \{ (x_n )_{n \in \N} \in \{1,\dots,N \}^{\N}
\mid
A(x_n,x_{n+1}) =1 \text{ for all } n \in {\N}
\}. \label{eq:onMarkovshift}
\end{equation}
Define the shift transformation $\sigma_A$ on $X_A$
by 
$\sigma_{A}((x_n)_{n \in {\N}})=(x_{n+1} )_{n \in \N}$,
which is a continuous surjection on $X_A$.
The topological dynamical system $(X_A,\sigma_A)$
is called the one-sided topological Markov shift for matrix $A$.
The two-sided topological Markov shift 
$(\bar{X}_A, \bar{\sigma}_A)$
is similarly defined with the shift space
\begin{equation}
\bar{X}_A = \{ (x_n )_{n \in \Z} \in \{1,\dots,N \}^{\Z}
\mid
A(x_n,x_{n+1}) =1 \text{ for all } n \in {\Z}
\} \label{eq:twoMarkovshift}
\end{equation}
and 
the shift homeomorphism
$\bar{\sigma}_{A}((x_n)_{n \in {\Z}})=(x_{n+1} )_{n \in \Z}$
on 
$\bar{X}_A$.

 Let  us denote by 
$\DA$ 
the $C^*$-subalgebra of $\OA$
generated by the projections of the form:
$S_{i_1}\cdots S_{i_n}S_{i_n}^* \cdots S_{i_1}^*,
i_1,\dots,i_n =1,\dots,N$.
The subalgebra $\DA$ is
 canonically isomorphic to the commutative 
$C^*$-algebra $C(X_A)$
of the complex valued continuous functions on $X_A$
by identifying the projection
$S_{i_1}\cdots S_{i_n}S_{i_n}^* \cdots S_{i_1}^*
$
with the characteristic function
$\chi_{U_{i_1\cdots i_n}} \in C(X_A)$
of the cylinder set
$U_{i_1\cdots i_n}$
for the word
${i_1\cdots i_n}$.
Let us denote by $\K$ the $C^*$-algebra $\K(\ell^2(\N))$ of compact operators on a separable infinite dimensional  Hilbert space  $\ell^2(\N)$ and 
by ${\mathcal{C}}$ its maximal abelian $C^*$-subalgebra of diagonal operators.

In \cite{Williams}, R. F. Williams  proved that 
the topological Markov shifts 
$(\bar{X}_A, \bar{\sigma}_A)$ and 
$(\bar{X}_B, \bar{\sigma}_B)$ 
are topologically conjugate
if and only if the matrices $A,B$ are strong shift equivalent.
Two nonnegative matrices $A, B$ are said to be  elementary equivalent
if there exist
nonnegative rectangular matrices $C,D$ such that 
$A = CD, B = DC$.
We write it as $A\underset{C,D}{\approx}B$.
If there exists a finite sequence of nonnegative matrices $A_0,A_1,\dots, A_n$
such that
$A = A_0, B = A_n$ and $A_i$ is elementary equivalent to $A_{i+1}$ for 
$i=1,2,\dots, n-1$, 
then $A$ and $B$ are said to be strong shift equivalent.  
 Hence elementary equivalence generates topological conjugacy of two-sided topological Markov shifts. 

Let $A$ be an irreducible non-permutation matrix.
The triplet
$(\OA,\DA,\rho^A)$ 
for the Cuntz-Krieger algebra $\OA$, 
its canonical maximal abelian $C^*$-subalgebra $\DA$, 
and its gauge action $\rho^A$
is called the Cuntz--Krieger triplet for the matrix $A$. 
As pointed out in \cite{MaETDS2004},
two elementary equivalence matrices 
$A = CD, B = DC$ yield
$\OA-\OB$-imprimitivity bimodule via Cuntz--Krieger algebra
$\OZ$ for the matrix $Z$ defined by
$Z =
\begin{bmatrix}
0 & C \\
D & 0
\end{bmatrix}.
$

In the first part of the paper,
We will introduce a notion of strong Morita equivalence in the Cuntz--Krieger triplets,
and prove the following theorem.
\begin{theorem}[Corolary \ref{cor:SMECK}]
The Cuntz--Krieger triplets
$(\OA,\DA,\rho^A)$ and
$(\OB,\DB,\rho^B)$ 
are strong Morita equivalent 
if and only if the matrices
$A$ and $B$ are strong shift equivalent.
\end{theorem}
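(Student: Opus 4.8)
The plan is to prove the two implications separately, using two structural facts already available: that strong shift equivalence is by definition the equivalence relation generated by elementary equivalence $A\underset{C,D}{\approx}B$, and Williams' theorem \cite{Williams}, which identifies strong shift equivalence of $A$ and $B$ with topological conjugacy of the two-sided shifts $(\bar{X}_A,\bar{\sigma}_A)$ and $(\bar{X}_B,\bar{\sigma}_B)$. Since strong Morita equivalence of triplets will be an equivalence relation, the forward implication reduces to a single elementary-equivalence step, while the converse will proceed by reconstructing a topological conjugacy from the operator-algebraic data.

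For the implication that strong shift equivalence implies strong Morita equivalence of the triplets, I would first check that strong Morita equivalence of Cuntz--Krieger triplets is an equivalence relation: reflexivity and symmetry are formal, and transitivity follows from the internal (balanced) tensor product of the imprimitivity bimodules over the middle triplet, carried out compatibly with the diagonals and the gauge actions. Granting this, it suffices to treat one elementary equivalence $A=CD$, $B=DC$. Here I would invoke the construction recalled from \cite{MaETDS2004}: forming $Z=\begin{bmatrix} 0 & C \\ D & 0\end{bmatrix}$, the algebra $\OZ$ contains $\OA$ and $\OB$ as complementary full corners and supplies an $\OA$--$\OB$-imprimitivity bimodule. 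The new point is to promote this to an equivalence of \emph{triplets}. Because $Z^2=\begin{bmatrix} A & 0 \\ 0 & B\end{bmatrix}$, a length-two admissible word for $Z$ is exactly a length-one admissible word for $A$ or for $B$; this identifies $\DZ$ restricted to each corner with $\DA$ and $\DB$, and shows that $\rho^Z$ restricts on the two corners to $\rho^A$ and $\rho^B$ after doubling the circle parameter. One then verifies that the bimodule respects the $\DA$--$\DB$ structure and intertwines the gauge actions, which is precisely the requirement for a strong Morita equivalence of triplets.

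For the converse, I would show that a strong Morita equivalence of $(\OA,\DA,\rho^A)$ and $(\OB,\DB,\rho^B)$ reconstructs a topological conjugacy of the two-sided shifts. The diagonal subalgebra records $\DA\cong C(X_A)$, and hence the shift space $X_A$, while the gauge action $\rho^A$ encodes the length cocycle, so that the pair $(\DA,\rho^A)$ inside $\OA$ determines the one-sided shift $(X_A,\sigma_A)$, and therefore the two-sided shift up to topological conjugacy. A diagonal- and gauge-preserving imprimitivity bimodule transports exactly this data, yielding a topological conjugacy $(\bar{X}_A,\bar{\sigma}_A)\cong(\bar{X}_B,\bar{\sigma}_B)$; Williams' theorem \cite{Williams} then returns strong shift equivalence of $A$ and $B$.

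The main obstacle is the converse direction: one must extract a genuine topological conjugacy of the two-sided shifts from the purely operator-algebraic equivalence, the delicate point being that preserving both the diagonal \emph{and} the gauge action is enough to recover the one-step shift map rather than merely the orbit structure. Within the forward direction, the technical care lies in the doubling phenomenon coming from $Z^2=\begin{bmatrix} A & 0 \\ 0 & B\end{bmatrix}$, which has to be tracked consistently through the diagonal identification and the gauge-action intertwining.
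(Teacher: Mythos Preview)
Your forward implication is essentially the paper's Proposition \ref{prop:SME1}: the $Z$-matrix construction exhibits $\OA$ and $\OB$ as complementary corners of $\OZ$ with the diagonal and doubled-gauge compatibilities, which is exactly how the paper \emph{defines} strong Morita equivalence in $1$-step for triplets (Definition \ref{def:SME}). Note in particular that transitivity is built into the definition (equivalence is generated by $1$-step chains), so your discussion of balanced tensor products of bimodules is unnecessary and suggests you have a different, bimodule-style definition in mind.

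The converse, however, has a genuine gap. You assert that a diagonal- and gauge-preserving equivalence ``transports exactly this data'' to produce a conjugacy of the two-sided shifts, but this is precisely the content of the whole of Section 2 and is not formal. The data you are handed is an \emph{arbitrary} matrix $Z$ together with complementary projections $P_A,P_B\in\DZ$ satisfying the four conditions of Definition \ref{def:SME}; there is no a priori reason $Z$ should be of the block form $\begin{bmatrix}0&C\\D&0\end{bmatrix}$. The key structural step you are missing is Lemma \ref{lem:SME2}: the doubling condition $\rho^Z_t|_{P_A\OZ P_A}=\rho^A_{2t}$ evaluated at $t=\tfrac12$ forces $P_AS_\gamma P_A=P_BS_\gamma P_B=0$ for every generator $S_\gamma$ of $\OZ$, so each $S_\gamma$ strictly crosses between the $P_A$- and $P_B$-corners. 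This is what manufactures a bipartite structure on $G_Z$ relative to $P_A,P_B$, from which the paper builds matrices $\tilde C,\tilde D$ with $\tilde A=\tilde C\tilde D$ and $\tilde B=\tilde D\tilde C$ (Proposition \ref{prop:SME8}). One then still has to identify the triplet $(\mathcal O_{\tilde A},\mathcal D_{\tilde A},\rho^{\tilde A})$ with $(\OA,\DA,\rho^A)$ (Proposition \ref{prop:SME13}) and invoke the external results of \cite{MaMZ2016} and \cite{MaJOT2015} to pass from triplet isomorphism to eventual conjugacy of the one-sided shifts and hence conjugacy of the two-sided shifts. Your sketch skips all of this; in particular, the doubling phenomenon you mention only in the forward direction is the decisive ingredient in the converse.
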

It is well-known that two unital $C^*$-algebras 
$\A$ and $\B$
are strong Morita equivalent 
if and only if their stabilizations 
$\A\otimes\K$ and $\B\otimes\K$ 
are isomorphic  by Brown--Green--Rieffel Theorem \cite[Theorem 1.2]{BGR} 
(cf. \cite{BGR}, \cite{Combes}).
We will next study relationships between stabilized Cuntz--Krieger algebras
with their gauge actions
and strong shift equivalence matrices.
We must emphasize that Cuntz and Krieger in \cite[3.8 Theorem]{CK} and Cuntz in \cite[2.3 Theorem]{Cu3}
 have  shown that the stabilized 
Cuntz--Krieger triplet
$(\SOA,\SDA,\rho^A\otimes\id)$
is invariant under topological conjugacy of the two-sided topological Markov shifts
$(\bar{X}_A,\bar{\sigma}_A)$. 
 We will investigate stabilizations of generalized gauge actions from a view point of 
flow equivalence.


Let us denote by 
$C(X_A, \Z)$ the set of $\Z$-valued continuous functions on $X_A$.
For $f \in C(X_A, \Z)$,
define a one-parameter unitary group
$U_t(f), t \in \T= \R/\Z$ in $\DA$
by 
\begin{equation}
U_t(f) = \exp({2\pi \sqrt{-1} t f}), \label{eq:utf}
\end{equation}
and an automorphism
$\rho_t^{A,f}$ on $\OA$ for each $t \in \T$ 
by 
\begin{equation}
\rho_t^{A,f}(S_i) = U_t(f) S_i, \qquad i=1,\dots,N. \label{eq:rhotf}
\end{equation}
For $f \equiv 1$, the action 
$\rho_t^{A,1}$ is the gauge action denoted by $\rho_t^A$.
Suppose that 
$A = CD$ and $B =DC$ for some nonnegative rectangular matrices $C,D$.
Then there exist  homomorphisms
$\phi:C(X_A,\Z) \rightarrow  C(X_B,\Z)$
and
$\psi:C(X_B,\Z) \rightarrow  C(X_A,\Z)$
such that 
\begin{equation}
(\psi \circ \phi)(f) = f \circ \sigma_A,\qquad
(\phi \circ \psi)(g) = g \circ \sigma_B
\end{equation}
for $f \in C(X_A,\Z)$ and $g \in C(X_B,\Z)$.
Let us denote by $(H^A, H^A_+)$
the ordered cohomology groups 
 for the one-sided topological Markov shift 
$(X_A,\sigma_A)$
which has been introduced in \cite{MMKyoto} by setting
\begin{equation*}
H^A=C(X_A,{\mathbb{Z}})/\{\eta - \eta\circ\sigma_A\mid\eta\in C(X_A,{\mathbb{Z}})\}
\end{equation*}
and its positive cone
\begin{equation*}
H^A_+=\{[\eta]\in H^A\mid\eta(x)\geq0 \text{ for all } x\in X_A\}. 
\end{equation*}
The ordered cohomology group 
$(\bar{H}^A, \bar{H}^A_+)$ for $(\bar{X}_A,\bar{\sigma}_A)$ 
has been considered  by Y. T. Poon in \cite{Po}.
The latter ordered group
$(\bar{H}^A, \bar{H}^A_+)$
has been proved to be a complete invariant of flow equivalence of 
the two-sided topological Markov shift  $(\bar{X}_A,\bar{\sigma}_A)$ 
by M. Boyle and D. Handelman in \cite{BH}. 
The two ordered groups 
$(\bar H^A,\bar H^A_+)$ and $(H^A,H^A_+)$ are actually isomorphic 
(\cite[Lemma 3.1]{MMKyoto}).

In \cite{MaMZ2016},
the following result has been proved.
\begin{theorem}[{\cite[Corollary 4.4]{MaMZ2016}}]
Suppose that 
$A$ and $B$ are strong shift equivalent.
Then there exist an isomorphism
$\Phi:\SOA \rightarrow \SOB$ satisfying 
$\Phi(\SDA) = \SDB$
and a homomorphism
$\phi:C(X_A,\Z) \rightarrow C(X_B,\Z)$
of ordered groups
which induces an isomorphism
between
$(H^A, H^A_+)$ and $(H^B, H^B_+)$ of ordered groups
such that
for each function
$f\in C(X_A,\Z)$
there exists a unitary one-cocycle
$u_t^f \in \U(M(\SOA))$  
relative to 
$\rho^{A,f} \otimes\id$
satisfying 
\begin{equation*}
\Phi \circ \Ad(u_t^f) \circ (\rho^{A,f}_t\otimes \id)
 = (\rho^{B,\phi(f)}_t\otimes\id) \circ \Phi
\quad
\text{ for }
 t \in \T.
\end{equation*}
\end{theorem}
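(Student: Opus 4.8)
The plan is to reduce to a single elementary equivalence and then use the bipartite companion matrix $Z=\begin{bmatrix}0&C\\D&0\end{bmatrix}$ as a bridge between $\OA$ and $\OB$. Since strong shift equivalence is by definition a finite chain of elementary equivalences, and the asserted conclusion---a diagonal-preserving stable isomorphism $\Phi$, an ordered-group homomorphism $\phi$, and cocycles $u_t^f$ intertwining the generalized gauge actions---is stable under composition (one composes the isomorphisms $\Phi$, composes the maps $\phi$, and concatenates the cocycles via the standard product formula), it suffices to treat the case $A=CD$, $B=DC$. First I would fix this elementary equivalence and record the block identity $Z^2=\begin{bmatrix}A&0\\0&B\end{bmatrix}$, which encodes that length-two return paths in $Z$ reproduce the edges of $A$ on the first vertex set and of $B$ on the second.

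Next I would construct the two corners. Writing $P_A=\sum_{i\in I}S_i^Z(S_i^Z)^*$ and $P_B=\sum_{j\in J}S_j^Z(S_j^Z)^*$ for the partition of the vertices of $Z$ into the index set $I$ of $A$ and the index set $J$ of $B$, one has $P_A+P_B=1$ and canonical isomorphisms $P_A\OZ P_A\cong\OA$ and $P_B\OZ P_B\cong\OB$ carrying $P_A\DZ P_A$ onto $\DA$ and $P_B\DZ P_B$ onto $\DB$; this is precisely the imprimitivity-bimodule picture of \cite{MaETDS2004}. Because $A$ and $B$ are irreducible non-permutation matrices, $Z$ is irreducible, so $\OZ$ is simple and $P_A,P_B$ are full. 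Hence $\OA$ and $\OB$ are strongly Morita equivalent, and by the Brown--Green--Rieffel theorem \cite{BGR} their stabilizations are isomorphic; choosing the implementing partial isometries inside $\OZ$ so that they normalize $\DZ$ yields an isomorphism $\Phi:\SOA\to\SOB$ with $\Phi(\SDA)=\SDB$.

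The heart of the matter is the equivariance of the gauge actions, and here lies the main obstacle. The ambient algebra $\OZ$ carries a whole family of generalized gauge actions $\rho^{Z,\tilde f}$ for $\tilde f\in C(X_Z,\Z)$, each preserving the corner decomposition and restricting to generalized gauge actions on $P_A\OZ P_A\cong\OA$ and $P_B\OZ P_B\cong\OB$. The key point is that a length-two return path in $Z$ alternates between the two vertex sets, so the weight $\rho^{Z,\tilde f}$ assigns to a corner generator is the sum of $\tilde f$ over its two constituent edges; one must therefore choose $\tilde f$ supported so that these length-two sums reproduce a prescribed $f$ on the $A$-side and $\phi(f)$ on the $B$-side, where $\phi$ is exactly the homomorphism of the excerpt satisfying $(\psi\circ\phi)(f)=f\circ\sigma_A$ and $(\phi\circ\psi)(g)=g\circ\sigma_B$ (for $f\equiv1$ this recovers the ordinary gauge actions, with $\tilde f$ the indicator of edges emanating from the $A$-vertices). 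For each $f$ the fixed bimodule $P_A\OZ P_B$ then carries the $\T$-action coming from $\rho^{Z,\tilde f}$, and its $\rho^{Z,\tilde f}$-equivariance yields the intertwining $\Phi\circ\Ad(u_t^f)\circ(\rho^{A,f}_t\otimes\id)=(\rho^{B,\phi(f)}_t\otimes\id)\circ\Phi$, where the unitary one-cocycle $u_t^f\in\U(M(\SOA))$---built from the one-parameter unitaries $U_t(\cdot)=\exp(2\pi\sqrt{-1}t\,(\cdot))$ of $\DA$---absorbs the residual coboundary between the module action and the honest corner action. Verifying that a single $\Phi$ with $\Phi(\SDA)=\SDB$ simultaneously supports such a cocycle for every $f$, that each $u_t^f$ is genuinely a $\rho^{A,f}\otimes\id$-cocycle, and that it lives in the multiplier algebra is the delicate step.

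Finally, that $\phi$ is an isomorphism of ordered cohomology groups follows formally. Since $(\psi\circ\phi)(f)=f\circ\sigma_A$ and $f-f\circ\sigma_A$ is a coboundary, $\psi$ and $\phi$ induce mutually inverse maps $[\phi]$ and $[\psi]$ between $H^A$ and $H^B$, using symmetrically $(\phi\circ\psi)(g)=g\circ\sigma_B$. Positivity in both directions is immediate because $\phi$ and $\psi$ are assembled from the nonnegative matrices $C$ and $D$ and hence send nonnegative functions to nonnegative functions. This gives the order isomorphism $(H^A,H^A_+)\cong(H^B,H^B_+)$ and completes the proof.
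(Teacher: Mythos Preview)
Your proposal is correct and follows essentially the same route as the paper's argument (which is reviewed in Section~3, following \cite{MaMZ2016}): reduce to a single elementary equivalence $A=CD$, $B=DC$, pass to the bipartite matrix $Z=\begin{bmatrix}0&C\\D&0\end{bmatrix}$, use the full corner projections $P_C,P_D\in\DZ$ and Brown--Green--Rieffel to produce a diagonal-preserving stable isomorphism $\Phi=\Ad(v_Bv_A^*)$, extend $f$ to $f\oplus 0\in C(X_Z,\Z)$ so that $\rho^{Z,f\oplus 0}$ restricts to $\rho^{A,f}$ and $\rho^{B,\phi(f)}$ on the two corners, and read off the cocycle as $u_t^f=w^*(\rho^{Z,f\oplus 0}_t\otimes\id)(w)$; the ordered-cohomology isomorphism then follows exactly as you say from $(\psi\circ\phi)(f)=f\circ\sigma_A$. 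The only place your sketch is less concrete than the paper is in the construction of the implementing isometries $v_A,v_B\in M(\SOZ)$ normalizing $\SDZ$ (the paper does this explicitly via Brown's \cite{Brown} construction, Lemmas~3.3--3.6), which is precisely the ``delicate step'' you flag.
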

In the second part of the present paper,
we will study K-theoretic behavior of the above isomorphism
$\Phi:\SOA \rightarrow \SOB$. 
Let us denote by 
$\epsilon_A: K_0(\OA) \rightarrow \Z^N/{(\id - A^{t})\Z^N}$
the isomorphism defined in \cite[3.1 Proposition]{Cu3}
satisfying 
$\epsilon_A([1_A]) = [(1,1,\dots,1)]$,
where $1_A$ is the unit of $\OA$.
We will prove the following theorem.
\begin{theorem}[Proposition \ref{prop:main1} and Theorem \ref{thm:KC}]
Suppose that $A$ and $B$ are elementary equivalent such that
$A = CD$ and $B =DC$.
Then there exist an isomorphism
$\Phi:\SOA \rightarrow \SOB$ satisfying 
$\Phi(\SDA) = \SDB$ 
and a unitary representation 
$t \in \T \rightarrow u^f_t \in M(\SDA)$ for each $f \in C(X_A,\Z)$
such that
\begin{equation*}
\Phi  \circ \Ad(u_t^f) \circ (\rho^{A,f}_t\otimes \id)
 = (\rho^{B,\phi(f)}_t\otimes\id) \circ \Phi
\quad
\text{ for }
f \in C(X_A,\Z), \, 
 t \in \T
\end{equation*}
and the diagram 
$$
\begin{CD}
K_0(\OA) @>\Phi_* >> K_0(\OB) \\
@V{\epsilon_A }VV  @VV{\epsilon_B}V \\
\Z^N/{(\id - A^{t})\Z^N} @> \Phi_{C^t} >> \Z^M/{(\id - B^{t})\Z^M} 
\end{CD}
$$
is commutative,
where $\Phi_{C^t}$ is an isomorphism induced by multiplying the matrix
$C^t$. 
\end{theorem}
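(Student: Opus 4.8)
The plan is to realize the elementary equivalence $A\underset{C,D}{\approx}B$ through the Cuntz--Krieger algebra $\OZ$ of the bipartite matrix $Z=\begin{bmatrix}0&C\\ D&0\end{bmatrix}$, as recalled in the introduction, and to read off both the cocycle conjugacy and the $K_0$-map from the two full corners of $\OZ$. Writing $P_k=S_kS_k^*$ for the range projections of the canonical generators $S_1,\dots,S_{N+M}$ of $\OZ$, I set $E=\sum_{i=1}^N P_i$ and $F=\sum_{m=1}^M P_{N+m}$. Since $Z$ is irreducible and not a permutation matrix, $\OZ$ is simple, so $E$ and $F$ are full projections; a direct computation with the relations \eqref{eq:CK} for $Z$ shows that $T_i=\sum_{m=1}^M C(i,m)\,S_iS_{N+m}$ $(i=1,\dots,N)$ are Cuntz--Krieger $A$-generators inside $E\OZ E$ with $T_iT_i^*=P_i$, giving an isomorphism $E\OZ E\cong\OA$ carrying $\DA$ into $E\DZ E$; symmetrically $U_m=\sum_{n=1}^N D(m,n)\,S_{N+m}S_n$ yield $F\OZ F\cong\OB$ with $U_mU_m^*=P_{N+m}$ and $\DB\hookrightarrow F\DZ F$.

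Using these two full-corner embeddings together with the Brown--Green--Rieffel theorem gives $\SOA\cong\SOZ\cong\SOB$, and because both embeddings send the relevant diagonals into $\DZ$, the resulting isomorphism $\Phi:\SOA\to\SOB$ satisfies $\Phi(\SDA)=\SDB$; this is precisely the strong Morita equivalence of Cuntz--Krieger triplets established in the first part of the paper. For the intertwining of the generalized gauge actions I would transport a generalized gauge action $\rho^{Z,\tilde f}$ on $\OZ$ attached to a function $\tilde f\in C(X_Z,\Z)$ built from $f$ through the bipartite structure of $X_Z$ (this is the origin of the homomorphism $\phi$ with $\psi\circ\phi=(\cdot)\circ\sigma_A$). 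Restricting $\rho^{Z,\tilde f}$ to the corners recovers $\rho^{A,f}$ and $\rho^{B,\phi(f)}$ up to an inner perturbation by the family $U_t(\cdot)=\exp(2\pi\sqrt{-1}\,t\,(\cdot))$ living in the diagonal. Since $f$ is $\Z$-valued and $\rho^{A,f}_t$ fixes $\DA$ pointwise, these factors satisfy $U_tU_s=U_{t+s}$, so the one-cocycle $u_t^f$ furnished by the \cite{MaMZ2016} result is in fact a one-parameter unitary representation in $M(\SDA)$, and the relation $\Phi\circ\Ad(u_t^f)\circ(\rho^{A,f}_t\otimes\id)=(\rho^{B,\phi(f)}_t\otimes\id)\circ\Phi$ holds.

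For the $K_0$-diagram I pass to $K_0$, using stability to identify $K_0(\SOA)=K_0(\OA)$ and $K_0(\SOB)=K_0(\OB)$, together with the Cuntz isomorphism $\epsilon_Z:K_0(\OZ)\to\Z^{N+M}/(\id-Z^t)\Z^{N+M}$ sending $[P_k]\mapsto[e_k]$. The corner embeddings are full, hence induce $K_0$-isomorphisms, and I write $\iota_1=\epsilon_Z\circ(\iota_E)_*\circ\epsilon_A^{-1}$ and $\iota_2=\epsilon_Z\circ(\iota_F)_*\circ\epsilon_B^{-1}$. By the generator computation, $(\iota_E)_*$ sends $\epsilon_A^{-1}[e_i]=[S_i^A(S_i^A)^*]$ to $[P_i]=[e_i]$ and $(\iota_F)_*$ sends $\epsilon_B^{-1}[e_m]$ to $[P_{N+m}]=[e_{N+m}]$, so $\iota_1[e_i]=[e_i]$ and $\iota_2[e_m]=[e_{N+m}]$, while $\epsilon_B\circ\Phi_*\circ\epsilon_A^{-1}=\iota_2^{-1}\circ\iota_1$. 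The identity $(\id-Z^t)e_i=e_i-\sum_{m=1}^M C(i,m)e_{N+m}$, valid because $Z^t=\begin{bmatrix}0&D^t\\ C^t&0\end{bmatrix}$, shows $[e_i]=[\sum_{m}C(i,m)e_{N+m}]=\iota_2([C^te_i])$ in $\Z^{N+M}/(\id-Z^t)\Z^{N+M}$, whence $\iota_2^{-1}\iota_1[e_i]=[C^te_i]=\Phi_{C^t}[e_i]$; thus $\epsilon_B\circ\Phi_*=\Phi_{C^t}\circ\epsilon_A$ and the diagram commutes.

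I expect the main obstacle to lie not in the linear algebra of the last paragraph but in the bookkeeping that turns the inner perturbation into a genuine diagonal one-parameter representation: one must verify that restricting $\rho^{Z,\tilde f}$ to each corner yields exactly $\rho^{A,f}$ and $\rho^{B,\phi(f)}$ modulo a cocycle with values in $M(\SDA)$ (rather than merely in $M(\SOA)$), and reconcile the factor-of-two reparametrization coming from $\rho^Z_t(S_iS_{N+m})=e^{4\pi\sqrt{-1}t}S_iS_{N+m}$ with the normalization of $f$ and of $\tilde f$. Once the cocycle is pinned down inside the diagonal multiplier algebra, both the conjugacy relation and the $K_0$-computation follow as described.
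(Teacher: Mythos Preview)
Your $K_0$-computation is correct and in fact cleaner than the paper's: rather than tracking the explicit isometries $v_A,v_B$ through to $w(S_{a_i}S_{a_i}^*\otimes p_{\bar 0})w^*$ and then matching the result against an auxiliary matrix $\hat D$ (Proposition~\ref{prop:KTD}, Lemmas~\ref{lem:series}--\ref{lem:5.5}), you use that $\Phi_*=(\iota_F)_*^{-1}\circ(\iota_E)_*$ regardless of which BGR implementation is chosen, and read off $\Phi_{C^t}$ directly from the relation $[e_i]=\bigl[\sum_m C(i,m)e_{N+m}\bigr]$ in $\Z^{N+M}/(\id-Z^t)\Z^{N+M}$. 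This bypasses the entire edge-graph apparatus $(A^G,S_A,R_A,\hat D)$ of Section~4. One caveat: your formulas $T_i=\sum_m C(i,m)S_iS_{N+m}$ and the indexing $S_1,\dots,S_{N+M}$ tacitly assume $Z$ is a $\{0,1\}$-matrix, whereas the paper allows general nonnegative $A,B$; the argument survives if you replace generator projections by vertex projections.

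Where you do have a gap is exactly where you flag it. The upgrade from the one-cocycle of \cite{MaMZ2016} to a genuine unitary \emph{representation} $t\mapsto u_t^f$ taking values in $M(\SDA)$ is the whole content of Proposition~\ref{prop:main1}, and your heuristic (``$f$ is $\Z$-valued and $\rho^{A,f}_t$ fixes $\DA$, so $U_tU_s=U_{t+s}$'') does not touch it: the cocycle in question is $u_t^{A,f}=w^*(\rho^{Z,f\oplus 0}_t\otimes\id)(w)$, not $U_t(f)$, and a generic BGR isometry $w=v_Bv_A^*$ will not yield something diagonal-valued, let alone a representation. The paper resolves this by a tailored reconstruction of $v_A$ (Lemmas~\ref{lem:3.4}--\ref{lem:3.6}) from which one obtains the closed form
\[
u_t^{A,f}=q_{od}^C+(U_t(-f)\otimes 1)\,q_{od}^D+v_{ev}v_{ev}^*,
\]
and from this expression both $u_t^{A,f}\in M(\SDA)$ and $u_{t+s}^{A,f}=u_t^{A,f}u_s^{A,f}$ are immediate. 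Your outline supplies no mechanism for controlling $u_t^f$ beyond what \cite{MaMZ2016} already gives, so this step genuinely needs the paper's explicit construction (or something equivalent). Once that is in hand, your $K_0$-argument applies to that specific $\Phi$ without change, since any two BGR implementations agree on $K_0$.
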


In the third part of the paper, we will study the converse of 
the above theorem for the gauge actions.
We will introduce an invariant $\KSSEOA$
which is a non-empty subset of $K_0(\OA)$. 
The invariant 
$\KSSEOA$
is realized  
as a subset of $\Z^N /(\id - A^t)\Z^N$  consisting of the classes $[v]$
of vectors 
$v \in \Z^N$
such that 
$v = D_1^t \cdots D_{n-1}^t D_n^t [1,1,\dots,1]^t
$
for some strong shift equivalences
$
A \underset{C_1,D_1}{\approx}\cdots \underset{C_{n},D_{n}}{\approx}D_n C_n
$
(Proposition \ref{prop:algSSE}).
We will then prove the following theorem.
\begin{theorem}[Theorem \ref{thm:main2}]
Let $A, B$ be irreducible and non-permutation matrices.
The following two assertions are equivalent.
\begin{enumerate}
\renewcommand{\theenumi}{\roman{enumi}}
\renewcommand{\labelenumi}{\textup{(\theenumi)}}
\item
Two-sided topological Markov shifts 
$(\bar{X}_A, \bar{\sigma}_A)$ and 
$(\bar{X}_B, \bar{\sigma}_B)$ 
are topologically conjugate.
\item
There exist an isomorphism
$\Phi:\SOA \rightarrow \SOB$ 
and  a unitary representation 
$t \in \T \rightarrow u_t^A \in M(\SDA)$ 
such that 
\begin{align*}
\Phi(\SDA) = \SDB, \qquad &
\Phi \circ \Ad(u_t^A) \circ  (\rho^{A}_t\otimes \id)
 = (\rho^{B}_t\otimes\id) \circ \Phi
\text{ for }
 t \in \T, \\
\Phi_*(\KSSEOA) &  = \KSSEOB.
\end{align*}
\end{enumerate}
\end{theorem}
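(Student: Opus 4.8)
The plan is to prove the two implications separately, using strong shift equivalence (SSE) as the bridge furnished by Williams' theorem. For the implication (i) $\Rightarrow$ (ii), Williams' theorem turns the topological conjugacy into an SSE
\[
A = A_0 \underset{C_1,D_1}{\approx} A_1 \underset{C_2,D_2}{\approx} \cdots \underset{C_n,D_n}{\approx} A_n = B.
\]
I would then compose the isomorphisms supplied by Proposition \ref{prop:main1} and Theorem \ref{thm:KC} for each elementary equivalence $A_{i-1}=C_iD_i$, $A_i=D_iC_i$, obtaining $\Phi=\Phi_n\circ\cdots\circ\Phi_1:\SOA\to\SOB$ with $\Phi(\SDA)=\SDB$, a diagonal one-cocycle $u^A_t\in M(\SDA)$ (the $f\equiv1$ case) intertwining the gauge actions, and $\Phi_*=\Phi_{(C_n\cdots C_1)^t}$ on $K_0$. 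One checks that the $B$-side weight $\phi(1)$ represents the class of $1$, so that $\rho^{B,\phi(1)}$ may be replaced by $\rho^B$ after absorbing a further diagonal cocycle.

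The only genuinely new point in this direction is $\Phi_*(\KSSEOA)=\KSSEOB$, which I would reduce, via the realization in Proposition \ref{prop:algSSE}, to a statement about SSE-vectors. Writing $\mathbf 1$ for the all-ones vector, it suffices to treat a single elementary equivalence $A\underset{C,D}{\approx}B$, where $\Phi_*=\Phi_{C^t}$ acts by $[v]\mapsto[C^t v]$. Given an SSE-vector $w=D_1^t\cdots D_k^t\mathbf 1$ of $B$ arising from a chain starting at $B$, prepending $A\underset{C,D}{\approx}B$ produces the $A$-side SSE-vector $D^t w$, and
\[
\Phi_{C^t}([D^t w]) = [C^tD^t w] = [(DC)^t w] = [B^t w] = [w],
\]
because $B^t w - w\in(\id-B^t)\Z^M$; this yields $\KSSEOB\subseteq\Phi_*(\KSSEOA)$. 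Conversely, prepending the elementary equivalence $B=DC\underset{D,C}{\approx}CD=A$ to an $A$-side chain shows $\Phi_{C^t}([v])=[C^t v]\in\KSSEOB$, giving the reverse inclusion. Iterating along the chain produces $\Phi_*(\KSSEOA)=\KSSEOB$.

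For the converse (ii) $\Rightarrow$ (i), the strategy is first to extract from the algebraic data an equivalence \emph{a priori} weaker than topological conjugacy and then to use the $K_0^{\mathrm{SSE}}$-condition to sharpen it. Since $\Ad(u^A_t)\circ(\rho^A_t\otimes\id)$ is cocycle conjugate to $\rho^A_t\otimes\id$ by the diagonal cocycle $u^A_t$, passing to crossed products by $\T$ transports $\Phi$ to an isomorphism of the stabilized fixed-point (AF-core) algebras that carries $\SDA$ to $\SDB$ and intertwines the dual shift endomorphisms. At the level of ordered $K$-theory this is an isomorphism of the dimension groups together with the automorphisms induced by the shifts, that is, a \emph{shift equivalence} of $A$ and $B$; note that the diagonal condition alone already forces a flow equivalence, so the gauge-cocycle data is what promotes this to shift equivalence.

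The hard part, which I expect to be the decisive obstacle, is closing the gap between shift equivalence and strong shift equivalence, and this is precisely the purpose of $\Phi_*(\KSSEOA)=\KSSEOB$. Here I would use that $[1_A]=[\mathbf 1]\in\KSSEOA$ (the empty chain), so $\Phi_*([1_A])\in\KSSEOB$, whence by Proposition \ref{prop:algSSE} there is an SSE chain from $B$ realizing $\Phi_*([1_A])$ as an explicit vector $D_1^t\cdots D_k^t\mathbf 1$; combining this chain with the shift equivalence above, and exploiting that $\Phi_*$ matches the full \emph{subsets} $\KSSEOA$ and $\KSSEOB$ rather than merely the $K_0$-groups with order and automorphism, one manufactures the elementary-equivalence factorizations assembling into an honest SSE between $A$ and $B$. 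Equivalently, this step upgrades the stabilized gauge-cocycle data to a strong Morita equivalence of the Cuntz--Krieger triplets, so that $A$ and $B$ are strong shift equivalent by Corollary \ref{cor:SMECK}, and Williams' theorem then returns the topological conjugacy of $(\bar X_A,\bar\sigma_A)$ and $(\bar X_B,\bar\sigma_B)$. The delicacy lies entirely in the bookkeeping of this last upgrade, since the inclusion of shift equivalence in strong shift equivalence is known to be strict, and it is the explicit description of $\KSSEOA$ that must carry the additional rigidity.
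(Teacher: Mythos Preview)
Your argument for (i) $\Rightarrow$ (ii) is essentially the paper's: Corollary \ref{cor:main} packages the composition of the $\Phi_i$'s, and your direct verification that $\Phi_{C^t}$ carries $\KSSEOA$ onto $\KSSEOB$ for a single elementary equivalence is exactly the content of Proposition \ref{prop:K0SSEOA}. (Note also that $\phi(1)=1$ on the nose, not merely in cohomology, so no extra cocycle is needed there.)

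For (ii) $\Rightarrow$ (i), however, there is a genuine gap. The detour through crossed products to extract a shift equivalence is unnecessary, and the step where you ``manufacture the elementary-equivalence factorizations'' by combining this shift equivalence with the SSE chain from $B$ is not a valid move: a shift equivalence $A\sim_{\mathrm{SE}}B$ together with an SSE $B\approx\cdots\approx B'$ does not produce an SSE $A\approx\cdots\approx B$ (this is precisely the Williams problem, and the inclusion is strict). Likewise, ``upgrading to a strong Morita equivalence of the triplets'' would require producing the unstabilized corner picture of Definition \ref{def:SME} from stabilized data, which is exactly the destabilization you have not supplied.

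The paper's route avoids all of this. From $\Phi_*([1_A])\in\KSSEOB$ one obtains, by the very definition of $\KSSEOB$, a matrix $B'$ with $B\underset{C_1,D_1}{\approx}\cdots\underset{C_n,D_n}{\approx}B'$ and an SSE-induced isomorphism $\gamma:\SOB\to{\mathcal O}_{B'}\otimes\K$ satisfying $\gamma_*(\Phi_*([1_A]))=[1_{B'}]$. The composition $\gamma\circ\Phi$ then sends $[1_A]$ to $[1_{B'}]$, preserves diagonals, and intertwines gauge actions up to cocycle. At this point one invokes Proposition 5.1/Corollary \ref{cor:SOTWO}: the unit-matching condition $\Phi_*([1_A])=[1_{B'}]$ allows one to \emph{destabilize} (via \cite[Proposition 3.13]{MaPAMS}) to an isomorphism $\OA\to{\mathcal O}_{B'}$ carrying $\DA$ to ${\mathcal D}_{B'}$ with cocycle-conjugate gauge actions, and this in turn yields topological conjugacy of $(\bar X_A,\bar\sigma_A)$ and $(\bar X_{B'},\bar\sigma_{B'})$ by \cite{MaJOT2015}. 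Since $B$ and $B'$ are SSE, the conclusion follows. The key lemma you are missing is precisely this destabilization step; once you have it, no shift-equivalence argument is needed.
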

We say that $A$ has {\it full units} if 
$ \KSSEOA = K_0(\OA)$.
The condition  $ \KSSEOA = K_0(\OA)$ is able to describe in terms of the matrix $A$
as in Proposition \ref{prop:algSSE}.
\begin{corollary}[Corollary \ref{cor:main3}]
Suppose that matrices $A$ and $B$ have full units.
Then two-sided topological Markov shifts
$(\bar{X}_A, \bar{\sigma}_A)$ and
$(\bar{X}_B, \bar{\sigma}_B)$
are topologically conjugate if and only if there exist an isomorphism 
$\Phi:\SOA \rightarrow \SOB $ of $C^*$-algebras 
and 
 a unitary representation 
$t \in \T \rightarrow u_t^A \in M(\SDA)$ 
such that
\begin{equation*}
\Phi(\SDA) = \SDB, \qquad
\Phi \circ \Ad(u_t^A) \circ (\rho^{A}_t \otimes\id) 
=  (\rho^{B}_t \otimes \id) \circ \Phi. \\
\end{equation*}
\end{corollary}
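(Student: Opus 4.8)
The plan is to derive the corollary directly from Theorem \ref{thm:main2}, whose statement differs from the corollary only by the additional requirement $\Phi_*(\KSSEOA) = \KSSEOB$ appearing in assertion (ii). Thus the entire content of the corollary is that, under the full units hypothesis $\KSSEOA = K_0(\OA)$ and $\KSSEOB = K_0(\OB)$, this K-theoretic condition is automatically satisfied and may therefore be omitted from the list of requirements.

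For the forward implication I would simply invoke the implication (i) $\Rightarrow$ (ii) of Theorem \ref{thm:main2}: topological conjugacy of $(\bar{X}_A, \bar{\sigma}_A)$ and $(\bar{X}_B, \bar{\sigma}_B)$ produces an isomorphism $\Phi:\SOA \rightarrow \SOB$ with $\Phi(\SDA) = \SDB$ and a unitary representation $t \mapsto u_t^A \in M(\SDA)$ satisfying the intertwining relation $\Phi \circ \Ad(u_t^A) \circ (\rho^{A}_t \otimes \id) = (\rho^{B}_t \otimes \id) \circ \Phi$, together with the extra K-theory condition; discarding the latter yields exactly the data demanded by the corollary. For the converse I would start from the data $\Phi$ and $u_t^A$ furnished by the corollary's hypothesis, which already realize the first two clauses of assertion (ii) of Theorem \ref{thm:main2}, and verify that the third clause holds automatically. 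Since $\Phi:\SOA \rightarrow \SOB$ is an isomorphism of $C^*$-algebras, the induced map $\Phi_*$ is an isomorphism of $K_0$-groups; under the canonical stabilization identifications $K_0(\SOA) \cong K_0(\OA)$ and $K_0(\SOB) \cong K_0(\OB)$ it becomes an isomorphism of $K_0(\OA)$ onto $K_0(\OB)$. The full units hypothesis then gives
\[
\Phi_*(\KSSEOA) = \Phi_*(K_0(\OA)) = K_0(\OB) = \KSSEOB,
\]
so that assertion (ii) of Theorem \ref{thm:main2} holds in full, and the implication (ii) $\Rightarrow$ (i) delivers the desired topological conjugacy.

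There is essentially no substantive obstacle here: the argument reduces to the observation that a $C^*$-isomorphism induces a surjection on $K$-theory, which trivializes the subset condition $\Phi_*(\KSSEOA) = \KSSEOB$ precisely when both invariants exhaust their ambient $K_0$-groups. The only point requiring a little care is that the identification of $\Phi_*$ on $K_0(\SOA)$ with a map on $K_0(\OA)$ is compatible with the way $\KSSEOA$ is defined as a subset of $K_0(\OA)$; this follows from the naturality of the stabilization isomorphism together with the concrete description of $\KSSEOA$ given in Proposition \ref{prop:algSSE}.
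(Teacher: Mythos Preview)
Your proposal is correct and matches the paper's approach: the paper presents Corollary~\ref{cor:main3} simply as an immediate consequence of Theorem~\ref{thm:main2}, the point being exactly your observation that under the full units hypothesis the condition $\Phi_*(\KSSEOA) = \KSSEOB$ is automatic because $\Phi_*$ is a group isomorphism and both sides equal the whole $K_0$-group.
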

\medskip
Throughout the paper,
we denote by $\N$
the set of positive integers
and by $\Zp$
the set of nonnegative integers,
respectively.
For one-sided topological Markov shift
$(X_A,\sigma_A)$,
a word $\mu =(\mu_1, \dots, \mu_k)$ 
for $\mu_i \in \{1,\dots,N\}$
is said to be admissible for $X_A$ 
if  
$(\mu_1, \dots, \mu_k) =(x_1, \dots, x_k)$
for some element
$(x_n)_{n \in \N} \in X_A$.
The length of $\mu$ is denoted by $|\mu| =k$.
 We denote by $B_k(X_A)$ the set of all admissible words of length $k$.
We similarly denote by $B_k(\bar{X}_A)$
the set of admissible words of length $k$,
so that   $B_k(\bar{X}_A) = B_k(X_A).$
The cylinder set 
$\{ (x_n )_{n \in \N} \in X_A 
\mid x_1 =\mu_1,\dots, x_k = \mu_k \}$
for $\mu=(\mu_1,\dots,\mu_k) \in B_k(X_A)$
is denoted by $U_\mu$.  

\medskip
This paper is a second revised version of arXiv:1604.02763v1, in which the given proofs of the main results were incorrect. 


\section{Strong Morita equivalence for Cuntz--Krieger triplets}

There is a standard method to associate a Cuntz--Krieger algebra from a square matrix with entries in nonnegative integers as described in \cite[Section 4]{Ro}. 
Now we suppose that  $A =[A(i,j)]_{i,j=1}^N $ 
is an $N\times N$ matrix with entries in nonnegative integers.
Then the associated graph $G_A = (V_A,E_A)$ consists of the vertex set 
$V_A =\{ v^A_1, \dots, v_N^A\}$ of $N$ vertices and the edge set 
$E_A =\{a_1,\dots,a_{N_A} \}$, where 
there  are $A(i,j)$ edges  from $v_i^A$ to $v_j^A$.
 Hence the total number of edges is $\sum_{i,j=1}^N A(i,j)$ denoted by $N_A$.
For $a_i \in E_A$, 
denote by $t(a_i),  s(a_i)$ the terminal vertex  of $a_i$, 
the source vertex of $a_i$, respectively. 
The graph $G_A$ has the $N_A \times N_A$
 transition matrix $A^G =[A^G(i,j)]_{i,j=1}^{N_A}$ of edges 
defined by 
\begin{equation}
A^G(i, j) =
\begin{cases} 
 1 &  \text{  if  } t(a_i) = s(a_j), \\
 0 & \text{  otherwise}.
\end{cases} \label{eq:AG}
\end{equation}
The Cuntz--Krieger algebra $\OA$ for the matrix $A$ with
 entries in nonnegative integers  is defined as the Cuntz--Krieger algebra
 ${\mathcal{O}}_{A^G}$ 
for the matrix $A^G$ which is the universal $C^*$-algebra generated by 
partial isometries
$S_{a_i}$ indexed by edges $a_i, i=1,\dots, N_A$ subject to the relations:
\begin{equation}
\sum_{j=1}^{N_A}  S_{a_j} S_{a_j}^* = 1, 
\qquad
S_{a_i}^* S_{a_i} = 
\sum_{j=1}^{N_A}  A^G(i, j ) S_{a_j} S_{a_j}^*
 \quad \text{ for } i=1,\dots,N_A.
\label{eq:OAG}
\end{equation}
For a word 
$\mu =(\mu_1, \dots, \mu_k), \mu_i  \in E_Z$,
we denote by $S_\mu$ the partial isometry
$S_{\mu_1}\cdots S_{\mu_k}$.


As in the standard text books \cite{Kitchens},  \cite{LM}
of symbolic dynamics, 
the two-sided topological Markov shift
defined by a square matrix with entries in $\{0,1\}$
is naturally topologically conjugate to 
a topological Markov shift of the edge shift defined by 
the underlying directed graph. 
 In what follows,
we consider edge shifts and hence  square matrices
with entries in nonnegative integers 
(cf.  \cite{Kitchens}, \cite{LM}, \cite{Williams}, etc.).
Such a matrix is simply called  a nonnegative square matrix.
For a nonnegative square matrix $A$,
the two-sided shift space
$\bar{X}_A$  is defined by the two-sided shift space $\bar{X}_{A^G}$
for the matrix $A^G$ 
which consists of two-sided bi-infinite sequences of concatenated edges of
the directed graph $G_A$.

Suppose that two nonnegative square matrices $A$ and $B$ 
are elementary equivalent
such that $A =CD$ and $B = DC$.
The sizes of the matrices
$A$ and $B$ are denoted by
$N$ and $M$ respectively,
so that 
$C$ is an $N \times M$ matrix and 
$D$ is an $M \times N$ matrix, respectively. 
 We set the square matrix
$
Z =
\begin{bmatrix}
0 & C \\
D & 0
\end{bmatrix}
$
as a block matrix, and we see
\begin{equation*}
Z^2 =
\begin{bmatrix}
CD & 0 \\
0 & DC
\end{bmatrix}
=
\begin{bmatrix}
A & 0 \\
0 & B
\end{bmatrix}.
\end{equation*}
Similarly to the directed graph 
$
G_A =(V_A,E_A),
$
let  us denote by
$
G_B =(V_B,E_B),
G_C =(V_C,E_C),
G_D =(V_D,E_D)
$
and
$
G_Z =(V_Z,E_Z)
$
the associated directed graphs to the nonnegative matrices
$ B, C, D$ and $Z$, respectively.  
By the equalities $A=CD$ and $B = DC$,
we may take bijections
$\varphi_{A,CD}$ from $E_A$ to a subset of $E_C \times E_D$
and
$\varphi_{B,DC}$ from $E_B$ to a subset of $E_D \times E_C$.
Let 
$ S_c, S_d, c \in E_C, d \in E_D$ 
be the generating partial isometries of 
the Cuntz--Krieger algebra 
$\OZ$
for the matrix $Z$, so that
$
\sum_{c \in E_C} S_c S_c^*
+
\sum_{d \in E_D} S_d S_d^*
=1$
and
\begin{equation*}
S_c^* S_c = \sum_{d \in E_D}Z(c,d) S_d S_d^*, \qquad
S_d^* S_d = \sum_{c \in E_C}Z(d,c) S_c S_c^*
\end{equation*}
for $c \in E_C, d \in E_D$.
Since 
$S_c S_d \ne 0$ 
(resp. $S_d S_c \ne 0$) 
if and only if 
$\varphi_{A,CD}(a) = cd$
(resp.
$\varphi_{B,DC}(b) = dc$)
 for some $a \in E_A$
(resp. $b \in E_B$),
we may identify $cd$ (resp. $dc$) 
with $a$ (resp. $b$)
through the map 
$\varphi_{A,CD}$ (resp. $\varphi_{B,DC}$).
We may then write 
$S_{cd} = S_a$
(resp. $S_{dc} = S_b$)
where $S_{cd}$ denotes $S_c S_d$ (resp. $S_{dc}$ denotes $S_d S_c$).
We define two particular projections $P_C$ and $P_D$ in $\DZ$ by 
$P_C = \sum_{c \in E_C} S_c S_c^*$
and
$P_D = \sum_{d \in E_D} S_d S_d^*$
so that $P_C + P_D =1$.
It has been shown in \cite{MaETDS2004} (cf. \cite{MaMZ2016})
that 
\begin{equation}
P_C \OZ P_C = \OA, \qquad P_D \OZ P_D = \OB,
\qquad
 \DZ P_C = \DA, \qquad  \DZ P_D = \DB. \label{eq:4.1}
\end{equation}
As in \cite[Lemma 3.1]{MaETDS2004},
both projections $P_C$ and $P_D$ are full projections so that  
$P_C \OZ P_D$ has a natural structure of 
$\OA-\OB$ imprimitivity bimodule that makes $\OA$ and $\OB$ 
strong Morita equivalent (cf. \cite{Rieffel1}, \cite{Rieffel2}).

Let
$\rho^Z, \rho^A, \rho^B$
be the gauge actions of 
$\OZ, \OA,\OB$, respectively.
Since $S_c S_d$ (resp. $S_d S_c$)
in $\OZ$ is identified with
$S_a$ in $\OA$ (resp. $S_b$ in $\OB$)
if
$\varphi_{A,CD}(a) = cd$ (resp. $\varphi_{B,DC}(b) = dc$,
we have
\begin{equation}
\rho^Z_t |_{P_C \OZ P_C} = \rho^A_{2t} \text{ on } \OA,
\qquad
\rho^Z_t |_{P_D \OZ P_D} = \rho^B_{2t} \text{ on } \OB. \label{eq:gaugeZAB}
\end{equation}


\medskip

Let $A$ be an irreducible non-permutation matrix.
The triplet
$(\OA,\DA,\rho^A)$ 
for the Cuntz-Krieger algebra $\OA$, 
its canonical maximal abelian $C^*$-subalgebra $\DA$, 
and its gauge action $\rho^A$
is called the Cuntz--Krieger triplet for the matrix $A$. 
In this section we will define the notion of strong Morita equivalence in 
Cuntz--Krieger triplets.
We will then prove that 
the Cuntz--Krieger triplets
$(\OA,\DA,\rho^A)$ and
$(\OB,\DB,\rho^B)$ 
are strong Morita equivalent 
if and only if the matrices
$A$ and $B$ are strong shift equivalent.
Let $A, B$ be irreducible non-permutation matrices.
\begin{definition}\label{def:SME}
The Cuntz--Krieger triplets
$(\OA,\DA,\rho^A)$ and
$(\OB,\DB,\rho^B)$ 
are said to be {\it strong Morita equivalent in} $1$-{\it step}
if 
there exist a Cuntz--Krieger triplet 
$(\OZ,\DZ,\rho^Z)$ for some nonnegative matrix $Z$
and projections 
$P_A, P_B  \in \DZ$ having  the following properties:
\begin{enumerate}
\item $P_A + P_B = 1,$
\item $ P_A \OZ P_A = \OA$ and $P_B \OZ P_B = \OB,$
\item $\DZ P_A = \DA$ and $\DZ P_B =\DB,$
\item $\rho^Z_t |_{P_A \OZ P_A} = \rho^A_{2t}$ on $\OA$
  and $\rho^Z_t |_{P_B \OZ P_B} = \rho^B_{2t}$ on $\OB$ for $t \in \T$.
\end{enumerate}
\end{definition}
In this case, we say that 
$(\OA,\DA,\rho^A)$ and
$(\OB,\DB,\rho^B)$ 
are strong Morita equivalent in $1$-step
via 
$(\OZ,\DZ,\rho^Z)$.
If two  Cuntz--Krieger triplets
$(\OA,\DA,\rho^A)$ and
$(\OB,\DB,\rho^B)$ are connected through $n$-chains of 
strong Morita equivalences in $1$-step, 
$(\OA,\DA,\rho^A)$ and
$(\OB,\DB,\rho^B)$ are said to be 
strong Morita equivalent in $n$-step,
or simply, 
strong Morita equivalent.

We note that if there exists an isomorphism
$\Phi:\OA \longrightarrow \OB$ satisfying
$\Phi(\DA) = \DB$ and 
$\Phi\circ\rho^A_t =\rho^B_t\circ \Phi, \, t \in \T,$
then the one-sided topological Markov shifts
$(X_A,\sigma_A)$ and $(X_B,\sigma_B)$ 
are eventually conjugate (\cite[Corollary 3.5]{MaMZ2016}),
so that their two-sided topological Markov shifts
$(\bar{X}_A, \bar{\sigma}_A)$ and $(\bar{X}_B, \bar{\sigma}_B)$ 
are topologically conjugate by \cite[Theorem 5.5]{MaJOT2015} 
(cf. \cite[Theorem 6.7]{MaJOT2015}),
and hence the matrices $A$ and $B$ are strong shift equivalent.  

\begin{proposition}\label{prop:SME1}
If $A$ and $B$ are elementary equivalent, 
then their   Cuntz--Krieger triplets
$(\OA,\DA,\rho^A)$ and
$(\OB,\DB,\rho^B)$ are 
strong Morita equivalent in $1$-steps,
\end{proposition}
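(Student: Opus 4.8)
The plan is to recycle exactly the data assembled in the paragraphs preceding Definition~\ref{def:SME} and check that it satisfies the four axioms there. Since $A$ and $B$ are elementary equivalent, write $A = CD$ and $B = DC$ with $C$ an $N\times M$ and $D$ an $M\times N$ nonnegative matrix, and form the block matrix $Z = \begin{bmatrix} 0 & C \\ D & 0 \end{bmatrix}$, so that $Z^2 = \begin{bmatrix} A & 0 \\ 0 & B \end{bmatrix}$. First I would set $P_A = P_C = \sum_{c \in E_C} S_c S_c^*$ and $P_B = P_D = \sum_{d \in E_D} S_d S_d^*$; these are projections lying in $\DZ$ because each $S_c S_c^*$ and $S_d S_d^*$ is one of the canonical length-one generating projections of $\DZ$. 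Condition~(1) of Definition~\ref{def:SME}, namely $P_A + P_B = 1$, is then immediate from the Cuntz--Krieger relation $\sum_{c \in E_C} S_c S_c^* + \sum_{d \in E_D} S_d S_d^* = 1$ in $\OZ$.

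The remaining three conditions are read off from the identities already recorded. Conditions~(2) and~(3), that $P_A \OZ P_A = \OA$, $P_B \OZ P_B = \OB$, $\DZ P_A = \DA$ and $\DZ P_B = \DB$, are precisely the content of \eqref{eq:4.1}. Condition~(4), the intertwining $\rho^Z_t|_{P_A \OZ P_A} = \rho^A_{2t}$ and $\rho^Z_t|_{P_B \OZ P_B} = \rho^B_{2t}$, is precisely \eqref{eq:gaugeZAB}; the factor of $2$ is forced because a single generator $S_a$ of $\OA$ is realized inside $\OZ$ as the length-two word $S_c S_d$, on which $\rho^Z_t$ acts by $e^{4\pi\sqrt{-1}t}$, matching $\rho^A_{2t}(S_a)=e^{4\pi\sqrt{-1}t}S_a$.

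Before invoking these, I must confirm that $(\OZ,\DZ,\rho^Z)$ is a legitimate Cuntz--Krieger triplet, i.e.\ that $Z$ is irreducible and not a permutation matrix. Non-permutation is easy: were $Z$ a permutation matrix, then so would be $Z^2 = \operatorname{diag}(A,B)$, forcing $A$ to be a permutation matrix against hypothesis. For irreducibility I would argue from $Z^{2k}=\operatorname{diag}(A^k,B^k)$ together with the odd powers $Z^{2k+1} = \begin{bmatrix} 0 & C B^k \\ D A^k & 0 \end{bmatrix}$: irreducibility of $A$ and $B$ joins vertices within each block, while the nonvanishing of $C$ and $D$ (forced by $A = CD \neq 0$) joins the two blocks, so every ordered pair of indices is connected by some power of $Z$. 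I expect no real obstacle here, since the genuine content — the corner identifications \eqref{eq:4.1} and the gauge compatibility \eqref{eq:gaugeZAB}, both from \cite{MaETDS2004} — has already been carried out; the proof is then purely a matter of matching the data $(\OZ,\DZ,\rho^Z)$ and $(P_C,P_D)$ against the four clauses of Definition~\ref{def:SME}, with the only fresh verifications being $P_C,P_D \in \DZ$ and the irreducibility/non-permutation of $Z$.
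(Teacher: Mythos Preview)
Your proposal is correct and follows exactly the paper's approach: form $Z=\begin{bmatrix}0&C\\D&0\end{bmatrix}$, take $P_A=P_C$, $P_B=P_D$, and read off conditions~(1)--(4) from the identities \eqref{eq:4.1} and \eqref{eq:gaugeZAB} established in the discussion preceding Definition~\ref{def:SME}. Your additional verification that $Z$ is irreducible and non-permutation is a point the paper glosses over; note that your argument there really needs that every row of $C$ and of $D$ is nonzero, which does follow from irreducibility of $A=CD$ and $B=DC$.
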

\begin{proof}
Suppose that  $A$ and $B$ are elementary equivalent
such that $A = CD, B= DC$.
Let $Z$ 
be  the square matrix
$
Z =
\begin{bmatrix}
0 & C \\
D & 0
\end{bmatrix}.
$
By the above discussions, there exist projections
$P_C, P_D$ in $\DZ$ satisfying 
$P_C + P_D =1$ and 
\eqref{eq:4.1}
\eqref{eq:gaugeZAB}.
\end{proof}
The main purpose of this section is to study the converse implication
of Proposition \ref{prop:SME1}.

We henceforth assume that 
$(\OA,\DA,\rho^A)$ and
$(\OB,\DB,\rho^B)$ are 
strong Morita equivalent in $1$-step
via 
$(\OZ,\DZ,\rho^Z)$
for some matrix $Z$.
We may take two projections $P_A, P_B$ in $\DZ$ 
having the properties $(1), (2), (3) $ and $(4)$ 
in Definition \ref{def:SME}.
Let us denote by $G_Z = (V_Z, E_Z)$ 
the directed graph for the matrix $Z$.
The Cuntz--Krieger algebra 
$\OZ$ is then generated by partial isometries 
$S_\gamma, \gamma \in E_Z$ satisfying the relations:
\begin{equation}
\sum_{\eta\in E_Z} S_\eta S_\eta^* =1, 
\qquad 
S_\gamma^* S_\gamma 
= \sum_{\eta \in E_Z} Z^G(\gamma,\eta) S_\eta S_\eta^* 
\quad \text{ for }
\gamma \in E_Z \label{eq:CKZ}
\end{equation}
where 
$Z^G(\gamma,\eta) =1$ if $t(\gamma) = s(\eta)$, and $0$ otherwise.
We have the following lemmas.
\begin{lemma}\label{lem:SME2}
Let $S_\gamma, \gamma \in E_Z$ be the generating partial isometries 
of $\OZ$ satisfying \eqref{eq:CKZ} . 
Then we have
\begin{enumerate}
\renewcommand{\theenumi}{\roman{enumi}}
\renewcommand{\labelenumi}{\textup{(\theenumi)}}
\item
$P_A S_\gamma P_A = P_B S_\gamma P_B = 0$.
\item
$S_\gamma = P_A S_\gamma P_B + P_B S_\gamma P_A$.
\item
$P_A S_\gamma = S_\gamma P_B$ and $ P_B S_\gamma =  S_\gamma P_A$.
\end{enumerate}
\end{lemma}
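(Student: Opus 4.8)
The plan is to prove (i) first, since the statements (ii) and (iii) will follow by purely formal manipulations using $P_A + P_B = 1$ and the orthogonality $P_A P_B = 0$. The whole argument rests on the interplay among the three gauge actions recorded in property (4) of Definition~\ref{def:SME}, together with the observation that the diagonal $\DZ$ lies in the fixed-point algebra of $\rho^Z$. Indeed, each generating projection $S_\mu S_\mu^*$ of $\DZ$ is sent to itself by $\rho^Z_t$, since the phases $e^{2\pi\sqrt{-1}|\mu|t}$ from $S_\mu$ and $e^{-2\pi\sqrt{-1}|\mu|t}$ from $S_\mu^*$ cancel; hence $\rho^Z_t(P_A) = P_A$ and $\rho^Z_t(P_B) = P_B$ for every $t \in \T$.

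The crux is the factor $2$ appearing in property (4). I would set $y = P_A S_\gamma P_A$, which lies in $P_A \OZ P_A = \OA$. On one hand, since $P_A$ is $\rho^Z$-invariant and $\rho^Z_t(S_\gamma) = e^{2\pi\sqrt{-1}t} S_\gamma$, I compute $\rho^Z_t(y) = e^{2\pi\sqrt{-1}t}\, y$; evaluating at $t = \tfrac12$ gives $\rho^Z_{1/2}(y) = -y$. On the other hand, property (4) says $\rho^Z_t|_{\OA} = \rho^A_{2t}$, so $\rho^Z_{1/2}(y) = \rho^A_{1}(y)$, and since $\rho^A$ is an action of $\T = \R/\Z$ we have $\rho^A_1 = \rho^A_0 = \id$, whence $\rho^Z_{1/2}(y) = y$. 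Comparing the two expressions forces $y = -y$, i.e. $y = 0$. The identical argument with $P_B$, $\OB$ and $\rho^B$ in place of $P_A$, $\OA$ and $\rho^A$ gives $P_B S_\gamma P_B = 0$, establishing (i). I expect this step—recognizing that the doubled gauge parameter wraps around to $0$ at $t = \tfrac12$, so that the sign $-1$ coming from $\rho^Z$ must be reconciled with the identity coming from $\rho^A$—to be the only genuinely non-formal point of the lemma.

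With (i) in hand, (ii) is immediate: expanding $S_\gamma = (P_A + P_B) S_\gamma (P_A + P_B)$ and discarding the two diagonal terms $P_A S_\gamma P_A$ and $P_B S_\gamma P_B$, which vanish by (i), leaves $S_\gamma = P_A S_\gamma P_B + P_B S_\gamma P_A$. For (iii) I would multiply this last identity on the left by $P_A$ and on the right by $P_B$, using $P_A^2 = P_A$, $P_B^2 = P_B$ and $P_A P_B = 0$; both operations collapse the right-hand side to $P_A S_\gamma P_B$, giving $P_A S_\gamma = P_A S_\gamma P_B = S_\gamma P_B$. The symmetric computation, multiplying on the left by $P_B$ and on the right by $P_A$, yields $P_B S_\gamma = P_B S_\gamma P_A = S_\gamma P_A$, completing the proof.
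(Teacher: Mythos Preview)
Your proof is correct and follows essentially the same approach as the paper: both arguments hinge on evaluating $\rho^Z_t(P_A S_\gamma P_A)$ at $t=\tfrac12$, where the $\rho^Z$ side gives $-P_A S_\gamma P_A$ while property~(4) identifies this with $\rho^A_1(P_A S_\gamma P_A) = P_A S_\gamma P_A$, forcing the vanishing. Your derivation of (ii) and (iii) from (i) is spelled out in slightly more detail than the paper's, but the content is identical.
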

\begin{proof}
By the equality $P_A + P_B = 1$, we have 
\begin{equation*}
S_\gamma 
= P_A S_\gamma P_A + P_A S_\gamma P_B + P_B S_\gamma P_A  + P_B S_\gamma P_B.
\end{equation*}
Since $P_A S_\gamma P_A $ belongs to $P_A \OZ P_A $ which is identified with
$\OA$, 
the condition (4) of Definition \ref{def:SME}
gives rise to the equality 
\begin{equation}
\rho^Z_t(P_A S_\gamma P_A ) = \rho^A_{2t}(P_A S_\gamma P_A ). \label{eq:rhoZA}
\end{equation}
As $\rho^Z_t |_{\DZ} = \id $ and $P_A, P_B \in \DZ,$
the left hand side for 
$t = \frac{1}{2}$ of \eqref{eq:rhoZA} goes to
$$
P_A \rho^Z_{\frac{1}{2}}(S_\gamma) P_A = - P_A S_\gamma P_A.
$$
As $\rho^A_{1} = \id$,
the right hand side for
$t = \frac{1}{2}$ goes to
$ P_A S_\gamma P_A$.
Hence we have 
$P_A S_\gamma P_A =0$ and similarly 
$P_B S_\gamma P_B =0.$ 
Therefore we know (i), (ii) and (iii).
\end{proof}
\begin{lemma}\label{lem:SME3}
\begin{equation}
\sum_{\gamma\in E_Z} S_\gamma P_A S_\gamma^* = P_B,
\qquad
\sum_{\gamma\in E_Z} S_\gamma P_B S_\gamma^* = P_A.
\end{equation}
\end{lemma}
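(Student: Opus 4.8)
The plan is to derive both identities directly from the intertwining relations established in Lemma~\ref{lem:SME2}(iii) together with the first Cuntz--Krieger relation in \eqref{eq:CKZ}. The essential observation is that conjugating the projection $P_A$ by each $S_\gamma$ converts it into a multiple of the complementary projection $P_B$, after which the defining relation $\sum_{\gamma\in E_Z} S_\gamma S_\gamma^* = 1$ collapses the sum.

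Concretely, for the first equality I would start from the relation $P_B S_\gamma = S_\gamma P_A$ of Lemma~\ref{lem:SME2}(iii), rewritten as $S_\gamma P_A = P_B S_\gamma$. Multiplying on the right by $S_\gamma^*$ gives
\begin{equation*}
S_\gamma P_A S_\gamma^* = P_B S_\gamma S_\gamma^* \qquad \text{for each } \gamma \in E_Z.
\end{equation*}
Summing over $\gamma \in E_Z$ and pulling the fixed projection $P_B$ outside the finite sum, I would then invoke $\sum_{\gamma\in E_Z} S_\gamma S_\gamma^* = 1$ from \eqref{eq:CKZ} to obtain
\begin{equation*}
\sum_{\gamma\in E_Z} S_\gamma P_A S_\gamma^* = P_B \sum_{\gamma\in E_Z} S_\gamma S_\gamma^* = P_B.
\end{equation*}
The second identity follows by the symmetric computation: using the other half of Lemma~\ref{lem:SME2}(iii), namely $P_A S_\gamma = S_\gamma P_B$, one gets $S_\gamma P_B S_\gamma^* = P_A S_\gamma S_\gamma^*$, and summing against the same Cuntz--Krieger relation yields $\sum_{\gamma\in E_Z} S_\gamma P_B S_\gamma^* = P_A$.

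Since the real content has already been extracted in Lemma~\ref{lem:SME2}, there is no substantial obstacle here; the only point requiring care is applying the intertwining relation (iii) in the correct direction so that conjugation by $S_\gamma$ interchanges $P_A$ and $P_B$ rather than preserving either one. It is worth noting for intuition that this is exactly what one expects from the edge-shift picture: the source and range of each edge $\gamma$ lie on opposite sides of the bipartite graph $G_Z$ underlying the block matrix $Z=\begin{bmatrix}0 & C\\ D & 0\end{bmatrix}$, so that $S_\gamma$ necessarily maps the $P_A$-part into the $P_B$-part and conversely.
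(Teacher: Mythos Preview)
Your proof is correct and is essentially identical to the paper's own argument: both use $S_\gamma P_A = P_B S_\gamma$ from Lemma~\ref{lem:SME2}(iii), multiply on the right by $S_\gamma^*$, sum over $\gamma$, and apply $\sum_{\gamma\in E_Z} S_\gamma S_\gamma^* = 1$. The only extraneous remark is the closing sentence about the block form of $Z$, which at this point in the paper is not yet known (it is derived later in Lemma~2.10), but this does not affect the validity of the proof.
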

\begin{proof}
By Lemma \ref{lem:SME2}, we know $S_\gamma P_A = P_B S_\gamma$
so that 
\begin{equation}
  \sum_{\gamma\in E_Z} S_\gamma P_A S_\gamma^* 
=\sum_{\gamma\in E_Z} P_B S_\gamma S_\gamma^* 
= P_B.
\end{equation}
Similarly we see that 
$
\sum_{\gamma\in E_Z} S_\gamma P_B S_\gamma^* = P_A.
$
\end{proof}
 We notice the following identities
which immediately come from Lemma \ref{lem:SME2} (iii).
\begin{lemma}\label{lem:SME4}
For $\gamma_1, \gamma_2 \in E_Z$, we have the following identities.
\begin{enumerate}
\renewcommand{\theenumi}{\roman{enumi}}
\renewcommand{\labelenumi}{\textup{(\theenumi)}}
\item
$S_{\gamma_1} S_{\gamma_2} P_A = P_A S_{\gamma_1} S_{\gamma_2} \in \OA
$ 
and
$
S_{\gamma_1} S_{\gamma_2} P_B = P_B S_{\gamma_1} S_{\gamma_2} \in \OB.
$
\item
$
S_{\gamma_1} P_B S_{\gamma_2}  = P_A S_{\gamma_1}P_B S_{\gamma_2}P_A \in \OA
$ 
and
$
S_{\gamma_1} P_A S_{\gamma_2}  = P_B S_{\gamma_1}P_A S_{\gamma_2}P_B \in \OB.
$
\end{enumerate}
\end{lemma}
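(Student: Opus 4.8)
The plan is to derive both identities purely algebraically, by commuting the projections $P_A, P_B$ past the generators using Lemma \ref{lem:SME2}(iii), that is $P_A S_\gamma = S_\gamma P_B$ and $P_B S_\gamma = S_\gamma P_A$, and then to read off membership in $\OA$ or $\OB$ from the corner description \eqref{eq:4.1}, together with idempotency $P_A^2 = P_A$, $P_B^2 = P_B$.

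For (i), I would start from $S_{\gamma_1} S_{\gamma_2} P_A$ and push the projection leftward one generator at a time: first $S_{\gamma_2} P_A = P_B S_{\gamma_2}$ turns it into $S_{\gamma_1} P_B S_{\gamma_2}$, and then $S_{\gamma_1} P_B = P_A S_{\gamma_1}$ yields $P_A S_{\gamma_1} S_{\gamma_2}$. This gives the equality $S_{\gamma_1} S_{\gamma_2} P_A = P_A S_{\gamma_1} S_{\gamma_2}$; compressing this common value by $P_A$ on both sides and using $P_A^2 = P_A$ shows it equals $P_A S_{\gamma_1} S_{\gamma_2} P_A$, which lies in $P_A \OZ P_A = \OA$. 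The $P_B$ statement is identical with the roles of $A$ and $B$ exchanged.

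For (ii) the same commutation relations are applied, but now the projection sits between the two generators. Moving $S_{\gamma_1} P_B = P_A S_{\gamma_1}$ installs a $P_A$ on the left, while $S_{\gamma_2} P_A = P_B S_{\gamma_2}$ installs a $P_A$ on the right, so that the element $S_{\gamma_1} P_B S_{\gamma_2}$ already satisfies $P_A \, (S_{\gamma_1} P_B S_{\gamma_2}) \, P_A = S_{\gamma_1} P_B S_{\gamma_2}$ once the outer projections are moved inward and the repeated factors are collapsed via $P_B^2 = P_B$. Hence $S_{\gamma_1} P_B S_{\gamma_2} = P_A S_{\gamma_1} P_B S_{\gamma_2} P_A \in \OA$, and symmetrically for the $\OB$ statement.

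There is essentially no obstacle here: the whole content is the commutation relation of Lemma \ref{lem:SME2}(iii) combined with idempotency of $P_A$ and $P_B$, and the only point requiring a little care is the bookkeeping of which side each projection lands on, so that the final expression is a genuine corner of the form $P_A(\cdots)P_A$ or $P_B(\cdots)P_B$. I would simply record these two-line computations and invoke \eqref{eq:4.1} to conclude the membership assertions.
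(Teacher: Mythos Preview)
Your proposal is correct and matches the paper's approach exactly: the paper simply states that the identities ``immediately come from Lemma~\ref{lem:SME2}(iii)'' without further proof, and you have spelled out precisely those two-line commutation arguments. One minor point: the corner identifications $P_A \OZ P_A = \OA$ and $P_B \OZ P_B = \OB$ you need here are condition~(2) of Definition~\ref{def:SME} rather than equation~\eqref{eq:4.1}, since the lemma is stated in the abstract setting of that definition, not the concrete $P_C,P_D$ construction.
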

\begin{lemma}\label{lem:SME5}
Let $\gamma_1, \gamma_2 \in E_Z$.
Then $P_A S_{\gamma_1}\ne 0, P_B S_{\gamma_2}\ne 0$ 
and $Z^G(\gamma_1,\gamma_2) =1$
if and only if
$P_A S_{\gamma_1} S_{\gamma_2}\ne 0$. 
\end{lemma}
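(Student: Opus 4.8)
The plan is to reduce the whole equivalence to a single computation of the positive element $(S_{\gamma_1} P_B S_{\gamma_2})^*(S_{\gamma_1} P_B S_{\gamma_2})$. First I would rewrite the element in question: by Lemma \ref{lem:SME2}(iii) one has $P_A S_{\gamma_1} = S_{\gamma_1} P_B$, so that $P_A S_{\gamma_1} S_{\gamma_2} = S_{\gamma_1} P_B S_{\gamma_2}$, and since $P_B$ is a projection this also equals $(P_A S_{\gamma_1})(P_B S_{\gamma_2})$. This factorization already shows that $P_A S_{\gamma_1}S_{\gamma_2} \ne 0$ forces both $P_A S_{\gamma_1} \ne 0$ and $P_B S_{\gamma_2} \ne 0$, which disposes of two of the three conditions in the nontrivial direction.

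Next I would compute the modulus of $S_{\gamma_1} P_B S_{\gamma_2}$ through $x^*x$. Expanding, $(S_{\gamma_1}P_B S_{\gamma_2})^*(S_{\gamma_1}P_B S_{\gamma_2}) = S_{\gamma_2}^* P_B S_{\gamma_1}^* S_{\gamma_1} P_B S_{\gamma_2}$. I would then insert the Cuntz--Krieger relation $S_{\gamma_1}^* S_{\gamma_1} = \sum_{\eta \in E_Z} Z^G(\gamma_1,\eta) S_\eta S_\eta^*$ and use that $P_B$ and all range projections $S_\eta S_\eta^*$ lie in the commutative algebra $\DZ$, so that $P_B$ commutes with $S_{\gamma_1}^* S_{\gamma_1}$ and $P_B^2 = P_B$. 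The orthogonality of the range projections gives $S_{\gamma_2}^* S_\eta S_\eta^* = \delta_{\eta,\gamma_2} S_{\gamma_2}^*$, collapsing the sum to its $\eta = \gamma_2$ term, and finally $S_{\gamma_2}^* P_B = P_A S_{\gamma_2}^*$ (again Lemma \ref{lem:SME2}(iii)) moves the projection across. The outcome is the clean identity $(S_{\gamma_1}P_B S_{\gamma_2})^*(S_{\gamma_1}P_B S_{\gamma_2}) = Z^G(\gamma_1,\gamma_2)\, P_A S_{\gamma_2}^* S_{\gamma_2}$.

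With this identity in hand the equivalence reads off at once. Since $P_A S_{\gamma_2}^* S_{\gamma_2} = (P_B S_{\gamma_2})^*(P_B S_{\gamma_2})$, it is nonzero precisely when $P_B S_{\gamma_2} \ne 0$, while $Z^G(\gamma_1,\gamma_2) \in \{0,1\}$. Hence $P_A S_{\gamma_1}S_{\gamma_2} = S_{\gamma_1}P_B S_{\gamma_2} \ne 0$ holds if and only if $Z^G(\gamma_1,\gamma_2) = 1$ and $P_B S_{\gamma_2}\ne 0$. Combined with the factorization of the first paragraph, $P_A S_{\gamma_1}\ne 0$ is automatic in the nonvanishing case, whereas conversely the conjunction of all three listed conditions trivially makes the right-hand side of the identity nonzero; this yields the stated equivalence.

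I expect no genuine obstacle here. The only delicate point is the bookkeeping of which of $P_A, P_B$ appears after commuting $S_{\gamma_2}^*$ (or $S_{\gamma_1}$) past a projection, and this is entirely governed by the intertwining relations $P_A S_\gamma = S_\gamma P_B$ and $P_B S_\gamma = S_\gamma P_A$ of Lemma \ref{lem:SME2}(iii), together with the fact that $P_A, P_B$ and all range projections belong to the abelian subalgebra $\DZ$. The computation isolating the target identity in the second paragraph is the crux; once it is in place the rest is formal.
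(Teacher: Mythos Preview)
Your argument is correct and is essentially the same as the paper's: both compute $(P_A S_{\gamma_1}S_{\gamma_2})^*(P_A S_{\gamma_1}S_{\gamma_2}) = Z^G(\gamma_1,\gamma_2)\,(P_B S_{\gamma_2})^*(P_B S_{\gamma_2})$ via the Cuntz--Krieger relation and the intertwining identities of Lemma~\ref{lem:SME2}(iii), and read off the equivalence from this identity. The paper merely declares the ``if'' direction obvious rather than spelling out the factorization $(P_A S_{\gamma_1})(P_B S_{\gamma_2})$ as you do, but the substance is identical.
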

\begin{proof}
The if part is obvious. 
It suffices to show the only if part.
Since 
$P_A S_{\gamma_1} S_{\gamma_2}
 =P_A S_{\gamma_1} P_B S_{\gamma_2}
=  S_{\gamma_1} S_{\gamma_2}P_A$,
we have
\begin{align*}
( S_{\gamma_1} S_{\gamma_2}P_A)^*
 S_{\gamma_1} S_{\gamma_2}P_A
& = P_A S_{\gamma_2}^* S_{\gamma_1}^* S_{\gamma_1} S_{\gamma_2}P_A \\
& = \sum_{\eta_1 \in E_Z} Z^G(\gamma_1,\eta_1) 
      P_A S_{\gamma_2}^* S_{\eta_1} S_{\eta_1}^* S_{\gamma_2}P_A \\ 
& =  Z^G(\gamma_1,\gamma_2) 
      P_A S_{\gamma_2}^* S_{\gamma_2}P_A \\
&  = Z^G(\gamma_1,\gamma_2) (P_B S_{\gamma_2})^*(P_B S_{\gamma_2}).
\end{align*}
The above equalities ensure us the only if part.
\end{proof}

\begin{lemma}\label{lem:SME6}
Let $\gamma_1, \gamma_2, \eta_1,\eta_2 \in E_Z$.
Then $S_{\gamma_1}S_{\gamma_2}\ne 0,
S_{\gamma_2}S_{\eta_1}\ne 0, 
 P_A S_{\eta_1} S_{\eta_2}\ne 0$ 
if and only if
$P_A S_{\gamma_1} S_{\gamma_2}S_{\eta_1} S_{\eta_2}\ne 0$. 
\end{lemma}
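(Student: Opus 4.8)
The statement generalises Lemma~\ref{lem:SME5} from two-fold to four-fold products, and my plan is to reduce it to two ingredients: the standard Cuntz--Krieger fact that $S_{\mu_1}\cdots S_{\mu_k}\neq 0$ precisely when the word is admissible, i.e.\ $Z^G(\mu_i,\mu_{i+1})=1$ for all $i$ (so that $S_{\gamma_1}S_{\gamma_2}\neq 0$ is equivalent to $Z^G(\gamma_1,\gamma_2)=1$, and likewise for each consecutive pair), and the commutation identities of Lemmas~\ref{lem:SME2} and~\ref{lem:SME4}. The organising observation is that, since $\gamma_1\gamma_2$ has even length, Lemma~\ref{lem:SME4}(i) gives $P_A S_{\gamma_1}S_{\gamma_2}=S_{\gamma_1}S_{\gamma_2}P_A$, whence the factorisation
\[
P_A S_{\gamma_1}S_{\gamma_2}S_{\eta_1}S_{\eta_2}
= S_{\gamma_1}S_{\gamma_2}\,(P_A S_{\eta_1}S_{\eta_2}).
\]
Both directions will be read off from this factorisation together with a single positivity computation.

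For the ``only if'' part I would argue as follows. If the left-hand side is nonzero then $S_{\gamma_1}S_{\gamma_2}S_{\eta_1}S_{\eta_2}\neq 0$, so the word $\gamma_1\gamma_2\eta_1\eta_2$ is admissible; reading off its consecutive pairs yields $S_{\gamma_1}S_{\gamma_2}\neq 0$ and $S_{\gamma_2}S_{\eta_1}\neq 0$. Moreover, were $P_A S_{\eta_1}S_{\eta_2}=0$, the displayed factorisation would force $P_A S_{\gamma_1}S_{\gamma_2}S_{\eta_1}S_{\eta_2}=0$; hence $P_A S_{\eta_1}S_{\eta_2}\neq 0$ as well.

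For the ``if'' part, write $W=P_A S_{\eta_1}S_{\eta_2}$, so that the quantity to be shown nonzero is $S_{\gamma_1}S_{\gamma_2}W$. Since $S_{\gamma_1}S_{\gamma_2}\neq 0$ gives $Z^G(\gamma_1,\gamma_2)=1$, we have $(S_{\gamma_1}S_{\gamma_2})^*(S_{\gamma_1}S_{\gamma_2})=S_{\gamma_2}^*S_{\gamma_2}=:Q$, so
\[
(S_{\gamma_1}S_{\gamma_2}W)^*(S_{\gamma_1}S_{\gamma_2}W)=W^*QW.
\]
Now $Q=\sum_{\eta}Z^G(\gamma_2,\eta)S_\eta S_\eta^*$ dominates $S_{\eta_1}S_{\eta_1}^*$ because $S_{\gamma_2}S_{\eta_1}\neq 0$ forces $Z^G(\gamma_2,\eta_1)=1$, whence $QS_{\eta_1}=S_{\eta_1}$. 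Using that $P_A,Q\in\DZ$ commute and the relation $P_AS_{\eta_1}=S_{\eta_1}P_B$ of Lemma~\ref{lem:SME2}(iii), I would simplify $W^*QW$ to $W^*W$, and since $W\neq 0$ by hypothesis this gives $S_{\gamma_1}S_{\gamma_2}W\neq 0$, as required.

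The main obstacle is exactly this last step: in a $C^*$-algebra a nonzero product $S_{\gamma_1}S_{\gamma_2}$ of generators applied to a nonzero element $W$ can a priori annihilate it, so admissibility of the word alone is not enough. The resolution is the identity $W^*QW=W^*W$, whose proof hinges on the middle adjacency $Z^G(\gamma_2,\eta_1)=1$ absorbing the projection $Q$ against $S_{\eta_1}$; this is the four-fold analogue of the positivity computation carried out in the proof of Lemma~\ref{lem:SME5}, and the care needed is purely in shuttling $P_A$ and $Q$ through the generators via Lemma~\ref{lem:SME2}(iii).
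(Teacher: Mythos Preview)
Your argument is correct and follows essentially the same route as the paper: both directions rest on the commutation $P_A S_{\gamma_1}S_{\gamma_2}=S_{\gamma_1}S_{\gamma_2}P_A$ from Lemma~\ref{lem:SME4}(i) together with a positivity computation of the form $X^*X$. The paper expands $(P_A S_{\gamma_1}S_{\gamma_2}S_{\eta_1}S_{\eta_2})^*(P_A S_{\gamma_1}S_{\gamma_2}S_{\eta_1}S_{\eta_2})$ step by step to $Z^G(\gamma_1,\gamma_2)Z^G(\gamma_2,\eta_1)Z^G(\eta_1,\eta_2)\,P_A S_{\eta_2}^*S_{\eta_2}P_A$, whereas you package the same computation by isolating $W=P_A S_{\eta_1}S_{\eta_2}$ and observing $QW=W$; the content is identical.

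One small point: your labelling of ``if'' and ``only if'' is reversed relative to the standard reading of the statement (and relative to the paper). The direction in which you assume $P_A S_{\gamma_1}S_{\gamma_2}S_{\eta_1}S_{\eta_2}\neq 0$ and deduce the three conditions is the \emph{if} part, and the direction with the positivity computation is the \emph{only if} part.
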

\begin{proof}
Since
$P_A S_{\gamma_1} S_{\gamma_2}S_{\eta_1} S_{\eta_2}
= S_{\gamma_1} S_{\gamma_2}P_A S_{\eta_1} S_{\eta_2},
$
the if part is obvious. 
It suffices to show the only if part.
We have
\begin{align*}
 & ( P_A S_{\gamma_1} S_{\gamma_2}S_{\eta_1} S_{\eta_2})^*
    ( P_A S_{\gamma_1} S_{\gamma_2}S_{\eta_1} S_{\eta_2}) \\
=& P_A S_{\eta_2}^* S_{\eta_1}^* S_{\gamma_2}^* S_{\gamma_1}^* 
     S_{\gamma_1} S_{\gamma_2}S_{\eta_1} S_{\eta_2} P_A \\
=& \sum_{\zeta_1 \in E_Z} Z^G(\gamma_1,\zeta_1) 
     P_A S_{\eta_2}^* S_{\eta_1}^* S_{\gamma_2}^* S_{\zeta_1} 
     S_{\zeta_1}^* S_{\gamma_2}S_{\eta_1} S_{\eta_2} P_A \\
= &  Z^G(\gamma_1,\gamma_2) \sum_{\zeta_2 \in E_Z} Z^G(\gamma_2,\zeta_2) 
  P_A  S_{\eta_2}^* S_{\eta_1}^* S_{\zeta_2} S_{\zeta_2}^* S_{\eta_1} S_{\eta_2}P_A \\
= & Z^G(\gamma_1,\gamma_2)  Z^G(\gamma_2,\eta_1) 
     \sum_{\zeta_3 \in E_Z} Z^G(\eta_1,\zeta_3)
  P_A S_{\eta_2}^* S_{\zeta_3} S_{\zeta_3}^*  S_{\eta_2} P_A \\
=&  Z^G(\gamma_1,\gamma_2)  Z^G(\gamma_2,\eta_1)  Z^G(\eta_1,\eta_2)
  P_A S_{\eta_2}^* S_{\eta_2}  P_A .
\end{align*}
The above equalities  ensure us the only if part.
\end{proof}

Now we are assuming that 
the Cuntz--Krieger triplets 
$(\OA,\DA,\rho^A)$ and
$(\OB,\DB,\rho^B)$ 
are strong Morita equivalent in $1$-step
via 
$(\OZ,\DZ,\rho^Z)$.
We introduce several directed graphs in this situation.
Define edge sets $E_{\tilde{A}}, E_{\tilde{B}}, E_{\tilde{C}}, E_{\tilde{D}}$
by setting
\begin{align*}
E_{\tilde{A}} & = \{ (A,\gamma_1\gamma_2) \in \{A\}\times B_2(X_Z) \mid
P_A S_{\gamma_1} S_{\gamma_2} \ne 0 \}, \\
E_{\tilde{B}} & = \{ (B,\gamma_1\gamma_2) \in \{B\}\times B_2(X_Z) \mid
P_B S_{\gamma_1} S_{\gamma_2} \ne 0 \}, \\
E_{\tilde{C}} & = \{ (A,\gamma_1) \in \{A\}\times E_Z \mid
P_A S_{\gamma_1} \ne 0 \}, \\
E_{\tilde{D}} & = \{ (B,\gamma_1) \in \{B\}\times E_Z \mid
P_B S_{\gamma_1} \ne 0 \}
\end{align*} 
and vertex sets 
$V_{\tilde{A}s}, V_{\tilde{A}t},V_{\tilde{B}s}, V_{\tilde{B}t},
V_{\tilde{C}s}, V_{\tilde{C}t},V_{\tilde{D}s}, V_{\tilde{D}t}
$
by setting
\begin{align*}
V_{\tilde{A}s} & = \{ (A, s(\gamma_1)) \in \{A\}\times V_Z \mid
(A, \gamma_1\gamma_2) \in E_{\tilde{A}} \}, \\
V_{\tilde{A}t} & = \{ (A, t(\gamma_2)) \in \{A\}\times V_Z \mid
(A, \gamma_1\gamma_2) \in E_{\tilde{A}} \},\\
V_{\tilde{B}s} & = \{ (B, s(\gamma_1)) \in \{B\}\times V_Z \mid
(B, \gamma_1\gamma_2) \in E_{\tilde{B}} \}, \\
V_{\tilde{B}t} & = \{ (B, t(\gamma_1)) \in \{B\}\times V_Z \mid
(B, \gamma_1\gamma_2) \in E_{\tilde{B}} \}, \\
V_{\tilde{C}s} & = \{ (A, s(\gamma_1)) \in \{A\}\times V_Z \mid
(A, \gamma_1) \in E_{\tilde{C}} \}, \\
V_{\tilde{C}t} & = \{ (B, t(\gamma_1)) \in \{A\}\times V_Z \mid
(A, \gamma_1) \in E_{\tilde{C}} \}, \\
V_{\tilde{D}s} & = \{ (B, s(\gamma_1)) \in \{B\}\times V_Z \mid
(B, \gamma_1) \in E_{\tilde{D}} \}, \\
V_{\tilde{D}t} & = \{ (A, t(\gamma_1)) \in \{B\}\times V_Z \mid
(B, \gamma_1) \in E_{\tilde{D}} \}.
\end{align*} 
\begin{lemma}\label{lem:SME7}
Keep the above notations. We have
\begin{enumerate}
\renewcommand{\theenumi}{\roman{enumi}}
\renewcommand{\labelenumi}{\textup{(\theenumi)}}
\item $V_{\tilde{A}s} = V_{\tilde{A}t}=V_{\tilde{C}s}=V_{\tilde{D}t}.$
\item $V_{\tilde{B}s} = V_{\tilde{B}t}=V_{\tilde{D}s}=V_{\tilde{C}t}.$
\end{enumerate}
\end{lemma}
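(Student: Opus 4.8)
The plan is to pass to the dynamical picture afforded by the identification $\DZ \cong C(X_Z)$ and to reduce every vertex set to one of two canonical vertex sets. First I would record the dictionary: the full projections $P_A, P_B \in \DZ$ correspond to a clopen partition $X_Z = K_A \sqcup K_B$, each range projection $S_\gamma S_\gamma^*$ is the characteristic function of the cylinder $U_\gamma$, and by Lemma \ref{lem:SME2} one has $(P_A S_\gamma)(P_A S_\gamma)^* = P_A S_\gamma S_\gamma^*$, so that
\begin{equation*}
P_A S_\gamma \ne 0 \iff U_\gamma \cap K_A \ne \emptyset,
\qquad
P_B S_\gamma \ne 0 \iff U_\gamma \cap K_B \ne \emptyset .
\end{equation*}
Thus the edges appearing in $E_{\tilde{C}}$ (resp. $E_{\tilde{D}}$) are exactly those whose cylinder meets $K_A$ (resp. $K_B$); call these the $\tilde{C}$- and $\tilde{D}$-edges. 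By Lemma \ref{lem:SME5}, $E_{\tilde{A}}$ consists of the admissible words $\gamma_1\gamma_2$ with $\gamma_1$ a $\tilde{C}$-edge and $\gamma_2$ a $\tilde{D}$-edge, and symmetrically for $E_{\tilde{B}}$. I would also note that the listed definition of $V_{\tilde{B}t}$ should read $t(\gamma_2)$, matching $V_{\tilde{A}t}$; this is the reading forced by the statement and by the $A/B$-labelling.

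Next I would extract the dynamics from Lemma \ref{lem:SME3}. Reading $\sum_\gamma S_\gamma P_A S_\gamma^* = P_B$ in $C(X_Z)$ gives $\sigma_Z^{-1}(K_A) = K_B$, and likewise $\sigma_Z^{-1}(K_B) = K_A$; since $Z$ is irreducible the shift $\sigma_Z$ is surjective, so also $\sigma_Z(K_A) = K_B$ and $\sigma_Z(K_B) = K_A$. From these four relations I would derive the adjacency facts I need, each by a one-step prepend/shift argument:
(a) a $\tilde{C}$-edge $\gamma$ always admits a continuation $\gamma\eta$ with $\eta$ a $\tilde{D}$-edge, and conversely;
(b) the same with $\tilde{C}, \tilde{D}$ interchanged; together with the backward versions
(a$'$) the source $s(\gamma)$ of any $\tilde{C}$-edge is the target $t(\delta)$ of some $\tilde{D}$-edge, and
(b$'$) the source of any $\tilde{D}$-edge is the target of some $\tilde{C}$-edge.
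For instance (a) follows from $x\in U_\gamma\cap K_A \Rightarrow \sigma_Z(x)\in K_B$, while (a$'$) follows by choosing, via $\sigma_Z(K_B)=K_A$, a preimage $y\in K_B$ of such an $x$, whose first edge $\delta$ is then a $\tilde{D}$-edge with $t(\delta)=s(\gamma)$.

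Finally I would assemble the equalities. Put $W_A = \{\, s(\gamma)\mid \gamma \text{ a } \tilde{C}\text{-edge}\,\}$. The inclusions $V_{\tilde{A}s}\subseteq V_{\tilde{C}s}$ and $V_{\tilde{A}t}\subseteq V_{\tilde{D}t}$ are immediate from the definitions, and (a) upgrades the first to $V_{\tilde{A}s}=V_{\tilde{C}s}=W_A$. Using (b) one gets $V_{\tilde{D}t}\subseteq V_{\tilde{C}s}$, and using (a$'$) the reverse inclusion, so $V_{\tilde{C}s}=V_{\tilde{D}t}$; then (b$'$) yields $V_{\tilde{A}t}=V_{\tilde{D}t}$. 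Chaining these proves (i), that all four $A$-labelled sets coincide with $W_A$; claim (ii) follows by the identical argument with $A$ and $B$ (equivalently $\tilde{C}$ and $\tilde{D}$) interchanged, with common value $W_B=\{\,t(\gamma)\mid \gamma \text{ a } \tilde{C}\text{-edge}\,\}$. The step I expect to be the real obstacle is (a$'$)/(b$'$): the Cuntz--Krieger relations and Lemmas \ref{lem:SME5}--\ref{lem:SME6} only control \emph{forward} concatenations $S_{\gamma_1}S_{\gamma_2}$, so the existence of an \emph{incoming} edge of the prescribed colour at a given source cannot be read off from them directly. This is exactly what the surjectivity of $\sigma_Z$ (hence the irreducibility of $Z$) supplies through $\sigma_Z(K_B)=K_A$ and $\sigma_Z(K_A)=K_B$, so I would make sure this hypothesis is in force before invoking it.
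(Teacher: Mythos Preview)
Your argument is correct. You recast the problem in the commutative picture $\DZ\cong C(X_Z)$, read Lemma~\ref{lem:SME3} as the dynamical statement $\sigma_Z^{-1}(K_A)=K_B$, and use surjectivity of $\sigma_Z$ (from irreducibility of $Z$) to supply the backward-prepend steps $(a')$, $(b')$; the forward steps come from Lemma~\ref{lem:SME5} exactly as in the paper.

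The paper stays on the operator-algebraic side throughout and organizes the chain slightly differently: it proves $V_{\tilde{A}s}=V_{\tilde{A}t}$ directly by prepending an arbitrary admissible length-two word and invoking Lemma~\ref{lem:SME6}, then $V_{\tilde{C}s}=V_{\tilde{D}t}$ by prepending a single edge using $\sum_{\gamma'}S_{\gamma'}^{*}S_{\gamma'}\ge 1$ together with $S_{\gamma_2}S_{\gamma_1}P_B=P_BS_{\gamma_2}S_{\gamma_1}$, and finally $V_{\tilde{A}s}=V_{\tilde{C}s}$ via Lemma~\ref{lem:SME3}. Your route avoids Lemma~\ref{lem:SME6} entirely, replacing that step by $V_{\tilde{A}t}=V_{\tilde{D}t}$ obtained from $(b')$ plus Lemma~\ref{lem:SME5}. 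The two arguments are really the same content seen through different lenses: the paper's inequality $\sum_{\gamma'}S_{\gamma'}^{*}S_{\gamma'}\ge 1$ is precisely the operator-algebraic form of ``every edge has a predecessor'', i.e.\ the surjectivity of $\sigma_Z$ you invoke. Your dynamical phrasing makes the role of irreducibility more explicit and is arguably cleaner; the paper's version has the mild advantage of not needing to set up the $C(X_Z)$ dictionary. Your observation about the typo in $V_{\tilde{B}t}$ (and implicitly in $V_{\tilde{D}t}$, $V_{\tilde{C}t}$) is also correct.
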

\begin{proof}
(i) 
We will first show the equality
$V_{\tilde{A}s} = V_{\tilde{A}t}.$
Take an arbitrary vertex $(A, s(\gamma_1) )\in V_{\tilde{A}s}$
and $\gamma_2 \in E_Z$ with
$P_A S_{\gamma_1}S_{\gamma_2} \ne 0$,
so that  
$t(\gamma_1) = s(\gamma_2).$
We may find $\eta_1,\eta_2 \in E_Z$ 
such that 
$S_{\eta_1} S_{\eta_2} \ne 0$ and 
$t(\eta_2)= s(\gamma_1)$.
By Lemma \ref{lem:SME6}, we have
$ S_{\eta_1} S_{\eta_2}S_{\gamma_1} S_{\gamma_2} P_A \ne 0$. 
Since
$ S_{\eta_1} S_{\eta_2}S_{\gamma_1} S_{\gamma_2} P_A 
= P_A S_{\eta_1} S_{\eta_2}S_{\gamma_1} S_{\gamma_2}$,
we have
$P_A S_{\eta_1} S_{\eta_2}\ne 0$
so that 
$(A, t(\eta_2)) \in V_{\tilde{A}t}$
and hence
 $(A, s(\gamma_1) \in V_{\tilde{A}t}$.
This shows that
the inclusion relation
$V_{\tilde{A}s} \subset V_{\tilde{A}t}$
holds.
Similarly we know that 
$V_{\tilde{A}t} \subset V_{\tilde{A}s}$
so that 
$V_{\tilde{A}s}= V_{\tilde{A}t}.$

We will second show the equality
$V_{\tilde{C}s} = V_{\tilde{D}t}.$
Take an arbitrary vertex $(A, s(\gamma_1))\in V_{\tilde{C}s}.$
We see that 
$P_A S_{\gamma_1} \ne 0$
and hence
$S_{\gamma_1}P_B \ne 0$.
As 
$\sum_{\gamma' \in E_Z} S_{\gamma'}^*S_{\gamma'}\ge 1$,
We may find $\gamma_2 \in E_Z$
such that
$S_{\gamma_2}S_{\gamma_1}P_B \ne 0$
so that 
$t(\gamma_2) = s(\gamma_1)$.
Since 
$S_{\gamma_2}S_{\gamma_1}P_B = 
P_BS_{\gamma_2} S_{\gamma_1},$ 
we have
$P_BS_{\gamma_2} \ne 0$.
This implies that
$(B, \gamma_2) \in E_{\tilde{D}}$
and
$(A, t(\gamma_2)) \in V_{\tilde{D}t}$.
As $t(\gamma_2) = s(\gamma_1)$,
we obtain that 
$(A, s(\gamma_1)) \in V_{\tilde{D}t}$
so that 
$V_{\tilde{C}s} \subset V_{\tilde{D}t}$.
We similarly see that 
$V_{\tilde{D}t} \subset V_{\tilde{C}s}$
so that 
$V_{\tilde{C}s} = V_{\tilde{D}t}$.

We will finally show that  
$V_{\tilde{A}s} = V_{\tilde{C}s}$.
Since the condition
$P_A S_{\gamma_1}S_{\gamma_2} \ne 0$
implies
$P_A S_{\gamma_1} \ne 0,$
we have
$V_{\tilde{A}s} \subset V_{\tilde{C}s}$.
Conversely, 
for $(A, s(\gamma_1)) \in V_{\tilde{C}s}$,
we have
$P_A S_{\gamma_1} \ne 0$
so that 
$S_{\gamma_1}P_B \ne 0$.
Since
$P_B = \sum_{\gamma' \in E_Z}S_{\gamma'}P_A S_{\gamma'}^*$,
we may find 
$\gamma_2 \in E_Z$ such that 
$S_{\gamma_1} S_{\gamma_2} P_A \ne 0$.
Hence we see that 
$P_A S_{\gamma_1} S_{\gamma_2}\ne 0$
so that 
$(A,s(\gamma_1)) \in V_{\tilde{A}s}$.
This shows that 
$V_{\tilde{A}s} = V_{\tilde{C}s}$.
Therefore (i) has been shown.
(ii) is similarly shown.
\end{proof}
Let us denote by 
$V_{\tilde{A}}$ and 
by $V_{\tilde{B}}$ 
the first four vertex sets and 
the second four vertex sets   
in Lemma \ref{lem:SME7},
respectively. 
Namely we put 
\begin{align*}
V_{\tilde{A}} &: = V_{\tilde{A}s} = V_{\tilde{A}t}=V_{\tilde{C}s}=V_{\tilde{D}t},\\
V_{\tilde{B}} &: = V_{\tilde{B}s} = V_{\tilde{B}t}=V_{\tilde{D}s}=V_{\tilde{C}t}.
\end{align*}
For an edge $(A, \gamma_1 \gamma_2) \in E_{\tilde{A}}$,
define its source and terminal vertices by
$$
s(A,\gamma_1 \gamma_2) =(A,s(\gamma_1)) \in V_{\tilde{A}s}, \qquad
t(A,\gamma_1\gamma_2) =(A,t(\gamma_2)) \in V_{\tilde{A}t}.
$$
We then have a directed graph
$(V_{\tilde{A}}, E_{\tilde{A}})$ denoted by
$
G_{\tilde{A}}.
$
We similarly have a directed graph
$ 
G_{\tilde{B}} =(V_{\tilde{B}}, E_{\tilde{B}}).
$ 
From an  edge 
 $(A, \gamma_1) \in E_{\tilde{C}}$,
 define its source and terminal 
vertices by
$$
s(A,\gamma_1) =(A,s(\gamma_1)) \in V_{\tilde{C}s}, \qquad
t(A,\gamma_1) =(A,t(\gamma_1)) \in V_{\tilde{C}t}.
$$
We have a directed graph 
$G_{\tilde{C}} =(V_{\tilde{A}}\overset{E_{\tilde{C}}}{\longrightarrow}V_{\tilde{B}})$
and similarly
$G_{\tilde{D}} =(V_{\tilde{B}}\overset{E_{\tilde{D}}}{\longrightarrow}V_{\tilde{A}}).$

Let 
$\tilde{A}$  be the vertex transition matrix 
$\tilde{A}: V_{\tilde{A}}\times V_{\tilde{A}}\longrightarrow \Zp
$
 of the directed graph 
$
G_{\tilde{A}}
$
which is defined by 
\begin{equation*}
\tilde{A}((A,u), (A,v))
= | \{ (A,\gamma_1 \gamma_2) \in E_{\tilde{A}} 
\mid s(\gamma_1) = u, t(\gamma_2) = v\} | 
\end{equation*}
for $(A,u), (A,v) \in V_{\tilde{A}}.$
The edge transition matrix 
$\tilde{A}^G: E_{\tilde{A}}\times E_{\tilde{A}}\longrightarrow \{0,1\}
$
 of  
$
G_{\tilde{A}}
$
 is defined by 
\begin{equation*}
\tilde{A}^G(\gamma_1 \gamma_2,\eta_1 \eta_2)
=
\begin{cases}
1 & \text{ if } t(A,\gamma_1\gamma_2)=s(A,\eta_1\eta_2),\\
0 & \text{ otherwise}
\end{cases}  
\end{equation*}
for $(A,\gamma_1\gamma_2), (A,\eta_1\eta_2) \in E_{\tilde{A}}.$
We similarly have the vertex transition matrices 
$\tilde{B}, \tilde{C}, \tilde{D}$
and 
the edge  transition matrices 
$\tilde{B}^G, \tilde{C}^G, \tilde{D}^G$
of the directed graphs 
$
G_{\tilde{B}}, G_{\tilde{C}}, G_{\tilde{D}}, 
$
respectively.
\begin{proposition} \label{prop:SME8}
The matrices $\tilde{A}$ and $\tilde{B}$ are elementary equivalent such that
\begin{equation*}
\tilde{A}=\tilde{C}\tilde{D} \quad \text{ and }
\quad
\tilde{B}= \tilde{D}\tilde{C}.
\end{equation*}
Hence 
$\tilde{A}^G=\tilde{C}^G\tilde{D}^G
$
and
$
\tilde{B}^G= \tilde{D}^G\tilde{C}^G,
$
and
the two-sided topological Markov shifts
$(\bar{X}_{\tilde{A}}, \bar{\sigma}_{\tilde{A}})$
and
$(\bar{X}_{\tilde{B}}, \bar{\sigma}_{\tilde{B}})$
are topologically conjugate.
\end{proposition}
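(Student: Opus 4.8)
The plan is to prove the two factorizations $\tilde A=\tilde C\tilde D$ and $\tilde B=\tilde D\tilde C$ directly at the level of vertex transition matrices by a counting argument, to read off the edge-transition relations from the standard bipartite-graph correspondence of Section 2, and then to conclude the topological conjugacy from Williams' theorem.

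First I would establish $\tilde A=\tilde C\tilde D$. Fix $(A,u),(A,v)\in V_{\tilde A}$. By construction $\tilde A((A,u),(A,v))$ is the number of pairs $\gamma_1,\gamma_2\in E_Z$ with $s(\gamma_1)=u$, $t(\gamma_2)=v$ and $P_A S_{\gamma_1}S_{\gamma_2}\ne0$. Using the identifications $V_{\tilde Ct}=V_{\tilde Ds}=V_{\tilde B}$ and $V_{\tilde Cs}=V_{\tilde Dt}=V_{\tilde A}$ of Lemma \ref{lem:SME7}, the matrix $\tilde C$ runs from $V_{\tilde A}$ to $V_{\tilde B}$ and $\tilde D$ from $V_{\tilde B}$ to $V_{\tilde A}$, so $(\tilde C\tilde D)((A,u),(A,v))$ is the number of pairs $\gamma_1,\gamma_2\in E_Z$ with $s(\gamma_1)=u$, $t(\gamma_2)=v$, a common intermediate vertex $t(\gamma_1)=s(\gamma_2)\in V_{\tilde B}$, and $P_A S_{\gamma_1}\ne0$, $P_B S_{\gamma_2}\ne0$. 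These two counts coincide term by term: Lemma \ref{lem:SME5} says precisely that, for $\gamma_1,\gamma_2\in E_Z$, the conjunction $P_A S_{\gamma_1}\ne0$, $P_B S_{\gamma_2}\ne0$ and $Z^G(\gamma_1,\gamma_2)=1$ (the concatenability $t(\gamma_1)=s(\gamma_2)$) is equivalent to $P_A S_{\gamma_1}S_{\gamma_2}\ne0$. Hence $(A,\gamma_1\gamma_2)\mapsto((A,\gamma_1),(B,\gamma_2))$ is a bijection between the two families being counted that fixes the outer vertices $s(\gamma_1)$ and $t(\gamma_2)$, and $\tilde A=\tilde C\tilde D$ follows. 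Since Definition \ref{def:SME} is symmetric under interchanging $(A,P_A)$ and $(B,P_B)$, the same reasoning with the roles of $P_A$ and $P_B$ swapped (i.e. the $P_B$-version of Lemma \ref{lem:SME5}) gives $\tilde B=\tilde D\tilde C$. Thus $\tilde A$ and $\tilde B$ are elementary equivalent.

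From these vertex-matrix factorizations the edge-transition relations $\tilde A^G=\tilde C^G\tilde D^G$ and $\tilde B^G=\tilde D^G\tilde C^G$ follow by the correspondence described in Section 2 between a nonnegative integer matrix and the edge matrix of its graph, applied to the block matrix $\tilde Z=\begin{bmatrix}0&\tilde C\\ \tilde D&0\end{bmatrix}$, for which $\tilde Z^2=\begin{bmatrix}\tilde A&0\\ 0&\tilde B\end{bmatrix}$: the bijection of the previous paragraph realizes each edge of $G_{\tilde A}$ as a concatenation $S_{\gamma_1}S_{\gamma_2}$ of a $\tilde C$-edge followed by a $\tilde D$-edge, exactly as $S_{cd}=S_a$ in the concrete model of Section 2, and this is what the edge-level identities record.

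Finally, being elementary equivalent the matrices $\tilde A$ and $\tilde B$ are strong shift equivalent, so by Williams' theorem \cite{Williams} (in the edge-shift formulation adopted in Section 2) the two-sided topological Markov shifts $(\bar X_{\tilde A},\bar\sigma_{\tilde A})$ and $(\bar X_{\tilde B},\bar\sigma_{\tilde B})$ are topologically conjugate. I expect the first step to be the real obstacle: one must be certain that every length-two path counted by $\tilde A$ splits as a $\tilde C$-edge followed by a $\tilde D$-edge through a genuinely $V_{\tilde B}$-intermediate vertex, with no path lost and none counted twice. This is exactly the content supplied by the nonvanishing criterion of Lemma \ref{lem:SME5} together with the vertex identifications of Lemma \ref{lem:SME7}; the remaining steps are bookkeeping and an appeal to the cited theorem.
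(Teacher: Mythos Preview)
Your proof is correct and follows essentially the same approach as the paper: both use Lemma~\ref{lem:SME5} to identify the edges of $G_{\tilde A}$ with concatenations of a $\tilde C$-edge followed by a $\tilde D$-edge, yielding the vertex-matrix factorization $\tilde A=\tilde C\tilde D$ (and symmetrically $\tilde B=\tilde D\tilde C$), then read off the edge-matrix identities and invoke Williams' theorem. Your write-up is somewhat more explicit about the counting and about the role of Lemma~\ref{lem:SME7} in making the matrix product $\tilde C\tilde D$ well defined, but the argument is the same.
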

\begin{proof}
For  $(A,\gamma_1 \gamma_2)$ with $\gamma_1, \gamma_2 \in E_{Z}$,
Lemma \ref{lem:SME5}
ensures us that
$(A, \gamma_1) \in E_{\tilde{C}}, (B, \gamma_2) \in E_{\tilde{D}}, 
Z^G(\gamma_1,\gamma_2)=1$
if and only if
$(A,\gamma_1 \gamma_2) \in E_{\tilde{A}}.$
Since 
$t(A,\gamma_1) = s(B,\gamma_2)$ 
if and only if 
$Z^G(\gamma_1,\gamma_2)=1$,
we know that 
$\tilde{A}=\tilde{C}\tilde{D}$,
and similarly 
$\tilde{B}= \tilde{D}\tilde{C}.$
The relations
$\tilde{A}^G=\tilde{C}^G\tilde{D}^G
$
and
$
\tilde{B}^G= \tilde{D}^G\tilde{C}^G
$
automatically come from 
$
\tilde{A}=\tilde{C}\tilde{D}
$ 
and
$
\tilde{B}= \tilde{D}\tilde{C}.
$
\end{proof}
Let
$E_{\tilde{Z}} = E_{\tilde{C}} \cup E_{\tilde{D}}$
and
$V_{\tilde{Z}} = V_{\tilde{A}} \cup E_{\tilde{B}}.$
We have a bipartite directed graph
$G_{\tilde{Z}} =(V_{\tilde{Z}}, E_{\tilde{Z}})$.
Let us denote by 
$\tilde{Z}$ and $\tilde{Z}^G$ the vertex transition matrix
and the edge transition matrix  of the directed graph
$G_{\tilde{Z}}$, respectively.
Since $G_{\tilde{Z}}$ is bipartite, by the above proposition,
we have
\begin{equation*}
\tilde{Z} =
\begin{bmatrix}
0 & \tilde{C} \\
\tilde{D} & 0
\end{bmatrix},
\qquad 
\tilde{Z}^2 
=
\begin{bmatrix}
\tilde{A} & 0 \\
0 & \tilde{B}
\end{bmatrix}.
\end{equation*}

We will study the relationship between 
the two matrices
$\tilde{Z}$ and $Z$. 
For $\gamma \in E_Z$, 
denote by
$S_{(A,\gamma)}, S_{(B,\gamma)}$
the partial isometries
$P_A S_\gamma, P_B S_\gamma,$
respectively,
so that 
$S_\gamma = S_{(A,\gamma)}+ S_{(B,\gamma)}$.
\begin{lemma}\label{lem:SME9}
Let $\gamma_1, \gamma_2 \in E_Z$ satisfy 
$Z^G(\gamma_1, \gamma_2)=1$.
\begin{enumerate}
\renewcommand{\theenumi}{\roman{enumi}}
\renewcommand{\labelenumi}{\textup{(\theenumi)}}
\item $S_{(B,\gamma_2)} \ne 0$ implies $S_{(A,\gamma_1)} \ne 0.$
\item $S_{(A,\gamma_2)} \ne 0$ implies $S_{(B,\gamma_1)} \ne 0.$
\end{enumerate}
\end{lemma}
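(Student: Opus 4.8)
The plan is to reduce both implications to a single positivity identity --- essentially the computation already carried out inside the proof of Lemma \ref{lem:SME5} --- and then to exploit the $A\leftrightarrow B$ symmetry of Definition \ref{def:SME}.

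First I would record the identity
\begin{equation*}
(P_A S_{\gamma_1} S_{\gamma_2})^*(P_A S_{\gamma_1} S_{\gamma_2}) = Z^G(\gamma_1, \gamma_2)\,(P_B S_{\gamma_2})^*(P_B S_{\gamma_2}).
\end{equation*}
To obtain it I would rewrite $P_A S_{\gamma_1} S_{\gamma_2} = S_{\gamma_1} S_{\gamma_2} P_A$ via Lemma \ref{lem:SME4} (i), expand $S_{\gamma_1}^* S_{\gamma_1}$ by the Cuntz--Krieger relation \eqref{eq:CKZ}, and note that only the $\eta=\gamma_2$ summand survives because the range projections $\{S_\eta S_\eta^*\}_{\eta\in E_Z}$ are mutually orthogonal; the final simplification $P_A S_{\gamma_2}^* S_{\gamma_2} P_A = (P_B S_{\gamma_2})^*(P_B S_{\gamma_2})$ uses $S_{\gamma_2} P_A = P_B S_{\gamma_2}$ from Lemma \ref{lem:SME2} (iii). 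This is routine and I would not belabor it.

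For assertion (i) I would then specialize to the standing hypotheses $Z^G(\gamma_1,\gamma_2)=1$ and $S_{(B,\gamma_2)}=P_B S_{\gamma_2}\ne 0$: the right-hand side becomes the nonzero positive element $(P_B S_{\gamma_2})^*(P_B S_{\gamma_2})$, so $P_A S_{\gamma_1} S_{\gamma_2}\ne 0$, and hence $S_{(A,\gamma_1)}=P_A S_{\gamma_1}\ne 0$. For assertion (ii) I would run the identical argument with $P_A$ and $P_B$ interchanged; since conditions (1)--(4) of Definition \ref{def:SME} and all of Lemmas \ref{lem:SME2}--\ref{lem:SME5} are symmetric under $\OA\leftrightarrow\OB$, the symmetric identity $(P_B S_{\gamma_1} S_{\gamma_2})^*(P_B S_{\gamma_1} S_{\gamma_2}) = Z^G(\gamma_1,\gamma_2)(P_A S_{\gamma_2})^*(P_A S_{\gamma_2})$ holds, and $S_{(A,\gamma_2)}\ne 0$ forces $S_{(B,\gamma_1)}=P_B S_{\gamma_1}\ne 0$.

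I do not anticipate a real obstacle: the whole content is the positivity identity, which is already contained in the proof of Lemma \ref{lem:SME5}. The only points demanding care are bookkeeping --- confirming that the Cuntz--Krieger expansion leaves precisely the factor $Z^G(\gamma_1,\gamma_2)$ (a single term, not a sum) on the right, and verifying that the whole construction is genuinely symmetric under $A\leftrightarrow B$ so that the interchanged identity may be invoked without repeating the computation.
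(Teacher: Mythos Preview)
Your argument is correct and is essentially identical to the paper's own proof: the paper computes $(S_{(A,\gamma_1)}S_{(B,\gamma_2)})^*(S_{(A,\gamma_1)}S_{(B,\gamma_2)}) = Z^G(\gamma_1,\gamma_2)\,S_{(B,\gamma_2)}^*S_{(B,\gamma_2)}$ (which is your displayed identity, since $P_A S_{\gamma_1}S_{\gamma_2}=S_{(A,\gamma_1)}S_{(B,\gamma_2)}$), draws the same conclusion for (i), and dispatches (ii) by the $A\leftrightarrow B$ symmetry exactly as you do.
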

\begin{proof}
(i)
Since 
$S_{(A,\gamma_1)} S_{(B,\gamma_2)}
 =P_A S_{\gamma_1} P_B S_{\gamma_2}
=  S_{\gamma_1} S_{\gamma_2}P_A$,
we have
\begin{align*}
( S_{(A,\gamma_1)} S_{(B,\gamma_2)})^*
( S_{(A,\gamma_1)} S_{(B,\gamma_2)})
& = P_A S_{\gamma_2}^* S_{\gamma_1}^* S_{\gamma_1} S_{\gamma_2}P_A \\
& = \sum_{\eta_1 \in E_Z} Z^G(\gamma_1,\eta_1) 
      P_A S_{\gamma_2}^* S_{\eta_1} S_{\eta_1}^* S_{\gamma_2}P_A \\ 
& =  Z^G(\gamma_1,\gamma_2)  S_{(B,\gamma_2)}^* S_{(B,\gamma_2)}.
\end{align*}
The above equality ensures us the assertion.
(ii) is similarly shown.
\end{proof}
\begin{lemma}
Either of the following two situations occurs:
\begin{enumerate}
\item
Both $S_{(A,\gamma)}$ and $S_{(B,\gamma)}$
are not zero for all $\gamma\in E_Z$.
In this case we have $\tilde{C}^G =\tilde{D}^G = Z^G$
so that $\tilde{A} = \tilde{B}$ and
$\tilde{Z} =
\begin{bmatrix}
0 & Z \\
Z & 0
\end{bmatrix}
$
\item 
Either $S_{(A,\gamma)}=0$ or  $S_{(B,\gamma)}=0$
 for all $\gamma\in E_Z$.
In this case we have  
$\tilde{Z}=Z.$ 
\end{enumerate}
\end{lemma}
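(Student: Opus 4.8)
The plan is to deduce the dichotomy from Lemma~\ref{lem:SME9} by a connectivity argument, and then to read off the transition matrices in each of the two cases. Since $P_A+P_B=1$ we have $S_\gamma=S_{(A,\gamma)}+S_{(B,\gamma)}$ for every $\gamma\in E_Z$, and $S_\gamma\neq0$; hence at least one of $S_{(A,\gamma)},S_{(B,\gamma)}$ is nonzero, and the only issue is whether, for a given $\gamma$, both are nonzero or exactly one vanishes.

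Call $\gamma\in E_Z$ \emph{saturated} if $S_{(A,\gamma)}\neq0$ and $S_{(B,\gamma)}\neq0$. The first key step is that saturation propagates backwards along edges: if $Z^G(\gamma_1,\gamma_2)=1$ and $\gamma_2$ is saturated, then $\gamma_1$ is saturated. Indeed, $S_{(B,\gamma_2)}\neq0$ forces $S_{(A,\gamma_1)}\neq0$ by Lemma~\ref{lem:SME9}(i), while $S_{(A,\gamma_2)}\neq0$ forces $S_{(B,\gamma_1)}\neq0$ by Lemma~\ref{lem:SME9}(ii). Now suppose some edge $\gamma_0$ is saturated. For an arbitrary edge $\delta$, the irreducibility of $Z$ supplies a path in $G_Z$ from $t(\delta)$ to $s(\gamma_0)$, and prepending $\delta$ and appending $\gamma_0$ yields a walk $\delta\to\zeta_1\to\cdots\to\zeta_{k-1}\to\gamma_0$ in $E_Z$. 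Applying the propagation step along this walk, from $\gamma_0$ back to $\delta$, shows that $\delta$ is saturated. Thus if even one edge is saturated then every edge is, which is alternative~(1); the remaining possibility, that no edge is saturated, is precisely alternative~(2), in which each $\gamma$ satisfies exactly one of $S_{(A,\gamma)}=0$, $S_{(B,\gamma)}=0$. This proves the dichotomy.

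It then remains to compute $\tilde C,\tilde D,\tilde Z$ in each case using the vertex identifications of Lemma~\ref{lem:SME7}. In case~(1) every $\gamma$ belongs to both $E_{\tilde C}$ and $E_{\tilde D}$, so $\gamma\mapsto(A,\gamma)$ and $\gamma\mapsto(B,\gamma)$ are incidence-preserving bijections of $E_Z$ onto $E_{\tilde C}$ and $E_{\tilde D}$; hence $\tilde C=\tilde D=Z$, giving $\tilde C^G=\tilde D^G=Z^G$, $\tilde A=\tilde C\tilde D=\tilde D\tilde C=\tilde B$, and $\tilde Z=\begin{bmatrix}0&Z\\Z&0\end{bmatrix}$. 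In case~(2) the sets $E_{\tilde C}=\{\gamma:S_{(A,\gamma)}\neq0\}$ and $E_{\tilde D}=\{\gamma:S_{(B,\gamma)}\neq0\}$ partition $E_Z$, so $E_{\tilde Z}=E_{\tilde C}\sqcup E_{\tilde D}=E_Z$, and Lemma~\ref{lem:SME7} matches the source and terminal of each $\tilde Z$-edge with those of the corresponding $Z$-edge, exhibiting $G_{\tilde Z}$ as $G_Z$ and hence $\tilde Z=Z$.

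The propagation step and the connectivity reduction are routine once Lemma~\ref{lem:SME9} is available, so I expect the main obstacle to be the bookkeeping in case~(2): one must verify that $V_{\tilde A}$ and $V_{\tilde B}$ are disjoint and together exhaust $V_Z$, so that the partition $E_Z=E_{\tilde C}\sqcup E_{\tilde D}$ genuinely reassembles the block form of $Z$. This rests on the bipartite structure of $G_Z$, which is in fact forced by Lemma~\ref{lem:SME3}: writing $P_A=\chi_{W_A}$ one computes $\sum_\gamma S_\gamma P_A S_\gamma^*=\chi_{W_A}\circ\sigma_Z$, so $\sigma_Z^{-1}(W_A)=W_B$, whence the colouring of $X_Z$ by $W_A$ and $W_B$ alternates under $\sigma_Z$ and every cycle of $G_Z$ has even length. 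The same relation, via transitivity of the irreducible shift, shows that this colouring is determined by the source vertex of the first edge, so that in truth only alternative~(2) can occur; alternative~(1) survives merely as the formal complement produced by the propagation argument.
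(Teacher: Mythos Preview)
Your proof of the dichotomy is essentially the paper's: both invoke Lemma~\ref{lem:SME9} to propagate saturation backwards along edges and then use irreducibility of $Z$ to spread this to all of $E_Z$. For the two cases you argue graph-theoretically (identifying $E_{\tilde C},E_{\tilde D}$ with copies or parts of $E_Z$ and reading off the vertex transition matrices), whereas the paper argues $C^*$-algebraically: in case~(1) it expands $S_\gamma^*S_\gamma$ in two ways and matches coefficients of the mutually orthogonal nonzero projections $S_{(B,\eta)}S_{(B,\eta)}^*$, $S_{(A,\eta)}S_{(A,\eta)}^*$ to obtain $\tilde C^G=\tilde D^G=Z^G$; in case~(2) it uses $S_{(A,\gamma_1)}S_{(A,\gamma_2)}=P_AS_{\gamma_1}P_AS_{\gamma_2}=0$ (from Lemma~\ref{lem:SME2}) to force the block form $Z=\begin{bmatrix}0&\tilde C\\\tilde D&0\end{bmatrix}$. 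The two routes are equivalent and equally short.

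Your closing remark that only case~(2) can actually occur is correct and worth recording, though your justification is compressed. A cleaner way to see it: write $P_A=\chi_{W_A}$ with $W_A\subset X_Z$ clopen; Lemma~\ref{lem:SME3} gives $\sigma_Z^{-1}(W_A)=W_B$. If $W_A$ were a union of $k$-cylinders but not of $(k{-}1)$-cylinders for some $k\ge 2$, choose $x\in W_A,\,y\in W_B$ with $x_{[1,k-1]}=y_{[1,k-1]}$ and any $\gamma$ with $t(\gamma)=s(x_1)$; then the two points beginning $\gamma x_{[1,k]}$ and $\gamma y_{[1,k]}$ agree on their first $k$ symbols yet land in opposite parts via $\sigma_Z^{-1}(W_A)=W_B$, a contradiction. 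Hence $W_A$ is a union of $1$-cylinders, so each $U_\gamma$ lies entirely in $W_A$ or in $W_B$, and one of $S_{(A,\gamma)},S_{(B,\gamma)}$ vanishes. The paper does not make this observation; it simply proves the dichotomy as stated and proceeds.
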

\begin{proof}
Suppose that there exists $\gamma_0 \in E_Z$ 
such that both conditions 
$S_{(A,\gamma_0)} \ne 0$ and $S_{(B,\gamma_0)} \ne 0$
hold.
By the preceding lemma, any edge $\eta \in E_Z$ satisfying
$Z^G(\eta,\gamma_0) =1$ forces that
$S_{(A,\eta)} \ne 0$ and $S_{(B,\eta)} \ne 0.$
Since for any edge $\gamma \in E_Z$, there exists a finite sequence
of edges $\gamma_1, \dots, \gamma_n$ in $E_Z$ such that 
$$
  Z^G(\eta,\gamma_1)
=Z^G(\gamma_1,\gamma_2)
=\cdots 
=Z^G(\gamma_n,\gamma_0)=1
$$
so that  
 $S_{(A,\gamma)} \ne 0$ and $S_{(B,\gamma)} \ne 0.$
Hence 
either of the following two cases occurs:
\begin{enumerate}
\item
Both $S_{(A,\gamma)}$ and $S_{(B,\gamma)}$
are not zero for all $\gamma\in E_Z$.
\item 
Either $S_{(A,\gamma)}=0$ or  $S_{(B,\gamma)}=0$
 for all $\gamma\in E_Z$.
\end{enumerate}
Case (1):
We have the following equalities.
\begin{align*}
S_\gamma^* S_\gamma
& = (S_{(A,\gamma)}^* + S_{(B,\gamma)}^*)(S_{(A,\gamma)} + S_{(B,\gamma)}) \\
& = S_{(A,\gamma)}^*S_{(A,\gamma)} + S_{(B,\gamma)}^* S_{(B,\gamma)} \\
& = \sum_{(B,\eta)\in E_{\tilde{D}}}\tilde{C}^G((A,\gamma),(B,\eta))
    S_{(B,\eta)} S_{(B,\eta)}^* + 
    \sum_{(A,\eta)\in E_{\tilde{C}}}\tilde{D}^G((B,\gamma),(A,\eta))
    S_{(A,\eta)} S_{(A,\eta)}^*. 
\end{align*}
On the other hand, we have
\begin{align*}
S_\gamma^* S_\gamma
& = \sum_{\eta \in E_Z} Z^G(\gamma,\eta) S_{\eta}S_{\eta}^* \\
& = \sum_{\eta \in E_Z} Z^G(\gamma,\eta)
     (P_B S_{\eta}S_{\eta}^* P_B + P_A S_{\eta}S_{\eta}^* P_A)  \\
& = \sum_{\eta \in E_Z} Z^G(\gamma,\eta)
    S_{(B,\eta)} S_{(B,\eta)}^* + 
    \sum_{\eta \in E_Z} Z^G(\gamma,\eta)
    S_{(A,\eta)} S_{(A,\eta)}^*. 
\end{align*}
Since both $S_{(A,\gamma)}\ne 0$ and $S_{(B,\gamma)}\ne 0$
 for all $\gamma\in E_Z$,
we have 
\begin{equation*}
\tilde{C}^G((A,\gamma),(B,\eta)) = Z^G(\gamma,\eta), \qquad
\tilde{D}^G((B,\gamma),(A,\eta)) = Z^G(\gamma,\eta)
\end{equation*}
for all $\gamma, \eta \in E_Z$.
Hence we have
$\tilde{C}^G =\tilde{D}^G = Z^G$
so that $\tilde{A}^G = \tilde{B}^G$
and hence $\tilde{A} = \tilde{B}$.
As
$\tilde{Z} =
\begin{bmatrix}
0 & \tilde{C} \\
\tilde{D} & 0
\end{bmatrix},
$
we have
$
\tilde{Z}^G =
\begin{bmatrix}
0 & Z^G \\
Z^G & 0
\end{bmatrix}
$
and hence
$\tilde{Z} =
\begin{bmatrix}
0 & Z \\
Z & 0
\end{bmatrix}.
$

Case (2): 
Since either $S_{(A,\gamma)}\ne 0$ or  $S_{(B,\gamma)}\ne0$
 for all $\gamma\in E_Z$
occurs, we have a disjoint union
$E_Z = E_{\tilde{C}} \cup E_{\tilde{D}}.$
As 
$S_{(A,\gamma_1)}S_{(A,\gamma_2)} =0,
 S_{(B,\gamma_1)}S_{(B,\gamma_2)} =0
$
for all $\gamma_1,\gamma_2 \in E_Z$,
we have
$Z =
\begin{bmatrix}
0 & \tilde{C} \\
\tilde{D} & 0
\end{bmatrix}
$
so that 
$\tilde{Z} = Z$.
\end{proof}

\medskip

We will next study the bipartite graph $G_{\tilde{Z}}$ from the $C^*$-algebraic view point.
For $(A,\gamma_1 \gamma_2) \in E_{\tilde{A}}$, define the partial isometry
$$
S_{(A,\gamma_1 \gamma_2)} = P_A S_{\gamma_1} S_{\gamma_2}.
$$
\begin{lemma}\label{lem:SME11}
The $C^*$-subalgebra 
$C^*(S_{(A,\gamma_1 \gamma_2)};(A,\gamma_1 \gamma_2) \in E_{\tilde{A}})$
of $\OZ$ is isomorphic to the Cuntz--Krieger algebra 
${\mathcal{O}}_{\tilde{A}}$ for the matrix $\tilde{A}$. 
\end{lemma}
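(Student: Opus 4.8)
The plan is to exhibit the family $\{S_{(A,\gamma_1\gamma_2)} = P_A S_{\gamma_1}S_{\gamma_2}\}_{(A,\gamma_1\gamma_2)\in E_{\tilde{A}}}$ as a set of nonzero partial isometries satisfying the Cuntz--Krieger relations for the edge matrix $\tilde{A}^G$, and then to close the argument with the Cuntz--Krieger uniqueness theorem. First I would check that each $S_{(A,\gamma_1\gamma_2)}$ is a partial isometry: by Lemma \ref{lem:SME4}~(i) the projection $S_{\gamma_1}S_{\gamma_2}S_{\gamma_2}^*S_{\gamma_1}^*\in\DZ$ commutes with $P_A\in\DZ$, so $P_A S_{\gamma_1}S_{\gamma_2}$ is a partial isometry whose initial and final projections lie in the corner $P_A\OZ P_A = \OA$, and it is nonzero precisely because $(A,\gamma_1\gamma_2)\in E_{\tilde{A}}$.

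For the range projections I would compute $S_{(A,\gamma_1\gamma_2)}S_{(A,\gamma_1\gamma_2)}^* = P_A S_{\gamma_1}S_{\gamma_2}S_{\gamma_2}^*S_{\gamma_1}^*$ and sum over $E_{\tilde{A}}$. Expanding $S_{\gamma_1}S_{\gamma_1}^*$ by the second relation of \eqref{eq:CKZ} and summing the first relation gives the identity $\sum_{\gamma_1,\gamma_2} Z^G(\gamma_1,\gamma_2)\, S_{\gamma_1}S_{\gamma_2}S_{\gamma_2}^*S_{\gamma_1}^* = 1$ in $\OZ$. Multiplying by $P_A$ and using that each summand lies in $\DZ$, the vanishing terms are exactly those with $P_A S_{\gamma_1}S_{\gamma_2}=0$, so the surviving terms are indexed by $E_{\tilde{A}}$ and
\[
\sum_{(A,\gamma_1\gamma_2)\in E_{\tilde{A}}} S_{(A,\gamma_1\gamma_2)}S_{(A,\gamma_1\gamma_2)}^* = P_A,
\]
the unit of the corner $\OA$.

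The core computation is the source relation. Using $S_{\gamma_1}^* P_A = P_B S_{\gamma_1}^*$ and $P_B S_{\gamma_2} = S_{\gamma_2} P_A$ from Lemma \ref{lem:SME2}~(iii), together with $S_{\gamma_1}^*S_{\gamma_1}S_{\gamma_2}=S_{\gamma_2}$ (valid since $Z^G(\gamma_1,\gamma_2)=1$), I would obtain
\[
S_{(A,\gamma_1\gamma_2)}^*S_{(A,\gamma_1\gamma_2)} = P_A S_{\gamma_2}^*S_{\gamma_2} = \sum_{\zeta\in E_Z} Z^G(\gamma_2,\zeta)\,P_A S_\zeta S_\zeta^*.
\]
Applying the local form of the range identity from the previous step, namely $P_A S_\zeta S_\zeta^* = \sum_{(A,\zeta\eta_2)\in E_{\tilde{A}}} S_{(A,\zeta\eta_2)}S_{(A,\zeta\eta_2)}^*$, and recording (via Lemma \ref{lem:SME5} and the definition of $\tilde{A}^G$) that $\tilde{A}^G((A,\gamma_1\gamma_2),(A,\eta_1\eta_2))=1$ is equivalent, on $E_{\tilde{A}}$, to $Z^G(\gamma_2,\eta_1)=1$, I arrive at
\[
S_{(A,\gamma_1\gamma_2)}^*S_{(A,\gamma_1\gamma_2)} = \sum_{(A,\eta_1\eta_2)\in E_{\tilde{A}}} \tilde{A}^G((A,\gamma_1\gamma_2),(A,\eta_1\eta_2))\, S_{(A,\eta_1\eta_2)}S_{(A,\eta_1\eta_2)}^*,
\]
which is precisely the Cuntz--Krieger relation for $\tilde{A}^G$.

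Finally, the universal property of $\OTA = {\mathcal{O}}_{\tilde{A}^G}$ furnishes a surjective $*$-homomorphism from $\OTA$ onto $C^*(S_{(A,\gamma_1\gamma_2)})$ carrying the canonical generators to the $S_{(A,\gamma_1\gamma_2)}$. Since $\tilde{A}$ is irreducible and not a permutation matrix—inherited from the corresponding properties of $Z$ together with the structure of $\tilde{Z}$ and Proposition \ref{prop:SME8}—the matrix $\tilde{A}^G$ satisfies condition (I), so by the uniqueness theorem of Cuntz and Krieger \cite{CK} this homomorphism is injective, yielding the asserted isomorphism. The main obstacle I anticipate is the bookkeeping in the source relation: one must verify that restricting to the nonzero projections $P_A S_\zeta S_\zeta^*$ reproduces exactly the edge set $E_{\tilde{A}}$ and that the suppressed terms correspond precisely to the pairs failing $\tilde{A}^G=1$; the simplicity/uniqueness step is then routine under the standing irreducibility and non-permutation hypotheses.
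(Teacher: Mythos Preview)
Your proposal is correct and follows essentially the same path as the paper: verify that the family $\{S_{(A,\gamma_1\gamma_2)}\}$ satisfies the Cuntz--Krieger relations for $\tilde{A}^G$ (same range-sum identity $\sum S_{(A,\gamma_1\gamma_2)}S_{(A,\gamma_1\gamma_2)}^*=P_A$, same source relation), then conclude via uniqueness. Your computation of the source projection is slightly more streamlined than the paper's---you use $S_{\gamma_1}^*P_A=P_BS_{\gamma_1}^*$ from Lemma~\ref{lem:SME2}(iii) to reduce immediately to $P_A S_{\gamma_2}^*S_{\gamma_2}$, whereas the paper expands $S_{\gamma_1}^*S_{\gamma_1}$ via the CK relation first---and you make the appeal to the uniqueness theorem explicit while the paper leaves it implicit, but these are cosmetic differences rather than a different argument.
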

\begin{proof}
We first notice that
\begin{equation*}
\sum_{(A,\gamma_1 \gamma_2) \in E_{\tilde{A}}}
     S_{(A,\gamma_1 \gamma_2)}S_{(A,\gamma_1 \gamma_2)}^*
=
\sum_{\gamma_1, \gamma_2 \in E_Z}
     P_A S_{\gamma_1}S_{\gamma_2} S_{\gamma_2}^* S_{\gamma_1}^* P_A
= P_A.
\end{equation*}
We also have
\begin{align*}
S_{(A,\gamma_1 \gamma_2)}^*S_{(A,\gamma_1 \gamma_2)}
& = P_A S_{\gamma_2}^* S_{\gamma_1}^* S_{\gamma_1} S_{\gamma_2}P_A \\
& = \sum_{\zeta_1 \in E_Z} Z^G(\gamma_1,\zeta_1) 
      P_A S_{\gamma_2}^* S_{\zeta_1} S_{\zeta_1}^* S_{\gamma_2}P_A \\ 
&  = \sum_{\eta_1 \in E_Z} 
     Z^G(\gamma_1,\gamma_2) Z^G(\gamma_2,\eta_1)
     P_A S_{\eta_1} S_{\eta_1}^*P_A \\
&  = \sum_{\eta_1, \eta_2 \in E_Z}  
      Z^G(\gamma_1,\gamma_2) Z^G(\gamma_2,\eta_1)Z^G(\eta_1,\eta_2)
     P_A S_{\eta_1}S_{\eta_2}S_{\eta_2}^* S_{\eta_1}^*P_A 
\end{align*}
For $(A,\gamma_1 \gamma_2), (A,\eta_1 \eta_2) \in E_{\tilde{A}}$,
the condition
$t(A,\gamma_1 \gamma_2) =s(A,\eta_1 \eta_2)$
holds if and only if
$Z^G(\gamma_2,\eta_1) =1$.
Hence we know 
$$ 
Z^G(\gamma_1,\gamma_2) Z^G(\gamma_2,\eta_1)Z^G(\eta_1,\eta_2)
= {\tilde{A}}^G(\gamma_1\gamma_2, \eta_1 \eta_2).
$$
By the above equalities, 
we have
\begin{equation*}
S_{(A,\gamma_1 \gamma_2)}^*S_{(A,\gamma_1 \gamma_2)}
  = \sum_{(A,\eta_1 \eta_2) \in E_{\tilde{A}}} 
     {\tilde{A}}^G(\gamma_1\gamma_2, \eta_1 \eta_2) 
     S_{(A,\eta_1 \eta_2)}S_{(A,\eta_1 \eta_2)}^*. 
\end{equation*}
Hence the $C^*$-subalgebra 
$C^*(S_{(A,\gamma_1 \gamma_2)};(A,\gamma_1 \gamma_2) \in E_{\tilde{A}})$
of $\OZ$ is isomorphic to the Cuntz--Krieger algebra 
${\mathcal{O}}_{\tilde{A}}$ for the matrix $\tilde{A}$.
\end{proof}
\begin{lemma}\label{lem:SME12}
The $C^*$-subalgebra 
$C^*(S_{(A,\gamma_1 \gamma_2)};(A,\gamma_1 \gamma_2) \in E_{\tilde{A}})$
of $\OZ$ is nothing but $P_A \OZ P_A$.
Hence  the Cuntz--Krieger algebra 
${\mathcal{O}}_{\tilde{A}}$ is isomorphic to $\OA$. 
\end{lemma}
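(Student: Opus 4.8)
Write $\mathcal{B}$ for the $C^*$-subalgebra $C^*(S_{(A,\gamma_1\gamma_2)};(A,\gamma_1\gamma_2)\in E_{\tilde{A}})$ of $\OZ$. The plan is to prove the two inclusions $\mathcal{B}\subseteq P_A\OZ P_A$ and $P_A\OZ P_A\subseteq\mathcal{B}$ separately. The first is immediate: by Lemma \ref{lem:SME4}(i) each generator satisfies $S_{(A,\gamma_1\gamma_2)}=P_A S_{\gamma_1}S_{\gamma_2}=P_A S_{\gamma_1}S_{\gamma_2}P_A$, so it lies in the corner $P_A\OZ P_A$, which is itself a $C^*$-algebra; hence the $C^*$-algebra it generates is contained in $P_A\OZ P_A$. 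Since $P_A\OZ P_A=\OA$ by property $(2)$ of Definition \ref{def:SME} and $\mathcal{B}\cong\OTA$ by Lemma \ref{lem:SME11}, establishing the reverse inclusion will yield $\OTA\cong\OA$ at once.

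For the reverse inclusion I would use that $\OZ$ is the closed linear span of the monomials $S_\mu S_\nu^*$ over admissible words $\mu,\nu$, so that $P_A\OZ P_A$ is the closed linear span of the compressions $P_A S_\mu S_\nu^* P_A$; it therefore suffices to show $P_A S_\mu S_\nu^* P_A\in\mathcal{B}$ for every such pair. The organizing device is the parity bookkeeping coming from Lemma \ref{lem:SME2}(iii): since $P_A S_\gamma=S_\gamma P_B$ and $P_B S_\gamma=S_\gamma P_A$, sliding $P_A$ across a word $S_\mu$ turns it into a trailing $P_A$ or $P_B$ according to whether $|\mu|$ is even or odd. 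Consequently $P_A S_\mu S_\nu^* P_A=0$ whenever $|\mu|$ and $|\nu|$ have opposite parity (the two interior projections are then orthogonal), and such terms lie trivially in $\mathcal{B}$.

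When $|\mu|$ and $|\nu|$ are both even I would write $P_A S_\mu S_\nu^* P_A=(P_A S_\mu)(P_A S_\nu)^*$ and check that, whenever $P_A S_\mu\neq0$, the element $P_A S_\mu$ is literally a product $S_{(A,\mu_1\mu_2)}S_{(A,\mu_3\mu_4)}\cdots S_{(A,\mu_{2k-1}\mu_{2k})}$ of generators of $\mathcal{B}$. Both points follow by repeatedly sliding $P_A$ through consecutive pairs of edges using Lemma \ref{lem:SME2}(iii) and Lemma \ref{lem:SME4}(i): the telescoping identity $\prod_{i}S_{(A,\mu_{2i-1}\mu_{2i})}=P_A S_\mu$ holds, and if $P_A S_\mu\neq0$ then, writing $P_A S_\mu=S_{\mu_1}\cdots S_{\mu_{2i-2}}\,(P_A S_{\mu_{2i-1}}S_{\mu_{2i}})\,S_{\mu_{2i+1}}\cdots S_{\mu_{2k}}$, each pairwise compression $P_A S_{\mu_{2i-1}}S_{\mu_{2i}}$ is forced to be nonzero, i.e.\ each factor is a genuine generator indexed by an edge of $E_{\tilde{A}}$. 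Hence $P_A S_\mu,\,P_A S_\nu\in\mathcal{B}$, and so is their product with one adjoint.

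The remaining case is $|\mu|,|\nu|$ both odd, where the interior reduces to $P_A S_\mu S_\nu^* P_A=S_\mu P_B S_\nu^*$. Here I would insert the resolution $P_B=\sum_{\gamma\in E_Z}S_\gamma P_A S_\gamma^*$ from Lemma \ref{lem:SME3}, obtaining the finite sum $\sum_{\gamma}S_{\mu\gamma}P_A S_{\nu\gamma}^*$ in which both $\mu\gamma$ and $\nu\gamma$ have even length. Each summand equals $(P_A S_{\mu\gamma})(P_A S_{\nu\gamma})^*$ and so falls under the even case just treated, placing it in $\mathcal{B}$. This completes the verification that every compression lies in $\mathcal{B}$, whence $P_A\OZ P_A\subseteq\mathcal{B}$ and the two algebras coincide. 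I expect the main obstacle to be the bookkeeping in the even case---specifically the implication that nonvanishing of the full compression $P_A S_\mu$ forces nonvanishing of every length-two sub-compression, so that the telescoped product is an honest word in the generators of $\mathcal{B}$---rather than anything analytically deep.
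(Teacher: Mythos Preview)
Your proof is correct and follows essentially the same strategy as the paper's: both establish the easy inclusion via Lemma~\ref{lem:SME4}(i), then reduce the reverse inclusion to showing that compressions of monomials lie in $\mathcal{B}$ via the parity bookkeeping of Lemma~\ref{lem:SME2}(iii), splitting into even and odd cases. The only notable difference is in the odd case: the paper works with the spanning set $S_\mu S_i S_i^* S_\nu^*$ from \cite[2.2 Lemma]{CK}, so the extra edge $i$ is already present to pair with the trailing letters, yielding $S_{(A,\mu_m i)}S_{(A,\nu_n i)}^*$ directly; you instead insert the resolution $P_B=\sum_\gamma S_\gamma P_A S_\gamma^*$ from Lemma~\ref{lem:SME3} to manufacture that extra edge. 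Both devices accomplish the same thing. One small remark: your worry at the end about nonvanishing of the length-two sub-compressions is unnecessary---the telescoping identity $\prod_i S_{(A,\mu_{2i-1}\mu_{2i})}=P_A S_\mu$ holds regardless, and if any factor vanishes the product is $0\in\mathcal{B}$, so you need not verify that each pair lies in $E_{\tilde{A}}$.
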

\begin{proof}
Since
$
S_{(A,\gamma_1 \gamma_2)} = P_A S_{\gamma_1} S_{\gamma_2}P_A
$
for
$(A,\gamma_1 \gamma_2) \in E_{\tilde{A}}$,
we have
$C^*(S_{(A,\gamma_1 \gamma_2)};(A,\gamma_1 \gamma_2) \in E_{\tilde{A}})
\subset P_A \OZ P_A$.
We will show the converse inclusion relation.
Take an arbitrary fixed $X \in \OZ$ with $P_A X P_A \ne 0$.
Let ${\mathcal{P}}_Z$ be the dense $*$-subalgebra of $\OZ$ algebraically generated by
$S_\gamma, \gamma \in E_Z$.
We may find $X_n \in {\mathcal{P}}_Z$ such that 
$\| X - X_n \| \rightarrow 0$.
Since 
$\| P_A X P_A - P_A X_n P_A \| \le \| X - X_n \| \rightarrow 0,$
it suffices to show that 
$P_A X_n P_A$ belongs to 
$C^*(S_{(A,\gamma_1 \gamma_2)};(A,\gamma_1 \gamma_2) \in E_{\tilde{A}})$.
By \cite[2.2 Lemma]{CK},
any element of the subalgebra ${\mathcal{P}}_Z$
is a finite linear combination of elements of the form
$S_\mu S_i S_i^* S_\nu^*$
for some $\mu =(\mu_1,\dots,\mu_m),
              \nu =(\nu_1,\dots,\nu_n) \in B_*(X_Z).$
Assume that 
$P_A S_\mu S_i S_i^* S_\nu^* P_A \ne 0.$
Since
$P_A S_j = S_j P_B$, we have
\begin{equation}
P_A S_\mu 
= P_A S_{\mu_1}\cdots S_{\mu_m}
= 
\begin{cases}
S_{\mu_1}\cdots S_{\mu_m} P_A  & \text{ if } m \text{ is even, }\\
S_{\mu_1}\cdots S_{\mu_m} P_B  & \text{ if } m \text{ is odd. }
\end{cases}
\end{equation}
The assumption  
$P_A S_\mu S_i S_i^* S_\nu^* P_A \ne 0$ 
forces the numbers $m, n$ to be both even, or both odd.

Case 1: $m, n$ are both even.

We have 
\begin{align*}
& P_A S_\mu S_i S_i^* S_\nu^* P_A \\
= & P_A S_{\mu_1}S_{\mu_2}P_A S_{\mu_3}S_{\mu_4} P_A \cdots 
      P_A S_{\mu_{m-1}}S_{\mu_m} P_A S_i S_i^* P_A
     S_{\nu_n}^* S_{\nu_{n-1}}^* P_A \cdots 
    S_{\nu_4}^* S_{\nu_3}^* P_A S_{\nu_2}^* S_{\nu_1}^* P_A \\
= & S_{(A,\mu_1\mu_2)} S_{(A,\mu_3\mu_4)} \cdots 
      S_{(A,\mu_{m-1}\mu_m)} P_A S_i S_i^* P_A 
     S_{(A, \nu_{n-1}\nu_n)}^*  \cdots 
    S_{(A,\nu_3 \nu_4)}^* S_{(A, \nu_1 \nu_2)}^*.
\end{align*}
Now we have
\begin{equation*}
P_A S_i S_i^* P_A  
= \sum_{j \in E_Z} P_A S_i S_j S_j^* S_i^* P_A
= \sum_{j \in E_Z} S_{(A,ij)}S_{(A,ij)}^*
\end{equation*}
so that 
$P_A S_\mu S_i S_i^* S_\nu^* P_A$
is a finite linear combination of products of the elements
$
S_{(A,\gamma_1 \gamma_2)},
S_{(A,\gamma_1 \gamma_2)}^*
$
for
$(A,\gamma_1 \gamma_2) \in E_{\tilde{A}}$
and hence 
it belongs to
$C^*(S_{(A,\gamma_1 \gamma_2)};(A,\gamma_1 \gamma_2) \in E_{\tilde{A}})$.

Case 2: $m, n$ are both odd.

Similarly to Case 1, we have 
\begin{equation*}
 P_A S_\mu S_i S_i^* S_\nu^* P_A 
= S_{(A,\mu_1\mu_2)} \cdots 
      S_{(A,\mu_{m-2}\mu_{m-1})}
      S_{(A,\mu_m i)}S_{(A,\nu_n i)}^* 
     S_{(A, \nu_{n-2}\nu_{n-1})}^*  \cdots 
      S_{(A, \nu_1 \nu_2)}^*
\end{equation*}
so that 
$P_A S_\mu S_i S_i^* S_\nu^* P_A$
 belongs to
$C^*(S_{(A,\gamma_1 \gamma_2)};(A,\gamma_1 \gamma_2) \in E_{\tilde{A}})$.
\end{proof}

\begin{proposition}\label{prop:SME13}
The Cuntz--Krieger triplet 
$({\mathcal{O}}_{\tilde{A}}, {\mathcal{D}}_{\tilde{A}}, {\rho}^{\tilde{A}})$
for the matrix $\tilde{A}$
is isomorphic to
$(\OA, \DA, {\rho}^{A})$.  
\end{proposition}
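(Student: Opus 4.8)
The plan is to promote the bare $C^*$-algebra isomorphism already supplied by Lemmas \ref{lem:SME11} and \ref{lem:SME12} to an isomorphism of the full Cuntz--Krieger triplets. By those two lemmas, the assignment $s_{(A,\gamma_1\gamma_2)}\mapsto S_{(A,\gamma_1\gamma_2)}=P_A S_{\gamma_1}S_{\gamma_2}$ on the canonical generating partial isometries $s_{(A,\gamma_1\gamma_2)}$ of ${\mathcal O}_{\tilde A}$ (indexed by $(A,\gamma_1\gamma_2)\in E_{\tilde A}$) extends to a $C^*$-isomorphism $\Psi\colon {\mathcal O}_{\tilde A}\longrightarrow P_A\OZ P_A=\OA$, the last identification being \eqref{eq:4.1}. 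It then remains to verify the two extra structural features: that $\Psi$ carries ${\mathcal D}_{\tilde A}$ onto $\DA$, and that $\Psi$ intertwines the gauge actions $\rho^{\tilde A}$ and $\rho^A$.

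For the gauge actions I would first note that each generator $S_{(A,\gamma_1\gamma_2)}$ is homogeneous of degree two for $\rho^Z$: since $P_A\in\DZ$ is fixed by the gauge action of $\OZ$, we have $\rho^Z_t(S_{(A,\gamma_1\gamma_2)})=e^{2\pi\sqrt{-1}\,2t}S_{(A,\gamma_1\gamma_2)}$. Combining this with condition (4) of Definition \ref{def:SME}, equivalently \eqref{eq:gaugeZAB}, which reads $\rho^Z_t|_{P_A\OZ P_A}=\rho^A_{2t}$, the doubling of the parameter exactly absorbs the degree two, and writing $s=2t$ one obtains $\rho^A_s(S_{(A,\gamma_1\gamma_2)})=e^{2\pi\sqrt{-1}s}S_{(A,\gamma_1\gamma_2)}$. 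As $\rho^{\tilde A}_s$ multiplies every generator $s_{(A,\gamma_1\gamma_2)}$ by $e^{2\pi\sqrt{-1}s}$ by definition, this yields $\Psi\circ\rho^{\tilde A}_s=\rho^A_s\circ\Psi$ for all $s\in\T$. The main conceptual point of the whole proposition lives here: it is the cancellation of the degree-two scaling against the factor $2t$ deliberately built into the definition of strong Morita equivalence that forces $\Psi$ to respect $\rho^A$ itself, rather than some reparametrization of it.

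For the diagonals I would argue by a double inclusion. A generating projection of ${\mathcal D}_{\tilde A}$ has the form $s_w s_w^*$ for an admissible word $w=(\gamma_1\gamma_2)\cdots(\gamma_{2k-1}\gamma_{2k})$ in $E_{\tilde A}$. Using Lemma \ref{lem:SME4}(i) to slide the intermediate projections $P_A$ through the product, exactly as in the telescoping computation carried out in Lemma \ref{lem:SME12}, its image $\Psi(s_w s_w^*)$ collapses to $P_A S_\nu S_\nu^*$ with $\nu=\gamma_1\cdots\gamma_{2k}\in B_{2k}(X_Z)$. Since $S_\nu S_\nu^*\in\DZ$ commutes with $P_A$, this lies in $\DZ P_A=\DA$ by \eqref{eq:4.1}, giving $\Psi({\mathcal D}_{\tilde A})\subseteq\DA$. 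For the reverse inclusion, $\DA=\DZ P_A$ is generated by the projections $P_A S_\nu S_\nu^*$ for arbitrary words $\nu\in B_m(X_Z)$; when $m$ is even these are precisely the telescoped images just described. When $m$ is odd I would invoke Lemma \ref{lem:SME3} to write $P_B=\sum_{\gamma\in E_Z}S_\gamma P_A S_\gamma^*$, whence $P_A S_\nu S_\nu^*=S_\nu P_B S_\nu^*=\sum_{\gamma}P_A S_{\nu\gamma}S_{\nu\gamma}^*$ is a finite sum over even-length words $\nu\gamma\in B_{m+1}(X_Z)$, each term of which already lies in $\Psi({\mathcal D}_{\tilde A})$. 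This establishes $\DA\subseteq\Psi({\mathcal D}_{\tilde A})$ and hence equality.

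I expect the genuinely delicate bookkeeping, rather than any deep difficulty, to be the consistent tracking of $P_A$ and $P_B$ through the identities of Lemmas \ref{lem:SME2}--\ref{lem:SME4} when telescoping the generators and, on the diagonal side, the reduction of odd-length cylinder projections to even-length ones via Lemma \ref{lem:SME3}. The only step carrying real content is the gauge computation above; the remainder is routine verification that $\Psi$ is compatible with the canonical diagonal.
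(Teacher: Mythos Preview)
Your proposal is correct and follows essentially the same route as the paper's proof: both invoke Lemmas \ref{lem:SME11} and \ref{lem:SME12} for the algebra isomorphism, both verify the gauge intertwining by computing $\rho^Z_t$ on the generators $S_{(A,\gamma_1\gamma_2)}$ and cancelling the degree-two factor against the $2t$ in condition (4), and both identify ${\mathcal D}_{\tilde A}$ with $P_A\DZ P_A=\DA$. The only difference is one of explicitness: the paper simply asserts that the cylinder projections of ${\mathcal D}_{\tilde A}$ are ``naturally identified'' with $P_A\DZ P_A$, whereas you spell out the double inclusion and in particular handle the odd-length words in $\DZ P_A$ via Lemma \ref{lem:SME3}, which is a welcome clarification rather than a different argument.
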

\begin{proof}
By Lemma \ref{lem:SME11} and Lemma \ref{lem:SME12},
we know that 
\begin{equation}
{\mathcal{O}}_{\tilde{A}}
=C^*(S_{(A,\gamma_1 \gamma_2)};(A,\gamma_1 \gamma_2) \in E_{\tilde{A}})
= P_A {\mathcal{O}}_Z P_A =\OA.
\end{equation}
Under the identification
between 
$C^*(S_{(A,\gamma_1 \gamma_2)};(A,\gamma_1 \gamma_2) \in E_{\tilde{A}})$
and
$P_A {\mathcal{O}}_Z P_A$
in Lemma \ref{lem:SME12},
 the $C^*$-subalgebra
$$
C^*(S_{(A,\gamma_1 \gamma_2)}\cdots S_{(A,\gamma_{n-1} \gamma_n)}
S_{(A,\gamma_{n-1} \gamma_n)}^* \cdots S_{(A,\gamma_1 \gamma_2)}^*;
(A,\gamma_1 \gamma_2), \dots, (A,\gamma_{n-1} \gamma_n) \in E_{\tilde{A}})
$$
of
$C^*(S_{(A,\gamma_1 \gamma_2)};(A,\gamma_1 \gamma_2) \in E_{\tilde{A}})$
generated by the projections
$$
S_{(A,\gamma_1 \gamma_2)}\cdots S_{(A,\gamma_{n-1} \gamma_n)}
S_{(A,\gamma_{n-1} \gamma_n)}^* \cdots S_{(A,\gamma_1 \gamma_2)}^*
$$
for 
$
(A,\gamma_1 \gamma_2), \dots, (A,\gamma_{n-1} \gamma_n) \in E_{\tilde{A}}
$
is naturally identified with
the $C^*$-subalgebra
$P_A \DZ P_A$
of $\DZ$.
Hence we know that
${\mathcal{D}}_{\tilde{A}} = \DA$.
By regarding the generating partial isometry
$S_{(A,\gamma_1 \gamma_2)}$ 
for 
$(A,\gamma_1 \gamma_2)\in E_{\tilde{A}}$
  as an element 
of $P_A \OZ P_A = \OA$, we have
\begin{align*}
\rho^{\tilde{A}}_{2t}(S_{(A,\gamma_1 \gamma_2)})
= & e^{2\pi\sqrt{-1} 2t}S_{(A,\gamma_1 \gamma_2)} \\
= & P_A e^{2\pi\sqrt{-1} t}S_{\gamma_1}e^{2\pi\sqrt{-1} t}S_{\gamma_2} \\
= & P_A \rho^Z_t(S_{\gamma_1}) \rho^Z_t(S_{\gamma_2}) \\
= & \rho^Z_t(P_A S_{\gamma_1}S_{\gamma_2}) 
\end{align*}
Since
$P_A S_{\gamma_1}S_{\gamma_2} \in P_A \OZ P_A=\OA$
and
$\rho^Z_t|_{P_A \OZ P_A} = \rho^A_{2t}$ on $\OA$,
we have
\begin{equation*}
 \rho^Z_t(P_A S_{\gamma_1}S_{\gamma_2})
=\rho^A_{2t}( P_A S_{\gamma_1}S_{\gamma_2})
=\rho^A_{2t}( S_{(A,\gamma_1 \gamma_2)})
\end{equation*}
so that  
$\rho^{\tilde{A}}_{2t} =\rho^A_{2t}$ for all $t \in \T$
and hence
$\rho^{\tilde{A}} =\rho^A$.
\end{proof}
We thus have
\begin{proposition}\label{prop:SMEmain}
Suppose that the Cuntz--Krieger triplets
$(\OA,\DA,\rho^A)$ and
$(\OB,\DB,\rho^B)$ 
are strong Morita equivalent in $1$-step.
Then 
the two-sided topological Markov shifts
$(\bar{X}_A, \bar{\sigma}_A)$
and
$(\bar{X}_B, \bar{\sigma}_B)$ are topologically conjugate.
\end{proposition}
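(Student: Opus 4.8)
The plan is to assemble the two-sided topological conjugacy $(\bar X_A,\bar\sigma_A)\cong(\bar X_B,\bar\sigma_B)$ out of the pieces already prepared, routing the argument through the auxiliary matrices $\tilde A$ and $\tilde B$. The analytic heart of the matter has in fact been carried out in Proposition \ref{prop:SME8} and Proposition \ref{prop:SME13}: the former produces the elementary equivalence $\tilde A=\tilde C\tilde D$, $\tilde B=\tilde D\tilde C$ together with the topological conjugacy of $(\bar X_{\tilde A},\bar\sigma_{\tilde A})$ with $(\bar X_{\tilde B},\bar\sigma_{\tilde B})$, while the latter identifies the Cuntz--Krieger triplet $(\mathcal O_{\tilde A},\mathcal D_{\tilde A},\rho^{\tilde A})$ with $(\OA,\DA,\rho^A)$. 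What remains is to supply the analogous identification on the $B$-side and to convert triplet isomorphisms into conjugacies of the underlying two-sided shifts.

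First I would record the $B$-side counterpart of Proposition \ref{prop:SME13}. The construction of $G_{\tilde B}$, the partial isometries $S_{(B,\gamma_1\gamma_2)}=P_B S_{\gamma_1}S_{\gamma_2}$, and the chains of identities proving Lemma \ref{lem:SME11} and Lemma \ref{lem:SME12} are completely symmetric under interchanging the roles of $P_A$ and $P_B$ (equivalently of $A$ and $B$); the only inputs used are the symmetric relations $P_A S_\gamma=S_\gamma P_B$, $P_B S_\gamma=S_\gamma P_A$ of Lemma \ref{lem:SME2}(iii) and condition (4) of Definition \ref{def:SME}. Running that argument verbatim with $P_B$ in place of $P_A$ yields an isomorphism of Cuntz--Krieger triplets $(\mathcal O_{\tilde B},\mathcal D_{\tilde B},\rho^{\tilde B})\cong(\OB,\DB,\rho^B)$.

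Next I would convert each triplet isomorphism into a topological conjugacy of two-sided shifts. As recorded in the remark following Definition \ref{def:SME}, an isomorphism $\Phi$ carrying $\DA$ onto $\DB$ and intertwining the gauge actions forces eventual conjugacy of the one-sided shifts and hence topological conjugacy of the two-sided shifts (via \cite[Corollary 3.5]{MaMZ2016} and \cite[Theorem 5.5]{MaJOT2015}). Applying this to the triplet isomorphism of Proposition \ref{prop:SME13} gives $(\bar X_A,\bar\sigma_A)\cong(\bar X_{\tilde A},\bar\sigma_{\tilde A})$, and applying it to the $B$-side isomorphism just obtained gives $(\bar X_{\tilde B},\bar\sigma_{\tilde B})\cong(\bar X_B,\bar\sigma_B)$. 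Concatenating these with the conjugacy $(\bar X_{\tilde A},\bar\sigma_{\tilde A})\cong(\bar X_{\tilde B},\bar\sigma_{\tilde B})$ of Proposition \ref{prop:SME8} yields the desired topological conjugacy $(\bar X_A,\bar\sigma_A)\cong(\bar X_B,\bar\sigma_B)$.

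Since Propositions \ref{prop:SME8} and \ref{prop:SME13} already carry the weight, the remaining difficulty is bookkeeping rather than substance. The one point demanding genuine care is verifying that the triplet isomorphism of Proposition \ref{prop:SME13} is gauge-equivariant in the precise form the remark requires: there the isomorphism is the identification $\mathcal O_{\tilde A}=P_A\OZ P_A=\OA$, under which the proposition establishes $\rho^{\tilde A}_t=\rho^A_t$ for all $t\in\T$ (the factor of two in condition (4) is absorbed because $t\mapsto 2t$ is onto $\T$), so the identity map serves as the required intertwining isomorphism and the hypothesis of the remark is met. The only other thing to confirm is that the symmetric $B$-side argument uses no hidden asymmetry in the definition of $G_{\tilde Z}$; inspecting Lemma \ref{lem:SME7} shows that the four vertex sets attached to $\tilde A$ and the four attached to $\tilde B$ are interchanged consistently, so no such asymmetry arises.
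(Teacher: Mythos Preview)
Your proposal is correct and follows essentially the same route as the paper: invoke Proposition \ref{prop:SME8} for the conjugacy $(\bar X_{\tilde A},\bar\sigma_{\tilde A})\cong(\bar X_{\tilde B},\bar\sigma_{\tilde B})$, use Proposition \ref{prop:SME13} (and its symmetric $B$-side analogue) together with \cite[Corollary 3.5]{MaMZ2016} and \cite[Theorem 5.5]{MaJOT2015} to obtain $(\bar X_A,\bar\sigma_A)\cong(\bar X_{\tilde A},\bar\sigma_{\tilde A})$ and $(\bar X_{\tilde B},\bar\sigma_{\tilde B})\cong(\bar X_B,\bar\sigma_B)$, and concatenate. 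The paper phrases the middle step as eventual conjugacy $\Rightarrow$ strongly continuous orbit equivalence $\Rightarrow$ two-sided conjugacy, which is exactly the content of the remark you cite.
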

\begin{proof}
Assume that
 the Cuntz--Krieger triplets
$(\OA,\DA,\rho^A)$ and
$(\OB,\DB,\rho^B)$ 
are strong Morita equivalent in $1$-step.
By Proposition \ref{prop:SME8},
the matrices $\tilde{A}, \tilde{B}$ 
are elementary equivalent so that 
their two-sided topological Markov shifts 
$(\bar{X}_{\tilde{A}}, \bar{\sigma}_{\tilde{A}})$
and
$(\bar{X}_{\tilde{B}}, \bar{\sigma}_{\tilde{B}})$ 
are topologically conjugate.
Proposition \ref{prop:SME13} with \cite[Corollary 3.5]{MaMZ2016}
ensures us that
the ons-sided topological Markov shifts
$({X}_{\tilde{A}}, {\sigma}_{\tilde{A}})$
and
$(X_A, \sigma_A)$ 
are eventually conjugate and hence strongly continuous orbit equivalent
in the sense of \cite{MaMZ2016}.
Since the latter property
yields topological conjugacy of their two-sided topological Markov shifts,
the two-sided topological Markov shifts 
$(\bar{X}_{\tilde{A}}, \bar{\sigma}_{\tilde{A}})$
and
$(\bar{X}_{A}, \bar{\sigma}_{A})$ 
are topologically conjugate.
Similarly we know that 
the two-sided topological Markov shifts 
$(\bar{X}_{\tilde{B}}, \bar{\sigma}_{\tilde{B}})$
and
$(\bar{X}_{A}, \bar{\sigma}_{B})$ 
are topologically conjugate.
Therefore we get the assertion.
\end{proof}

Now we reach one of the main results of the paper.
\begin{theorem}
Let $A, B$ be irreducible non-permutation matrices.
The Cuntz--Krieger triplets
$(\OA,\DA,\rho^A)$ and
$(\OB,\DB,\rho^B)$ 
are strong Morita equivalent 
if and only if
their two-sided topological Markov shifts
$(\bar{X}_A, \bar{\sigma}_A)$
and
$(\bar{X}_B, \bar{\sigma}_B)$ are topologically conjugate.
\end{theorem}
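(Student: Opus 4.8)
The plan is to deduce the theorem by concatenating the two one-step results already established, Proposition~\ref{prop:SME1} and Proposition~\ref{prop:SMEmain}, together with Williams' theorem~\cite{Williams} (topological conjugacy of two-sided shifts $\Longleftrightarrow$ strong shift equivalence of the defining matrices) and the elementary fact that topological conjugacy is an equivalence relation, in particular transitive. Neither direction requires new $C^*$-algebraic input beyond these ingredients.

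For the implication ``strong Morita equivalent $\Rightarrow$ topologically conjugate'', I would unwind the definition of strong Morita equivalence: by definition it means the two triplets are joined by a finite chain of strong Morita equivalences in $1$-step, so there are irreducible non-permutation matrices $A=A_0,A_1,\dots,A_n=B$ such that each adjacent pair of triplets $(\mathcal{O}_{A_i},\mathcal{D}_{A_i},\rho^{A_i})$ and $(\mathcal{O}_{A_{i+1}},\mathcal{D}_{A_{i+1}},\rho^{A_{i+1}})$ is strong Morita equivalent in $1$-step. Applying Proposition~\ref{prop:SMEmain} to each adjacent pair gives a topological conjugacy $(\bar{X}_{A_i},\bar{\sigma}_{A_i})\cong(\bar{X}_{A_{i+1}},\bar{\sigma}_{A_{i+1}})$ for every $i$, and composing these finitely many conjugacies via transitivity yields $(\bar{X}_A,\bar{\sigma}_A)\cong(\bar{X}_B,\bar{\sigma}_B)$.

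For the converse, I would begin with a topological conjugacy and invoke Williams' theorem to obtain a strong shift equivalence of $A$ and $B$, that is, a finite sequence $A=A_0,A_1,\dots,A_n=B$ of nonnegative matrices in which $A_i$ is elementary equivalent to $A_{i+1}$ for each $i$. Proposition~\ref{prop:SME1} then makes each adjacent pair of Cuntz--Krieger triplets strong Morita equivalent in $1$-step, and concatenating this chain of $1$-step equivalences exhibits $(\OA,\DA,\rho^A)$ and $(\OB,\DB,\rho^B)$ as strong Morita equivalent in $n$-step, hence strong Morita equivalent.

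The genuine content of the theorem has already been absorbed into Proposition~\ref{prop:SMEmain}, whose proof reconstructs the auxiliary graphs $G_{\tilde{A}},G_{\tilde{B}},G_{\tilde{C}},G_{\tilde{D}}$ purely from the one-step data and identifies $\tilde{A}=\tilde{C}\tilde{D}$, $\tilde{B}=\tilde{D}\tilde{C}$ together with triplet isomorphisms $\mathcal{O}_{\tilde{A}}\cong\OA$ and $\mathcal{O}_{\tilde{B}}\cong\OB$; relative to it the present statement is essentially bookkeeping, so I do not expect any serious obstacle. The one point I would check with some care is that the triplet formalism is legitimately available at every node of each chain, i.e. that the intermediate matrices $A_1,\dots,A_{n-1}$ are again irreducible non-permutation matrices. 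In the forward direction this is built into the hypothesis, since each node is by assumption a Cuntz--Krieger triplet; in the converse direction it follows from the standard fact that strong shift equivalence preserves irreducibility and cannot turn a non-permutation matrix into a permutation matrix.
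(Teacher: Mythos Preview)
Your proposal is correct and follows exactly the paper's approach: the paper's own proof consists of the two sentences ``If part comes from Proposition~\ref{prop:SME1}. The only if part follows from Proposition~\ref{prop:SMEmain},'' and you have simply spelled out the chaining and the appeal to Williams' theorem that these sentences leave implicit. Your added remark about the intermediate matrices being irreducible non-permutation is a point the paper silently passes over as well.
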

\begin{proof}
If part comes from Proposition \ref{prop:SME1}.
The only if part follows from Proposition \ref{prop:SMEmain}. 
\end{proof}

As a corollary we have
\begin{corollary}\label{cor:SMECK}
Let $A, B$ be irreducible non-permutation matrices.
The Cuntz--Krieger triplets
$(\OA,\DA,\rho^A)$ and
$(\OB,\DB,\rho^B)$ 
are strong Morita equivalent 
if and only if the matrices
$A$ and $B$ are strong shift equivalent.
\end{corollary}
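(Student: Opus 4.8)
The plan is to obtain the Corollary as an immediate consequence of the Theorem just proved, combined with Williams' classical classification of two-sided topological Markov shifts recalled in the Introduction. No new dynamical or $C^*$-algebraic work is required here: the entire substantive burden has already been discharged in establishing that strong Morita equivalence of the triplets is equivalent to topological conjugacy of the associated two-sided shifts (via Propositions \ref{prop:SME1} and \ref{prop:SMEmain}). The Corollary merely reinterprets the dynamical side of that equivalence in purely matrix-theoretic terms.

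Concretely, I would argue as follows. By the preceding Theorem, the Cuntz--Krieger triplets $(\OA,\DA,\rho^A)$ and $(\OB,\DB,\rho^B)$ are strong Morita equivalent if and only if the two-sided topological Markov shifts $(\bar{X}_A,\bar{\sigma}_A)$ and $(\bar{X}_B,\bar{\sigma}_B)$ are topologically conjugate. On the other hand, Williams' theorem \cite{Williams}, as recorded in the Introduction, asserts that $(\bar{X}_A,\bar{\sigma}_A)$ and $(\bar{X}_B,\bar{\sigma}_B)$ are topologically conjugate precisely when the matrices $A$ and $B$ are strong shift equivalent. Chaining these two equivalences then yields the stated assertion directly.

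The only point requiring any attention --- and it is the mildest of obstacles --- is to confirm that the two invoked results apply to the same class of matrices. The Theorem is stated for irreducible non-permutation matrices, which is exactly the standing hypothesis of the Corollary, while Williams' theorem is formulated for nonnegative matrices under the edge-shift conventions adopted at the beginning of Section 2. Since an irreducible non-permutation $\{0,1\}$-matrix and its edge-shift reformulation $A^G$ define topologically conjugate two-sided shifts, and since strong shift equivalence is preserved under this passage, the hypotheses of the two results line up and the composition of the two equivalences is legitimate. Hence the proof is complete.
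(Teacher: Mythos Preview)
Your argument is correct and matches the paper's own treatment: the paper introduces this result with ``As a corollary we have'' and gives no separate proof, since it follows immediately from the preceding Theorem together with Williams' classification, exactly as you outline. Your additional remark about the compatibility of the matrix conventions is a reasonable (if slightly over-cautious) check that the paper itself leaves implicit.
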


\section{Strong shift equivalence and circle actions on $\OA$}

It is well-known that two unital $C^*$-algebras 
$\A$ and $\B$
are strong Morita equivalent 
if and only if their stabilizations 
$\A\otimes\K$ and $\B\otimes\K$ 
are isomorphic  by Brown--Green--Rieffel Theorem \cite[Theorem 1.2]{BGR} 
(cf. \cite{BGR}, \cite{Combes}).
We will next study relationships between stabilized Cuntz--Krieger algebras
with their gauge actions
and strong shift equivalence matrices.
 We will investigate stabilizations of generalized gauge actions from a view point of 
flow equivalence. 

Recall that 
for a function $f\in C(X_A,\Z)$ 
and $t \in \T$,
an automorphism
$\rho_t^{A,f}\in\Aut(\OA)$ 
is defined by 
$
\rho_t^{A,f}(S_i) = U_t(f) S_i, i=1,\dots,N, t \in \T
$
for the unitary
$
U_t(f) = \exp({2\pi \sqrt{-1} t f})\in \DA
$
as in \eqref{eq:rhotf}.
It is easy to see that
the automorphisms
$\rho_t^{A,f}, t \in \T$ 
yield an action of 
$\T$ to $\OA$
 such that
$\rho^{A,f}_t(a) =a$
for all $a \in \DA$.
For $f \in C(X_A,\Z)$ 
and $n \in \Zp$, let us denote by $f^n$ 
the function 
$f^n(x) =\sum_{i=0}^{n-1}f (\sigma_A^i(x)), x \in X_A$.
We know that the following identity holds (cf. \cite[Lemma 3.1]{MaMZ2016})
\begin{equation}
\rho_t^{A,f}(S_\mu) = U_t(f^n)S_\mu,
\qquad f \in C(X_A,\Z),\,
\mu = (\mu_1,\dots,\mu_n) \in B_n(X_A), \,
 t \in \T.
\end{equation}

\medskip

For a $C^*$-algebra $\mathcal{A}$ without unit,
let $M({\mathcal{A}})$ stand for its multiplier $C^*$-algebra
defined by 
\begin{equation*}
M({\mathcal{A}}) = \{ a \in {\mathcal{A}}^{**} 
\mid a {\mathcal{A}} \subset {\mathcal{A}}, \, 
    {\mathcal{A}} a \subset {\mathcal{A}} \}
\end{equation*}
where ${\mathcal{A}}^{**}$ denotes the second dual
${({\mathcal{A}}^{*})}^{*}$ of the $C^*$-algebra $\mathcal{A}$.
An action $\alpha$ of $\T$ to $\mathcal{A}$ extends to 
  $M({\mathcal{A}})$ and is still denoted by $\alpha$.
For an action $\alpha$ of $\T$ to ${\mathcal{A}}$,
a unitary one-cocycle $u_t, t \in \T$ relative to $\alpha$
is a continuous map $t \in \T \rightarrow u_t \in \U(M({\mathcal{A}}))$
to the unitary group  $\U(M({\mathcal{A}}))$ 
satisfying $u_{t+s} = u_s \alpha_s(u_t), s,t \in \T$.
The following proposition has been proved in \cite{MaMZ2016}.
\begin{proposition}[{\cite[Proposition 4.3]{MaMZ2016}}] \label{prop:4.3}
Suppose that 
$A = CD$ and $B =DC$.
Then there exists an isomorphism
$\Phi:\SOA \rightarrow \SOB$ satisfying 
$\Phi(\SDA) = \SDB$
and a homomorphism $\phi:C(X_A,\Z) \rightarrow C(X_B,\Z)$ of ordered groups
such that
for each function
$f\in C(X_A,\Z)$
there exists a unitary one-cocycle
$u_t^f \in \U(M(\SOA))$  relative to 
$\rho^{A,f} \otimes\id$
such that 
\begin{equation}
\Phi \circ \Ad(u_t^f) \circ (\rho^{A,f}_t\otimes \id)
 = (\rho^{B,\phi(f)}_t\otimes\id) \circ \Phi
\quad
\text{ for }
 t \in \T. \label{eq:cocyclegauge}
\end{equation}
\end{proposition}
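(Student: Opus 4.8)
The plan is to realise both systems as complementary full corners of a single Cuntz--Krieger system and then transport the structure across a diagonal‑preserving stable isomorphism. Concretely, I would set $Z=\begin{bmatrix}0&C\\ D&0\end{bmatrix}$ and use the projections $P_C,P_D\in\DZ$ with $P_C+P_D=1$, $P_C\OZ P_C=\OA$, $P_D\OZ P_D=\OB$, $\DZ P_C=\DA$, $\DZ P_D=\DB$ recorded in \eqref{eq:4.1}. Since $G_Z$ is bipartite, the shift space splits as a disjoint union of two clopen pieces $X_Z^{C}$ and $X_Z^{D}$ according to whether the first edge lies in $E_C$ or $E_D$, with $P_C=\chi_{X_Z^{C}}$, $P_D=\chi_{X_Z^{D}}$, and $\sigma_Z$ interchanges the two pieces so that $\sigma_Z^{2}$ restricts to $\sigma_A$ on $X_Z^{C}\cong X_A$ and to $\sigma_B$ on $X_Z^{D}\cong X_B$.

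First I would produce the map $\phi$ and a single ambient action. Given $f\in C(X_A,\Z)$, extend it by zero across the bipartition, i.e. let $\tilde f\in C(X_Z,\Z)$ equal $f$ on $X_Z^{C}$ and $0$ on $X_Z^{D}$, and define $\phi(f)\in C(X_B,\Z)$ by restricting $\tilde f+\tilde f\circ\sigma_Z$ to $X_Z^{D}\cong X_B$. Using the $\OZ$‑analogue of the identity $\rho^{Z,g}_t(S_\mu)=U_t(g^{|\mu|})S_\mu$ together with $\tilde f^{2}=\tilde f+\tilde f\circ\sigma_Z$, a direct evaluation on the length‑two words $cd$ (identified with edges of $A$) and $dc$ (edges of $B$) gives $\tilde f^{2}=f$ on $X_Z^{C}$ and $\tilde f^{2}=f\circ\sigma_Z$ on $X_Z^{D}$; hence
\[
\rho^{Z,\tilde f}_t|_{\OA}=\rho^{A,f}_t,\qquad
\rho^{Z,\tilde f}_t|_{\OB}=\rho^{B,\phi(f)}_t\qquad(t\in\T).
\]
The zero extension is precisely what removes the factor $2$ present in \eqref{eq:gaugeZAB}. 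One then checks that $\phi$ is a homomorphism of ordered groups and that $\psi\circ\phi(f)=f\circ\sigma_A$ for the symmetrically defined $\psi$, so $\phi$ induces the isomorphism of $(H^A,H^A_+)$ and $(H^B,H^B_+)$.

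Next I would build the isomorphism $\Phi$. Because $P_C$ and $P_D$ are complementary full projections of the unital algebra $\OZ$ and both are fixed by every $\rho^{Z,g}$ (they lie in $\DZ$), the corners $\OA$ and $\OB$ are Morita equivalent via the invariant imprimitivity bimodule $P_C\OZ P_D$. By the Brown--Green--Rieffel theorem this yields an isomorphism $\Phi\colon\SOA\to\SOB$, which I would factor as $\Phi=\theta_D\circ\theta_C^{-1}$ through $\SOZ$, where $\theta_C\colon\SOZ\to\SOA$ and $\theta_D\colon\SOZ\to\SOB$ are the stabilised full‑corner isomorphisms; crucially $\Phi$ depends only on the bimodule, not on $f$. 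To obtain $\Phi(\SDA)=\SDB$ I would choose the connecting partial isometries from the multiplier algebra of $\DZ\otimes\mathcal{C}$, using $\DZ P_C=\DA$ and $\DZ P_D=\DB$, which is the diagonal‑preserving strengthening that the graph picture makes available. Finally, since the single action $\rho^{Z,\tilde f}\otimes\id$ on $\SOZ$ restricts on the full corners to $\rho^{A,f}\otimes\id$ and $\rho^{B,\phi(f)}\otimes\id$, transporting it through $\theta_C,\theta_D$ makes $\Phi$ equivariant up to the inner perturbation caused by the (non‑invariant) connecting isometries; that perturbation is exactly a unitary one‑cocycle $u^f_t\in\U(M(\SOA))$ relative to $\rho^{A,f}\otimes\id$, giving \eqref{eq:cocyclegauge}. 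I expect the main obstacle to be carrying out the constructions of $\Phi$ and $u^f$ so that the diagonal MASA is preserved and the perturbation is genuinely a one‑cocycle: generic stable isomorphisms do not respect Cartan/diagonal subalgebras, so the delicate point is to keep all connecting data inside the diagonal multiplier algebra while still filling up $\ell^2(\N)$, and then to verify the cocycle identity $u_{t+s}=u_s(\rho^{A,f}_s\otimes\id)(u_t)$ for the perturbation obtained this way.
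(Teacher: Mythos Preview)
Your outline is correct and matches the paper's approach: the paper writes $\tilde f=f\oplus 0\in\DZ$, sets $\phi(f)=\sum_{d\in E_D}S_d f S_d^*$ (which is your $\tilde f\circ\sigma_Z|_{X_Z^D}$), checks that $\rho^{Z,\tilde f}_t$ restricts to $\rho^{A,f}_t$ on $\OA$ and to $\rho^{B,\phi(f)}_t$ on $\OB$, and then defines $\Phi=\Ad(v_Bv_A^*)$ and $u_t^f=(v_Bv_A^*)^*(\rho^{Z,\tilde f}_t\otimes\id)(v_Bv_A^*)$ for Brown-type partial isometries $v_A,v_B\in M(\SOZ)$ implementing the full-corner stable isomorphisms.

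One correction on the point you flagged as delicate: the connecting isometries $v_A,v_B$ \emph{cannot} be chosen in $M(\DZ\otimes\mathcal{C})$, since that algebra is commutative and a partial isometry there must have equal source and range projections, whereas $v_A^*v_A=1\otimes 1\ne P_C\otimes 1=v_Av_A^*$. What the paper actually does is build $v_A$ from blocks $S_{c(k)}S_{d_k}S_{d_k}^*\otimes s_{n_k,n}$; these lie in $\OZ\otimes\K$, not in the diagonal, but each one \emph{normalizes} $\DZ\otimes\mathcal{C}$ (its initial and final projections are diagonal and conjugation by it preserves the diagonal), and that is precisely what yields $\Ad(v_A^*)(\SDA)=\SDZ$ and hence $\Phi(\SDA)=\SDB$. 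Once this is in place, the cocycle identity for $u_t^f$ follows formally from the fact that $\rho^{Z,\tilde f}_t\otimes\id$ fixes $P_C\otimes 1$ and $P_D\otimes 1$.
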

In this section, 
we will first review the proof in \cite{MaMZ2016} of the above proposition to
investigate the K-theoretic behavior of the above isomorphism
$\Phi:\SOA \rightarrow \SOB$.
The proof of the above proposition
is based on the the proof of \cite {MaETDS2004},
in which Morita equivalence of $C^*$-algebras has been used
(cf. \cite{Brown}, \cite{BGR}, \cite{Combes}, \cite{CKRW}, \cite{MaYMJ2007}, \cite{MPT}, \cite{Tomforde}).  

Suppose that two nonnegative square matrices $A$ and $B$ 
are elementary equivalent
such that $A =CD$ and $B = DC$.
As in the previous section, we may take  and fix bijections
$\varphi_{A,CD}$
from $E_A$ to a subset of $E_C \times E_D$
and 
$\varphi_{B,DC}$
from $E_B$ to a subset of $E_D \times E_C$.
 We set the square matrix
$
Z =
\begin{bmatrix}
0 & C \\
D & 0
\end{bmatrix}
$
as block matrix,
and use the same notation as in the previous sections.

For an arbitrary fixed 
function $f \in C(X_A,\Z)$,
we may regard it as an element of 
$\DA$ and hence of $\DZ$
by identifying it with 
$f\oplus 0$ in $\DA \oplus \DB = \DZ$.
As
$$
 \exp{(2\pi\sqrt{-1}t (f\oplus 0))}
 =\exp{(2\pi\sqrt{-1}t f)} \oplus P_D
 \in \U(\DZ),
$$
the automorphism
$\rho^{Z,f\oplus 0}_t$ of $\OZ$
for $t \in \T$ defined by \eqref{eq:rhotf}
satisfies
\begin{equation}
\rho^{Z,f\oplus 0}_t(S_c) = \exp{(2\pi\sqrt{-1}t f)}S_c
\quad \text{ for } c \in E_C,\qquad
\rho^{Z,f\oplus 0}_t(S_d) = S_d
\quad \text{ for } d \in E_D. \label{eq:4.2}
\end{equation}
Take 
 $a \in E_A, b \in E_B$ satisfying
$
\varphi_{A,CD}(a) = cd,
\varphi_{B,DC}(b) = dc,
$
The equalities \eqref{eq:4.2} imply
\begin{align*}
\rho^{Z,f\oplus 0}_t(S_c S_d) 
&
 =\exp{(2\pi\sqrt{-1}t f)} S_c S_d 
 =\rho^{A,f}_t(S_{a}), \\ 
\rho^{Z,f\oplus 0}_t(S_d S_c) 
& 
 = S_d \exp{(2\pi\sqrt{-1}t f)} S_c  
 = S_d \exp{(2\pi\sqrt{-1}t f)} S_d^* S_{b}.
\end{align*} 
We set 
$\phi(f) = \sum_{d\in E_D}S_d f S_d^* \in \DZ$.
As
$P_D \phi(f) P_D = \phi(f)$,
we see that
$\phi(f) \in \DB$ and hence $\phi(f) \in C(X_B,\Z)$ 
which satisfies 
\begin{equation*}
\sum_{d \in E_D}S_d \exp{(2\pi\sqrt{-1}t f)}S_d^*
=
\exp{(2\pi\sqrt{-1}t\phi(f))} \in \U(\DB).
\end{equation*}
We similarly set 
$\psi(g) =\sum_{c \in E_C} S_c g S_c^* \in C(X_A,\Z)$
for $g \in C(X_B,\Z)$.
We thus see the following lemma.
\begin{lemma}[{\cite[Lemma 4.1]{MaMZ2016}}]
For $f \in C(X_A,\Z), g \in C(X_B,\Z)$ and $t \in \T$, 
we have 
\begin{align}
\rho^{Z,f\oplus 0}_t(S_c S_d) =\rho^{A,f}_t(S_{a}), & \qquad
\rho^{Z,f\oplus 0}_t(S_d S_c) =\rho^{B,\phi(f)}_t(S_{b}),
 \label{eq:rhozf} \\
\rho^{Z,0\oplus g}_t(S_d S_c) =\rho^{B,g}_t(S_{b}), & \qquad
\rho^{Z,0\oplus g}_t(S_c S_d) =\rho^{A,\psi(g)}_t(S_{a})
\label{eq:rhozg}
\end{align}
 where
$a \in E_A, b \in E_B$
and
$c \in E_C, d\in E_D$
are satisfying
$
\varphi_{A,CD}(a) = cd
$ and
$
\varphi_{B,DC}(b) = dc$,
respectively.
\end{lemma}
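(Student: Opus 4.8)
The plan is to use that each of $\rho^{Z,f\oplus 0}_t$ and $\rho^{Z,0\oplus g}_t$ is a $*$-automorphism of $\OZ$, so that its value on a product $S_cS_d$ or $S_dS_c$ is completely determined by its values on the single generators $S_c,S_d$. For the twist $f\oplus 0$ those values are recorded in \eqref{eq:4.2}, namely $\rho^{Z,f\oplus 0}_t(S_c)=\exp(2\pi\sqrt{-1}tf)S_c$ and $\rho^{Z,f\oplus 0}_t(S_d)=S_d$. First I would record the symmetric statement for $0\oplus g$, which follows the same computation with the roles of $P_C$ and $P_D$ interchanged: $\rho^{Z,0\oplus g}_t(S_c)=S_c$ and $\rho^{Z,0\oplus g}_t(S_d)=\exp(2\pi\sqrt{-1}tg)S_d$. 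The only inputs here are that $f\oplus 0$ is supported under $P_C$ while $0\oplus g$ is supported under $P_D$, together with the relations $P_CS_c=S_c$, $P_DS_c=0$, $P_DS_d=S_d$, $P_CS_d=0$, all immediate from $S_cS_c^*\le P_C$ and $S_dS_d^*\le P_D$.

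The two ``straight'' identities (the first of \eqref{eq:rhozf} and the first of \eqref{eq:rhozg}, i.e.\ the ones matching $S_cS_d=S_a$ and $S_dS_c=S_b$ respectively) are then immediate by multiplicativity. For instance
\[
\rho^{Z,f\oplus 0}_t(S_cS_d)=\bigl(\exp(2\pi\sqrt{-1}tf)S_c\bigr)S_d=\exp(2\pi\sqrt{-1}tf)S_cS_d=\exp(2\pi\sqrt{-1}tf)S_a=\rho^{A,f}_t(S_a),
\]
using the identification $S_cS_d=S_a$ and the definition \eqref{eq:rhotf} of $\rho^{A,f}_t$; the identity $\rho^{Z,0\oplus g}_t(S_dS_c)=\rho^{B,g}_t(S_b)$ is obtained the same way from $S_dS_c=S_b$.

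The two ``crossed'' identities (the second of \eqref{eq:rhozf} and the second of \eqref{eq:rhozg}) are where $\phi$ and $\psi$ enter, and this is the only spot needing a genuine computation. For the second identity of \eqref{eq:rhozf}, multiplicativity gives $\rho^{Z,f\oplus 0}_t(S_dS_c)=S_d\exp(2\pi\sqrt{-1}tf)S_c$, and I would match this against $\rho^{B,\phi(f)}_t(S_b)=\exp(2\pi\sqrt{-1}t\phi(f))S_dS_c$. Here I would invoke the identity $\exp(2\pi\sqrt{-1}t\phi(f))=\sum_{d'}S_{d'}\exp(2\pi\sqrt{-1}tf)S_{d'}^*$ recorded just before the statement, and then simplify $\sum_{d'}S_{d'}\exp(2\pi\sqrt{-1}tf)S_{d'}^*S_dS_c$ using the orthogonality relation $S_{d'}^*S_d=\delta_{d',d}S_d^*S_d$ (distinct range projections $S_{d'}S_{d'}^*$ being orthogonal) together with $S_d^*S_dS_c=S_c$, which holds because $S_dS_c\ne 0$ forces $Z^G(d,c)=1$ and hence $S_cS_c^*\le S_d^*S_d$. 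The completely symmetric computation with $\psi(g)=\sum_cS_cgS_c^*$ and $S_c^*S_cS_d=S_d$ handles the second identity of \eqref{eq:rhozg}.

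I do not expect a real obstacle: the whole argument is bookkeeping once one tracks which of $P_C,P_D$ each object sits under. The single point that deserves to be stated explicitly --- and the place where the crossed identities actually close --- is the commutation of $\exp(2\pi\sqrt{-1}tf)$ past the diagonal projection $S_d^*S_d$ inside the commutative algebra $\DZ$, which licenses the rewriting $S_d\exp(2\pi\sqrt{-1}tf)S_d^*S_dS_c=S_d\exp(2\pi\sqrt{-1}tf)S_c$ and thereby identifies $\exp(2\pi\sqrt{-1}t\phi(f))S_b$ with $\rho^{Z,f\oplus 0}_t(S_dS_c)$.
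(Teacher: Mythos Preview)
Your argument is correct and is essentially the same as the paper's: the paper proves this lemma in the paragraph immediately preceding it (concluding ``We thus see the following lemma''), computing $\rho^{Z,f\oplus 0}_t(S_cS_d)$ and $\rho^{Z,f\oplus 0}_t(S_dS_c)$ by multiplicativity from \eqref{eq:4.2}, and handling the crossed case via the identity $\sum_{d'}S_{d'}\exp(2\pi\sqrt{-1}tf)S_{d'}^*=\exp(2\pi\sqrt{-1}t\phi(f))$ together with $S_{d'}^*S_d=\delta_{d',d}S_d^*S_d$ and $S_d^*S_dS_c=S_c$. One tiny remark: the step $S_d\exp(2\pi\sqrt{-1}tf)S_d^*S_dS_c=S_d\exp(2\pi\sqrt{-1}tf)S_c$ follows directly from $S_d^*S_dS_c=S_c$ without invoking commutation in $\DZ$, so that last comment is slightly more than you need.
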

We note that the  homomorphisms
$\phi:C(X_A,\Z) \rightarrow  C(X_B,\Z)$
and
$\psi:C(X_B,\Z) \rightarrow  C(X_A,\Z)$
 satisfy the equalities
\begin{equation}
(\psi \circ \phi)(f) = f \circ \sigma_A,\qquad
(\phi \circ \psi)(g) = g \circ \sigma_B
\end{equation}
for $f \in C(X_A,\Z)$ and $g \in C(X_B,\Z)$
(\cite[Lemma 4.2]{MaMZ2016}).

By \cite[Proposition 4.1]{MaETDS2004},
one may find partial isometries
$v_{A}, v_{B} \in M(\SOZ)$
such that
\begin{equation}
v_{A}^*v_{A} = v_{B}^*v_{B} =1 \otimes 1,
\qquad 
v_{A}v_{A}^* = P_C \otimes 1,
\qquad
v_{B}v_{B}^* = P_D \otimes 1. \label{eq:vAvB}
\end{equation}
Since 
\begin{equation}
\Ad(v_{A}^*):\SOA \rightarrow \SOZ
\quad
\text{ and }
\quad
\Ad(v_{B}^*):\SOB \rightarrow \SOZ \label{eq:isomSO}
\end{equation}
are isomorphisms satisfying
\begin{equation*}
\Ad(v_{A}^*)(\SDA) = \SDZ
\quad
\text{ and }
\quad
\Ad(v_{B}^*)(\SDB) = \SDZ.
\end{equation*}
By putting 
\begin{gather}
w = v_{B}v_{A}^* \in M(\SOZ), \label{eq:wvBvA} \\
\Phi =\Ad(w): \SOA\rightarrow\SOB, \label{eq: Phi} \\
u_t^{A,f} = w^*(\rho^{Z,f\oplus 0}_t \otimes\id)(w) \quad \text{ for } f \in  C(X_A,\Z),\\ u_t^{B,g} = w(\rho^{Z,0\oplus g}_t \otimes\id)(w^*) \quad \text{ for } g \in  C(X_B,\Z), \label{eq:utf}
\end{gather}
they satisfy 
$\Phi(\SDA) = \SDB$ and the equalities
\begin{gather*}
\Phi \circ \Ad(u_t^{A,f}) \circ (\rho^{A,f}_t\otimes \id)
 = (\rho^{B,\phi(f)}_t\otimes\id) \circ \Phi
\quad
\text{ for }
 f \in  C(X_A,\Z), \\
\Phi \circ  (\rho^{A, \psi(g)}_t\otimes \id)
 = \Ad(u_t^{B,g}) \circ (\rho^{B,g}_t\otimes\id) \circ \Phi
\quad
\text{ for }
g \in  C(X_B,\Z).
\end{gather*}

\medskip

The above discussion is a sketch of the proof of Proposition \ref{prop:4.3}
given in \cite{MaMZ2016}.

In what follows, we will reconstruct partial isometries
$v_A, v_B$ satisfying  \eqref{eq:vAvB}
to  investigate the K-theoretic behavior of the map 
$\Phi: \SOA \rightarrow \SOB$
in the following section.
The idea of the reconstruction is due to
the proof of \cite[Lemma 2.5]{Brown} (cf. \cite[Proposition 4.1]{MaETDS2004}).

We are assuming that $A=CD, B= DC$. 
Keep the notations as in the preceding section.
Put 
$E_C= \{ c_1, \dots,c_{N_C}\}$
and
$E_D= \{ d_1, \dots,d_{N_D}\}$
for the matrices $C$ and $D$ respectively.
For $k =1,\dots,N_D$, take $c(k) \in E_C$ such that 
$c(k) d_k \in B_2(X_Z)$ so that we have
\begin{equation}
S_{c(k)}^*S_{c(k)} \ge S_{d_k} S_{d_k}^*. \label{eq:ckd}
\end{equation}
Similarly 
for $l =1,\dots,N_C$, take $d(l) \in E_D$ such that 
$d(l) c_l \in B_2(X_Z)$ so that we have
\begin{equation}
S_{d(l)}^*S_{d(l)} \ge S_{c_l} S_{c_l}^*. \label{eq:dlc}
\end{equation}
Put
\begin{align}
U_0 = P_C, \qquad  U_k & = S_{c(k)} S_{d_k} S_{d_k}^* 
\quad \text{ for } k=1,\dots,N_D, \label{eq:ak}\\
T_0 = P_D, \qquad  T_l & = S_{d(l)} S_{c_l} S_{c_l}^* 
\quad \text{ for } l=1,\dots,N_C. \label{eq:bl}
\end{align}
We then have
\begin{align}
\sum_{k=1}^{N_D} U_k^* U_k 
& =  \sum_{k=1}^{N_D}S_{d_k} S_{d_k}^* S_{c(k)} ^*S_{c(k)} S_{d_k} S_{d_k}^*
   =  \sum_{k=1}^{N_D}S_{d_k} S_{d_k}^* =P_D, \label{eq:akpd} \\
\sum_{k=1}^{N_C} T_l^* T_l 
& =  \sum_{l=1}^{N_C}S_{c_l} S_{c_l}^* S_{d(l)} ^*S_{d(l)} S_{c_l} S_{c_l}^*
   =  \sum_{l=1}^{N_C}S_{c_l} S_{c_l}^* =P_C. \label{eq:blpc} 
\end{align} 
We decompose the set  $\N$ of natural numbers
into disjoint infinite subsets $\N = \cup_{j=1}^{\infty} {\N}_j$,
and decompose $\N_j$ for each $j$ once again into disjoint infinite sets
$\N_j = \cup_{k=0}^{\infty} {\N}_{j_k}.$
Let $\{ e_{i,j} \}_{i,j \in \N}$ be a set of matrix units which generate the algebra 
${\K} = \K(\ell^2(\N)).$
Put the projections
$f_j = \sum_{i\in {\N}_j} e_{i,i}$ 
and
$f_{j_k} = \sum_{i\in {\N}_{j_k}} e_{i,i}.$
Take a partial isometry
$s_{j_k,j}$ such that
$
s_{j_k,j}^*s_{j_k,j} = f_j,
s_{j_k,j}s_{j_k,j}^* = f_{j_k}
$
and put $s_{j,j_k} =s_{j_k,j}^*$.
We set for $n=1,2,\dots, $
\begin{align*}
u_n = \sum_{k=1}^{N_D} U_k \otimes s_{n_k,n}, 
& \qquad w_n = P_C \otimes s_{{n_0},n} +  u_n,\\ 
t_n = \sum_{l=1}^{N_C} T_l \otimes s_{n_l,n},
& \qquad z_n =  P_D \otimes s_{{n_0},n}  + t_n. 
\end{align*}
Then we have
\begin{lemma}
Keep the above notations.
\begin{enumerate}
\renewcommand{\theenumi}{\roman{enumi}}
\renewcommand{\labelenumi}{\textup{(\theenumi)}}
\item$w_n^* w_n = 1\otimes f_n$ and $w_n w_n^* \le P_C \otimes f_n$.
\item$z_n^* z_n = 1\otimes f_n$ and $z_n z_n^* \le P_D \otimes f_n$.
\end{enumerate}
\end{lemma}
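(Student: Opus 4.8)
The plan is to treat (i) and (ii) in parallel, since they are interchanged by swapping the roles of $C$ and $D$ (equivalently $U_k \leftrightarrow T_l$, $P_C \leftrightarrow P_D$, and the identity \eqref{eq:akpd} $\leftrightarrow$ \eqref{eq:blpc}); so I would carry out the computation for $w_n$ in full and then simply remark that the argument for $z_n$ is verbatim the same after this substitution. The first move is cosmetic but clarifying: absorb the leading term into the sum by writing $w_n = \sum_{k=0}^{N_D} U_k \otimes s_{n_k,n}$ with the convention $U_0 = P_C$, so that every term has a single form.

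For the source projection, I would expand $w_n^* w_n$ and reduce the Hilbert-space factors first. The key computation is the orthogonality relation $s_{n_k,n}^* s_{n_{k'},n} = \delta_{k,k'} f_n$: writing $s_{n_k,n}^* = s_{n,n_k}$, the product $s_{n,n_k} s_{n_{k'},n}$ vanishes unless $k = k'$ because the range projection $f_{n_{k'}}$ of $s_{n_{k'},n}$ and the source projection $f_{n_k}$ of $s_{n,n_k}$ are supported on the pairwise disjoint index sets $\N_{n_k}$, and equals $f_n$ when $k = k'$. This collapses the double sum to $w_n^* w_n = \bigl(\sum_{k=0}^{N_D} U_k^* U_k\bigr) \otimes f_n$. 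Then $U_0^* U_0 = P_C$ together with the already-established identity \eqref{eq:akpd}, namely $\sum_{k=1}^{N_D} U_k^* U_k = P_D$, and the relation $P_C + P_D = 1$ give $\sum_{k=0}^{N_D} U_k^* U_k = 1$, hence $w_n^* w_n = 1 \otimes f_n$.

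For the inequality $w_n w_n^* \le P_C \otimes f_n$, I would avoid expanding $w_n w_n^*$ directly. Since $w_n^* w_n$ is a projection, $w_n$ is a partial isometry whose range projection is $w_n w_n^*$, so it suffices to show that the projection $P_C \otimes f_n$ dominates it, i.e. that $(P_C \otimes f_n) w_n = w_n$. This factors into two one-line checks on each summand: $P_C U_k = U_k$ for every $k$ (trivially for $k=0$ since $U_0 = P_C$, and for $k \ge 1$ because $S_{c(k)} S_{c(k)}^* \le P_C$ forces $P_C S_{c(k)} = S_{c(k)}$), and $f_n s_{n_k,n} = s_{n_k,n}$ (because $\N_{n_k} \subset \N_n$ gives $f_{n_k} \le f_n$, while $f_{n_k}$ is the range projection of $s_{n_k,n}$). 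These together yield $(P_C \otimes f_n) w_n = w_n$, whence $w_n w_n^* = (P_C \otimes f_n) w_n w_n^* \le P_C \otimes f_n$.

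There is no serious obstacle here; everything reduces to the Cuntz--Krieger relations already recorded and to the combinatorics of the decomposition $\N_j = \cup_k \N_{j_k}$. The only point demanding care is the bookkeeping with the partial isometries $s_{j_k,j}$: keeping the source and range conventions straight and correctly reading off the orthogonality relation (from disjointness of the $\N_{n_k}$) and the domination $f_{n_k} \le f_n$ (from the nesting $\N_{n_k} \subset \N_n$). Once those relations are pinned down, both parts, and their $D$-analogues for $z_n$, are immediate.
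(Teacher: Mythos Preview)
Your proof is correct and follows essentially the same approach as the paper: both use the orthogonality $s_{n_k,n}^* s_{n_{k'},n} = \delta_{k,k'} f_n$ together with \eqref{eq:akpd} for the first identity, and both use that each $U_k$ has range under $P_C$ and each $s_{n_k,n}$ has range under $f_n$ for the inequality. The only cosmetic difference is that for $w_n w_n^*$ the paper expands explicitly to $P_C \otimes f_{n_0} + \sum_{k=1}^{N_D} S_{c(k)} S_{d_k} S_{d_k}^* S_{c(k)}^* \otimes f_{n_k}$ and then bounds each summand, whereas you bypass the expansion by checking $(P_C \otimes f_n) w_n = w_n$; the underlying facts are identical.
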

\begin{proof}
(i)
Since $u_n^* u_n = P_D \otimes f_n$, we have
$$
w_n^* w_n = P_C \otimes f_{n} + u_n^* u_n = P_C \otimes f_n + P_D \otimes f_n 
                = 1 \otimes f_n. 
$$
On the other hand, we know that 
$u_n(P_C \otimes s_{n,n_0}) =(P_C \otimes s_{n,n_0}) u_n^* =0$
so that we have 
$$
w_n w_n^*
 = P_C \otimes f_{n_0} + u_n u_n^* 
 = P_C \otimes f_{n_0} +
\sum_{k=1}^{N_D} S_{c(k)} S_{d_k}S_{d_k}^*S_{c(k)}^* \otimes f_{n_k}. 
$$
As $f_{n_0}, f_{n_k} \le f_n$,
we have 
$$
w_n w_n^* \le P_C \otimes f_n.
$$
(ii) is similarly shown to (i).
\end{proof}
We will reconstruct and study the unitary $v_A$
 in \eqref{eq:vAvB}.
Let $f_{n,m}$ be a partial isometry satisfying
$f_{n,m}^*f_{n,m} = f_m,\, f_{n,m}f_{n,m}^* = f_n.$
We put
\begin{align*}
v_1 &  = w_1 = P_C \otimes s_{1_0,1} + u_1, \\
v_{2n} & = (P_C \otimes f_n - v_{2n-1}v_{2n-1}^*)(P_C \otimes f_{n,n+1})
\quad \text{ for } 1\le n \in \N, \\
v_{2n-1} & = w_n(1\otimes f_n - v_{2n-2}^*v_{2n-2}) \quad \text{ for } 2\le n \in \N. 
\end{align*} 
\begin{lemma}Keep the above notations.
\begin{enumerate}
\renewcommand{\theenumi}{\roman{enumi}}
\renewcommand{\labelenumi}{\textup{(\theenumi)}}
\item$v_{2n-2}^* v_{2n-2} + v_{2n-1}^* v_{2n-1} = 1\otimes f_n$.
\item$v_{2n-1} v_{2n-1}^* + v_{2n} v_{2n}^* = P_C\otimes f_n$.
\end{enumerate}
\end{lemma}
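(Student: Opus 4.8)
The plan is to prove the two identities simultaneously by induction on $n$, strengthening the statement so that the inductive hypothesis also records where the relevant source and range projections sit; these identities are exactly the orthogonality relations that will let the $v_k$ be assembled into the partial isometry $v_A$ with $v_A^* v_A = 1\otimes 1$ and $v_A v_A^* = P_C\otimes 1$. Concretely, I would carry along the two auxiliary bounds $v_{2n-1}v_{2n-1}^* \le P_C \otimes f_n$ and $v_{2n}^* v_{2n} \le P_C \otimes f_{n+1}$, together with the assertion that each $v_k$ is a partial isometry, so that $v_k^* v_k$ and $v_k v_k^*$ are genuine projections. Adopting the convention $v_0 = 0$, the base case $n=1$ is immediate: $v_1 = w_1$ gives $v_1^* v_1 = w_1^* w_1 = 1\otimes f_1$ and $v_1 v_1^* = w_1 w_1^* \le P_C \otimes f_1$ by the preceding lemma, which is (i) for $n=1$.

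The engine of the induction is a single compression principle applied in two guises. For (i), I would write $v_{2n-1} = w_n P$ with $P = 1\otimes f_n - v_{2n-2}^* v_{2n-2}$; by the inductive hypothesis $v_{2n-2}^* v_{2n-2}$ is a projection dominated by $1\otimes f_n$, so $P$ is a projection and, using $w_n^* w_n = 1\otimes f_n$,
\[
v_{2n-1}^* v_{2n-1} = P\,(w_n^* w_n)\,P = P(1\otimes f_n)P = P = 1\otimes f_n - v_{2n-2}^* v_{2n-2},
\]
which is exactly (i). For (ii), set $Q_n = P_C \otimes f_n - v_{2n-1}v_{2n-1}^*$; since $v_{2n-1}v_{2n-1}^* \le P_C \otimes f_n$, the element $Q_n$ is a projection below $P_C \otimes f_n$, and using $f_{n,n+1}f_{n,n+1}^* = f_n$ one gets
\[
v_{2n} v_{2n}^* = Q_n\,(P_C \otimes f_n)\,Q_n = Q_n = P_C \otimes f_n - v_{2n-1}v_{2n-1}^*,
\]
which is (ii).

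It then remains to propagate the auxiliary bounds so that the next stage is fed correctly. From $v_{2n-1} = w_n P$ and $0 \le P \le 1$ I obtain $v_{2n-1}v_{2n-1}^* = w_n P w_n^* \le w_n w_n^* \le P_C \otimes f_n$, and since $v_{2n-1}^* v_{2n-1} = P$ is a projection, $v_{2n-1}$ is a partial isometry. For the even step, $v_{2n} = Q_n(P_C \otimes f_{n,n+1})$ is a partial isometry by a direct idempotency check using $f_{n,n+1}f_{n,n+1}^* = f_n$, and transporting the subprojection $Q_n \le P_C \otimes f_n$ by the shift gives $v_{2n}^* v_{2n} = (P_C \otimes f_{n,n+1}^*)\,Q_n\,(P_C \otimes f_{n,n+1}) \le P_C \otimes f_{n+1}$, where $f_{n,n+1}^* f_n f_{n,n+1} = f_{n+1}$. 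This is precisely the hypothesis needed to run (i) at level $n+1$, closing the induction.

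There is no deep obstacle here: the content is entirely the algebra of partial isometries and matrix units. The point requiring the most care is the bookkeeping that keeps the two compression steps legitimate, namely verifying at each stage that $v_k^* v_k$ and $v_k v_k^*$ are honest projections and that they are dominated by the correct corner $1\otimes f_n$ or $P_C \otimes f_n$; this is what licenses replacing $P(1\otimes f_n)P$ by $P$ and $Q_n(P_C\otimes f_n)Q_n$ by $Q_n$. These facts rely on the source--range relations $f_{n,m}^* f_{n,m} = f_m$ and $f_{n,m}f_{n,m}^* = f_n$ of the shifts, together with $w_n^* w_n = 1\otimes f_n$ and $w_n w_n^* \le P_C \otimes f_n$ from the preceding lemma, so the chief task is simply to order the induction so that each needed domination is already in hand --- which the strengthened hypothesis guarantees.
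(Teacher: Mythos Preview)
Your proposal is correct and follows essentially the same approach as the paper: both arguments reduce (i) to the compression identity $P(w_n^*w_n)P = P$ with $P = 1\otimes f_n - v_{2n-2}^*v_{2n-2}$, and (ii) to $Q_n(P_C\otimes f_n)Q_n = Q_n$ with $Q_n = P_C\otimes f_n - v_{2n-1}v_{2n-1}^*$. The paper presents these two computations directly without explicitly justifying that $P$ and $Q_n$ are projections dominated by $1\otimes f_n$ and $P_C\otimes f_n$ respectively; your induction with the strengthened hypotheses $v_{2n-1}v_{2n-1}^*\le P_C\otimes f_n$ and $v_{2n}^*v_{2n}\le P_C\otimes f_{n+1}$ makes those auxiliary facts explicit, which is a cleaner organization of the same argument.
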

\begin{proof}
(i) As $w_n^* w_n = 1 \otimes f_n$,
we have
\begin{align*}
& v_{2n-2}^* v_{2n-2} + v_{2n-1}^* v_{2n-1} \\
= & v_{2n-2}^* v_{2n-2} + (1 \otimes f_n -v_{2n-2}^*v_{2n-2})w_n^*w_n(1 \otimes f_n -v_{2n-2}^*v_{2n-2}) \\
= & v_{2n-2}^* v_{2n-2} + 1 \otimes f_n -v_{2n-2}^*v_{2n-2} \\
 = & 1\otimes f_n.
\end{align*}

(ii) We have 
\begin{align*}
   & v_{2n-1} v_{2n-1}^* + v_{2n} v_{2n}^* \\
= & v_{2n-1} v_{2n-1}^* + (P_C\otimes f_n - v_{2n-1} v_{2n-1}^*)(P_C\otimes f_{n}) 
    (P_C\otimes f_n - v_{2n-1} v_{2n-1}^*) \\
= & v_{2n-1} v_{2n-1}^* + P_C\otimes f_n - v_{2n-1} v_{2n-1}^* \\
= & P_C\otimes f_n.
\end{align*}
\end{proof}

By the above lemma, one may see that 
the summations
$\sum_{n=1}^\infty v_{2n-2}$
and
$\sum_{n=1}^\infty v_{2n-1}$
 converge in $M(\OZ\otimes\K)$ 
to certain partial isometries written $v_{ev}$ and $v_{od}$ respectively
 in the strict topology of the multiplier algebra of $\OZ\otimes \K$.
Similarly we obtain a partial isometry
$v_A =\sum_{n=1}^\infty v_{n}$
in $M(\OZ\otimes \K)$ in the strict topology.
Therefore we have the next lemma.
\begin{lemma}
The partial isometries $v_{ev}, v_{od} $ and $v_A$ defined above satisfy the following relations:
\begin{enumerate}
\renewcommand{\theenumi}{\roman{enumi}}
\renewcommand{\labelenumi}{\textup{(\theenumi)}}
\item$v_A = v_{od} + v_{ev}.$
\item$v_{od}^* v_{od} + v_{ev}^* v_{ev} = 1\otimes 1.$
\item$v_{od} v_{od}^* + v_{ev} v_{ev}^* = P_C\otimes 1.$
\item$v_A^*v_A = 1 \otimes 1$ and $ v_A v_A^* = P_C \otimes 1$.
\end{enumerate}
\end{lemma}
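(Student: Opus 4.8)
The plan is to deduce all four relations from the two identities of the preceding lemma, namely $v_{2n-2}^* v_{2n-2} + v_{2n-1}^* v_{2n-1} = 1\otimes f_n$ and $v_{2n-1} v_{2n-1}^* + v_{2n} v_{2n}^* = P_C\otimes f_n$, combined with a book-keeping of the supports of the projections $v_m^* v_m$ and $v_m v_m^*$ among the mutually orthogonal blocks $1\otimes f_n$.

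First I would record the support containments coming directly from the definitions and from the earlier lemma $w_n^* w_n = 1\otimes f_n$, $w_n w_n^* \le P_C\otimes f_n$. Since $v_{2n-1} = w_n(1\otimes f_n - v_{2n-2}^* v_{2n-2})$, one has $v_{2n-1}^* v_{2n-1} = 1\otimes f_n - v_{2n-2}^* v_{2n-2}\le 1\otimes f_n$ and $v_{2n-1} v_{2n-1}^*\le P_C\otimes f_n$; and since $v_{2n} = (P_C\otimes f_n - v_{2n-1}v_{2n-1}^*)(P_C\otimes f_{n,n+1})$, one has $v_{2n}^* v_{2n}\le P_C\otimes f_{n+1}$ and $v_{2n} v_{2n}^* = P_C\otimes f_n - v_{2n-1}v_{2n-1}^*\le P_C\otimes f_n$ (with the convention $v_0 = 0$, so that $v_1^* v_1 = 1\otimes f_1$). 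As the $f_n$ are pairwise orthogonal, these containments show that within each block $1\otimes f_n$ the only initial projections are $v_{2n-2}^* v_{2n-2}$ and $v_{2n-1}^* v_{2n-1}$, which are orthogonal by the first identity, and within each block $P_C\otimes f_n$ the only final projections are $v_{2n-1} v_{2n-1}^*$ and $v_{2n} v_{2n}^*$, which are orthogonal by the second identity. Hence the whole family $\{v_m^* v_m\}$ of initial projections is pairwise orthogonal, and so is the family $\{v_m v_m^*\}$ of final projections. This mutual orthogonality is precisely what makes the strict sums $\sum_n v_n$, $\sum_n v_{2n-1}$ and $\sum_n v_{2n}$ converge to partial isometries and legitimizes the rearrangement $v_A = \sum_n v_{2n-1} + \sum_n v_{2n} = v_{od} + v_{ev}$, which is relation (i).

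Next I would obtain (ii) and (iii) by telescoping. For (iii), summing the second identity over $n = 1,\dots,N$ gives $\sum_{m=1}^{2N} v_m v_m^* = P_C\otimes\bigl(\sum_{n=1}^N f_n\bigr)$, which converges strictly to $P_C\otimes 1$ as $\sum_{n=1}^N f_n\to 1$; grouping the left-hand side by parity gives $v_{od} v_{od}^* + v_{ev} v_{ev}^* = P_C\otimes 1$. For (ii), grouping the initial projections by the block they live in, the first identity gives $\sum_{m=1}^{2N-1} v_m^* v_m = 1\otimes\bigl(\sum_{n=1}^N f_n\bigr)$, so that $\sum_{m=1}^{2N} v_m^* v_m = 1\otimes\bigl(\sum_{n=1}^N f_n\bigr) + v_{2N}^* v_{2N}$; since $v_{2N}^* v_{2N}\le P_C\otimes f_{N+1}\to 0$ strictly, this converges to $1\otimes 1$, and grouping by parity gives $v_{od}^* v_{od} + v_{ev}^* v_{ev} = 1\otimes 1$.

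Finally, (iv) follows by combining (i)--(iii) with the orthogonality from the first step. In $v_A^* v_A = (v_{od}+v_{ev})^*(v_{od}+v_{ev})$ the cross term $v_{od}^* v_{ev}$ vanishes because the final projections $v_{od} v_{od}^*$ and $v_{ev} v_{ev}^*$ are orthogonal, whence $v_A^* v_A = v_{od}^* v_{od} + v_{ev}^* v_{ev} = 1\otimes 1$; dually, in $v_A v_A^*$ the cross term $v_{od} v_{ev}^*$ vanishes because the initial projections $v_{od}^* v_{od}$ and $v_{ev}^* v_{ev}$ are orthogonal, whence $v_A v_A^* = v_{od} v_{od}^* + v_{ev} v_{ev}^* = P_C\otimes 1$. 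I expect the only delicate point to be the support book-keeping in the first step, namely verifying that each matrix-unit block $f_n$ carries exactly one odd-indexed and one even-indexed initial (respectively final) projection, so that the two identities of the preceding lemma account for all the orthogonality and the trailing term $v_{2N}^* v_{2N}$ really tends to $0$ strictly; the telescoping and the annihilation of the cross terms are then routine manipulations of partial isometries.
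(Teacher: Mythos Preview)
Your argument is correct and is precisely the verification the paper leaves implicit: the paper gives no proof of this lemma beyond the words ``Therefore we have the next lemma,'' indicating that the four relations are meant to follow directly from the two identities $v_{2n-2}^* v_{2n-2} + v_{2n-1}^* v_{2n-1} = 1\otimes f_n$ and $v_{2n-1} v_{2n-1}^* + v_{2n} v_{2n}^* = P_C\otimes f_n$ of the preceding lemma. Your block-by-block support analysis, the resulting pairwise orthogonality of the families $\{v_m^* v_m\}$ and $\{v_m v_m^*\}$, the telescoping over $n$, and the vanishing of the cross terms in (iv) are exactly the routine details needed to turn that sentence into a proof.
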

We put
\begin{equation*}
q_{od}^C  = \sum_{n=1}^\infty v_{2n-1}(P_C\otimes 1) v_{2n-1}^*,
 \qquad
q_{od}^D  = \sum_{n=1}^\infty v_{2n-1}(P_D\otimes 1) v_{2n-1}^*
\end{equation*}
so that 
\begin{equation*}
q_{od}^C + q_{od}^D = v_{od} v_{od}^* \quad 
\text{ and hence }
\quad 
q_{od}^C + q_{od}^D + v_{ev} v_{ev}^* = P_C \otimes 1. 
\end{equation*}
We will show the following lemma. 
\begin{lemma}\label{lem:3.4}
 $v_A(\rho^{Z,f\oplus 0}_t\otimes \id)(v_A^*) 
=q_{od}^C + (U_t(-f)\otimes 1) q_{od}^D + v_{ev} v_{ev}^*.
$
\end{lemma}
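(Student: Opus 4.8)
The plan is to expand $v_A(\rho^{Z,f\oplus 0}_t\otimes\id)(v_A^*)$ by following the action on each building block, exploiting that the generalized gauge action is trivial on the diagonal. Write $\gamma_t=\rho^{Z,f\oplus 0}_t\otimes\id$ and $W=\exp(2\pi\sqrt{-1}t(f\oplus 0))\in\U(\DZ)$, so that $\gamma_t$ fixes $\DZ\otimes{\mathcal C}$ pointwise. By \eqref{eq:4.2} we have $\rho^{Z,f\oplus 0}_t(S_c)=WS_c$ for $c\in E_C$ and $\rho^{Z,f\oplus 0}_t(S_d)=S_d$ for $d\in E_D$; as $c(k)\in E_C$ and $d_k\in E_D$, this gives $\rho^{Z,f\oplus 0}_t(U_k)=WU_k$. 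Moreover $U_kU_k^*=S_{c(k)d_k}S_{c(k)d_k}^*\in\DZ$, and since $S_{d_k}^*S_{d_{k'}}=0$ for $k\ne k'$ we get $U_kU_{k'}^*=0$ unless $k=k'$, so $u_nu_n^*=\sum_k U_kU_k^*\otimes f_{n_k}\in\DZ\otimes{\mathcal C}$. A straightforward induction on the recursive definitions of the $v_n$ then shows that every source and range projection $v_n^*v_n$, $v_nv_n^*$ lies in $\DZ\otimes{\mathcal C}$, hence is $\gamma_t$-invariant; in particular each $v_{2n}=(P_C\otimes f_n-v_{2n-1}v_{2n-1}^*)(P_C\otimes f_{n,n+1})$ is a product of $\gamma_t$-invariant elements, so $\gamma_t(v_{ev})=v_{ev}$.

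The core computation concerns the odd blocks. First I would record the support identities $w_n(P_C\otimes 1)=P_C\otimes s_{n_0,n}$ and $w_n(P_D\otimes 1)=u_n$; combined with $\rho^{Z,f\oplus 0}_t(U_k)=WU_k$ they give $\gamma_t(w_n)(P_C\otimes 1)=w_n(P_C\otimes 1)$ and $\gamma_t(w_n)(P_D\otimes 1)=(W\otimes 1)w_n(P_D\otimes 1)$. Since the $\gamma_t$-invariant projection $1\otimes f_n-v_{2n-2}^*v_{2n-2}$ commutes with $P_C\otimes 1$ and $P_D\otimes 1$, this yields
\begin{equation*}
\gamma_t(v_{2n-1})=v_{2n-1}(P_C\otimes 1)+(W\otimes 1)\,v_{2n-1}(P_D\otimes 1).
\end{equation*}
Taking adjoints and multiplying on the left by $v_{2n-1}$, where $v_{2n-1}(P_C\otimes 1)$ and $v_{2n-1}(P_D\otimes 1)$ are mutually orthogonal because $P_CP_D=0$, produces
\begin{equation*}
v_{2n-1}\gamma_t(v_{2n-1})^*=v_{2n-1}(P_C\otimes 1)v_{2n-1}^*+v_{2n-1}(P_D\otimes 1)v_{2n-1}^*(W^*\otimes 1).
\end{equation*}

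Next I would assemble $v_A\gamma_t(v_A^*)=(v_{od}+v_{ev})(\gamma_t(v_{od})^*+v_{ev}^*)$. The mixed terms $v_{od}v_{ev}^*$ and $v_{ev}\gamma_t(v_{od})^*$ vanish, and among the odd-odd contributions only the diagonal $m=n$ terms survive; both facts follow from $v_{2n-2}^*v_{2n-2}+v_{2n-1}^*v_{2n-1}=1\otimes f_n$ and the mutual orthogonality of the $f_n$, the relevant projections being $\gamma_t$-invariant. Thus $v_A\gamma_t(v_A^*)=\sum_n v_{2n-1}\gamma_t(v_{2n-1})^*+v_{ev}v_{ev}^*$. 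Finally, since $v_{2n-1}(P_D\otimes 1)v_{2n-1}^*=u_nu_n^*\in\DZ\otimes{\mathcal C}$ it commutes with $W^*\otimes 1$; moving $W^*$ to the left, using $q_{od}^D\le P_C\otimes 1$ with $W^*P_C=U_t(-f)$, and recognizing $\sum_n v_{2n-1}(P_C\otimes 1)v_{2n-1}^*=q_{od}^C$ and $\sum_n v_{2n-1}(P_D\otimes 1)v_{2n-1}^*=q_{od}^D$, I obtain $v_A\gamma_t(v_A^*)=q_{od}^C+(U_t(-f)\otimes 1)q_{od}^D+v_{ev}v_{ev}^*$.

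The hard part will be the bookkeeping that every source and range projection of the $v_n$ is diagonal, i.e. lies in $\DZ\otimes{\mathcal C}$ and is therefore gauge-invariant: this invariance is what forces the cross terms to vanish and, more delicately, is precisely what legitimizes the final commutation of $U_t(-f)$ from the right of $q_{od}^D$ to its left. The fact that $U_kU_{k'}^*=0$ for $k\ne k'$, making $u_nu_n^*$ diagonal, is the linchpin of that last step.
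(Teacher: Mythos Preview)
Your argument is correct and follows essentially the same route as the paper: split $v_A=v_{od}+v_{ev}$, use that all the source/range projections lie in $\DZ\otimes{\mathcal C}$ to see $\gamma_t(v_{ev})=v_{ev}$ and kill the cross terms, then compute the action of $\gamma_t$ on each odd block via $\rho^{Z,f\oplus 0}_t(U_k)=U_t(f)U_k$ and sum. The only cosmetic differences are that the paper works directly with $\gamma_t(v_{2n-1}^*)$ rather than taking the adjoint of $\gamma_t(v_{2n-1})$, and that you make explicit the commutation of $U_t(-f)\otimes 1$ past $q_{od}^D$ (via $v_{2n-1}(P_D\otimes 1)v_{2n-1}^*=u_nu_n^*\in\DZ\otimes{\mathcal C}$ and $q_{od}^D\le P_C\otimes 1$), which the paper states without justification.
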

\begin{proof}
We notice that 
$\rho^{Z,f\oplus 0}_t(S_c) = U_t(f)S_c$ for $c \in E_C$
 and 
$\rho^{Z,f\oplus 0}_t(S_d) = S_d$ for $d \in E_D.$
As $v_{2n-1} v_{2n-1}^* \in D_Z\otimes{\mathcal{C}}$ 
so that 
$(\rho^{Z,f\oplus 0}_t\otimes\id)(v_{2n-1} v_{2n-1}^*) = v_{2n-1} v_{2n-1}^*$ 
and hence
$(\rho^{Z,f\oplus 0}_t\otimes \id)(v_{ev}) = v_{ev}$.
We then have
\begin{align*}
v_A(\rho^{Z,f\oplus 0}_t\otimes \id)(v_A^*)
 & =  v_{od}(\rho^{Z,f\oplus 0}_t\otimes \id)(v_{od}^*)  
     + v_{ev}(\rho^{Z,f\oplus 0}_t\otimes \id)(v_{ev}^*) \\
 & =  \sum_{n=1}^\infty v_{2n-1}(\rho^{Z,f\oplus 0}_t\otimes \id)(v_{2n-1}^*)  
     + v_{ev} v_{ev}^*.
\end{align*}
Since
\begin{equation*}
v_1(P_C \otimes 1)   = P_C \otimes s_{1_0,1} \quad
\text{ and } \quad
v_1(P_D\otimes 1 )  = \sum_{k=1}^{N_D}S_{c(k)}S_{d_k}S_{d_k}^*\otimes s_{1_k,1},
\end{equation*}
we have
\begin{align*}
(\rho^{Z,f\oplus 0}_t\otimes \id)(v_{1}^*)
& = (P_C \otimes 1)v_1^* + (\rho^{Z,f\oplus 0}_t\otimes \id)((P_D\otimes 1)v_1^* ) \\
& = (P_C \otimes 1)v_1^* 
+ \sum_{k=1}^{N_D}S_{d_k}S_{d_k}^* \rho^{Z,f\oplus 0}_t(S_{c(k)}^*)\otimes s_{1_k,1}^* \\
& = (P_C \otimes 1)v_1^* 
+ \sum_{k=1}^{N_D}S_{d_k}S_{d_k}^* S_{c(k)}^*U_t(-f) \otimes s_{1_k,1}^* \\
& = (P_C \otimes 1)v_1^* 
 +(P_D \otimes 1)v_1^* (U_t(-f) \otimes 1),
\end{align*}
so that 
\begin{align*}
v_1(\rho^{Z,f\oplus 0}_t\otimes \id)(v_{1}^*)
& = v_1 (P_C \otimes 1)v_1^* 
 +v_1 (P_D \otimes 1)v_1^* (U_t(-f) \otimes 1) \\
& = v_1 (P_C \otimes 1)v_1^* 
 + (U_t(-f) \otimes 1) v_1 (P_D \otimes 1)v_1^*.  \\
\end{align*}
For $2\le n \in \N$, 
we have
\begin{align*}
v_{2n-1}(P_C \otimes 1) 
& = (P_C \otimes s_{n_0,n})(1 \otimes f_n - v_{2n-2}^* v_{2n-2}), \\ 
v_{2n-1}(P_D \otimes 1) 
& = \sum_{k=1}^{N_D} (S_{c(k)}S_{d_k}S_{d_k}^*\otimes s_{n_k,n})(1 \otimes f_n - v_{2n-2}^* v_{2n-2}), 
\end{align*}
and hence 
\begin{align*}
(\rho^{Z,f\oplus 0}_t\otimes \id)((P_D \otimes 1)v_{2n-1}^*)
& = (1 \otimes f_n - v_{2n-2}^* v_{2n-2})
       \sum_{k=1}^{N_D}S_{d_k}S_{d_k}^* \rho^{Z,f\oplus 0}_t(S_{c(k)}^*)\otimes s_{n_k,n}^* \\
& = (1 \otimes f_n - v_{2n-2}^* v_{2n-2})
         \sum_{k=1}^{N_D}S_{d_k}S_{d_k}^* S_{c(k)}^*U_t(-f) \otimes s_{n_k,n}^* \\
& = (P_D\otimes 1) v_{2n-1}^*(U_t(-f) \otimes 1)
\end{align*}
so that 
\begin{align*}
v_{2n-1}(\rho^{Z,f\oplus 0}_t\otimes \id)(v_{2n-1}^*)
& = v_{2n-1} (P_C \otimes 1)v_{2n-1}^* 
 +v_{2n-1} (P_D \otimes 1)v_{2n-1}^* (U_t(-f) \otimes 1) \\
& = v_{2n-1} (P_C \otimes 1)v_{2n-1}^* 
 + (U_t(-f) \otimes 1) v_{2n-1} (P_D \otimes 1)v_{2n-1}^*.  \\
\end{align*}
Therefore we have
\begin{equation*}
v_{od} (\rho^{Z,f\oplus 0}_t\otimes\id)(v_{od}^*) 
= q_{od}^C + (U_t(-f)\otimes 1) q_{od}^D
\end{equation*}
and hence
\begin{equation*}
v_A(\rho^{Z,f\oplus 0}_t\otimes \id)(v_A^*) 
=q_{od}^C + (U_t(-f)\otimes 1) q_{od}^D + v_{ev} v_{ev}^*.
\end{equation*}
\end{proof}

By using $t_n,  z_n$ instead of 
$u_n, w_n$ respectively, 
we similarly obtain a partial isometry
$v_B$
in $M(\OZ\otimes \K)$ in the strict topology.
We then see the following lemmas.
\begin{lemma} \label{lem:3.5} \hspace{6cm}
\begin{enumerate}
\renewcommand{\theenumi}{\roman{enumi}}
\renewcommand{\labelenumi}{\textup{(\theenumi)}}
\item The partial isometry $v_A(\rho_t^{Z, f\oplus 0} \otimes \id)(v_A^*)$
for $f \in C(X_A,\Z), t \in \T$ belongs to $M(\SDA)$ and satisfies
\begin{equation}
v_A(\rho_t^{Z, {(f_1 + f_2)}\oplus 0} \otimes \id)(v_A^*) 
= v_A(\rho_t^{Z, {f_1}\oplus 0} \otimes \id)(v_A^*)
   v_A(\rho_t^{Z, {f_2}\oplus 0} \otimes \id)(v_A^*) \label{eq:vAf}
\end{equation}
for $f_1, f_2 \in C(X_A,\Z), t \in \T. $
\item
The partial isometry $v_B(\rho_t^{Z, 0\oplus g} \otimes \id)(v_B^*)$ 
for $g \in C(X_B,\Z), t \in \T$ belongs to $M(\SDB)$ and satisfies
\begin{equation}
v_B(\rho_t^{Z, 0\oplus{(g_1 + g_2)}} \otimes \id)(v_B^*) 
= v_B(\rho_t^{Z, 0\oplus{g_1}} \otimes \id)(v_B^*)
   v_B(\rho_t^{Z, 0\oplus{g_2}} \otimes \id)(v_B^*)
\label{eq:vBg}
\end{equation}
 for
$  g_1,g_2 \in C(X_B,\Z), t \in \T. 
$
\end{enumerate}
\end{lemma}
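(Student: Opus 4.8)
The plan is to deduce both assertions directly from the explicit formula for these partial isometries obtained in Lemma \ref{lem:3.4}, treating (ii) symmetrically via the construction of $v_B$ from $t_n,z_n$. For (i), Lemma \ref{lem:3.4} gives
\begin{equation*}
v_A(\rho^{Z,f\oplus 0}_t\otimes \id)(v_A^*)
= q_{od}^C + (U_t(-f)\otimes 1) q_{od}^D + v_{ev} v_{ev}^*,
\end{equation*}
and the three projections satisfy $q_{od}^C + q_{od}^D + v_{ev}v_{ev}^* = P_C\otimes 1 = v_Av_A^*$. Both claims will follow once I know that $q_{od}^C,q_{od}^D,v_{ev}v_{ev}^*$ lie in $M(\SDZ)$ and are mutually orthogonal, so these are the two points to establish.

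First I would prove the membership claim. Since $P_C=P_A$ is a central projection of the abelian algebra $\DZ\otimes\mathcal{C}$ and $\DA=P_C\DZ P_C$, one has $\SDA=(P_C\otimes 1)(\SDZ)$ and hence $M(\SDA)=(P_C\otimes 1)M(\SDZ)$. As $U_t(-f)$ is a unitary in $\DA$ and each of the three projections is dominated by $P_C\otimes 1$, it suffices to show $q_{od}^C,q_{od}^D,v_{ev}v_{ev}^*\in M(\SDZ)$. For $v_{ev}v_{ev}^*$ this is immediate from $v_{2n}v_{2n}^*=P_C\otimes f_n-v_{2n-1}v_{2n-1}^*$ and the fact, recorded in the proof of Lemma \ref{lem:3.4}, that $v_{2n-1}v_{2n-1}^*\in\SDZ$. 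For $q_{od}^C$ the key observation is that $U_k P_C=0$ for every $k$, because $S_{d_k}S_{d_k}^*\le P_D$; consequently $w_n(P_C\otimes 1)=P_C\otimes s_{n_0,n}$ and
\begin{equation*}
v_{2n-1}(P_C\otimes 1)v_{2n-1}^*
= (P_C\otimes s_{n_0,n})\,(1\otimes f_n-v_{2n-2}^*v_{2n-2})\,(P_C\otimes s_{n,n_0}).
\end{equation*}
An induction on $n$ shows every source projection lies in $\SDZ$: indeed $v_{2n-1}^*v_{2n-1}=1\otimes f_n-v_{2n-2}^*v_{2n-2}$, while $v_{2n}^*v_{2n}$ is the conjugate of $P_C\otimes f_n-v_{2n-1}v_{2n-1}^*$ by the diagonal partial isometry $P_C\otimes f_{n,n+1}$. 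Conjugating the diagonal projection $1\otimes f_n-v_{2n-2}^*v_{2n-2}$ by $P_C\otimes s_{n_0,n}$ again yields a diagonal element, so $v_{2n-1}(P_C\otimes 1)v_{2n-1}^*\in\SDZ$ and therefore $q_{od}^C\in M(\SDZ)$; subtracting from $v_{2n-1}v_{2n-1}^*$ gives $q_{od}^D\in M(\SDZ)$ as well.

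It remains to prove the multiplicative relation \eqref{eq:vAf}. The three projections are mutually orthogonal: $q_{od}^C$ and $q_{od}^D$ are orthogonal because $P_C\otimes 1\perp P_D\otimes 1$ and the source projections $v_{2n-1}^*v_{2n-1}$ are pairwise orthogonal, while $v_{od}v_{od}^*=q_{od}^C+q_{od}^D$ is orthogonal to $v_{ev}v_{ev}^*$. Since all three projections and the unitaries $U_t(-f_i)\otimes 1$ lie in the abelian algebra $M(\SDZ)$, the unitaries commute with the projections; expanding the product of the expressions for $f_1$ and $f_2$, every cross term between distinct projections vanishes and the surviving terms give $q_{od}^C+(U_t(-f_1)U_t(-f_2)\otimes 1)q_{od}^D+v_{ev}v_{ev}^*$. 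As $U_t(-f_1)U_t(-f_2)=U_t(-(f_1+f_2))$, this equals $v_A(\rho^{Z,(f_1+f_2)\oplus 0}_t\otimes\id)(v_A^*)$, which is \eqref{eq:vAf}. Assertion (ii) follows by the same argument applied to $v_B$ and the analogue of Lemma \ref{lem:3.4} in which the roles of $C$ and $D$ (and of $P_C$ and $P_D$) are interchanged, so that $U_t(-g)$ multiplies the $P_C$-component and $U_t(-g)\in\DB$. I expect the membership step to be the main obstacle: the delicate point is that the individual summands $q_{od}^C$ and $q_{od}^D$, and not merely their sum $v_{2n-1}v_{2n-1}^*$, stay in the diagonal, and this is exactly where the vanishing $U_kP_C=0$ and the induction on the source projections are indispensable.
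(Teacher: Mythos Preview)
Your proof is correct and follows essentially the same route as the paper: both deduce everything from the explicit formula of Lemma~\ref{lem:3.4} together with the facts that $q_{od}^C$, $q_{od}^D$, $v_{ev}v_{ev}^*$ lie in $M(\SDA)$ and that $U_t(-f_1)U_t(-f_2)=U_t(-(f_1+f_2))$. The paper's own proof is far more terse---it simply asserts that the three projections belong to $M(\SDA)$ and that the multiplicativity of $U_t$ gives \eqref{eq:vAf}---whereas you supply the verification (via $U_kP_C=0$ and the inductive description of the source projections) that the paper omits.
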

\begin{proof}
(i) Since the projections 
$q_{od}^C, q_{od}^D, v_{ev} v_{ev}^*$ are all belong to the multiplier algebra $M(\SDA)$
of $\SDA$,
the preceding lemma ensures us
that the partial isometry 
$v_A(\rho^{Z, f\oplus 0} \otimes \id)(v_A^*)$ belongs to $M(\SDA)$.
As $U_t(f_1 + f_2) = U_t(f_1)U_t(f_2)$,  the equality \eqref{eq:vAf} follows.

(ii) is similarly shown to (i).
\end{proof}

\begin{lemma} \label{lem:3.6} \hspace{6cm}
\begin{enumerate}
\renewcommand{\theenumi}{\roman{enumi}}
\renewcommand{\labelenumi}{\textup{(\theenumi)}}
\item$(\rho_t^{Z, 0\oplus g} \otimes \id)(v_A) = v_A $ for $g \in C(X_B,\Z), t \in \T.$
\item$(\rho_t^{Z, f\oplus 0} \otimes \id)(v_B) = v_B $ for $f \in C(X_A,\Z), t \in \T.$
\end{enumerate}
\end{lemma}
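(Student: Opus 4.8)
The plan is to show that every building block from which the partial isometry $v_A$ is assembled is fixed by $\rho^{Z,0\oplus g}$, and then to pass to the strict limit; assertion (ii) will follow by interchanging $C$ and $D$. First I would record the action on generators. Exactly as in \eqref{eq:4.2}, but with the roles of $C$ and $D$ reversed, one has $U_t(0\oplus g) = P_C + U_t(g)$ with $U_t(g)\in\U(\DB)$ supported under $P_D$, and hence
\begin{equation*}
\rho_t^{Z,0\oplus g}(S_c) = S_c \ (c\in E_C), \qquad \rho_t^{Z,0\oplus g}(S_d) = U_t(g)S_d \ (d\in E_D).
\end{equation*}
Moreover $\rho_t^{Z,0\oplus g}$ restricts to the identity on $\DZ$, a fact already used in the proof of Lemma \ref{lem:3.4}.

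The key observation is that $v_A$ is manufactured solely from $P_C$, from the partial isometries $S_c$ with $c\in E_C$, and from diagonal projections lying in $\DZ$. Indeed, by \eqref{eq:ak} we have $U_k = S_{c(k)}\,(S_{d_k}S_{d_k}^*)$ with $c(k)\in E_C$, where $S_{d_k}S_{d_k}^*\in\DZ$. Since $\rho_t^{Z,0\oplus g}$ fixes each $S_c$ with $c\in E_C$ and fixes $\DZ$ pointwise, it fixes both factors $S_{c(k)}$ and $S_{d_k}S_{d_k}^*$, and therefore fixes their product $U_k$; the projection $P_C\in\DZ$ is fixed as well. (One may equally check this by the direct computation $\rho_t^{Z,0\oplus g}(U_k)=S_{c(k)}U_t(g)(S_{d_k}S_{d_k}^*)U_t(-g)=S_{c(k)}(S_{d_k}S_{d_k}^*)U_t(g)U_t(-g)=U_k$, using that $U_t(g)$ and $S_{d_k}S_{d_k}^*$ commute in the abelian algebra $\DZ$.)

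With these invariances in hand the remainder is formal. Because $\id$ fixes every matrix unit of $\K$, the element $w_n = P_C\otimes s_{n_0,n}+u_n$ with $u_n=\sum_k U_k\otimes s_{n_k,n}$ is fixed by $\rho_t^{Z,0\oplus g}\otimes\id$; then running the recursion $v_1=w_1$, $v_{2n-1}=w_n(1\otimes f_n-v_{2n-2}^*v_{2n-2})$, $v_{2n}=(P_C\otimes f_n-v_{2n-1}v_{2n-1}^*)(P_C\otimes f_{n,n+1})$ by induction on $n$—and using that a $*$-automorphism respects products, adjoints and differences—shows that every $v_n$ is fixed. Since $v_A=\sum_{n=1}^\infty v_n$ converges in the strict topology of $M(\SOZ)$ and the extension of $\rho_t^{Z,0\oplus g}\otimes\id$ to the multiplier algebra is strictly continuous on bounded sets, we conclude $(\rho_t^{Z,0\oplus g}\otimes\id)(v_A)=v_A$. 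For (ii) the same argument applies with $C$ and $D$ interchanged: $\rho_t^{Z,f\oplus 0}$ fixes $\DZ$ and each $S_d$ with $d\in E_D$, hence fixes $P_D$ and, by \eqref{eq:bl}, each $T_l=S_{d(l)}\,(S_{c_l}S_{c_l}^*)$, and therefore fixes $v_B$.

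The algebraic input is immediate once the action on generators is recorded, so I expect the only point requiring genuine care to be the passage to the strict limit: one must invoke the standard fact that an automorphism extends uniquely to a strictly continuous automorphism of the multiplier algebra, and that strict continuity on bounded sets transports the fixedness of the partial sums $\sum_{n=1}^N v_n$ to their strict limit $v_A$.
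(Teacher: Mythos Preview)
Your proof is correct and follows essentially the same approach as the paper: record that $\rho_t^{Z,0\oplus g}$ fixes each $S_c$ and $\DZ$, deduce that each $U_k$ and hence each $w_n$ is fixed, then run the recursion to see every $v_n$ is fixed, and pass to the limit. The only difference is that you are more explicit about the strict-continuity step needed to pass from the $v_n$ to $v_A$, a point the paper leaves implicit.
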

\begin{proof}
(i)
Since
$\rho_t^{Z,0\oplus g}(S_c) = S_c, \rho_t^{Z,0\oplus g}(S_d) = e^{2 \pi \sqrt{-1}t g}S_d,$
we have
\begin{equation*}
\rho_t^{Z,0\oplus g}(U_k) 
= \rho_t^{Z,0\oplus g}(S_{c(k)} S_{d_k}S_{d_k}^*)
= S_{c(k)} e^{2 \pi \sqrt{-1}t g}S_{d_k}S_{d_k}^*e^{-2 \pi \sqrt{-1}t g})
= S_{c(k)} S_{d_k}S_{d_k}^* =U_k.
\end{equation*} 
Hence 
$(\rho_t^{Z, 0\oplus g} \otimes \id)(u_n) = u_n $
so that 
$(\rho_t^{Z, 0\oplus g} \otimes \id)(w_n) = w_n. $
We then have
\begin{equation*}
(\rho_t^{Z, 0\oplus g} \otimes \id)(v_1) 
=(\rho_t^{Z, 0\oplus g} \otimes \id)(P_C \otimes s_{1_0,1} +u_1)  
=P_C \otimes s_{1_0,1} +u_1
=v_1.
\end{equation*}
Since
$v_{2n-1}v_{2n-1}^*, v_{2n-2}^*v_{2n-2} \in \SDZ$ 
and
the restriction of $\rho_t^{Z,0\oplus g}\otimes \id$ to
$\SDZ$ is the identity,
we easily know that 
\begin{equation*}
(\rho_t^{Z, 0\oplus g} \otimes \id)(v_{2n})
= v_{2n}, \qquad
(\rho_t^{Z, 0\oplus g} \otimes \id)(v_{2n-1})
= v_{2n-1}
\quad \text{ for }
n \in \N.
\end{equation*}
We thus have 
$(\rho_t^{Z, 0\oplus g} \otimes \id)(v_n)
= v_n
$ for all $n \in \N$ and hence
$(\rho_t^{Z, 0\oplus g} \otimes \id)(v_A)
= v_A.
$

(ii) is similarly shown to (i).
\end{proof}

We put 
\begin{gather}
w = v_{B}v_{A}^* \in M(\SOZ), \label{eq:wvBvA} \\
u_t^{A,f} = w^*(\rho^{Z,f\oplus 0}_t \otimes\id)(w) \quad \text{ for } f \in  C(X_A,\Z),\\ u_t^{B,g} = w(\rho^{Z,0\oplus g}_t \otimes\id)(w^*) \quad \text{ for } g \in  C(X_B,\Z).\label{eq:utf}
\end{gather}
By Lemma \ref{lem:3.6},
we have
\begin{equation}
u_t^{A,f} 
= v_A v_B^*(\rho^{Z,f\oplus 0}_t \otimes\id)(v_B)(\rho^{Z,f\oplus 0}_t \otimes\id)(v_A^*)
= v_A(\rho^{Z,f\oplus 0}_t \otimes\id)(v_A^*) \label{eq:utafva}
\end{equation}
and similarly 
$u_t^{B,g} = v_B(\rho^{Z,0\oplus g}_t \otimes\id)(v_B^*).$ 
\begin{lemma} \hspace{6cm}
\begin{enumerate}
\renewcommand{\theenumi}{\roman{enumi}}
\renewcommand{\labelenumi}{\textup{(\theenumi)}}
\item For each $f \in C(X_A,\Z)$,
the partial isometries 
$u_t^{A,f}, t \in \T$ give rise to a unitary representation of $\T$ in $M(\SDA)$
 and satisfies
$u_t^{A,f_1 + f_2} = u_t^{A,f_1} u_t^{A,f_2}$
for $f_1, f_2 \in C(X_A,\Z). $
\item For each $g \in C(X_B,\Z)$,
the partial isometries 
$u_t^{B,g}, t \in \T$ give rise to a unitary representation of $\T$ in $M(\SDB)$
 and satisfies
$u_t^{B,g_1 + g_2} = u_t^{B,g_1} u_t^{B,g_2}$
for $g_1, g_2 \in C(X_B,\Z). $
\end{enumerate}
\end{lemma}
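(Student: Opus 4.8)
The plan is to read both claims off the explicit normal form already available for these partial isometries. By \eqref{eq:utafva} we have $u_t^{A,f} = v_A(\rho^{Z,f\oplus 0}_t\otimes\id)(v_A^*)$, and Lemma~\ref{lem:3.4} rewrites this as
\[
u_t^{A,f} = q_{od}^C + (U_t(-f)\otimes 1)\,q_{od}^D + v_{ev}v_{ev}^*,
\]
where $q_{od}^C, q_{od}^D, v_{ev}v_{ev}^*$ are mutually orthogonal projections satisfying $q_{od}^C + q_{od}^D + v_{ev}v_{ev}^* = P_C\otimes 1$. All three projections and the unitary $U_t(-f)\otimes 1$ lie in $M(\SDA)$, which is commutative, so the non-scalar terms commute with every projection in sight. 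I would carry the entire argument in this normal form, since it makes the group law in $t$ and the additivity in $f$ transparent.

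First I would establish that each $u_t^{A,f}$ is a unitary of $M(\SDA)$. Membership in $M(\SDA)$ is precisely Lemma~\ref{lem:3.5}(i). For unitarity the point to keep in mind is that the unit of $M(\SDA)$ (equivalently of $M(\SOA)$, since $\OA = P_C\OZ P_C$) is $P_C\otimes 1$, not $1\otimes 1$; thus the content is that the partial isometry $u_t^{A,f}$ has both its source and range projection equal to $P_C\otimes 1$. Using the normal form, $(u_t^{A,f})^*u_t^{A,f} = q_{od}^C + q_{od}^D + v_{ev}v_{ev}^* = P_C\otimes 1$, the middle summand surviving because $U_t(-f)\otimes 1$ is unitary and commutes with $q_{od}^D$, and symmetrically for $u_t^{A,f}(u_t^{A,f})^*$. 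Equivalently, one may compute directly from $v_A^*v_A = 1\otimes 1$, $v_Av_A^* = P_C\otimes 1$ and the fact that $\rho^{Z,f\oplus 0}_t\otimes\id$ fixes $\SDZ$, hence fixes $P_C\otimes 1$. Either way $u_t^{A,f}$ is unitary in $M(\SDA)$.

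Next I would verify the representation law. Since $U_t(-f)$ is a one-parameter unitary group, $U_0(-f)=1$ gives $u_0^{A,f} = P_C\otimes 1$, the unit; and expanding $u_t^{A,f}u_s^{A,f}$ in the normal form, orthogonality of $q_{od}^C, q_{od}^D, v_{ev}v_{ev}^*$ annihilates all cross terms while on the $q_{od}^D$ summand $(U_t(-f)\otimes 1)(U_s(-f)\otimes 1) = U_{t+s}(-f)\otimes 1$, whence $u_t^{A,f}u_s^{A,f} = u_{t+s}^{A,f}$. Norm-continuity of $t\mapsto u_t^{A,f}$ follows because $f$ takes only finitely many integer values on the compact space $X_A$, so $t\mapsto U_t(-f)$ is norm-continuous. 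The additivity $u_t^{A,f_1+f_2} = u_t^{A,f_1}u_t^{A,f_2}$ is then immediate, either directly from \eqref{eq:vAf} of Lemma~\ref{lem:3.5}(i) together with \eqref{eq:utafva}, or from the same bookkeeping using $U_t(-(f_1+f_2)) = U_t(-f_1)U_t(-f_2)$. Part (ii) is entirely symmetric: replacing $v_A$ by $v_B$, the pair $(U_k,w_n)$ by $(T_l,z_n)$, and $P_C$ by $P_D$, the analogue of Lemma~\ref{lem:3.4} yields the same shape of normal form for $u_t^{B,g}$ and the identical computation applies.

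The only genuinely delicate point is the one flagged above: a priori $u_t^{A,f}$ is merely a partial isometry in $M(\SOZ)$, and the substance of the claim is that it is a \emph{unitary} in the corner $M(\SDA)$ whose unit is $P_C\otimes 1$. Once the normal form of Lemma~\ref{lem:3.4} is in hand this reduces to orthogonal-projection bookkeeping, so I anticipate no real obstacle beyond consistently distinguishing the units $P_C\otimes 1$ and $1\otimes 1$.
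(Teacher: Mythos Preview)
Your proposal is correct and follows the same route as the paper: both arguments invoke the normal form of Lemma~\ref{lem:3.4} via \eqref{eq:utafva} and expand the product $u_t^{A,f}u_s^{A,f}$ using the orthogonality of $q_{od}^C, q_{od}^D, v_{ev}v_{ev}^*$ together with the group law $U_t(-f)U_s(-f)=U_{t+s}(-f)$, then read off additivity in $f$ from Lemma~\ref{lem:3.5}. Your treatment is in fact more careful than the paper's on two points the paper leaves implicit: that the unit of $M(\SDA)$, viewed inside $M(\SOZ)$, is $P_C\otimes 1$ (so ``unitary'' means source and range equal to $P_C\otimes 1$), and the norm-continuity of $t\mapsto u_t^{A,f}$.
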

\begin{proof}
(i)
By Lemma \ref{lem:3.4} and \eqref{eq:utafva}, we have
\begin{align*}
u_t^{A,f} u_s^{A,f}
= & v_A(\rho^{Z,f\oplus 0}_t\otimes \id)(v_A^*) 
      v_A(\rho^{Z,f\oplus 0}_s\otimes \id)(v_A^*) \\
= & (q_{od}^C + (U_t(-f)\otimes 1) q_{od}^D + v_{ev} v_{ev}^*)
      (q_{od}^C + (U_s(-f)\otimes 1) q_{od}^D + v_{ev} v_{ev}^*) \\
= & q_{od}^C + (U_{t+s}(-f)\otimes 1) q_{od}^D + v_{ev} v_{ev}^* 
=  u_{t+s}^{A,f}.
\end{align*}
The equality
$u_t^{A,f_1 + f_2} = u_t^{A,f_1} u_t^{A,f_2}$
immediately follows from Lemma \ref{lem:3.5}.
(ii) is similarly shown to (i).
\end{proof}

We thus have
\begin{proposition}\label{prop:main1}
Let $A , B$ 
be nonnegative irreducible and non-permutation matrices.
Suppose that $A = CD, \, B = DC$ for some nonnegative rectangular matrices $C, D$.
Then  
 there exist an isomorphism
$\Phi:\SOA \rightarrow \SOB$ satisfying 
$\Phi(\SDA) = \SDB$, 
and unitary representations 
$t \in \T \rightarrow u^{A,f}_t \in M(\SDA)$ for each $f \in C(X_A,\Z)$
and
$t \in \T \rightarrow u^{B,g}_t \in M(\SDB)$ for each $g \in C(X_B,\Z)$
such that
\begin{gather}
\Phi \circ \Ad(u_t^{A,f}) \circ (\rho^{A,f}_t\otimes \id)
 = (\rho^{B,\phi(f)}_t\otimes\id) \circ \Phi
\quad
\text{ for }
 f \in  C(X_A,\Z), \label{eq:3.12} \\
\Phi \circ  (\rho^{A, \psi(g)}_t\otimes \id)
 = \Ad(u_t^{B,g}) \circ (\rho^{B,g}_t\otimes\id) \circ \Phi
\quad
\text{ for }
g \in  C(X_B,\Z). \label{eq:3.13}
\end{gather}
\end{proposition}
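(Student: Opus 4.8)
The plan is to assemble the objects constructed in the preceding lemmas and then verify the two intertwining identities by a direct cocycle manipulation. First I would take the reconstructed partial isometries $v_A, v_B \in M(\SOZ)$ satisfying \eqref{eq:vAvB}, set $w = v_B v_A^*$, and put $\Phi = \Ad(w)$. Since $\Ad(v_A^*)\colon \SOA \to \SOZ$ and $\Ad(v_B^*)\colon \SOB \to \SOZ$ are the isomorphisms of \eqref{eq:isomSO} carrying $\SDA$ and $\SDB$ onto $\SDZ$, the composite $\Phi = \Ad(v_B)\circ\Ad(v_A^*)$ is an isomorphism of $\SOA$ onto $\SOB$ with $\Phi(\SDA) = \SDB$. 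For the cocycles I would take $u_t^{A,f} = w^*(\rho^{Z,f\oplus0}_t\otimes\id)(w)$ and $u_t^{B,g} = w(\rho^{Z,0\oplus g}_t\otimes\id)(w^*)$; Lemma \ref{lem:3.6} reduces them to $u_t^{A,f} = v_A(\rho^{Z,f\oplus0}_t\otimes\id)(v_A^*)$ and $u_t^{B,g} = v_B(\rho^{Z,0\oplus g}_t\otimes\id)(v_B^*)$ as in \eqref{eq:utafva}, while Lemma \ref{lem:3.4} and Lemma \ref{lem:3.5} show that they lie in $M(\SDA)$ and $M(\SDB)$ and, by the preceding lemma, form unitary representations of $\T$. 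This accounts for every assertion of the statement except the two displayed equalities.

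The core of the verification rests on a corner description of the actions. Writing $\alpha_t = \rho^{Z,f\oplus0}_t\otimes\id$, the relations \eqref{eq:rhozf} say that $\rho^{Z,f\oplus0}_t$ restricts on $P_C\OZ P_C = \OA$ to $\rho^{A,f}_t$ and on $P_D\OZ P_D = \OB$ to $\rho^{B,\phi(f)}_t$. Hence, under the corner identifications $\SOA = (P_C\otimes1)(\SOZ)(P_C\otimes1)$ and $\SOB = (P_D\otimes1)(\SOZ)(P_D\otimes1)$ furnished by $v_A$ and $v_B$, the actions $\rho^{A,f}_t\otimes\id$ and $\rho^{B,\phi(f)}_t\otimes\id$ are precisely the restrictions of $\alpha_t$. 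To prove \eqref{eq:3.12} I would evaluate both sides on an element $q$ of the corner $\SOA$. Using $w w^* = v_B v_B^* = P_D\otimes1$ together with Lemma \ref{lem:3.6}(ii), which gives $\alpha_t(v_B) = v_B$ and $\alpha_t(v_B^*) = v_B^*$, the product $w\,u_t^{A,f}$ collapses to $v_B\,\alpha_t(v_A^*)$, so that the left-hand side becomes $v_B\,\alpha_t(v_A^* q v_A)\,v_B^*$; the right-hand side $\alpha_t(\Phi(q)) = \alpha_t(v_B v_A^* q v_A v_B^*)$ collapses to the same expression by the same two facts. The identity \eqref{eq:3.13} follows in the same way from \eqref{eq:rhozg} and Lemma \ref{lem:3.6}(i).

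The genuinely new content over Proposition \ref{prop:4.3} is not the existence of $\Phi$ and the cocycles, which is already in \cite{MaMZ2016}, but the assertion that the cocycles take values in $M(\SDA)$ and $M(\SDB)$ rather than merely in $M(\SOA)$ and $M(\SOB)$. This is exactly what the explicit reconstruction of $v_A$ and $v_B$ delivers: Lemma \ref{lem:3.4} exhibits $u_t^{A,f}$ as $q_{od}^C + (U_t(-f)\otimes1)\,q_{od}^D + v_{ev}v_{ev}^*$, built from projections in $M(\SDA)$ and the diagonal unitary $U_t(-f)\otimes1$. I therefore expect the main difficulty to be bookkeeping rather than conceptual: one must keep the two corner identifications mutually consistent and check that every factor entering $u_t^{A,f}$ normalizes $\SDA$, so that $t\mapsto u_t^{A,f}$ genuinely lands in $M(\SDA)$. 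Once this is secured, the two intertwining relations follow from the cocycle computation above, and the proof is complete.
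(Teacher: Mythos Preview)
Your proposal is correct and follows the same approach as the paper: define $\Phi = \Ad(w)$ with $w = v_B v_A^*$, use the simplified cocycles $u_t^{A,f} = v_A(\rho^{Z,f\oplus0}_t\otimes\id)(v_A^*)$ and $u_t^{B,g} = v_B(\rho^{Z,0\oplus g}_t\otimes\id)(v_B^*)$ furnished by Lemmas \ref{lem:3.4}--\ref{lem:3.6} and the lemma preceding the proposition, and verify the intertwining relations by the cocycle computation you outline. The paper's own proof is terser---it simply cites the argument of \cite[Proposition 4.3]{MaMZ2016} for \eqref{eq:3.12} and asserts \eqref{eq:3.13} is analogous---so your explicit corner computation is a faithful unpacking of what is intended, and your identification of the genuinely new content (that the cocycles land in $M(\SDA)$, $M(\SDB)$ via the reconstructed $v_A$, $v_B$) is exactly right.
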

\begin{proof}
As in the proof of \cite[Proposition 4.3]{MaMZ2016},
the map 
$\Phi = \Ad(w)$ where $w = v_B v_A^*$ gives rise to 
an isomorphism
$\Phi:\SOA \rightarrow \SOB$
such that 
$ \Phi(\SDA) = \SDB$ and
\begin{equation*}
\Phi \circ \Ad(u_t^{A,f}) \circ (\rho^{A,f}_t\otimes \id)
=  (\rho^{B,\phi(f)}_t \otimes\id)\circ \Phi.
\end{equation*} 
The other equality \eqref{eq:3.13}
is similarly shown to (i).
\end{proof}
Since both the homomorphisms 
$\varphi: C(X_A,\Z) \rightarrow C(X_B,\Z)$
and
$\psi: C(X_B,\Z) \rightarrow C(X_A,\Z)$
satisfy 
$\varphi(1) = 1, \psi(1) =1$,
we have the following corollary. 
\begin{corollary}[cf. {\cite[3.8 Theorem]{CK}, \cite[2.3 Theorem]{Cu3}}] \label{cor:main}
Let $A, B$ be irreducible non-permutation matrices.
Suppose that two-sided topological Markov shifts
$(\bar{X},\bar{\sigma}_A)$ and $(\bar{X}_B,\bar{\sigma}_B)$ are topologically conjugate.
Then  there exist an isomorphism 
$\Phi:\SOA \rightarrow \SOB $ of $C^*$-algebras
satisfying
$\Phi(\SDA) = \SDB$,
and  
unitary representations
 $t \in \T \rightarrow u_t^A \in M(\SDA)$
and
 $t \in \T \rightarrow u_t^B \in M(\SDB)$
 such that
\begin{gather*}
\Phi \circ \Ad(u^A_t) \circ (\rho^{A}_t \otimes\id) 
= (\rho^{B}_t \otimes \id) \circ \Phi, \\
\Phi \circ (\rho^{A}_t \otimes\id) 
= \Ad(u^B_t) \circ (\rho^{B}_t \otimes \id) \circ \Phi
\end{gather*}
where $\rho^A_t$ and $\rho^B_t$ are the gauge actions on $\OA$ and $\OB$, respectively.
\end{corollary}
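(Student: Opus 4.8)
The plan is to pass from topological conjugacy to a chain of elementary equivalences and then to apply Proposition~\ref{prop:main1} at each link, specialised to the constant function $f\equiv 1$. By Williams' theorem \cite{Williams}, the topological conjugacy of $(\bar{X}_A,\bar{\sigma}_A)$ and $(\bar{X}_B,\bar{\sigma}_B)$ means that $A$ and $B$ are strong shift equivalent, so there is a finite chain $A = A_0, A_1, \dots, A_n = B$ of nonnegative matrices with $A_i = C_i D_i$ and $A_{i+1} = D_i C_i$ for each $i$. Each two-sided shift $(\bar{X}_{A_i}, \bar{\sigma}_{A_i})$ is topologically conjugate to $(\bar{X}_A, \bar{\sigma}_A)$, so each $A_i$ is again irreducible and non-permutation, and Proposition~\ref{prop:main1} is applicable to every link.

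First I would record the single-link case at $f=g=1$. For the homomorphisms of the proposition one computes $\phi_i(1) = \sum_{d \in E_{D_i}} S_d S_d^* = P_{D_i} = 1$ and $\psi_i(1) = \sum_{c \in E_{C_i}} S_c S_c^* = P_{C_i} = 1$, the units of ${\mathcal O}_{A_{i+1}}$ and ${\mathcal O}_{A_i}$ respectively. Since $\rho^{A_i,1} = \rho^{A_i}$ is exactly the gauge action, specialising \eqref{eq:3.12} and \eqref{eq:3.13} to $f = g = 1$ produces an isomorphism $\Phi_i : {\mathcal O}_{A_i}\otimes\K \to {\mathcal O}_{A_{i+1}}\otimes\K$ with $\Phi_i({\mathcal D}_{A_i}\otimes{\mathcal C}) = {\mathcal D}_{A_{i+1}}\otimes{\mathcal C}$ and a unitary representation $u^{(i)}_t \in M({\mathcal D}_{A_i}\otimes{\mathcal C})$ such that $\Phi_i \circ \Ad(u^{(i)}_t)\circ(\rho^{A_i}_t\otimes\id) = (\rho^{A_{i+1}}_t\otimes\id)\circ\Phi_i$.

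The heart of the argument is then to compose these links. I would set $\Phi = \Phi_{n-1}\circ\cdots\circ\Phi_0 : \SOA \to \SOB$, which carries $\SDA$ onto $\SDB$, and define
\[
u^A_t = (\Phi_0^{-1}\circ\cdots\circ\Phi_{n-2}^{-1})(u^{(n-1)}_t)\cdots \Phi_0^{-1}(u^{(1)}_t)\,u^{(0)}_t .
\]
Each factor lies in $M(\SDA)$ because every $\Phi_j^{-1}$ maps the stabilised diagonal of $A_{j+1}$ into that of $A_j$; and since $\DA\cong C(X_A)$ and ${\mathcal C}$ are abelian, $M(\SDA)$ is commutative, so this product is again a unitary representation of $\T$ in $M(\SDA)$. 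The intertwining relation $\Phi\circ\Ad(u^A_t)\circ(\rho^A_t\otimes\id) = (\rho^B_t\otimes\id)\circ\Phi$ then follows by a straightforward induction on $n$: one pushes each $\Phi_j$ past an inner automorphism using $\Ad(x)\circ\Phi_j = \Phi_j\circ\Ad(\Phi_j^{-1}(x))$, and uses that $\rho^{A}_t\otimes\id$ fixes $\SDA$ pointwise so that it commutes with the inner automorphisms coming from diagonal unitaries. I expect this composition bookkeeping to be the only real obstacle, and it is precisely here that the reconstruction of $v_A, v_B$ in this section matters: forcing the cocycles $u^{(i)}_t$ into the \emph{diagonal} multiplier algebra $M({\mathcal D}_{A_i}\otimes{\mathcal C})$, rather than merely $M({\mathcal O}_{A_i}\otimes\K)$, is what makes the product of the pulled-back representations a representation again.

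Finally, the second displayed identity of the corollary need not be proved independently. From the first identity one rewrites $\Phi\circ\Ad(u^A_t) = \Ad(\Phi(u^A_t))\circ\Phi$ to obtain $\Phi\circ(\rho^A_t\otimes\id) = \Ad(\Phi((u^A_t)^*))\circ(\rho^B_t\otimes\id)\circ\Phi$, so it suffices to put $u^B_t = \Phi((u^A_t)^*)$; this lies in $M(\SDB)$ since $\Phi(\SDA) = \SDB$, and it is again a unitary representation of $\T$. This completes the plan.
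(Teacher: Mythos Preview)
Your proof is correct and follows the same core idea as the paper: specialise Proposition~\ref{prop:main1} to the constant function $1$, using $\phi(1)=\psi(1)=1$, and then pass from a single elementary equivalence to a chain. The paper's justification is in fact only the one-line remark ``Since both the homomorphisms $\varphi$ and $\psi$ satisfy $\varphi(1)=1,\ \psi(1)=1$'' immediately preceding the corollary; it leaves the chaining entirely implicit, whereas you spell out carefully how to compose the $\Phi_i$ and how to assemble a single unitary representation $u^A_t$ from the pulled-back diagonal unitaries, exploiting commutativity of $M(\SDA)$ and $(\rho^A_t\otimes\id)$-invariance of diagonal elements. One small difference in route: the paper obtains $u^A_t$ and $u^B_t$ independently from the two displayed identities of Proposition~\ref{prop:main1}, while you derive $u^B_t=\Phi((u^A_t)^*)$ directly from the first intertwining relation. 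Both are valid, and your derivation has the advantage that it makes the second identity an immediate consequence of the first rather than a parallel construction.
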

\begin{remark}
 We must emphasize that Cuntz-- Krieger in \cite[3.8 Theorem]{CK} 
and  Cuntz in \cite[2.3 Theorem]{Cu3}
 have shown that the stabilized 
Cuntz--Krieger triplet
$(\SOA,\SDA,\rho^A\otimes\id)$
is invariant under topological conjugacy of the two-sided topological Markov shifts
$(\bar{X}_A,\bar{\sigma}_A)$. 
Hence the above corollary is weaker than their  result.
\end{remark}

Before ending this section, 
we will introduce a notion of strong Morita equivalence 
in the stabilized Cuntz--Krieger triplets.
The triplet
$(\SOA, \SDA, \rho^A\otimes\id)$
is called the stabilized  Cuntz--Krieger triplets.
Two stabilized  Cuntz--Krieger triplets
$(\SOA,\SDA,\rho^A\otimes\id)$ and
$(\SOB,\SDB,\rho^B\otimes\id)$ 
are said to be {\it strong Morita equivalent in} $1$-{\it step}
if 
there exists a stabilized Cuntz--Krieger triplet 
$(\SOZ,\SDZ,\rho^Z\otimes\id)$ such that 
there exist isomorphisms of $C^*$-algebras
\begin{equation*}
\Phi_A : \SOZ \longrightarrow \SOA, \qquad
\Phi_B : \SOZ \longrightarrow \SOB 
\end{equation*}
satisfying
\begin{align*}
\Phi_A (\DZ\otimes&\C ) = \SDA, \qquad
\Phi_B (\SDZ)= \SDB, \\ 
\rho^Z_t\otimes\id 
=&
 (\Phi_B^{-1}\circ \rho^B_t\otimes\id\circ\Phi_B) \circ
(\Phi_A^{-1}\circ \rho^A_t\otimes\id\circ\Phi_A) \\
=&(\Phi_A^{-1}\circ \rho^A_t\otimes\id\circ\Phi_A) \circ
(\Phi_B^{-1}\circ \rho^B_t\otimes\id\circ\Phi_B).
\end{align*}
If two stabilized Cuntz--Krieger triplets
$(\SOA,\SDA,\rho^A\otimes\id)$ and
$(\SOB,\SDB,\rho^B\otimes\id)$ 
are connected by $n$-chains of 
strong Morita equivalences in $1$-step, 
they are said to be 
strong Morita equivalent in $n$-step,
or simply 
strong Morita equivalent.
\begin{proposition}
Suppose that $A, B$ are elementary equivalent such that
$A= CD, B = DC$. 
Then the
stabilized  Cuntz--Krieger triplets
$(\SOA,\SDA,\rho^A\otimes\id)$ and
$(\SOB,\SDB,\rho^B\otimes\id)$ 
are strong Morita equivalent in $1$-step.
\end{proposition}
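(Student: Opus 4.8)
The plan is to use the block matrix $Z = \begin{bmatrix} 0 & C \\ D & 0 \end{bmatrix}$, whose stabilized Cuntz--Krieger triplet $(\SOZ,\SDZ,\rho^Z\otimes\id)$ will be the mediating one, and to take as the two required isomorphisms the maps implemented by the partial isometries $v_A, v_B \in M(\SOZ)$ reconstructed in this section. Concretely I would set $\Phi_A = \Ad(v_A)\colon \SOZ \to \SOA$ and $\Phi_B = \Ad(v_B)\colon \SOZ \to \SOB$. Since $v_A^*v_A = v_B^*v_B = 1\otimes 1$, $v_Av_A^* = P_C\otimes 1$ and $v_Bv_B^* = P_D\otimes 1$ (see \eqref{eq:vAvB}), these are precisely the inverses of the isomorphisms $\Ad(v_A^*),\Ad(v_B^*)$ onto $\SOZ$, so they are isomorphisms of $\SOZ$ onto the full corners $(P_C\otimes1)\SOZ(P_C\otimes1) = \SOA$ and $(P_D\otimes1)\SOZ(P_D\otimes1) = \SOB$. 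The diagonal requirements $\Phi_A(\SDZ) = \SDA$ and $\Phi_B(\SDZ) = \SDB$ follow at once from the intertwining of the diagonals carried by $v_A, v_B$ (the same property that yields $\Phi(\SDA)=\SDB$ in Proposition \ref{prop:main1}).

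The heart of the argument is the gauge compatibility, and the key point is to pair each corner with the \emph{correct} one-parameter partial gauge. Writing $\alpha^A_t = \Phi_A^{-1}\circ(\rho^A_t\otimes\id)\circ\Phi_A$ and $\alpha^B_t = \Phi_B^{-1}\circ(\rho^B_t\otimes\id)\circ\Phi_B$, I would compute $\alpha^A_t$ via the identity $\rho^{Z,0\oplus 1}_t|_{\OA} = \rho^A_t$, which is \eqref{eq:rhozg} evaluated at the constant function $1$ together with $\psi(1)=1$; symmetrically $\rho^{Z,1\oplus 0}_t|_{\OB} = \rho^B_t$ comes from \eqref{eq:rhozf} and $\phi(1)=1$. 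Because $P_C\in\DZ$ is fixed by every $\rho^{Z,f}_t$, the automorphism $\rho^{Z,0\oplus1}_t\otimes\id$ preserves the corner $\SOA$ and restricts there to $\rho^A_t\otimes\id$. Hence for $y\in\SOZ$, using $v_Ayv_A^*\in\SOA$ and the multiplier extension,
\[
(\rho^A_t\otimes\id)(v_Ayv_A^*)=(\rho^{Z,0\oplus1}_t\otimes\id)(v_A)\,(\rho^{Z,0\oplus1}_t\otimes\id)(y)\,(\rho^{Z,0\oplus1}_t\otimes\id)(v_A^*).
\]
Now Lemma \ref{lem:3.6}(i) states exactly that $v_A$ is fixed by $\rho^{Z,0\oplus1}_t\otimes\id$, so the outer factors collapse against $v_A^*v_A=1\otimes1$ and I obtain $\alpha^A_t = \rho^{Z,0\oplus1}_t\otimes\id$, with no cocycle. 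The identical computation, invoking Lemma \ref{lem:3.6}(ii) that $v_B$ is fixed by $\rho^{Z,1\oplus0}_t\otimes\id$, gives $\alpha^B_t = \rho^{Z,1\oplus0}_t\otimes\id$.

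I would then finish by the additivity of the generalized gauge actions on $\DZ$: since $1\oplus0$ and $0\oplus1$ are complementary functions on $X_Z = X_A\sqcup X_B$ with $(1\oplus0)+(0\oplus1)=1$, one has $\rho^{Z,1\oplus0}_t\circ\rho^{Z,0\oplus1}_t = \rho^{Z,0\oplus1}_t\circ\rho^{Z,1\oplus0}_t = \rho^Z_t$. Tensoring with $\id$ yields
\[
\alpha^B_t\circ\alpha^A_t = \alpha^A_t\circ\alpha^B_t = \rho^Z_t\otimes\id,
\]
which is exactly the relation demanded in the definition of strong Morita equivalence in $1$-step for the stabilized triplets.

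The only genuine subtlety I anticipate is the choice of pairing in the gauge computation. The naive attempt of matching $\Phi_A$ with the $C$-gauge $\rho^{Z,1\oplus0}$ would leave a nontrivial unitary cocycle $c_t$ (explicitly of the form $\exp(2\pi\sqrt{-1}\,t\,Q)$ for a nonscalar projection $Q$ coming from Lemma \ref{lem:3.4}), and one would then be forced into an awkward check that the two cocycles cancel in the composition. Pairing instead the $A$-corner against the $D$-gauge and the $B$-corner against the $C$-gauge, so that Lemma \ref{lem:3.6} applies directly, makes each $\alpha$ cocycle-free and renders the conclusion immediate; everything else (fullness of the corners and the diagonal identities) is already available from the constructions of this section.
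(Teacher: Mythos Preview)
Your proposal is correct and follows essentially the same approach as the paper's own proof: you use the same mediating matrix $Z$, the same isomorphisms $\Phi_A=\Ad(v_A)$ and $\Phi_B=\Ad(v_B)$, invoke Lemma~\ref{lem:3.6} in exactly the same way to obtain $\alpha^A_t=\rho^{Z,0\oplus1}_t\otimes\id$ and $\alpha^B_t=\rho^{Z,1\oplus0}_t\otimes\id$, and conclude via the factorization $\rho^Z_t=\rho^{Z,1\oplus0}_t\circ\rho^{Z,0\oplus1}_t$. Your closing remark on the correct pairing (the $A$-corner with the $D$-side gauge and the $B$-corner with the $C$-side gauge) is a helpful gloss on why Lemma~\ref{lem:3.6} applies cleanly, but the argument itself matches the paper's.
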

\begin{proof}
Let 
$Z =
\begin{bmatrix}
0 & C\\
D & 0
\end{bmatrix}.
$
Take partial isometries 
$v_A,v_B \in M(\SOZ)$ satisfying
\eqref{eq:vAvB}.
By Lemma \ref{lem:3.6},
the following identities hold
\begin{equation*}
(\rho_t^{Z, 0\oplus 1} \otimes \id)(v_A) = v_A,\qquad
(\rho_t^{Z, 1\oplus 0} \otimes \id)(v_B) = v_B.
\end{equation*} 
Define
$
\Phi_A = \Ad(v_A),
\Phi_B = \Ad(v_B).
$
As in \eqref{eq:isomSO},
they give rise to isomorphisms
$$
\Phi_A : \SOZ \longrightarrow \SOA, \qquad
\Phi_B : \SOZ \longrightarrow \SOB 
$$
satisfying
$$
\Phi_A (\SDZ) = \SDA, \qquad
\Phi_B (\SDZ)= \SDB.
$$
Since we see
\begin{gather*}
\rho_t^{Z, 0\oplus 1}(S_c) = S_c,\qquad
\rho_t^{Z, 0\oplus 1}(S_d) = e^{2\pi\sqrt{-1}t}S_d,\\
\rho_t^{Z, 1\oplus 0}(S_c) = e^{2\pi\sqrt{-1}t}S_c,\qquad
\rho_t^{Z, 1\oplus 0}(S_d) = S_d
\end{gather*}
for $c \in C, d \in D$, we have
for $x\otimes K \in \SOZ$
\begin{align*}
((\rho_t^{A} \otimes \id) \circ\Phi_A)(x\otimes K)
=& (\rho_t^{Z, 0\oplus 1} \otimes \id) (v_A(x\otimes K)v_A^*) \\
=& v_A(\rho_t^{Z, 0\oplus 1}\otimes \id)(x\otimes K) v_A^* \\
=& \Phi_A \circ (\rho_t^{Z, 0\oplus 1} \otimes \id)(x \otimes K).
\end{align*} 
Hence we have 
$(\rho_t^{A} \otimes \id) \circ\Phi_A =\Phi_A \circ (\rho_t^{Z, 0\oplus 1} \otimes \id)$
and similarly
$(\rho_t^{B} \otimes \id) \circ\Phi_B =\Phi_B \circ (\rho_t^{Z, 1\oplus 0} \otimes \id)$.
Since
$\rho_t^{Z}\otimes \id
=(\rho_t^{Z, 1\oplus 0}\otimes \id) \circ (\rho_t^{Z, 0\oplus 1}\otimes \id)
=(\rho_t^{Z, 0\oplus 1}\otimes \id)\circ (\rho_t^{Z, 1\oplus 0}\otimes \id),
$
we know the assertion.
\end{proof}
Therefore we have the following corollary.
\begin{corollary}
If  $A, B$ are strong shift equivalent, 
then
the stabilized  Cuntz--Krieger triplets
$(\SOA,\SDA,\rho^A\otimes\id)$ and
$(\SOB,\SDB,\rho^B\otimes\id)$ 
are strong Morita equivalent.
\end{corollary}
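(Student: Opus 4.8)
The plan is to reduce the statement to the single elementary-equivalence step already treated in the preceding proposition and then to concatenate the resulting $1$-step equivalences along a strong shift equivalence. First I would unwind the definition of strong shift equivalence: since $A$ and $B$ are strong shift equivalent, there is a finite sequence of nonnegative square matrices $A_0, A_1, \dots, A_n$ with $A = A_0$ and $B = A_n$ such that each consecutive pair $A_i, A_{i+1}$ is elementary equivalent, say $A_i = C_{i+1}D_{i+1}$ and $A_{i+1} = D_{i+1}C_{i+1}$ for suitable nonnegative rectangular matrices $C_{i+1}, D_{i+1}$, for $i = 0,\dots,n-1$.

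Next, for each index $i$ I would apply the preceding proposition to the elementary equivalence $A_i = C_{i+1}D_{i+1}$, $A_{i+1}=D_{i+1}C_{i+1}$, using the intermediate matrix $Z_{i+1} = \begin{bmatrix} 0 & C_{i+1} \\ D_{i+1} & 0 \end{bmatrix}$. This yields that the stabilized Cuntz--Krieger triplets $(\mathcal{O}_{A_i}\otimes\K,\, \mathcal{D}_{A_i}\otimes\mathcal{C},\, \rho^{A_i}\otimes\id)$ and $(\mathcal{O}_{A_{i+1}}\otimes\K,\, \mathcal{D}_{A_{i+1}}\otimes\mathcal{C},\, \rho^{A_{i+1}}\otimes\id)$ are strong Morita equivalent in $1$-step, through the stabilized triplet attached to $Z_{i+1}$ together with the intertwining partial isometries $v_{A_i}, v_{A_{i+1}} \in M(\mathcal{O}_{Z_{i+1}}\otimes\K)$ constructed there. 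I would remark that the construction underlying that proposition makes no use of irreducibility of the matrices involved, so it applies verbatim to each (a priori reducible) intermediate matrix $A_i$.

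Finally, the chain $A_0 \to A_1 \to \cdots \to A_n$ now exhibits $(\SOA,\SDA,\rho^A\otimes\id)$ and $(\SOB,\SDB,\rho^B\otimes\id)$ as connected through $n$ consecutive $1$-step strong Morita equivalences. By the definition of strong Morita equivalence in $n$-step for stabilized Cuntz--Krieger triplets, this is precisely the statement that the two stabilized triplets are strong Morita equivalent, which completes the proof.

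I would expect no genuine analytic obstacle here: the argument is a bookkeeping reduction to the single-step proposition combined with an unwinding of the $n$-step definition. The one point deserving a sentence of care is the observation that the $n$-step notion is defined exactly as the concatenation of $1$-step equivalences, so that it supplies the transitive closure needed to transport the elementary-equivalence decomposition of a strong shift equivalence into the desired conclusion; apart from that, everything follows directly from the preceding proposition.
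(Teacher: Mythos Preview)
Your proposal is correct and matches the paper's approach: the paper states this as an immediate corollary of the preceding proposition with no further proof, and your argument spells out exactly the intended chaining of $1$-step equivalences along a strong shift equivalence decomposition. Your side remark that the single-step proposition does not rely on irreducibility of the intermediate matrices is a reasonable point of care that the paper leaves implicit.
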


\section{Behavior on K-theory}
In this section we will study the behavior of the isomorphism 
$\Phi:\SOA \rightarrow \SOB$ in Proposition \ref{prop:main1} on their K-groups 
$\Phi_* : K_0(\OA) \rightarrow K_0(\OB)$
under the condition $A=CD, B=DC$.  

Recall that $A =[A(i,j)]_{i,j=1}^N $ is an $N\times N$ matrix with entries in nonnegative integers.
Then the associated graph $G_A = (V_A,E_A)$ consists of the vertex set 
$V_A =\{ v^A_1, \dots, v_N^A\}$ of $N$ vertices and edge set 
$E_A =\{a_1,\dots,a_{N_A} \}$, where 
there  are $A(i,j)$ edges  from $v_i^A$ to $v_j^A$.
Denote by $t(a_i),  s(a_i)$ the terminal vertex  of $a_i$, 
the source vertex of $a_i$, respectively. 
The graph $G_A$ has the $N_A \times N_A$
 transition matrix $A^G =[A^G(i,j)]_{i,j=1}^{N_A}$ of edges 
defined by \eqref{eq:AG}.
The Cuntz--Krieger algebra $\OA$ is defined as the Cuntz--Krieger algebra
 ${\mathcal{O}}_{A^G}$ 
for the matrix $A^G$ which is the universal $C^*$-algebra generated by 
partial isometries
$S_{a_i}, i=1,\dots, N_A$ subject to the relations \eqref{eq:OAG}.
We similarly consider the $N_B \times N_B$ matrix $B^G$ with entries in $\{0,1\}$
for the graph $G_B = (V_B, E_B)$ of the matrix $B$
with
vertex set $V_B =\{ v_1^B,\dots,v_M^{B} \}$
and edge set $E_B = \{ b_1, \dots,b_{N_B}\}$,
so that we have the other Cuntz--Krieger algebra ${\mathcal{O}}_{B^G}$
for the matrix $B^G$ which is denoted by $\OB.$

Now we are assuming that $A=CD$ and $B= DC$ for some nonnegative rectangular matrices $C$ and $D$.
Both $A$ and $B$ are also assumed to be irreducible and not any permutations. 
Since $A=CD$, the edge set $E_A$ is regarded as a subset of the product
$E_C \times E_D$ of those of $E_C$ and $E_D$.
As in Section 2, we may take a bijection 
$\phi_{A,CD}$ from $ E_A$ to a subset of $E_C\times E_D$. 
 For any $a_i \in E_A$, there uniquely exist
$c(a_i) \in E_C$ and $d(a_i) \in E_D$ such that 
$\phi_{A,CD}(a_i) = c(a_i) d(a_i)$.
We write it simply as $a_i = c(a_i)d(a_i)$. 
Similarly, for any edge $b_l \in E_B$, 
there uniquely exist  
$d(b_l) \in E_D$ and $c(b_l) \in E_C$ such that
$\phi_{B,DC}(b_l) = d(b_l) c(b_l)$, simply written 
$b_l = d(b_l)c(b_l).$
We define $N_A\times N_B$ matrix 
$\hat{D} = [\hat{D}(i,l)]_{i=1,\dots,N_A}^{l=1,\dots,N_B}$ by
\begin{equation}
\hat{D}(i,l) =
\begin{cases} 
1 &  \text{  if  } d(a_i) = d(b_l), \\
0 &  \text{  otherwise.}
\end{cases} \label{eq:Dhat}
\end{equation}
\begin{lemma}
The matrix $\hat{D}^t: \Z^{N_A}\rightarrow \Z^{N_B}$ 
induces a homomorphism from
$\Z^{N_A}/{(\id - {(A^G)}^{t})\Z^{N_A}}$
to 
$\Z^{N_B}/{(\id - {(B^G)}^{t})\Z^{N_B}}$
as abelian groups. 
\end{lemma}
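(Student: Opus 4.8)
The plan is to reduce the statement to the single containment
$\hat{D}^t\bigl((\id - (A^G)^t)\Z^{N_A}\bigr) \subseteq (\id - (B^G)^t)\Z^{N_B}$, which is exactly what is needed for $\hat{D}^t$ to descend to a well-defined map on the quotient groups; since $\hat{D}^t$ is $\Z$-linear, the induced map is then automatically a group homomorphism. The whole lemma therefore hinges on an intertwining identity at the edge level, namely
$\hat{D}\,B^G = A^G\,\hat{D}$ as $N_A \times N_B$ matrices.

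To produce this identity I would introduce the companion matrix $\hat{C}$ of size $N_B \times N_A$, defined in complete analogy with $\hat{D}$ by $\hat{C}(l,i) = 1$ if $c(b_l) = c(a_i)$ and $0$ otherwise, and then establish the edge-level factorization $A^G = \hat{D}\hat{C}$ and $B^G = \hat{C}\hat{D}$. Using the bijections $\phi_{A,CD}$ and $\phi_{B,DC}$, which identify $a_i \in E_A$ with the concatenation $c(a_i)d(a_i)$ and $b_l \in E_B$ with $d(b_l)c(b_l)$ inside the bipartite graph $G_Z$, one computes that the $(i,j)$ entry of $\hat{D}\hat{C}$ counts edges $b_l$ with $d(b_l) = d(a_i)$ and $c(b_l) = c(a_j)$. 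A pair $(d,c')$ corresponds to a (necessarily unique) edge of $E_B$ precisely when $t(d) = s(c')$, i.e. $Z^G(d,c') = 1$, so this count equals $1$ exactly when $t(d(a_i)) = s(c(a_j))$, which by definition is $A^G(i,j)$; the product $\hat{C}\hat{D} = B^G$ is verified identically.

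Granting the factorization, the commutation identity is the familiar associativity trick $\hat{D}(\hat{C}\hat{D}) = (\hat{D}\hat{C})\hat{D}$, i.e. $\hat{D}B^G = A^G\hat{D}$. Transposing yields $(B^G)^t\hat{D}^t = \hat{D}^t(A^G)^t$, and hence
\begin{equation*}
\hat{D}^t(\id - (A^G)^t) = \hat{D}^t - \hat{D}^t(A^G)^t = \hat{D}^t - (B^G)^t\hat{D}^t = (\id - (B^G)^t)\hat{D}^t .
\end{equation*}
Thus for every $v \in \Z^{N_A}$ we obtain $\hat{D}^t(\id - (A^G)^t)v = (\id - (B^G)^t)(\hat{D}^t v) \in (\id - (B^G)^t)\Z^{N_B}$, which is the required containment and finishes the argument.

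I expect the main obstacle to be the purely combinatorial verification of the factorization $A^G = \hat{D}\hat{C}$ and $B^G = \hat{C}\hat{D}$: one must keep careful track of how edges of $G_A$ and $G_B$ split into concatenated $C$- and $D$-edges of $G_Z$, and confirm that each admissible source/terminal configuration contributes exactly one edge, so that the matrix products come out with $\{0,1\}$-entries matching $A^G$ and $B^G$. Should the introduction of $\hat{C}$ seem cumbersome, there is a direct alternative that bypasses it: one checks $\hat{D}B^G = A^G\hat{D}$ entrywise by observing that both $(i,m)$ entries count edges $c \in E_C$ with $s(c) = t(d(a_i))$ and $t(c) = s(d(b_m))$, so the two sides coincide. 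Either route reduces the lemma to this bookkeeping, after which the quotient statement is immediate.
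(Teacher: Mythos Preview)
Your proposal is correct. Both your primary route and the paper's coincide in their endpoint: the intertwining relation $A^G\hat{D}=\hat{D}B^G$, from which the containment $\hat{D}^t(\id-(A^G)^t)\Z^{N_A}\subseteq(\id-(B^G)^t)\Z^{N_B}$ is immediate. The paper takes precisely the ``direct alternative'' you sketch at the end, computing both $[A^G\hat{D}](i,l)$ and $[\hat{D}B^G](i,l)$ and identifying each with the cardinality of $\{c\in E_C\mid d(a_i)\,c\,d(b_l)\in B_3(X_Z)\}$, which is your condition $s(c)=t(d(a_i))$ and $t(c)=s(d(b_l))$.

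Your main route differs in that you introduce the companion matrix $\hat{C}$ and prove the stronger factorizations $A^G=\hat{D}\hat{C}$ and $B^G=\hat{C}\hat{D}$, deducing the intertwining by associativity. This is a genuine, if modest, addition: it exhibits $(\hat{D},\hat{C})$ as an explicit elementary equivalence between $A^G$ and $B^G$, which is conceptually pleasing and slightly more informative than the bare commutation relation the paper records. The cost is the extra bookkeeping you flag---checking that admissible $(d,c')$ pairs are in bijection with $E_B$ via $\phi_{B,DC}$---but this follows from $B=DC$ and the definition of $\phi_{B,DC}$, so there is no real obstacle. Either argument is short and sound.
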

\begin{proof}
For $i=1,\dots,N_A$ and $l=1,\dots,N_B$,
we know that both
$$
[A^G\hat{D}](i,l) = \sum_{j=1}^{N_A}A^G(i,j)\hat{D}(j,l)
\quad \text{ and } \quad
[\hat{D} B^G](i,l) = \sum_{k=1}^{N_B} \hat{D}(i,k) B^G(k,l)
$$
are the cardinal number of the set 
$\{ c \in E_C \mid d(a_i) c d(b_l) \in B_3(X_Z)\}$.
Hence we have 
$
A^G\hat{D} = \hat{D} B^G.
$
We then have that 
$\hat{D}^t(\id - (A^G)^t)\Z^{N_A} \subset (\id - (B^G)^t)\Z^{N_B}
$
so that $\hat{D}^t$ induces a desired homomorphism.
\end{proof}
The above homomorphism 
from
$\Z^{N_A}/{(\id - {(A^G)}^{t})\Z^{N_A}}$
to 
$\Z^{N_B}/{(\id - {(B^G)}^{t})\Z^{N_B}}$
induced by  ${\hat{D}}^t$
is denoted by 
$\Phi_{\hat{D}^t}$.

Let us denote by 
$[e_i^{N_A}] $
 the class of the vector 
$e_i^{N_A} = (0,\dots,0,\overset{i}{1},0,\dots,0) \in \Z^{N_A}$
in $\Z^{N_A}/{(\id - {(A^G)}^{t})\Z^{N_A}}. 
$ 
It was shown in \cite{Cu3} that
the correspondence
$\epsilon_{A^G}: K_0({\mathcal{O}}_{A^G}) 
\rightarrow \Z^{N_A}/{(\id - {(A^G)}^{t})\Z^{N_A}}$
defined by
$\epsilon_{A^G}([S_{a_i}S_{a_i}^*] ) = [e_i^{N_A}]$
yields an isomorphism of abelian groups. 
We then have

\begin{proposition}\label{prop:KTD}
Suppose that $A=CD, B=DC.$
Let  
$\Phi:\SOA \rightarrow \SOB$ 
be the isomorphism in Proposition \ref{prop:main1}.
Then the diagram 
$$
\begin{CD}
K_0({\mathcal{O}}_{A^G}) @>\Phi_* >> K_0({\mathcal{O}}_{B^G}) \\
@V{\epsilon_{A^G} }VV  @VV{\epsilon_{B^G}}V \\
\Z^{N_A}/{(\id - {(A^G)}^{t})\Z^{N_A}} @> \Phi_{\hat{D}^t}>> \Z^{N_B}/{(\id - {(B^G)}^{t})\Z^{N_B}}. 
\end{CD}
$$ 
is commutative.
\end{proposition}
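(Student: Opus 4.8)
The plan is to verify the square on the canonical generators $[S_{a_i}S_{a_i}^*]$, $i=1,\dots,N_A$, of $K_0(\mathcal{O}_{A^G})$; since $\epsilon_{A^G}$ is an isomorphism carrying these to the generating classes $[e_i^{N_A}]$, commutativity on generators forces commutativity everywhere. Recall from the reconstruction of the preceding section that $\Phi = \Ad(w)$ with $w = v_B v_A^*$, so that $\Phi = \Ad(v_B)\circ\Ad(v_A^*)$ factors as $\SOA \xrightarrow{\Ad(v_A^*)} \SOZ \xrightarrow{\Ad(v_B)} \SOB$, where $v_A v_A^* = P_C\otimes 1$, $v_B v_B^* = P_D\otimes 1$, and $v_A^* v_A = v_B^* v_B = 1\otimes 1$. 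Writing $\iota_A\colon\SOA\hookrightarrow\SOZ$ and $\iota_B\colon\SOB\hookrightarrow\SOZ$ for the inclusions of the full corners, a short Murray--von Neumann computation (for a projection $q\le P_C\otimes 1$ the partial isometry $qv_A$ witnesses $v_A^* q v_A\sim q$, and symmetrically $v_B r v_B^*\sim r$ for $r\in\SOZ$) shows that $(\Ad(v_A^*))_* = \iota_{A*}$ and $(\Ad(v_B))_* = \iota_{B*}^{-1}$ on $K_0$. In particular $\iota_{B*}$ is an isomorphism and $\Phi_* = \iota_{B*}^{-1}\circ\iota_{A*}$, so everything reduces to understanding the two corner inclusions.

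Next I would compute these inclusions on the canonical generators, working entirely inside $K_0(\OZ)$. Under the identifications $S_{a_i} = S_{c(a_i)}S_{d(a_i)}$ and $S_{b_l} = S_{d(b_l)}S_{c(b_l)}$, together with $Z^G(c(a_i),d(a_i)) = Z^G(d(b_l),c(b_l)) = 1$, passing to the equivalent source projections yields
\[
\iota_{A*}([S_{a_i}S_{a_i}^*]) = [S_{d(a_i)}S_{d(a_i)}^*],
\qquad
\iota_{B*}([S_{b_l}S_{b_l}^*]) = [S_{c(b_l)}S_{c(b_l)}^*].
\]
Indeed $S_{a_i}S_{a_i}^*\sim S_{d(a_i)}^* S_{c(a_i)}^* S_{c(a_i)} S_{d(a_i)} = S_{d(a_i)}^* S_{d(a_i)}$ since $S_{c(a_i)}^* S_{c(a_i)}\ge S_{d(a_i)}S_{d(a_i)}^*$, and the second identity is symmetric.

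The key step is to invert $\iota_{B*}$ on $[S_{d(a_i)}S_{d(a_i)}^*]$. Fix $d = d(a_i)\in E_D$. Because $G_Z$ is bipartite, the Cuntz--Krieger relation $[S_dS_d^*] = [S_d^*S_d] = \sum_{\eta\in E_Z}Z^G(d,\eta)[S_\eta S_\eta^*]$ involves only edges $\eta\in E_C$, and the bijection $l\mapsto c(b_l)$ between $\{\,l : d(b_l)=d\,\}$ and $\{\,c'\in E_C : Z^G(d,c')=1\,\}$ gives
\[
\iota_{B*}\Bigl(\sum_{l\,:\,d(b_l)=d}[S_{b_l}S_{b_l}^*]\Bigr)
= \sum_{c'\in E_C,\ Z^G(d,c')=1}[S_{c'}S_{c'}^*]
= [S_dS_d^*]
= \iota_{A*}([S_{a_i}S_{a_i}^*]).
\]
Since $\iota_{B*}$ is injective, this forces $\Phi_*([S_{a_i}S_{a_i}^*]) = \sum_{l\,:\,d(b_l)=d(a_i)}[S_{b_l}S_{b_l}^*]$. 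Applying $\epsilon_{B^G}$ and recalling that $\hat D(i,l)=1$ precisely when $d(a_i)=d(b_l)$, the image becomes $\sum_{l\,:\,\hat D(i,l)=1}[e_l^{N_B}] = [\hat D^t e_i^{N_A}] = \Phi_{\hat D^t}([e_i^{N_A}]) = \Phi_{\hat D^t}(\epsilon_{A^G}([S_{a_i}S_{a_i}^*]))$, which is exactly $\epsilon_{B^G}\circ\Phi_* = \Phi_{\hat D^t}\circ\epsilon_{A^G}$ on generators.

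I expect the main obstacle to lie in the first paragraph: identifying $\Phi_*$ with $\iota_{B*}^{-1}\circ\iota_{A*}$ requires careful bookkeeping with the partial isometries $v_A,v_B$ in the multiplier algebra $M(\SOZ)$ and with the stabilization isomorphisms $K_0(\OA)\cong K_0(\SOA)$, $K_0(\OB)\cong K_0(\SOB)$. Once this reduction is secured, the rest is a direct application of the Cuntz--Krieger relations in $K_0(\OZ)$ and the bipartite block structure of $Z$.
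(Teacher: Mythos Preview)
Your argument is correct and takes a cleaner, more conceptual route than the paper. The paper computes $w(S_{a_i}S_{a_i}^*\otimes p_{\bar 0})w^*$ directly from the explicit recursive formulas for $v_A$ and $v_B$ built in Section~3 (this is Lemma~\ref{lem:series}), obtaining an honest projection in $\SOB$ that is Murray--von Neumann equivalent to $S_{d(a_i)}S_{d(a_i)}^*$; it then invokes the projection identity $S_{d(a_i)}S_{d(a_i)}^* = \sum_{l}\hat D(i,l)\,S_{b_l}S_{b_l}^*$ inside $\OZ$ (Lemma~\ref{lem:Sdai}) to finish. You bypass all of this by observing that any isometries $v_A,v_B\in M(\SOZ)$ with range projections $P_C\otimes 1$, $P_D\otimes 1$ give $\Phi_* = \iota_{B*}^{-1}\circ\iota_{A*}$ on $K_0$, and then you read off both corner inclusions from the Cuntz--Krieger relations and the bipartite structure of $G_Z$; your inversion of $\iota_{B*}$ via the bijection $l\mapsto c(b_l)$ is the $K_0$-level shadow of the paper's Lemma~\ref{lem:Sdai}. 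What your approach buys is independence from the particular construction of $v_A,v_B$ and a shorter argument; what the paper's approach buys is an explicit projection formula (not merely a $K_0$-class) and a proof that does not presuppose the standard full-corner $K$-theory fact you use. Your stated worry about the ``main obstacle'' is unwarranted: the equivalences $q\sim v_A^* q v_A$ for $q\le P_C\otimes 1$ and $r\sim v_B r v_B^*$ that you sketch are exactly what is needed, and the stabilization identifications are routine.
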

\begin{proof}
We note that 
$\K = \K(\ell^2(\N))$ has a countable basis and $\N$ is decomposed such as
$\N = \cup_{j=1}^\infty \N_j$ where $\N_j$ is also disjoint infinite set 
such as $\N_j = \cup_{k=0}^\infty \N_{j_k}$ with disjoint infinite sets
$\N_{j_k}$ for every $k=0,1,2,\dots.$ 
We write $\N_{j_k}$ as
$
\N_{j_k} = \{ j_k(0), j_k(1), j_k(2), \dots \}.
$
In particular for $j=1, k=0$, 
we denote by
$\bar{n} = 1_0(n)$ for $n=0,1,2, \dots$
so  that 
$\N_{1_0} = \{ \bar{0}, \bar{1}, \bar{2}, \dots \}$.
Let
$p_{\bar{n}}, n=0,1,2,\dots
$
be the sequence of projections of rank one in $\K$  such that
$\sum_{n=0}^\infty p_{\bar{n}} = f_{1_0}.$
By \cite{Cu3}, the group 
$K_0(\OAG)$ is generated by the projections of the form
$$
S_{a_i}S_{a_i}^*\otimes p_{\bar{0}}, \qquad i=1,\dots,N_A.
$$
Denote by $1_A$ the unit of $\OAG$
so that 
$[1_A] = \sum_{i=1}^{N_A} [S_{a_i}S_{a_i}^*\otimes p_{\bar{0}}]$
in $K_0(\OAG)$.
Let $\Phi = \Ad(w):\OAG\otimes\K\rightarrow \OBG\otimes\K$ 
be the isomorphism
in Proposition \ref{prop:main1}.
Hence 
$\Phi_*: K_0(\OAG) \rightarrow K_0(\OBG)$ satisfies
$\Phi_*([S_{a_i}S_{a_i}^*\otimes p_{\bar{0}}])
 = [w(S_{a_i}S_{a_i}^*\otimes p_{\bar{0}})^*w^*].
$ 
To complete the proof of the proposition, 
we provide the following two lemmas.
\end{proof}
Let $l(i)$ be the number $l=1,\dots,N_C$ satisfying 
$c_l = c(a_i)$ so that $d(l(i)) \in E_D$
satisfies
$T_{l(i)} =S_{d(l(i))} S_{c(a_i)}S_{c(a_i)}^*$ in \eqref{eq:bl}.
We put
$s_{1_{l(i)},1_0} = s_{1_{l(i)},1}s_{1,1_0}$ 
and
$s_{1_0,1_{l(i)}}= s_{1_{l(i)},1_0}^*.
$
\begin{lemma} \label{lem:series}Keep the above notation.
\begin{enumerate}
\renewcommand{\theenumi}{\roman{enumi}}
\renewcommand{\labelenumi}{\textup{(\theenumi)}}
\item $w(S_{a_i}S_{a_i}^*\otimes p_{\bar{0}})w^* 
= v_B(S_{a_i}S_{a_i}^*\otimes s_{1,1_0}p_{\bar{0}} s_{1_0,1})v_B^*.
$
\item
$v_B(S_{a_i}S_{a_i}^*\otimes s_{1,1_0}p_{\bar{0}} s_{1_0,1})v_B^*
= S_{d(l(i))} S_{c(a_i)} S_{d(a_i)} S_{d(a_i)}^* S_{c(a_i)}^* S_{d(l(i))}^* \otimes 
s_{1_{l(i)},1_0}p_{\bar{0}} s_{1_0,1_{l(i)}}.
$ 
\end{enumerate}
\end{lemma}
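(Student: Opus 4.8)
The plan is to conjugate by $w=v_Bv_A^*$ and then \emph{localize} each computation to the first building block $v_1$ (resp.\ $z_1$) of the partial isometry $v_A$ (resp.\ $v_B$), discarding all higher terms by an orthogonality-of-supports argument. Throughout I write $X=S_{a_i}S_{a_i}^*\otimes p_{\bar 0}$ and $Y=S_{a_i}S_{a_i}^*\otimes s_{1,1_0}p_{\bar 0}s_{1_0,1}$, and record that $S_{a_i}S_{a_i}^*=S_{c(a_i)}S_{d(a_i)}S_{d(a_i)}^*S_{c(a_i)}^*\le S_{c(a_i)}S_{c(a_i)}^*\le P_C$.

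For (i), since $w=v_Bv_A^*$ we have $wXw^*=v_B(v_A^*Xv_A)v_B^*$, so everything reduces to evaluating $v_A^*Xv_A$. Writing $v_A=\sum_{n\ge1}v_n$, the ranges $v_nv_n^*$ are mutually orthogonal with $\sum_nv_nv_n^*=P_C\otimes1$, and $X\le P_C\otimes f_{1_0}\le v_1v_1^*$; hence $v_nv_n^*X=0$, and therefore $v_n^*X=0=Xv_n$, for every $n\ge2$. This gives $v_A^*Xv_A=v_1^*Xv_1$. Now expand $v_1=P_C\otimes s_{1_0,1}+u_1$ with $u_1=\sum_kU_k\otimes s_{1_k,1}$. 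Every cross term involving $u_1$ or $u_1^*$ carries a factor $s_{1,1_k}p_{\bar0}$ or $p_{\bar0}s_{1_k,1}$, which vanishes because $p_{\bar0}\le f_{1_0}$ is orthogonal to $f_{1_k}$ for $k\ge1$; what survives is $(P_C\otimes s_{1,1_0})\,X\,(P_C\otimes s_{1_0,1})=S_{a_i}S_{a_i}^*\otimes s_{1,1_0}p_{\bar0}s_{1_0,1}$, using $P_CS_{a_i}S_{a_i}^*=S_{a_i}S_{a_i}^*$. This is precisely (i).

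For (ii) I would argue symmetrically, this time on the source side. Writing $v_B=\sum_{n\ge1}\tilde v_n$ with $\tilde v_1=z_1$, the source projections satisfy $\tilde v_1^*\tilde v_1=1\otimes f_1$ and $\sum_{n\ge2}\tilde v_n^*\tilde v_n=1\otimes(1-f_1)$; since $Y$ is supported under $\,\cdot\,\otimes f_1$, the terms $n\ge2$ annihilate $Y$ and one gets $v_BYv_B^*=z_1Yz_1^*$. Expanding $z_1=P_D\otimes s_{1_0,1}+t_1$ with $t_1=\sum_lT_l\otimes s_{1_l,1}$ and $T_l=S_{d(l)}S_{c_l}S_{c_l}^*$, the $P_D$-term dies against $S_{a_i}S_{a_i}^*\le P_C$ because $P_CP_D=0$, and in $t_1$ only the index $l=l(i)$ with $c_{l(i)}=c(a_i)$ survives, the others being killed by $S_{c_l}^*S_{c(a_i)}=0$. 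Collapsing the resulting product through $S_{c(a_i)}^*S_{c(a_i)}\ge S_{d(a_i)}S_{d(a_i)}^*$ (admissibility of $a_i=c(a_i)d(a_i)$) leaves the operator factor $S_{d(l(i))}S_{c(a_i)}S_{d(a_i)}S_{d(a_i)}^*S_{c(a_i)}^*S_{d(l(i))}^*$, while the matrix-unit factor collapses to $s_{1_{l(i)},1_0}p_{\bar0}s_{1_0,1_{l(i)}}$ by the definitions $s_{1_{l(i)},1_0}=s_{1_{l(i)},1}s_{1,1_0}$ and $s_{1_0,1_{l(i)}}=s_{1_{l(i)},1_0}^*$. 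This is exactly (ii).

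The two direct expansions are routine; the step that needs the most care is the localization bookkeeping — identifying precisely which $v_n$ (resp.\ $\tilde v_n$) survive against the rank-one supports of $X$ and $Y$, and keeping the doubly-indexed matrix units $s_{j_k,j}$ straight so that the label $\bar0=1_0(0)$ propagates correctly through $f_{1_0}$ and $f_{1_k}$ into the final $s_{1_{l(i)},1_0}$.
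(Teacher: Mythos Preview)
Your proof is correct and follows essentially the same route as the paper's. The paper is terser—it writes $v_A^*(S_{a_i}S_{a_i}^*\otimes p_{\bar 0})v_A=v_1^*(S_{a_i}S_{a_i}^*\otimes p_{\bar 0})v_1$ and, for (ii), passes directly from $v_B$ to $t_1$ without comment—whereas you spell out the range/source orthogonality that justifies discarding the $v_n$ (resp.\ $\tilde v_n$) for $n\ge 2$ and the $P_D$-summand of $z_1$; but the underlying argument is identical.
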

\begin{proof}
(i)
The unitary $w$ is given by
$w = v_B v_A^*$.
We know 
$v_A = \sum_{n=1}^\infty v_n$
and 
$v_1 = P_C \otimes s_{1_0,1} + \sum_{k=1}^{N_D} U_k\otimes s_{1_k,1}.$
As
$p_{\bar{0}} s_{1_k,1} =0$ for $k=1,\dots, N_D$,
we have
\begin{align*}
v_A^*(S_{a_i}S_{a_i}^*\otimes p_{\bar{0}}) v_A 
& = v_1^* (S_{a_i}S_{a_i}^*\otimes p_{\bar{0}})v_1 \\
& = (P_C \otimes s_{1_0,1})^* (S_{a_i}S_{a_i}^*\otimes p_{\bar{0}}) (P_C \otimes s_{1_0,1}) \\ 
& = S_{a_i}S_{a_i}^*\otimes s_{1,1_0}p_{\bar{0}} s_{1_0,1}.
\end{align*}
(ii)
For $c_l \in E_C =\{c_1,\dots,c_{N_C}\}$ and $a_i\in E_A$,
we note that 
$S_{c_l}^*S_{a_i} = S_{c_l}^* S_{c(a_i)} S_{d(a_i)}$
if $c_l = c(a_i)$, otherwise zero.
Hence we have
\begin{align*}
  &v_B(S_{a_i}S_{a_i}^*\otimes s_{1,1_0}p_{\bar{0}} s_{1_0,1})v_B^* \\
= & (\sum_{l=1}^{N_C} T_l \otimes s_{1_l,1})
     (S_{a_i}S_{a_i}^*\otimes s_{1,1_0}p_{\bar{0}} s_{1_0,1}) 
     (\sum_{l'=1}^{N_C} T_{l'} \otimes s_{1_{l'},1})^* \\
= &\sum_{l=1}^{N_C} S_{d(l)} S_{c_l} S_{c_l}^* 
      S_{a_i}S_{a_i}^* S_{c_l} S_{c_l}^*S_{d(l)}^* 
     \otimes s_{1_l,1}s_{1,1_0}p_{\bar{0}} s_{1_0,1} s_{1_l,1}^* \\
=& S_{d(l(i))} S_{c(a_i)} S_{d(a_i)} S_{d(a_i)}^* S_{c(a_i)}^* S_{d(l(i))}^* 
     \otimes 
     s_{1_{l(i)},1_0}p_{\bar{0}} s_{1_0,1_{l(i)}}.
\end{align*}
\end{proof}
\begin{lemma}\label{lem:Sdai}
$ S_{d(a_i)}S_{d(a_i)}^* = \sum_{l=1}^{N_B} \hat{D}(i,l) S_{b_l}S_{b_l}^*.$ 
\end{lemma}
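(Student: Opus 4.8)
The plan is to compute the projection $S_{d(a_i)}S_{d(a_i)}^*$ directly in $\OZ$ by inserting the unit and then discarding the terms that vanish for structural reasons. Write $d := d(a_i) \in E_D$ for brevity. Starting from the Cuntz--Krieger relation $\sum_{c \in E_C} S_c S_c^* + \sum_{d' \in E_D} S_{d'} S_{d'}^* = 1$ in $\OZ$ and sandwiching it between $S_d$ and $S_d^*$ gives
\[
S_d S_d^* = \sum_{c \in E_C} S_d S_c S_c^* S_d^* + \sum_{d' \in E_D} S_d S_{d'} S_{d'}^* S_d^* .
\]

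First I would discard the second sum using the bipartite structure of $Z = \begin{bmatrix} 0 & C \\ D & 0 \end{bmatrix}$: every edge of $E_D$ runs from a vertex of $V_B$ to a vertex of $V_A$, so for $d, d' \in E_D$ the word $dd'$ is never admissible in $X_Z$, i.e.\ $Z^G(d,d') = 0$ and hence $S_d S_{d'} = 0$. Thus $S_d S_d^* = \sum_{c \in E_C} S_d S_c S_c^* S_d^*$, in which only those $c$ with $t(d) = s(c)$ contribute.

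The remaining step is a bookkeeping identification. For such an admissible $c$, the word $dc$ lies in $B_2(X_Z)$ and equals $\varphi_{B,DC}(b_l)$ for a unique $b_l \in E_B$, so that $S_d S_c = S_{b_l}$ and consequently $S_d S_c S_c^* S_d^* = S_{b_l} S_{b_l}^*$; moreover this $b_l$ satisfies $d(b_l) = d = d(a_i)$ and $c(b_l) = c$. Conversely, every $b_l$ with $d(b_l) = d(a_i)$ arises in exactly this way from $c = c(b_l)$, and distinct admissible $c$ yield distinct $b_l$. Hence the sum runs precisely over the edges $b_l$ selected by $\hat D$, and by the definition \eqref{eq:Dhat} of $\hat D$ we obtain
\[
S_{d(a_i)} S_{d(a_i)}^* = \sum_{l \,:\, d(b_l) = d(a_i)} S_{b_l} S_{b_l}^* = \sum_{l=1}^{N_B} \hat D(i,l)\, S_{b_l} S_{b_l}^* ,
\]
as claimed.

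I expect no genuine obstacle here; the computation is routine once the bipartiteness of $Z$ is invoked to kill the $E_D$--$E_D$ products. The only point meriting care is the final identification, namely checking that $c \mapsto b_l = dc$ is a bijection between the edges of $E_C$ admissible after $d$ and the edges of $E_B$ whose $D$-coordinate equals $d(a_i)$, so that each projection $S_{b_l}S_{b_l}^*$ occurs exactly once and with coefficient $\hat D(i,l)$.
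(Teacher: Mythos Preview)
Your proof is correct and follows essentially the same idea as the paper's: both insert a resolution of the identity and then use the relation $b_l = d(b_l)c(b_l)$ together with the orthogonality coming from the $d$-edges to pick out exactly the terms with $d(b_l)=d(a_i)$. The only cosmetic difference is that the paper starts from $\sum_l S_{b_l}S_{b_l}^* = P_D$ in $\OZ$ and multiplies by $S_{d(a_i)}S_{d(a_i)}^*$, using $S_{d(a_i)}S_{d(a_i)}^* S_{d(b_l)} = \hat D(i,l)\,S_{d(b_l)}$, whereas you conjugate the full unit of $\OZ$ by $S_d,\,S_d^*$ and invoke bipartiteness to kill the $E_D$--$E_D$ products before making the same identification.
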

\begin{proof}
In the algebra $\OBG$, we have 
$\sum_{l=1}^{N_B} S_{b_l} S_{b_l}^* = 1$.
As $b_l = d(b_l)c(b_l)$, it implies that 
$
\sum_{l=1}^{N_B} S_{d(b_l)}S_{c(b_l)}S_{c(b_l)}^*S_{d(b_l)}^* = P_D
$
in
${\mathcal{O}}_Z$.
 By multiplying $S_{d(a_i)} S_{d(a_i)}^*$ to the equality we have
\begin{equation*}
\sum_{l=1}^{N_B} 
S_{d(a_i)} S_{d(a_i)}^*S_{d(b_l)}S_{c(b_l)}S_{c(b_l)}^*S_{d(b_l)}^*S_{d(a_i)} S_{d(a_i)}^*
 = S_{d(a_i)} S_{d(a_i)}^*.
\end{equation*}
Since
\begin{equation*}
S_{d(a_i)} S_{d(a_i)}^*S_{d(b_l)} = \hat{D}(i,l) S_{d(b_l)},
\end{equation*}
we have
\begin{equation*}
\sum_{l=1}^{N_B} 
\hat{D}(i,l) S_{d(b_l)} S_{c(b_l)}S_{c(b_l)}^*S_{d(b_l)}^*
 = S_{d(a_i)} S_{d(a_i)}^*.
\end{equation*}
As $S_{b_l} = S_{d(b_l)} S_{c(b_l)}$, 
we get the desired equality.
\end{proof}

{\it{Proof of Proposition \ref{prop:KTD}}}:

By using Lemma \ref{lem:series},
we have the equalities in $K_0(\OBG)$:
\begin{equation*}
 \Phi_*([S_{a_i}S_{a_i}^*\otimes p_{\bar{0}}]) 
=[ S_{d(l(i))} S_{c(a_i)} S_{d(a_i)} S_{d(a_i)}^* S_{c(a_i)}^* S_{d(l(i))}^* \otimes 
s_{1_{l(i)},1_0}p_{\bar{0}} s_{1_0,1_{l(i)}}].
\end{equation*}
Since
\begin{align*}
 & [ S_{d(l(i))} S_{c(a_i)} S_{d(a_i)} S_{d(a_i)}^* S_{c(a_i)}^* S_{d(l(i))}^* \otimes 
    s_{1_{l(i)},1_0}p_{\bar{0}} s_{1_0,1_{l(i)}}] \\
=& 
[ S_{d(a_i)} S_{d(a_i)}^*  \otimes 
f_{1_0} p_{\bar{0}} f_{1_0}]
\quad \text{ in } \quad K_0(\OBG),
\end{align*}
and
$f_{1_{0}} p_{\bar{0}} f_{1_0} \ge p_{\bar{0}}$,
we have
\begin{equation*}
\Phi_*([S_{a_i}S_{a_i}^*\otimes p_{\bar{0}}])  =
[ S_{d(a_i)} S_{d(a_i)}^*  \otimes p_{\bar{0}}].
\end{equation*}
As $\epsilon_{A^G}([S_{a_i}S_{a_i}^*\otimes p_{\bar{0}}]) = [e_i^{N_A}]$
and
$\epsilon_{B^G}([S_{b_l}S_{b_l}^*\otimes p_{\bar{0}}]) = [e_l^{N_B}],$
By using Lemma \ref{lem:Sdai}, we complete the proof of Proposition \ref{prop:KTD}.
\qed

\medskip

Let $S_A$ and $R_A$ be the $N_A\times N$ matrix and $N\times N_A$ matrix 
 defined by 
\begin{equation*}
S_A(i,j) =
\begin{cases}
1 & \text{ if }  t(a_i) = v_j^A,\\
0 & \text{ otherwise, }
\end{cases}\qquad
R_A(j,i) =
\begin{cases}
1 & \text{ if }   v_j^A = s(a_i),\\
0 & \text{ otherwise, }
\end{cases}
\end{equation*}
for $i = 1,\dots, N_A$ and $j=1,\dots,N,$ respectively.
We then have
$A = R_A S_A$ and 
$A^G = S_A R_A$.
We similarly have the matrices 
$S_B, R_B$ for the other matrix $B$ such that 
$B = R_B S_B$ and 
$B^G = S_B R_B$.
The matrix $S_A^t: \Z^{N_A}\rightarrow \Z^N$ induces 
a homomorphism 
$\Z^{N_A}/{(\id - {(A^G)}^{t})\Z^{N_A}}
\rightarrow
\Z^{N}/{(\id - {A}^{t})\Z^{N}}$
of abelian groups 
which is actually an isomorphism
since its inverse is given by a homomorphism induced by
$R_A^t$.
The above isomorphism is denoted by $\Phi_{S_A^t}$.
We  have an isomorphism
$\Phi_{S_B^t}:\Z^{N_B}/{(\id - {(B^G)}^{t})\Z^{N_B}} 
\rightarrow
\Z^{M}/{(\id - {B}^t)\Z^{M}}
$
in a similar way.

Now we are assuming that
$A=CD, B=DC$ so that  
$AC =CB$ and hence $C^t A^t = B^t C^t$.  
The matrix $C^t: \Z^{N}\rightarrow \Z^{M}$ 
induces a homomorphism from
$\Z^{N}/{(\id - {A}^{t})\Z^{N}}$
to 
$\Z^{M}/{(\id - {B}^{t})\Z^{M}}$
as abelian groups, which is denoted by
$\Phi_{C^t}$. 
It is actually an isomorphism with $\Phi_{D^t}$ as its inverse.
We notice the following lemma. 
The second assertion (ii) is pointed out by Hiroki Matui.
The author thanks him for his advice. 
\begin{lemma}\label{lem:5.5}
\begin{enumerate}
\renewcommand{\theenumi}{\roman{enumi}}
\renewcommand{\labelenumi}{\textup{(\theenumi)}}
\item
The diagram
$$
\begin{CD}
\Z^{N_A}/{(\id - {(A^G)}^{t})\Z^{N_A}} @> \Phi_{\hat{D}^t}>>
 \Z^{N_B}/{(\id - {(B^G)}^{t})\Z^{N_B}} \\
@V{\Phi_{S^t_{A}} }VV  @VV{\Phi_{S^t_{B}} }V \\
\Z^{N}/{(\id - {A}^{t})\Z^{N}} @> \Phi_{C^t}>> \Z^{M}/{(\id - {B}^t)\Z^{M}} \\
\end{CD}
$$
 is commutative.  
\item
$\Phi_{S^t_A}([(1,1,\dots,1)]) = [(1,1,\dots,1)].$
\end{enumerate}
\end{lemma}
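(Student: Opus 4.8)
The plan is to deduce both assertions from exact identities of integer matrices that already hold before passing to the quotient groups, so that the statements about the induced maps become purely formal.

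For (i), recall that $\Phi_{\hat{D}^t},\Phi_{S_B^t},\Phi_{C^t},\Phi_{S_A^t}$ are, by definition, the homomorphisms induced by left multiplication by the matrices $\hat{D}^t,S_B^t,C^t,S_A^t$. Consequently the two composites around the square, $\Phi_{S_B^t}\circ\Phi_{\hat{D}^t}$ and $\Phi_{C^t}\circ\Phi_{S_A^t}$, are induced by $S_B^t\hat{D}^t$ and $C^t S_A^t$ respectively. It therefore suffices to prove the matrix identity $S_B^t\hat{D}^t = C^t S_A^t$, or equivalently, after transposing, $\hat{D}S_B = S_A C$ as $N_A\times M$ matrices over $\Z$. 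I would prove this entrywise. Identify $V_A$ and $V_B$ with the two parts of the bipartite vertex set $V_Z$ of $G_Z$, so that each $c\in E_C$ runs from a vertex of $V_A$ to a vertex of $V_B$, each $d\in E_D$ runs from $V_B$ to $V_A$, and $t(a_i)=t(d(a_i))\in V_A$. Fixing $i\in\{1,\dots,N_A\}$ and $j\in\{1,\dots,M\}$, I claim both entries count the edges $c\in E_C$ with $s(c)=t(a_i)$ and $t(c)=v_j^B$. For $S_A C$ this is immediate: $S_A(i,k)=1$ exactly when $t(a_i)=v_k^A$, and $C(k,j)$ counts the edges of $E_C$ from $v_k^A$ to $v_j^B$. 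For $\hat{D}S_B$, the entry $(\hat{D}S_B)(i,j)$ counts the edges $b_l\in E_B$ with $d(b_l)=d(a_i)$ and $t(b_l)=v_j^B$; since $b_l=d(b_l)c(b_l)$ with $t(b_l)=t(c(b_l))$ and $\varphi_{B,DC}$ identifies $E_B$ with the admissible two-blocks $dc$ of $G_Z$, the assignment $b_l\mapsto c(b_l)$ is a bijection from these $b_l$ onto $\{c\in E_C : s(c)=t(d(a_i))=t(a_i),\ t(c)=v_j^B\}$. Hence $\hat{D}S_B=S_A C$ and the diagram commutes.

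For (ii), write $\mathbf{1}_n$ for the all-ones column vector in $\Z^n$. By definition $\Phi_{S_A^t}([(1,\dots,1)])=[S_A^t\mathbf{1}_{N_A}]$. Since every edge $a_i$ of $G_A$ has exactly one source vertex, each column of $R_A$ contains a single $1$, so $R_A^t\mathbf{1}_N=\mathbf{1}_{N_A}$. Using $A^t=(R_A S_A)^t=S_A^t R_A^t$ this gives $S_A^t\mathbf{1}_{N_A}=S_A^t R_A^t\mathbf{1}_N=A^t\mathbf{1}_N$. Finally $A^t\mathbf{1}_N-\mathbf{1}_N=-(\id-A^t)\mathbf{1}_N\in(\id-A^t)\Z^N$, so $[A^t\mathbf{1}_N]=[\mathbf{1}_N]$ in $\Z^N/(\id-A^t)\Z^N$, which is exactly $\Phi_{S_A^t}([(1,\dots,1)])=[(1,\dots,1)]$.

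I expect the only real obstacle to be the bookkeeping in (i): one must pin down the identifications of $V_A$ with the first part and $V_B$ with the second part of $V_Z$, and keep the source/terminal conventions straight so that the two counts manifestly enumerate the same set of edges $c\in E_C$. Once the exact identity $\hat{D}S_B=S_A C$ is established, both (i) and (ii) are immediate, with (ii) reducing to the standard observation that the all-ones vector is fixed modulo the coboundary image $(\id-A^t)\Z^N$.
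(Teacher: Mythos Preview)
Your proof is correct and follows essentially the same approach as the paper: for (i) you reduce to the matrix identity $\hat{D}S_B=S_A C$ and verify it entrywise by counting the edges $c\in E_C$ from $t(a_i)$ to $v_j^B$, and for (ii) you use that each column of $R_A$ has a single $1$ to show $S_A^t\mathbf{1}_{N_A}=A^t\mathbf{1}_N$, which coincides with $\mathbf{1}_N$ modulo $(\id-A^t)\Z^N$. The only difference is presentational: the paper leaves the computation of $[\hat{D}S_B](i,j)$ as ``easy to see,'' whereas you spell out the bijection $b_l\mapsto c(b_l)$ explicitly, and in (ii) you phrase the same calculation in compact matrix form rather than componentwise.
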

\begin{proof}
(i)
Since $\Phi_{\hat{D}}$ is induced by the matrix $\hat{D}^t$,
it suffices to prove the equality
$\hat{D} S_B = S_A C$.
Let $(i,j)$ be  $i=1,\dots,N_A$ and $j= 1,\dots, M$
so that $a_i \in E_A$ and $v_j^B \in V_B$.
Let $k$ be such that $t(a_i) = v_k^A$.
Hence we have
\begin{equation*}
[S_A C](i,j) = \sum_{n=1}^NS_A(i,n) C(n,j) = C(k,j)
\end{equation*}
which is 
the number of edges of $E_C$ leaving $v_k^A$ and terminating at $v_j^B$. 
On the other hand, 
\begin{equation*}
[\hat{D} S_B](i,j) = \sum_{l=1}^{M_B} \hat{D}(i,l) S_B(l,j).
\end{equation*}
It is easy to see that the above number is also $C(k,j).$

(ii)
Since $A = R_A S_A$,
for each $k=1,\dots, N_A$ with $a_k \in E_A$
there exists a unique $i=1,\dots,N$ such that $s(a_k) = v_i^A$.
Hence
$\sum_{i=1}^N R_A(i,k) = 1$ so that we have
for each $j=1,\dots,N$
\begin{equation*}
\sum_{i=1}^N A^t(j,i)
 = \sum_{i=1}^N \sum_{k=1}^{N_A} R_A(i,k)S_A(k,j)
 = \sum_{k=1}^{N_A} (\sum_{i=1}^N  R_A(i,k)) S_A(k,j)
=  \sum_{k=1}^{N_A} S_A^t(j,k).
\end{equation*}
We then see
\begin{align*}
\Phi_{S^t_A}([(1,1,\dots,1)]) 
& = [( \sum_{k=1}^{N_A} S_A(k,1), \sum_{k=1}^{N_A} S_A(k,2), \dots,
         \sum_{k=1}^{N_A} S_A(k,N))] \\
& = [( \sum_{i=1}^N A^t(1,i),  \sum_{i=1}^N A^t(2,i), \dots,
          \sum_{i=1}^N A^t(N,i))] \\
& =[(1,1,\dots,1)] \quad \text{ in } \Z^{N}/{(\id - {A}^{t})\Z^{N}}.
\end{align*}
\end{proof}
Put
$\epsilon_A = \Phi_{S_A^t} \circ \epsilon_{A^G}: 
K_0(\OA) \rightarrow  \Z^{N}/{(\id - {A}^{t})\Z^{N}}$,
which is an isomorphism of groups such that 
$\epsilon_A([1_A]) =[(1,1,\dots,1)].$ 
We thus reach the following theorem:
\begin{theorem}\label{thm:KC}
Suppose that two nonnegative irreducible matrices $A, B$ satisfy 
$A= CD, B= DC$ for some nonnegative rectangular matrices $C, D$.
Then the diagram
$$
\begin{CD}
K_0({\OA}) @>\Phi_* >> K_0({\OB}) \\
@V{\epsilon_{A} }VV  @VV{\epsilon_{B}}V \\
\Z^{N}/{(\id - {A}^{t})\Z^{N}} @> \Phi_{C^t} >> \Z^{M}/{(\id - {B}^t)\Z^{M}} 
\end{CD}
$$
 is commutative,
where the two vertical arrows and the two horizontal arrows are all isomorphisms of abelian groups.
\end{theorem}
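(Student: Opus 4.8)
The plan is to deduce the theorem by vertically composing two commutative squares that have already been established, namely the square of Proposition \ref{prop:KTD} and the square of Lemma \ref{lem:5.5} (i). Recall that the composite isomorphisms were defined by $\epsilon_A = \Phi_{S_A^t}\circ \epsilon_{A^G}$ and $\epsilon_B = \Phi_{S_B^t}\circ \epsilon_{B^G}$. The key observation is that the bottom row of the Proposition \ref{prop:KTD} square coincides with the top row of the Lemma \ref{lem:5.5} (i) square, both being the map $\Phi_{\hat{D}^t}$ on $\Z^{N_A}/(\id - (A^G)^t)\Z^{N_A}$, so the two squares can be glued along this common row.

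First I would perform this gluing. Since $K_0(\OA) = K_0(\mathcal{O}_{A^G})$ and $K_0(\OB) = K_0(\mathcal{O}_{B^G})$ by the very definition of the Cuntz--Krieger algebras for the matrices $A$ and $B$, the resulting rectangle has top row $\Phi_*\colon K_0(\OA)\to K_0(\OB)$, bottom row $\Phi_{C^t}$, and its left and right vertical edges are exactly the composites $\Phi_{S_A^t}\circ \epsilon_{A^G} = \epsilon_A$ and $\Phi_{S_B^t}\circ \epsilon_{B^G} = \epsilon_B$. As each of the two constituent squares commutes, the composite rectangle commutes, which yields precisely
$$
\epsilon_B \circ \Phi_* = \Phi_{C^t}\circ \epsilon_A,
$$
the asserted commutativity.

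It then remains only to record that all four maps are isomorphisms. The map $\Phi_*$ is an isomorphism because $\Phi$ is an isomorphism of $C^*$-algebras; the maps $\epsilon_A,\epsilon_B$ are isomorphisms as composites of the isomorphisms $\epsilon_{A^G},\epsilon_{B^G}$ (from \cite{Cu3}) with the isomorphisms $\Phi_{S_A^t},\Phi_{S_B^t}$; and $\Phi_{C^t}$ is an isomorphism with inverse $\Phi_{D^t}$, as noted above. I do not expect any genuine obstacle at this stage: all the substantive content, in particular the computation $\Phi_*([S_{a_i}S_{a_i}^*\otimes p_{\bar{0}}]) = [S_{d(a_i)}S_{d(a_i)}^*\otimes p_{\bar{0}}]$ underlying Proposition \ref{prop:KTD} and the matrix identity $\hat{D}S_B = S_A C$ underlying Lemma \ref{lem:5.5} (i), has already been carried out, so this final step is a purely formal diagram chase.
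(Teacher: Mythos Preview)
Your proposal is correct and is precisely the paper's own argument: the theorem is obtained by stacking the commutative square of Proposition \ref{prop:KTD} on top of that of Lemma \ref{lem:5.5} (i), using the definitions $\epsilon_A = \Phi_{S_A^t}\circ\epsilon_{A^G}$ and $\epsilon_B = \Phi_{S_B^t}\circ\epsilon_{B^G}$. The paper does not even write out a separate proof, simply stating ``We thus reach the following theorem'' after introducing $\epsilon_A$.
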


We write 
$
A \underset{C,D}{\approx}B
$
if 
$A = CD, \, B= DC$. 
Recall that $A, B$ are said to be strong shift equivalent in $n$-step
if there exist a finite sequence of square matrices 
$ A_1, \dots, A_{n-1}$
and two finite sequences of rectangular matrices
$C_1, \dots, C_n$ and 
$D_1, \dots,D_n$ such that
\begin{equation*}
A =A_0 \underset{C_1,D_1}{\approx}A_1, \quad 
A _1\underset{C_2,D_2}{\approx}A_2, \quad \dots, \quad
A_{n-1} \underset{C_{n},D_{n}}{\approx}A_n =B.
\end{equation*}
This situation is written
\begin{equation}
A \underset{C_1,D_1}{\approx}\cdots \underset{C_{n},D_{n}}{\approx}B. \label{eq:5.2}
\end{equation}
R. F. Williams proved that two-sided topological Markov shifts 
$(\bar{X}_A,\bar{\sigma}_A)$ and 
$(\bar{X}_B,\bar{\sigma}_B)$
are topologically conjugate if and only if 
$A$ and $B$ are strong shift equivalent in $n$-step for some $n$
(\cite{Williams}).
Hence we have the following corollary.
\begin{corollary}\label{cor:KC}
Suppose that
two matrices $A, B$ are strong shift equivalent in $n$-step
for some  two sequences of rectangular matrices
$C_1, \dots, C_n$ and 
$D_1, \dots,D_n$ as in \eqref{eq:5.2}.
Then there exist an isomorphism
$\Phi:\SOA \rightarrow \SOB $ of $C^*$-algebras 
and a unitary representation 
$t \in \T \rightarrow u_t^A \in M(\SDA)$ 
such that
\begin{equation*}
\Phi(\SDA) = \SDB, \qquad
\Phi \circ \Ad(u_t^A) \circ  (\rho^{A}_t \otimes\id) = (\rho^{B}_t \otimes \id) \circ \Phi,
\end{equation*}
and the following diagram is commutative
$$
\begin{CD}
K_0({\OA}) @>\Phi_* >> K_0({\OB}) \\
@V{\epsilon_{A} }VV  @VV{\epsilon_{B}}V \\
\Z^{N}/{(\id - {A}^{t})\Z^{N}} 
@> \Phi_{{(C_1 C_2 \cdots C_n)}^t}>> \Z^{M}/{(\id - {B}^t)\Z^{M}}. 
\end{CD}
$$
\end{corollary}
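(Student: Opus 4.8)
The plan is to apply the single elementary-equivalence results---Proposition~\ref{prop:main1} for the gauge actions (specialized to $f\equiv 1$) and Theorem~\ref{thm:KC} for the induced map on $K_0$---to each link of the chain \eqref{eq:5.2} and then compose. Write $A_0=A,A_1,\dots,A_n=B$ for the intermediate matrices, so that $A_{k-1}=C_k D_k$ and $A_k=D_k C_k$ for $k=1,\dots,n$. For each $k$, Proposition~\ref{prop:main1} furnishes an isomorphism $\Phi_k:{\mathcal{O}}_{A_{k-1}}\otimes\K\rightarrow{\mathcal{O}}_{A_k}\otimes\K$ with $\Phi_k({\mathcal{D}}_{A_{k-1}}\otimes{\mathcal{C}})={\mathcal{D}}_{A_k}\otimes{\mathcal{C}}$ together with a unitary representation $t\in\T\rightarrow u_t^{A_{k-1}}\in M({\mathcal{D}}_{A_{k-1}}\otimes{\mathcal{C}})$; taking $f\equiv1$ and using $\phi(1)=1$, this gives the gauge relation
\begin{equation}
\Phi_k\circ\Ad(u_t^{A_{k-1}})\circ(\rho^{A_{k-1}}_t\otimes\id)
=(\rho^{A_k}_t\otimes\id)\circ\Phi_k, \qquad t\in\T, \label{eq:KClink}
\end{equation}
while Theorem~\ref{thm:KC} asserts that the square with top $(\Phi_k)_*$, vertical isomorphisms $\epsilon_{A_{k-1}},\epsilon_{A_k}$ and bottom $\Phi_{C_k^t}$ commutes.

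I would then set $\Phi=\Phi_n\circ\cdots\circ\Phi_1$, an isomorphism $\SOA\rightarrow\SOB$ carrying $\SDA$ onto $\SDB$ since each $\Phi_k$ preserves the diagonals. The step that requires care---and the main obstacle---is assembling the individual $u_t^{A_{k-1}}$ into a single unitary representation $u_t^A\in M(\SDA)$ intertwining the gauge actions of $A$ and $B$. I would argue by induction on $k$: assuming $\Psi_k:=\Phi_k\circ\cdots\circ\Phi_1$ admits a unitary representation $w_t^{(k)}\in M(\SDA)$ with $\Psi_k\circ\Ad(w_t^{(k)})\circ(\rho^A_t\otimes\id)=(\rho^{A_k}_t\otimes\id)\circ\Psi_k$, I substitute this into \eqref{eq:KClink} for the index $k+1$ and use $\Ad(u_t^{A_k})\circ\Psi_k=\Psi_k\circ\Ad(\Psi_k^{-1}(u_t^{A_k}))$ to recover the same relation for $\Psi_{k+1}=\Phi_{k+1}\circ\Psi_k$ with
\[
w_t^{(k+1)}=\Psi_k^{-1}(u_t^{A_k})\,w_t^{(k)}.
\]
The obstacle is resolved by commutativity: since $\Psi_k$ maps $\SDA$ onto ${\mathcal{D}}_{A_k}\otimes{\mathcal{C}}$ and $u_t^{A_k}\in M({\mathcal{D}}_{A_k}\otimes{\mathcal{C}})$, the factor $\Psi_k^{-1}(u_t^{A_k})$ lies in the commutative algebra $M(\SDA)$ and hence commutes with $w_t^{(k)}$. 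Thus $w_t^{(k+1)}$ is again a one-parameter unitary representation---the one-cocycle identity degenerating to $w_{t+s}=w_t w_s$ because $\rho^A_t\otimes\id$ restricts to the identity on $\SDA$. Setting $u_t^A=w_t^{(n)}$ gives $\Phi\circ\Ad(u_t^A)\circ(\rho^A_t\otimes\id)=(\rho^B_t\otimes\id)\circ\Phi$.

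Finally, for the $K$-theory assertion I would concatenate the $n$ commuting squares of Theorem~\ref{thm:KC} horizontally; the interior isomorphisms $\epsilon_{A_1},\dots,\epsilon_{A_{n-1}}$ cancel in pairs, so the outer rectangle commutes with left arrow $\epsilon_A$, right arrow $\epsilon_B$, top arrow $\Phi_*=(\Phi_n)_*\circ\cdots\circ(\Phi_1)_*$, and bottom arrow $\Phi_{C_n^t}\circ\cdots\circ\Phi_{C_1^t}$. It remains only to identify this bottom composite: since $\Phi_{C_k^t}$ is induced by left multiplication by $C_k^t$, the composite sends $[v]$ to $[C_n^t\cdots C_1^t v]$, and $C_n^t\cdots C_1^t=(C_1 C_2\cdots C_n)^t$, so the bottom arrow is $\Phi_{(C_1 C_2\cdots C_n)^t}$, as claimed. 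The remaining verifications are routine: iterating $A_{k-1}C_k=C_k A_k$ gives $A(C_1\cdots C_n)=(C_1\cdots C_n)B$, which shows that $\Phi_{(C_1\cdots C_n)^t}$ is a well-defined isomorphism between the two quotient groups.
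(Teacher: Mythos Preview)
Your proposal is correct and follows exactly the strategy the paper has in mind: the paper states Corollary~\ref{cor:KC} immediately after Theorem~\ref{thm:KC} with no proof beyond ``Hence we have the following corollary,'' leaving the composition of the $n$ elementary steps implicit. You have supplied the details the paper omits, and in particular your inductive construction of $w_t^{(k+1)}=\Psi_k^{-1}(u_t^{A_k})\,w_t^{(k)}$, together with the observation that both factors lie in the commutative algebra $M(\SDA)$ and are fixed by $\rho^A_t\otimes\id$, is precisely the verification needed to ensure the composite cocycle is again a genuine unitary representation rather than merely a one-cocycle.
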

We note that the inverse 
of $\Phi_{{(C_1 C_2 \cdots C_n)}^t}:
\Z^{N}/{(\id - {A}^{t})\Z^{N}} \rightarrow  \Z^{M}/{(\id - {B}^t)\Z^{M}}$
is given by 
 $\Phi_{{(D_n \cdots D_2 D_1)}^t}:
 \Z^{M}/{(\id - {B}^t)\Z^{M}}\rightarrow 
\Z^{N}/{(\id - {A}^{t})\Z^{N}}.
$


\section{Converse and Invariant}
In this section, we will study the converse of Corollary \ref{cor:main}
by using Corollary \ref{cor:KC}.
We fix a projection  $p_1$  of  rank one in $\K.$
\begin{proposition}
The following assertions are equivalent.
\begin{enumerate}
\renewcommand{\theenumi}{\roman{enumi}}
\renewcommand{\labelenumi}{\textup{(\theenumi)}}
\item There exist an isomorphism 
$\Phi:\SOA \rightarrow \SOB $
of $C^*$-algebras and
a unitary one-cocycle $u_t \in M(\SOB), t \in \T$ 
relative to  $\rho_t^B\otimes \id$ 
such that
\begin{gather}
\Phi(\SDA) = \SDB, \qquad
\Phi \circ (\rho^{A}_t \otimes\id) = \Ad(u_t) \circ (\rho^{B}_t \otimes \id) \circ \Phi, 
\label{eq:SSCOE1}\\
\Phi_*([1_A \otimes p_1]) = [1_B \otimes p_1] \text{ in } K_0(\OB). \label{eq:SSCOE2}
\end{gather}
\item There  exist
 an isomorphism
$\phi:\OA \rightarrow \OB$
and 
a unitary one-cocycle $v_t \in U(\OB), t \in \T$
relative to $\rho_t^B$ on $\OB$
such that 
\begin{equation}
\phi(\DA) = \DB
\quad 
\text{  and }
\quad
\phi \circ \rho^A_t = \Ad(v_t)\circ \rho^B_t \circ \phi, 
\qquad t \in {\mathbb{T}}. \label{eq:SCOE}
\end{equation}
\end{enumerate}
\end{proposition}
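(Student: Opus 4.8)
The plan is to prove the two implications separately, the content lying almost entirely in (i)$\Rightarrow$(ii), which is a destabilization argument; (ii)$\Rightarrow$(i) is a routine stabilization. For (ii)$\Rightarrow$(i), given $\phi$ and $v_t$ as in (ii) I would set $\Phi=\phi\otimes\id\colon\SOA\to\SOB$ and $u_t=v_t\otimes 1\in\U(M(\SOB))$. Then $\Phi(\SDA)=\DB\otimes\mathcal{C}=\SDB$, and tensoring the cocycle identity and the intertwining relation \eqref{eq:SCOE} with $\id$ shows that $u_t$ is a one-cocycle relative to $\rho^B_t\otimes\id$ realizing \eqref{eq:SSCOE1}. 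Since a $*$-isomorphism of unital algebras sends $1_A$ to $1_B$, we get $\Phi_*([1_A\otimes p_1])=[1_B\otimes p_1]$, which is \eqref{eq:SSCOE2}. (All rank-one projections being equivalent, I would fix $p_1$ to be a minimal projection of $\mathcal{C}$, so that $1_A\otimes p_1\in\SDA$ and $1_B\otimes p_1\in\SDB$.)

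For (i)$\Rightarrow$(ii) I first set up the relevant corners. Put $e_A=1_A\otimes p_1\in\SDA$ and $e_B=1_B\otimes p_1\in\SDB$; both are fixed by the respective gauge actions, and the corner inclusions give $e_A\SOA e_A\cong\OA$, $e_A\SDA e_A\cong\DA$ (and similarly for $B$), under which $\rho^A_t\otimes\id$ restricts to $\rho^A_t$. Let $q=\Phi(e_A)$. Since $e_A\in\SDA$ and $\Phi(\SDA)=\SDB$, we have $q\in\SDB$, so $q$ is gauge-fixed; applying $\Phi$ to the identity $(\rho^A_t\otimes\id)(e_A)=e_A$ and using \eqref{eq:SSCOE1} then yields $u_tqu_t^*=q$, i.e. $u_t$ commutes with $q$. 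Moreover $\Phi$ restricts to an isomorphism of the pair $(e_A\SOA e_A,e_A\SDA e_A)$ onto $(q\SOB q,q\SDB q)$, and \eqref{eq:SSCOE2} gives $[q]=[e_B]$ in $K_0(\OB)$.

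The crux is to produce a partial isometry $W\in\SOB$ with $W^*W=q$, $WW^*=e_B$ that carries the diagonal correctly: $W(q\SDB q)W^*=e_B\SDB e_B$. I would construct it by hand. A projection of $\SDB$ is the characteristic function of a compact open subset of $X_B\times\N$, hence a finite orthogonal sum $q=\sum_k S_{\mu_k}S_{\mu_k}^*\otimes e_{i_k,i_k}$; each summand is moved into the block $e_B$ by the diagonal-normalizing partial isometry $S_{\mu_k}^*\otimes e_{1,i_k}$, whose range $S_{\mu_k}^*S_{\mu_k}\otimes e_{1,1}$ sits under $e_B$. The difficulty is that these ranges overlap inside the single block $1_B\otimes e_{1,1}$, so to glue the pieces into one partial isometry with range exactly $e_B$ I must spread them out disjointly and have them exhaust $e_B$. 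This is precisely where pure infiniteness of $\OB$ together with the $K_0$-equality $[q]=[e_B]$ enters: in the purely infinite simple setting two diagonal projections of equal $K_0$-class are equivalent through a normalizer of the diagonal (the type semigroup of the underlying Deaconu--Renault groupoid agreeing with $K_0$), which lets the pieces be rearranged disjointly so as to fill $e_B$ precisely. I expect this diagonal-preserving realization of the equivalence $q\sim e_B$ to be the main obstacle, and the unit condition \eqref{eq:SSCOE2} is exactly what makes it possible.

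Granting such a $W$, the remaining steps are formal and I would carry them out in order. Define $\phi=\Ad(W)\circ\Phi$ on $e_A\SOA e_A\cong\OA$; it is an isomorphism onto $e_B\SOB e_B\cong\OB$ with $\phi(1_A)=WqW^*=e_B=1_B$, and the normalizing property of $W$ gives $\phi(\DA)=W(q\SDB q)W^*=e_B\SDB e_B=\DB$. Writing $\beta_t=\rho^B_t\otimes\id$, set $v_t=Wu_t\beta_t(W^*)$. From $W^*W=q$, $WW^*=e_B$, $\beta_t(q)=q$ and $u_tq=qu_t$ one checks $v_t^*v_t=v_tv_t^*=e_B$, so $v_t\in\U(\OB)$; the one-cocycle identity $v_{t+s}=v_s\beta_s(v_t)$ follows from that for $u_t$ together with $\beta_t(q)=q$; and substituting the definitions gives $\phi\circ\rho^A_t=\Ad(v_t)\circ\rho^B_t\circ\phi$, which is \eqref{eq:SCOE}. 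These are direct manipulations with the displayed relations, so the whole argument hinges on the construction of $W$ in the previous step.
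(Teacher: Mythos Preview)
Your proposal is correct and follows essentially the same route as the paper's proof: for (ii)$\Rightarrow$(i) both stabilize via $\Phi=\phi\otimes\id$, $u_t=v_t\otimes 1$; for (i)$\Rightarrow$(ii) both compress via a diagonal-normalizing partial isometry $W$ (the paper's $V$) with $W^*W=\Phi(1_A\otimes p_1)$ and $WW^*=1_B\otimes p_1$, then set $\phi=\Ad(W)\circ\Phi$ and $v_t=Wu_t(\rho^B_t\otimes\id)(W^*)$, checking the cocycle identity from $u_tq=qu_t$ exactly as you do. The only substantive difference is at the crux you correctly isolate: the paper does not construct $W$ by hand but simply invokes \cite[Proposition~3.13]{MaPAMS}, which furnishes precisely such a partial isometry satisfying $V(\SDB)V^*\subset\SDB$ and $V^*(\SDB)V\subset\SDB$ from the $K_0$-equality; your type-semigroup sketch points in the right direction but would need more detail to stand alone, so citing that proposition (or its proof) is the clean way to close this step.
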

\begin{proof}
The implication (ii) $\Longrightarrow$ (i) is obvious 
by putting $\Phi = \varphi \otimes \id$ and $u_t = v_t \otimes 1$.
We will show the implication
(i) $\Longrightarrow$ (ii) in the following way.
By \cite[Proposition 3.13]{MaPAMS}, 
the condition
$
\Phi_*([1_A \otimes p_1]) = [1_B \otimes p_1] \text{ in } K_0(\OB)
$
ensures us that there exists a partial isometry
$V \in \SOB$ satisfying the following conditions:
\begin{gather*}
V(\SDB)V^* \subset \SDB, \qquad
V^*(\SDB)V \subset \SDB, \\
VV^* = 1_B\otimes p_1,\quad V^*V = \Phi(1_A\otimes p_1).
\end{gather*}
 Put
$\Psi = \Ad(V) \circ \Phi: \SOA \rightarrow \SOB.$
It is straightforward to see that 
\begin{equation*}
\Psi(\OA\otimes \C p_1) = \OB\otimes \C p_1,
\qquad 
\Psi(\DA\otimes \C p_1) = \DB\otimes \C p_1,
\qquad
\Psi(1_A\otimes p_1) = 1_B\otimes p_1.
 \end{equation*}
It is clear that 
$\Psi_* = \Phi_*: K_0(\OA) \rightarrow K_0(\OB)$.
We identify $\OB\otimes\C p_1$ with $\OB$.
Put
the partial isometry 
$v_t = V u_t (\rho^B_t\otimes \id)(V^*) \in  \SOB$.
Since
$v_t = (1_B\otimes p_1) v_t (1_B\otimes p_1) $,
by this identification,
 $v_t $ belongs to $\OB$.
Define 
$\phi: \OA \rightarrow \OB$ 
by setting
$\phi(a) = \Psi(a\otimes p_1)$ for $a \in \OA$.
 It then follows that
\begin{align*}
\phi(\rho^A_t(a))\otimes p_1
=& V \Phi( \rho^A_t(a) \otimes p_1) V^* \\
=& V (\Ad(u_t)\circ (\rho^B_t\otimes \id) \circ \Phi)( a \otimes p_1) V^* \\
=& V u_t (\rho^B_t\otimes \id)(V^*) (\rho^B_t\otimes \id) \Phi(V( a \otimes p_1) V^*)
     (\rho^B_t\otimes \id)(V)u_t^* V^* \\
= & v_t ((\rho^B_t\otimes \id)\circ\Psi) (a\otimes p_1) v_t^* \\
= & (\Ad(v_t) \circ (\rho^B_t \circ \phi)(a)) \otimes p_1
\end{align*}
so that we have 
$\phi(\rho^A_t(a)) = (\Ad(v_t)\circ\rho^B_t\circ\phi)(a).$
Since we have
$$
(\rho^B_t\otimes \id)(\Phi(1_A\otimes p_1))
= (\Ad(u_t^*)\circ \Phi \circ (\rho^A_t \otimes\id))(1_A\otimes p_1)
=u_t^* \Phi(1_A \otimes p_1)u_t
=u_t^* V^*V u_t,
$$
we have
\begin{align*}
v_t \rho^B_t(v_s) 
= & V u_t (\rho^B_t\otimes \id)(V^*) (\rho^B_t\otimes \id)(V u_s(\rho^B_s\otimes \id)(V^*)) \\
= & V u_t (\rho^B_t\otimes \id)(V^* V) (\rho^B_t\otimes \id)(u_s)
      (\rho^B_t\circ \rho^B_s\otimes \id)(V^*) \\
= & V u_t (\rho^B_t\otimes \id)(\Phi(1_A\otimes p_1)) (\rho^B_t\otimes \id)(u_s)
      (\rho^B_{t+s}\otimes \id)(V^*) \\
= & V u_t u_t^* V^* V u_t
 (\rho^B_t\otimes \id)(u_s)
 (\rho^B_{t+s}\otimes \id)(V^*) \\ 
= & V u_t (\rho^B_t\otimes \id)(u_s) (\rho^B_{t+s}\otimes \id)(V^*) \\ 
= & V u_{t+s} (\rho^B_{t+s}\otimes \id)(V^*) \\ 
= & v_{t+s}. 
\end{align*}
Hence $v_t, t \in \T$ is a unitary one-cocycle relative to $\rho^B\otimes \id.$
\end{proof}
\begin{remark}
Let $v_t$ in $\OB$ 
be
a unitary one-cocycle  relative to $\rho^B_t$ satisfying \eqref{eq:SCOE}.
For $a \in \DA$, we see that 
$\varphi(\rho^A_t(a)) = \Ad(v_t)(\rho^B_t(\varphi(a))).$
As  $\rho^A_t(a) = a$ and $\varphi(a)$ belongs to $\DB$ so that we have
$\varphi(a) =\Ad(v_t)(\varphi(a))$.
Hence $v_t$ commutes with any element of $\DB$.
This implies that $v_t$ belongs to $\DB$ and hence 
it is fixed by the action $\rho^B$.
Therefore 
a unitary one-cocycle
$v_t$ in $\OB$
relative to $\rho^B_t$ satisfying \eqref{eq:SCOE}
automatically belongs to $\DB$ and yields a unitary representation
$t \in \T \rightarrow v_t \in \DB$.
Since the unitary $u_t$ in \eqref{eq:SSCOE1} is given by
$u_t = v_t \otimes 1$ from the unitary $v_t$ satisfying  \eqref{eq:SCOE},
the unitary one-cocycle $u_t$ in the statement (i)  of the above proposition
can be taken as a unitary representation 
$t \in \T \rightarrow u_t \in M(\SDB)$ which is fixed by the action
$\rho_t^B\otimes\id$.
\end{remark}

\begin{corollary}\label{cor:SOTWO}
If there exist an isomorphism
$\Phi:\SOA \rightarrow \SOB $  of $C^*$-algebras 
and a unitary one-cocycle $u_t$ in $M(\SOB)$ relative to $\rho^B_t \otimes \id$ such that
\begin{gather*}
\Phi(\SDA) = \SDB, \qquad
\Phi \circ (\rho^{A}_t \otimes\id) = \Ad(u_t)\circ (\rho^{B}_t \otimes \id) \circ \Phi, \\
\Phi_*([1_A \otimes p_1]) = [1_B \otimes p_1] \text{ in } K_0(\OB),
\end{gather*}
then two-sided topological Markov shifts
$(\bar{X}_A, \bar{\sigma}_A)$ and
$(\bar{X}_B, \bar{\sigma}_B)$
are topologically conjugate.
\end{corollary}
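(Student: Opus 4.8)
The plan is to reduce the corollary to assertion (ii) of the proposition immediately above and then to read that assertion off as a dynamical statement. First I would note that the three hypotheses of the corollary are \emph{verbatim} assertion (i) of that proposition (the $K$-theoretic condition $\Phi_*([1_A\otimes p_1])=[1_B\otimes p_1]$ is exactly the input that the proposition consumes). Applying the proposition therefore yields an isomorphism $\phi\colon\OA\rightarrow\OB$ with $\phi(\DA)=\DB$ together with a unitary one-cocycle $v_t\in\U(\OB)$ relative to $\rho^B_t$ such that $\phi\circ\rho^A_t=\Ad(v_t)\circ\rho^B_t\circ\phi$ for all $t\in\T$.

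Next I would analyse the cocycle $v_t$ using the Remark following that proposition, which shows that such a $v_t$ automatically lies in $\DB$ and is fixed by $\rho^B$, so that $t\mapsto v_t$ is a continuous one-parameter unitary group in the commutative algebra $\DB\cong C(X_B)$. Evaluating at each point of $X_B$ gives a continuous homomorphism $\R/\Z\to\T$, whence $v_t=U_t(b)$ for a unique $b\in C(X_B,\Z)$. Using the commutation relation between the generators and the diagonal, one computes $\Ad(v_t)\circ\rho^B_t=\rho^{B,g}_t$ for a function $g\in C(X_B,\Z)$ that differs from the constant function $1$ by the coboundary $b-b\circ\sigma_B$, so that $[g]=[1]$ in $H^B$. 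Thus $\phi$ is a diagonal-preserving isomorphism intertwining the gauge action $\rho^A$ with a generalized gauge action $\rho^{B,g}$ whose cohomology class is exactly $[1]$.

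Finally I would invoke the dynamical interpretation of this configuration. The data $(\phi,g)$ with $\phi(\DA)=\DB$, $\phi\circ\rho^A_t=\rho^{B,g}_t\circ\phi$ and $[g]=[1]$ is precisely the $C^*$-algebraic description of strong continuous orbit equivalence of the one-sided shifts $(X_A,\sigma_A)$ and $(X_B,\sigma_B)$ in the sense of \cite{MaMZ2016}. As recalled in the proof of Proposition \ref{prop:SMEmain}, strong continuous orbit equivalence of the one-sided shifts forces topological conjugacy of the two-sided shifts $(\bar{X}_A,\bar{\sigma}_A)$ and $(\bar{X}_B,\bar{\sigma}_B)$ (via \cite{MaMZ2016}, cf. \cite[Theorem 5.5]{MaJOT2015}), which is the desired conclusion.

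The step I expect to be the main obstacle is the middle one: not the routine identification $v_t=U_t(b)$, but verifying that condition (ii) produces the \emph{strong} form of continuous orbit equivalence rather than ordinary continuous orbit equivalence (which would yield only flow equivalence). Concretely one must confirm that the class of the perturbing function $g$ is exactly $[1]$, and observe that a naive inner adjustment of $\phi$ inside $\DB$ \emph{cannot} remove the coboundary $b-b\circ\sigma_B$ when $b$ is non-constant; it is precisely this cohomological normalization — weaker than exact intertwining of the gauge actions, yet strictly stronger than diagonal preservation alone — that has to be matched against the definition of strong continuous orbit equivalence in \cite{MaMZ2016}.
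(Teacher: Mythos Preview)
Your proposal is correct and follows essentially the same route as the paper: apply the preceding proposition to obtain \eqref{eq:SCOE}, then read off strongly continuous orbit equivalence of the one-sided shifts, and finally invoke \cite[Theorem 5.5]{MaJOT2015} for two-sided conjugacy. The only difference is that the paper cites \cite[Theorem 6.7]{MaJOT2015} directly for the passage from \eqref{eq:SCOE} to strongly continuous orbit equivalence, whereas you unpack the cocycle $v_t=U_t(b)$ explicitly and verify $[g]=[1]$ in $H^B$ before matching against the dynamical definition; this extra computation is correct but subsumed by the cited theorem.
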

\begin{proof}
By \cite[Theorem 6.7]{MaJOT2015}, the equality \eqref{eq:SCOE}
implies strongly continuous orbit equivalence between the one-sided topological 
Markov shifts
$(X_A, \sigma_A)$ and $(X_B,\sigma_B)$.
It also implies topological conjugacy of their two-sided topological Markov shifts
$({\bar{X}}_A, {\bar{\sigma}}_A)$
and
$({\bar{X}}_B, {\bar{\sigma}}_B)$ by \cite[Theorem 5.5]{MaJOT2015}.
\end{proof}
\begin{definition}
We say that an isomorphism 
$ \xi:\SOB \rightarrow \SOA$
of $C^*$-algebras is induced from strong shift equivalence
$A \underset{C_1,D_1}{\approx}\cdots \underset{C_{n},D_{n}}{\approx}B$
if there exists a unitary one-cocycle $u_t$ in $M(\SOA)$ relative to $\rho^A_t \otimes \id$
such that  
\begin{gather*}
 \xi(\SDB) = \SDA, \qquad
\xi \circ (\rho^{B}_t \otimes\id) = \Ad(u_t) \circ (\rho^{A}_t \otimes \id) \circ \xi, \\
\xi_* =\epsilon_{A}^{-1}\circ \Phi_{(D_n \cdots D_2 D_1)^t}\circ\epsilon_{B} 
: K_0(\OB) \rightarrow K_0(\OA).
\end{gather*}
\end{definition}
We will define the strong shift equivalence invariant subset of $K_0(\OA)$
as follows.
\begin{definition}
\begin{align*}
\operatorname{K}_0^{\text{SSE}}(\OA)
=\{&  [p] \in K_0(\OA) \mid
\exists B \text{ a square matrix  and }  
 \exists \xi:\SOB \rightarrow \SOA \\
& \text{ an isomorphism induced from strong shift equivalence}; \\
& A \underset{C_1,D_1}{\approx}\cdots \underset{C_{n},D_{n}}{\approx}B  \text{ and }  
 \xi_*([1_B ]) =[p]
\text{ in } K_0(\OA) \}.
\end{align*}
\end{definition}
We note that 
the class $[1_A]$ in $K_0(\OA)$ of the unit $1_A$ of $\OA$ always belongs to the set
$\operatorname{K}_0^{\text{SSE}}(\OA)$,
because we may take $B=A$ and $\xi = \id$. 
\begin{proposition}\label{prop:K0SSEOA}
Suppose that there exists a topological conjugacy between
$(\bar{X}_A, \bar{\sigma}_A)$ and
$(\bar{X}_B, \bar{\sigma}_B)$.
Then there exists an isomorphism
$\eta: K_0(\OA) \rightarrow K_0(\OB)$
satisfying
$\eta(\operatorname{K}_0^{\text{SSE}}(\OA)) 
=\operatorname{K}_0^{\text{SSE}}(\OB).
$
Hence the pair
$(K_0(\OA), \operatorname{K}_0^{\text{SSE}}(\OA))$
is an invariant under topological conjugacy of two-sided topological Markov shifts.
\end{proposition}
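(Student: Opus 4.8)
The plan is to exhibit $\eta$ as the $K_0$-map of a single isomorphism arising from a strong shift equivalence, and to show that composing the defining isomorphisms of $\operatorname{K}_0^{\text{SSE}}$ with it stays inside the class of isomorphisms induced from a strong shift equivalence, so that $\eta$ carries $\operatorname{K}_0^{\text{SSE}}(\OA)$ onto $\operatorname{K}_0^{\text{SSE}}(\OB)$. First I would invoke Williams' theorem \cite{Williams}: the assumed topological conjugacy yields a strong shift equivalence $A=A_0\underset{C_1,D_1}{\approx}\cdots\underset{C_m,D_m}{\approx}A_m=B$. Feeding this chain into Corollary \ref{cor:KC} produces an isomorphism $\Phi:\SOA\rightarrow\SOB$ with $\Phi(\SDA)=\SDB$, a unitary representation $u^A_t\in M(\SDA)$ satisfying $\Phi\circ\Ad(u^A_t)\circ(\rho^A_t\otimes\id)=(\rho^B_t\otimes\id)\circ\Phi$, and $\Phi_*=\epsilon_B^{-1}\circ\Phi_{(C_1\cdots C_m)^t}\circ\epsilon_A$, which is an isomorphism of abelian groups; I set $\eta=\Phi_*$. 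Rewriting the intertwining relation as $\Phi\circ(\rho^A_t\otimes\id)=\Ad(\Phi((u^A_t)^*))\circ(\rho^B_t\otimes\id)\circ\Phi$ shows that $\Phi$, viewed as an isomorphism $\mathcal{O}_A\otimes\K\rightarrow\SOB$, is itself induced from the reversed chain $B\underset{D_m,C_m}{\approx}\cdots\underset{D_1,C_1}{\approx}A$ (using that $X\underset{C,D}{\approx}Y$ reverses to $Y\underset{D,C}{\approx}X$); indeed the second-factor matrices of the reversed chain are $C_m,\dots,C_1$, whose reverse-ordered product is $C_1\cdots C_m$, matching the formula for $\Phi_*$. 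Symmetrically, $\Phi^{-1}:\mathcal{O}_B\otimes\K\rightarrow\SOA$ is induced from $A\approx\cdots\approx B$, since $\Phi^{-1}\circ(\rho^B_t\otimes\id)=\Ad(u^A_t)\circ(\rho^A_t\otimes\id)\circ\Phi^{-1}$ and $(\Phi^{-1})_*=(\Phi_*)^{-1}=\epsilon_A^{-1}\circ\Phi_{(D_m\cdots D_1)^t}\circ\epsilon_B$.

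The technical core is a composition lemma: if $\xi:\mathcal{O}_E\otimes\K\rightarrow\SOA$ is induced from a chain $A\approx\cdots\approx E$ and $\Psi:\SOA\rightarrow\SOB$ is induced from a chain $B\approx\cdots\approx A$, then $\Psi\circ\xi:\mathcal{O}_E\otimes\K\rightarrow\SOB$ is induced from the concatenated chain $B\approx\cdots\approx A\approx\cdots\approx E$ (and symmetrically with the roles of $A$ and $B$ interchanged). Three points need checking. The diagonal condition $(\Psi\circ\xi)(\mathcal{D}_E\otimes\mathcal{C})=\SDB$ is immediate from $\xi(\mathcal{D}_E\otimes\mathcal{C})=\SDA$ and $\Psi(\SDA)=\SDB$. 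The cocycle condition follows from the standard fact that a composite of cocycle conjugacies is a cocycle conjugacy: if $\xi\circ(\rho^E_t\otimes\id)=\Ad(u_t)\circ(\rho^A_t\otimes\id)\circ\xi$ and $\Psi\circ(\rho^A_t\otimes\id)=\Ad(v_t)\circ(\rho^B_t\otimes\id)\circ\Psi$, then $(\Psi\circ\xi)\circ(\rho^E_t\otimes\id)=\Ad(\Psi(u_t)v_t)\circ(\rho^B_t\otimes\id)\circ(\Psi\circ\xi)$, with $\Psi(u_t)v_t$ again a unitary one-cocycle relative to $\rho^B_t\otimes\id$. Finally, the $K$-theory identity $(\Psi\circ\xi)_*=\Psi_*\circ\xi_*$ matches the required formula: using that $\Phi_{X^t}\circ\Phi_{Y^t}$ is the map induced by $X^tY^t=(YX)^t$, one checks that the reverse-ordered product of all second-factor matrices of the concatenated chain is exactly the matrix whose transpose realizes $\Psi_*\circ\xi_*$ between the quotient groups identified by $\epsilon_E$ and $\epsilon_B$.

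Granting the lemma, the forward inclusion is immediate: for $[p]=\xi_*([1_E])\in\operatorname{K}_0^{\text{SSE}}(\OA)$ with $\xi$ induced from $A\approx\cdots\approx E$, the lemma applied to $\Psi=\Phi$ gives that $\Phi\circ\xi$ is induced from $B\approx\cdots\approx E$, so $\eta([p])=\Phi_*([p])=(\Phi\circ\xi)_*([1_E])\in\operatorname{K}_0^{\text{SSE}}(\OB)$; hence $\eta(\operatorname{K}_0^{\text{SSE}}(\OA))\subseteq\operatorname{K}_0^{\text{SSE}}(\OB)$. Applying the lemma instead to $\Psi=\Phi^{-1}$ and any $\xi':\mathcal{O}_F\otimes\K\rightarrow\SOB$ induced from $B\approx\cdots\approx F$ yields that $\Phi^{-1}\circ\xi'$ is induced from $A\approx\cdots\approx F$, so $\eta^{-1}(\operatorname{K}_0^{\text{SSE}}(\OB))\subseteq\operatorname{K}_0^{\text{SSE}}(\OA)$, i.e. the reverse inclusion; therefore $\eta(\operatorname{K}_0^{\text{SSE}}(\OA))=\operatorname{K}_0^{\text{SSE}}(\OB)$, and the final invariance assertion is then immediate. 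I expect the main obstacle to be the bookkeeping in the $K$-theory step of the lemma: one must keep straight that reversing an elementary equivalence interchanges $C$ and $D$, that transposition reverses the order of a matrix product, and that these two effects must be reconciled with the reverse-ordered product of second-factor matrices prescribed by the definition of an isomorphism induced from a strong shift equivalence; the cocycle and diagonal verifications are routine, and the reduction to strong shift equivalence is quoted from Williams' theorem.
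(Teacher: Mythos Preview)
Your proposal is correct and follows essentially the same approach as the paper: define $\eta$ as the $K_0$-map of an isomorphism produced from a strong shift equivalence via Corollary~\ref{cor:KC}, and then use that compositions of SSE-induced isomorphisms are again SSE-induced (for the concatenated chain) to push elements of $\operatorname{K}_0^{\text{SSE}}(\OA)$ into $\operatorname{K}_0^{\text{SSE}}(\OB)$. You are simply more explicit than the paper about the composition lemma and about the reverse inclusion (which the paper leaves to symmetry), and your careful tracking of the reversed chain and the matrix orderings is exactly the bookkeeping the paper suppresses.
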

\begin{proof}
Suppose that 
$(\bar{X}_A, \bar{\sigma}_A)$ and
$(\bar{X}_B, \bar{\sigma}_B)$
are topologically conjugate so that
$
A \underset{C_1,D_1}{\approx}\cdots \underset{C_{n},D_{n}}{\approx}B
$ 
for some nonnegative rectangular matrices
$C_1, D_1, \dots, C_n, D_n$.
The strong shift equivalence
induces that there exist an isomorphism
$\xi_{BA}:\SOA \rightarrow \SOB$
and a unitary one-cocycle $u_t$ in $M(\SOB)$ relative to $\rho^B_t \otimes \id$
 such that
\begin{gather*}
\xi_{BA}(\SDA) = \SDB, \qquad
\xi_{BA} \circ (\rho^{A}_t \otimes\id) = \Ad(u_t) \circ (\rho^{B}_t \otimes \id) \circ \xi_{BA}, \\
\xi_{BA *} = \Phi_{(C_1 \cdots C_n)^t}: K_0(\OA) \rightarrow  K_0(\OB).
\end{gather*}
Put 
$\eta=\xi_{BA*}: K_0(\OA) \rightarrow K_0(\OB).$
Take an element 
$[p] \in \operatorname{K}_0^{\text{SSE}}(\OA)$.
There exists 
 a square nonnegative matrix $A'$
and
an isomorphism $\xi_{AA'}:{\mathcal{O}}_{A'} \otimes \K \rightarrow \SOA$
of $C^*$-algebras induced from strong shift equivalence
$ A' \underset{C'_1,D'_1}{\approx}\cdots \underset{C'_{n'},D'_{n'}}{\approx}A$
such that   
$ \xi_{AA'*}([1_{A'} ]) =[p]$ in $K_0(\OA)$.
Then the isomorphism
$\xi_{BA} \circ \xi_{AA'}:{\mathcal{O}}_{A'} \otimes \K \rightarrow \SOB$
is induced from  strong shift equivalence
$$ 
A' \underset{C'_1,D'_1}{\approx}\cdots \underset{C'_{n'},D'_{n'}}{\approx}A=
A \underset{C_1,D_1}{\approx}\cdots \underset{C_{n},D_{n}}{\approx}B
$$
such that   
$ \eta([p]) = (\xi_{BA}\circ \xi_{AA'})_*([1_{A'} ])$ in $K_0(\OB)$
so that 
$ \eta([p]) \in \operatorname{K}_0^{\text{SSE}}(\OB)$.
\end{proof}

Suppose that 
two matrices $A, B$ are strong shift equivalent in $n$-step
such as \eqref{eq:5.2}.
The matrix $B$ in \eqref{eq:5.2} 
is given by 
$B = D_n C_n$ so that \eqref{eq:5.2} is written as 
\begin{equation}
A \underset{C_1,D_1}{\approx}\cdots \underset{C_{n},D_{n}}{\approx}D_n C_n. \label{eq:SSEADC}
\end{equation}
We set the following sequence  $\text{SSE}_n(A), n=1,2,\dots$ of subsets of the group 
$\Z^N$ 
\begin{equation*}
\text{SSE}_n(A)
 = \{ v \in \Z^N \mid v = D_1^t \cdots D_{n-1}^t D_n^t [1,1,\dots,1]^t, \
A \underset{C_1,D_1}{\approx}\cdots \underset{C_{n},D_{n}}{\approx}D_n C_n \},  
\end{equation*}
where $[1,1,\dots,1]^t$ denotes the $ (\text{the row size of } D_n)\times 1$ matrix
whose entries are all $1$'s.
We  define the sequence  
$\operatorname{K}_{\text{alg},n}^{\text{SSE}}(A), n=1,2,\dots$ 
of subsets of the group 
$\Z^N/(\id - A^t)\Z^N$ by
\begin{equation*}
\operatorname{K}_{\text{alg},n}^{\text{SSE}}(A)
 = \{ [v] \in \Z^N/(\id - A^t)\Z^N \mid  v \in \text{SSE}_n(A) \}, \quad n=1,2,\dots.
\end{equation*}
Then we define the subset 
$\operatorname{K}_{\text{alg}}^{\text{SSE}}(A) $
of $\Z^N/(\id - A^t)\Z^N$ by
\begin{equation*}
\operatorname{K}_{\text{alg}}^{\text{SSE}}(A) = \cup_{n=1}^\infty
\operatorname{K}_{\text{alg},n}^{\text{SSE}}(A).
\end{equation*}
By Corollary \ref{cor:KC}, we have the following proposition
\begin{proposition}\label{prop:algSSE}
Let $\epsilon_A: K_0(\OA) \rightarrow  \Z^N/(\id - A^t)\Z^N $ be the isomorphism
in Corollary \ref{cor:KC}.
Then we have
\begin{equation*}
\epsilon_A(K_0^{\text{SSE}}(\OA)) =
\operatorname{K}_{\operatorname{alg}}^{\text{SSE}}(A).
\end{equation*}
\end{proposition}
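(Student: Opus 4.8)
The plan is to prove the claimed equality by two inclusions, each of which reduces to unwinding the definitions of $\operatorname{K}_0^{\text{SSE}}(\OA)$ and $\operatorname{K}_{\operatorname{alg}}^{\text{SSE}}(A)$ once the K-theoretic data of Corollary \ref{cor:KC} are in hand. The computational core is the following observation. Suppose $\xi:\SOB \rightarrow \SOA$ is an isomorphism induced from a strong shift equivalence $A \underset{C_1,D_1}{\approx}\cdots \underset{C_{n},D_{n}}{\approx}B$, so that by definition $\xi_* = \epsilon_A^{-1}\circ \Phi_{(D_n\cdots D_1)^t}\circ \epsilon_B$. Applying $\epsilon_A$ to $\xi_*([1_B])$ and using $\epsilon_B([1_B]) = [(1,\dots,1)]$ yields
\[
\epsilon_A(\xi_*([1_B])) = \Phi_{(D_n\cdots D_1)^t}([(1,\dots,1)]).
\]
Since $\Phi_{(D_n\cdots D_1)^t}$ is the homomorphism induced by multiplication by $(D_n\cdots D_1)^t = D_1^t\cdots D_n^t$, the right-hand side equals $[D_1^t\cdots D_n^t [1,\dots,1]^t]$. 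As the chain terminates at $B = D_n C_n$, the vector $v = D_1^t\cdots D_n^t[1,\dots,1]^t$ lies in $\text{SSE}_n(A)$, whence $\epsilon_A(\xi_*([1_B])) = [v] \in \operatorname{K}_{\operatorname{alg},n}^{\text{SSE}}(A)$.

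The inclusion $\epsilon_A(\operatorname{K}_0^{\text{SSE}}(\OA)) \subseteq \operatorname{K}_{\operatorname{alg}}^{\text{SSE}}(A)$ is then immediate: every $[p]\in\operatorname{K}_0^{\text{SSE}}(\OA)$ is of the form $\xi_*([1_B])$ for some such $\xi$ and some terminating chain, so the displayed computation places $\epsilon_A([p])$ in $\operatorname{K}_{\operatorname{alg},n}^{\text{SSE}}(A)\subseteq\operatorname{K}_{\operatorname{alg}}^{\text{SSE}}(A)$.

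For the reverse inclusion I would start from $[v]\in\operatorname{K}_{\operatorname{alg}}^{\text{SSE}}(A)$; then $[v]\in\operatorname{K}_{\operatorname{alg},n}^{\text{SSE}}(A)$ for some $n$, i.e. $v = D_1^t\cdots D_n^t[1,\dots,1]^t$ for some chain $A \underset{C_1,D_1}{\approx}\cdots \underset{C_{n},D_{n}}{\approx}D_n C_n$. Setting $B = D_n C_n$ and applying Corollary \ref{cor:KC} to this chain gives an isomorphism $\Phi:\SOA\rightarrow\SOB$ with $\Phi(\SDA)=\SDB$, a unitary representation $u_t^A\in M(\SDA)$ satisfying $\Phi \circ \Ad(u_t^A) \circ (\rho^{A}_t \otimes\id) = (\rho^{B}_t \otimes \id) \circ \Phi$, and $\Phi_* = \epsilon_B^{-1}\circ\Phi_{(C_1\cdots C_n)^t}\circ\epsilon_A$. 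I would then take $\xi = \Phi^{-1}:\SOB\rightarrow\SOA$ and verify that it is induced from the chain: the condition $\xi(\SDB)=\SDA$ is clear, rearranging the intertwining identity of Corollary \ref{cor:KC} gives $\xi\circ(\rho^B_t\otimes\id) = \Ad(u_t^A)\circ(\rho^A_t\otimes\id)\circ\xi$ with the same cocycle $u_t^A\in M(\SDA)$, and $\xi_* = \Phi_*^{-1} = \epsilon_A^{-1}\circ\Phi_{(C_1\cdots C_n)^t}^{-1}\circ\epsilon_B = \epsilon_A^{-1}\circ\Phi_{(D_n\cdots D_1)^t}\circ\epsilon_B$, where the last equality uses the note following Corollary \ref{cor:KC} that $\Phi_{(C_1\cdots C_n)^t}^{-1} = \Phi_{(D_n\cdots D_1)^t}$. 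Then $[p]:=\xi_*([1_B])$ lies in $\operatorname{K}_0^{\text{SSE}}(\OA)$, and the computation of the first paragraph gives $\epsilon_A([p]) = [v]$, so $[v]\in\epsilon_A(\operatorname{K}_0^{\text{SSE}}(\OA))$.

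The proposition thus reduces entirely to Corollary \ref{cor:KC}, and there is no analytic obstacle of its own. The points requiring care—and where I expect the bookkeeping to be delicate—are tracking the matrix sizes along the chain so that $[(1,\dots,1)]$ and the product $(D_n\cdots D_1)^t$ have compatible dimensions, and checking that inverting the intertwiner supplied by Corollary \ref{cor:KC} produces a one-cocycle of exactly the form demanded in the definition of an isomorphism \emph{induced from strong shift equivalence}, namely a unitary representation with values in $M(\SDA)$.
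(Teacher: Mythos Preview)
Your proposal is correct and follows essentially the same approach as the paper: both inclusions are obtained by unwinding the definitions using the identity $\epsilon_A(\xi_*([1_B])) = \Phi_{(D_n\cdots D_1)^t}([(1,\dots,1)])$, with Corollary \ref{cor:KC} supplying the required isomorphism for the reverse inclusion. The only cosmetic difference is that where you explicitly set $\xi = \Phi^{-1}$ and verify the cocycle condition, the paper simply asserts that Corollary \ref{cor:KC} furnishes such a $\xi:\SOB\to\SOA$ directly; your caution about the cocycle landing in $M(\SDA)$ is in fact stronger than what the definition of ``induced from strong shift equivalence'' requires (only a one-cocycle in $M(\SOA)$ is needed), so there is no obstacle there.
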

\begin{proof}
For $[p] \in K_0^{\text{SSE}}(\OA)$,
there exist a nonnegative square matrix $B$ 
with a strong shift equivalence 
$A \underset{C_1,D_1}{\approx}\cdots \underset{C_{n},D_{n}}{\approx}B$
and
 an isomorphism
$\xi:\SOB \rightarrow \SOA $ of $C^*$-algebras 
and a unitary one-cocycle $u_t, t\in \T$ relative to
$\rho^A\otimes\id$ such that
\begin{gather}
\xi(\SDB) = \SDA, \qquad
\xi \circ (\rho^{B}_t \otimes \id) =\Ad(u_t) \circ  (\rho^{A}_t \otimes\id)\circ\xi, 
\label{eq:xi5.71}\\
\xi_* = \epsilon_A^{-1}\circ\Phi_{{(D_n  \cdots D_2 D_1)}^t}\circ\epsilon_B:
K_0(\OB) \rightarrow K_0(\OA)
\quad \text{ and } \quad \xi_*([1_B]) = [p]. 
\end{gather}
Since $\epsilon_B([1_B]) = [[1,1,\dots,1]^t]$ in $\Z^M/(\id - B^t)\Z^M$,
we have
\begin{equation}
\epsilon_A([p]) 
= \epsilon_A\circ \xi_*([1_B]) 
=\Phi_{{(D_n  \cdots D_2 D_1)}^t}\circ\epsilon_B([1_B])
=\Phi_{{(D_n  \cdots D_2 D_1)}^t}([1,1,\dots,1]^t) \label{eq:5.73}
\end{equation} 
so that 
$\epsilon_A([p]) \in \operatorname{K}_{\operatorname{alg}}^{\text{SSE}}(A)
$
and hence
$\epsilon_A(K_0^{\text{SSE}}(\OA)) \subset
\operatorname{K}_{\operatorname{alg}}^{\text{SSE}}(A).
$

Conversely,
take an arbitrary element 
$[v] \in \operatorname{K}_{\operatorname{alg}}^{\text{SSE}}(A).$
We may find  
a strong shift equivalence 
$A \underset{C_1,D_1}{\approx}\cdots \underset{C_{n},D_{n}}{\approx}D_n C_n$
such that 
$v = {(D_n  \cdots D_2 D_1)}^t[1,1,\dots,1]^t.$
Put $B = D_n C_n$.
By Corollary \ref{cor:KC},
there exists
 an isomorphism
$\xi:\SOB \rightarrow \SOA $ of $C^*$-algebras 
and a unitary one-cocycle $u_t, t\in \T$ relative to
$\rho^A\otimes\id$ 
satisfying 
\eqref{eq:xi5.71}
and
$\xi_* = \epsilon_A^{-1}\circ\Phi_{{(D_n  \cdots D_2 D_1)}^t}\circ\epsilon_B:
K_0(\OB) \rightarrow K_0(\OA)$.
Put
$[p] =\xi_*([1_B])$
which belongs to $K_0^{\text{SSE}}(\OA).$
By the same equalities as \eqref{eq:5.73},
we get 
$\epsilon_A([p]) 
=\Phi_{{(D_n  \cdots D_2 D_1)}^t}([1,1,\dots,1]^t)
$ which is the class of $[v]$.
This shows that 
$\epsilon_A(K_0^{\text{SSE}}(\OA)) \supset
\operatorname{K}_{\operatorname{alg}}^{\text{SSE}}(A).$
\end{proof}
\begin{theorem} \label{thm:main2}
Let $A, B$ be nonnegative irreducible and non-permutation matrices.   
The following two assertions are equivalent.
\begin{enumerate}
\renewcommand{\theenumi}{\roman{enumi}}
\renewcommand{\labelenumi}{\textup{(\theenumi)}}
\item Two-sided topological Markov shifts
$(\bar{X}_A, \bar{\sigma}_A)$ and
$(\bar{X}_B, \bar{\sigma}_B)$
are topologically conjugate.
\item 
There exist an isomorphism 
$\Phi:\SOA \rightarrow \SOB $ of $C^*$-algebras  
and
 a unitary one-cocycle $u_t$ in $M(\SOB)$ relative to $\rho^B_t \otimes \id$
such that
\begin{gather*}
\Phi(\SDA) = \SDB, \qquad
\Phi \circ (\rho^{A}_t \otimes\id) = \Ad(u_t) \circ (\rho^{B}_t \otimes \id) \circ \Phi, \\
\Phi_*(K_0^{\text{SSE}}(\OA)) = K_0^{\text{SSE}}(\OB) \text{ in } K_0(\OB).
\end{gather*}
\end{enumerate}
\end{theorem}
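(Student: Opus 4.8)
The plan is to prove both implications by combining Williams' theorem with three earlier results: Corollary \ref{cor:KC} (which upgrades a strong shift equivalence to a gauge-intertwining isomorphism of stabilizations with a controlled K-theory map), Corollary \ref{cor:SOTWO} (which recovers topological conjugacy from such an isomorphism once the class of the unit is preserved), and Proposition \ref{prop:K0SSEOA} together with its proof (whose K-theory map carries $K_0^{\text{SSE}}(\OA)$ onto $K_0^{\text{SSE}}(\OB)$). The role of the subset $K_0^{\text{SSE}}$ is exactly to absorb the failure of $\Phi_*$ to preserve $[1_A]$, which is the one obstruction to applying Corollary \ref{cor:SOTWO} directly.

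For (i) $\Rightarrow$ (ii), I would first invoke Williams' theorem \cite{Williams} to get a strong shift equivalence $A \underset{C_1,D_1}{\approx}\cdots \underset{C_{n},D_{n}}{\approx}B$. Corollary \ref{cor:KC} then yields an isomorphism $\Phi:\SOA \rightarrow \SOB$ with $\Phi(\SDA)=\SDB$, a unitary representation $t \mapsto u_t^A \in M(\SDA)$ with $\Phi \circ \Ad(u_t^A)\circ(\rho^A_t \otimes \id)=(\rho^B_t\otimes\id)\circ\Phi$, and $\Phi_* = \epsilon_B^{-1}\circ\Phi_{(C_1\cdots C_n)^t}\circ\epsilon_A$. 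Pushing $\Ad(u_t^A)$ through $\Phi$ and setting $u_t := \Phi(u_t^A)^* \in M(\SDB)\subset M(\SOB)$ turns the intertwining into the form of (ii), namely $\Phi\circ(\rho^A_t\otimes\id)=\Ad(u_t)\circ(\rho^B_t\otimes\id)\circ\Phi$. Since $u_t^A$ is a representation in $M(\SDA)$ and $\rho^B_t\otimes\id$ fixes $M(\SDB)$ pointwise, the cocycle identity $u_{s+t}=u_s(\rho^B_s\otimes\id)(u_t)=u_su_t$ reduces to the representation property and so holds. Finally, $\Phi$ is induced from strong shift equivalence in the sense of the relevant Definition, so $\Phi_*$ is precisely the K-group map analysed in the proof of Proposition \ref{prop:K0SSEOA}; that argument gives $\Phi_*\big(K_0^{\text{SSE}}(\OA)\big)=K_0^{\text{SSE}}(\OB)$, completing (ii).

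For (ii) $\Rightarrow$ (i), the point is that (ii) does not assert $\Phi_*([1_A\otimes p_1])=[1_B\otimes p_1]$, so I would correct the class of the unit using $K_0^{\text{SSE}}$. Set $[p]:=\Phi_*([1_A])\in \Phi_*(K_0^{\text{SSE}}(\OA))=K_0^{\text{SSE}}(\OB)$. By the definition of $K_0^{\text{SSE}}(\OB)$ there is a square matrix $B'$ with a strong shift equivalence $B\approx\cdots\approx B'$ and an isomorphism $\xi:\mathcal{O}_{B'}\otimes\K \to \SOB$ induced from it, so that $\xi(\mathcal{D}_{B'}\otimes\mathcal{C})=\SDB$, $\xi\circ(\rho^{B'}_t\otimes\id)=\Ad(w_t)\circ(\rho^B_t\otimes\id)\circ\xi$ for a cocycle $w_t\in M(\SOB)$, and $\xi_*([1_{B'}])=[p]$. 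Forming $\Phi':=\xi^{-1}\circ\Phi:\SOA\to \mathcal{O}_{B'}\otimes\K$ and composing the two intertwinings gives $\Phi'(\SDA)=\mathcal{D}_{B'}\otimes\mathcal{C}$ and $\Phi'\circ(\rho^A_t\otimes\id)=\Ad(u_t'')\circ(\rho^{B'}_t\otimes\id)\circ\Phi'$ with $u_t''=\xi^{-1}(u_t w_t^*)$, while $\Phi'_*([1_A])=\xi_*^{-1}([p])=[1_{B'}]$, i.e.\ $\Phi'_*([1_A\otimes p_1])=[1_{B'}\otimes p_1]$. Corollary \ref{cor:SOTWO}, applied with $B$ replaced by $B'$, then gives $(\bar X_A,\bar\sigma_A)\cong(\bar X_{B'},\bar\sigma_{B'})$; since $B'$ is strong shift equivalent to $B$, Williams' theorem gives $(\bar X_{B'},\bar\sigma_{B'})\cong(\bar X_B,\bar\sigma_B)$, and transitivity of topological conjugacy finishes (i).

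The main obstacle I expect is the bookkeeping in (ii) $\Rightarrow$ (i): verifying that $\Phi'=\xi^{-1}\circ\Phi$ still carries $\SDA$ onto $\mathcal{D}_{B'}\otimes\mathcal{C}$ and that $u_t''$ is a genuine unitary one-cocycle relative to $\rho^{B'}_t\otimes\id$. The cocycle identity for $u_t''$ follows from the fact that $\Phi'\circ(\rho^A_t\otimes\id)\circ\Phi'^{-1}$ is an honest action of $\T$ that is exterior equivalent to $\rho^{B'}_t\otimes\id$, with the scalar ambiguity removed by normalising at $t=0$. The conceptual heart is the K-theory transport: identifying $\Phi_*$ with the map of Proposition \ref{prop:K0SSEOA} in the first implication, and in the second using $\xi_*([1_{B'}])=[p]$ to repair the class of the unit so that Corollary \ref{cor:SOTWO} becomes applicable; once this is arranged, the cited corollaries and Williams' theorem do the rest.
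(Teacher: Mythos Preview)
Your proof is correct and follows essentially the same route as the paper: for (i) $\Rightarrow$ (ii) you combine Corollary \ref{cor:KC} (the paper cites the equivalent Corollary \ref{cor:main}) with Proposition \ref{prop:K0SSEOA}, and for (ii) $\Rightarrow$ (i) you use the definition of $K_0^{\text{SSE}}(\OB)$ to find $B'$ and an SSE-induced isomorphism correcting the class of the unit, then apply Corollary \ref{cor:SOTWO}. The only cosmetic difference is that the paper phrases the correcting isomorphism as $\gamma:\SOB\to\mathcal{O}_{B'}\otimes\K$ with $\gamma_*([p])=[1_{B'}]$, i.e.\ the inverse of your $\xi$, which leads to the composed cocycle $\gamma(u_t)u'_t$ in place of your $\xi^{-1}(u_t w_t^*)$; these are the same computation read in opposite directions.
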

\begin{proof}
(i) $\Longrightarrow$ (ii) comes from 
Corollary \ref{cor:main} and Proposition \ref{prop:K0SSEOA}.

(ii) $\Longrightarrow$ (i):
Suppose that there exist an isomorphism 
$\Phi:\SOA \rightarrow \SOB$ of $C^*$-algebras
and  a unitary one-cocycle $u_t$ in $M(\SOB)$ relative to $\rho^B_t \otimes \id$
satisfying the conditions of (ii).
Put the projection $p = \Phi(1_A\otimes p_1) \in \SOB$.
As $[1_A] \in K_0^{\text{SSE}}(\OA)$ and
$\Phi_*(K_0^{\text{SSE}}(\OA)) = K_0^{\text{SSE}}(\OB)$,
the class  $[p] = \Phi_*([1_A])$ of $p $ in $K_0(\OB)$
belongs to
$K_0^{\text{SSE}}(\OB)$.
One may take a nonnegative square matrix $B'$ 
and an isomorphism
$\gamma: \SOB \rightarrow {\mathcal{O}}_{B'}\otimes \K$
with a unitary one-cocycle $u'_t$ in $M({\mathcal{O}}_{B'}\otimes \K)$
relative to $\rho^{B'}_t \otimes \id$ induced from
strong shift equivalence
$B \underset{C_1,D_1}{\approx}\cdots \underset{C_{n},D_{n}}{\approx}B'$
satisfying
\begin{gather*}
\gamma(\SDB)= {\mathcal{D}}_{B'}\otimes \C,
\qquad
\gamma \circ (\rho^{B}_t \otimes\id) 
=\Ad(u'_t)\circ (\rho^{B'}_t \otimes \id) \circ \gamma, \\
\gamma_*([p]) = [1_{B'}] \quad \text{ in } K_0({\mathcal{O}}_{B'}).
\end{gather*}
Then the isomorphism
$\gamma\circ \Phi: \SOA \rightarrow {\mathcal{O}}_{B'}\otimes \K$
satisfies the conditions
\begin{gather*}
(\gamma\circ \Phi)(\SDA)= {\mathcal{D}}_{B'}\otimes \C,
\qquad
(\gamma\circ \Phi) \circ (\rho^{A}_t \otimes\id) 
= \Ad(\gamma(u_t) u'_t)\circ  (\rho^{B'}_t \otimes \id) \circ (\gamma\circ \Phi), \\
(\gamma\circ \Phi)_*([1_A]) = [1_{B'}] \quad \text{ in } K_0({\mathcal{O}}_{B'}).
\end{gather*}
By Corollary \ref{cor:SOTWO}, the two-sided  topological Markov shifts
$ (\bar{X}_A, \bar{\sigma}_A) $
and
$ (\bar{X}_{B'}, \bar{\sigma}_{B'})$
are topologically conjugate.
Since
$ (\bar{X}_B, \bar{\sigma}_B) $
and
$ (\bar{X}_{B'}, \bar{\sigma}_{B'})$
are topologically conjugate, 
 so are   
$ (\bar{X}_A, \bar{\sigma}_A) $
and
$ (\bar{X}_{B}, \bar{\sigma}_{B})$.
\end{proof}
\begin{remark}
The unitary  one-cocycle $u_t$ in $M(\SOB)$ in (ii) of the above theorem
can be taken as a unitary representation $t \in \T \rightarrow u_t \in M(\SOB)$
by Corollary \ref{cor:main}.
\end{remark}
\begin{definition}
A nonnegative square matrix $A = [A(i,j)]_{i,j=1}^N$ 
is said to have {\it full strong shift equivalent units in } $K_0$-{\it group}
if  $\operatorname{K}_{\text{alg}}^{\text{SSE}}(A) = \Z^N / (\id - A^t)\Z^N$.
We simply call it  that $A$ \it{ has full units}.

\end{definition}
By Proposition \ref{prop:algSSE},
$A$ has full units if and only if 
$K_0^{\text{SSE}}(\OA) = K_0(\OA)$.
Since the subset $K_0^{\text{SSE}}(\OA) \subset K_0(\OA)$
is invariant under topological conjugacy of two-sided topological Markov shifts
by Proposition \ref{prop:K0SSEOA},
we have
\begin{proposition}
Suppose that two-sided topological Markov shifts
$(\bar{X}_A, \bar{\sigma}_A)$ and
$(\bar{X}_B, \bar{\sigma}_B)$
are topologically conjugate.
Then $A$ has full units if and only if 
$B$ has full units.
\end{proposition}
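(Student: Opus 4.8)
The plan is to deduce the statement directly from the invariance already established in Proposition \ref{prop:K0SSEOA}, combined with the characterization recorded just above the proposition, namely that $A$ has full units precisely when $K_0^{\text{SSE}}(\OA) = K_0(\OA)$ (which in turn rests on Proposition \ref{prop:algSSE}). The whole argument is formal once these two facts are in hand, so I would keep it short.

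First I would invoke Proposition \ref{prop:K0SSEOA}: since $(\bar{X}_A, \bar{\sigma}_A)$ and $(\bar{X}_B, \bar{\sigma}_B)$ are topologically conjugate, there exists an isomorphism of abelian groups $\eta : K_0(\OA) \to K_0(\OB)$ satisfying $\eta(K_0^{\text{SSE}}(\OA)) = K_0^{\text{SSE}}(\OB)$. Being a group isomorphism, $\eta$ is in particular a bijection of the underlying sets, so it carries $K_0(\OA)$ onto $K_0(\OB)$; its inverse $\eta^{-1}$ is again an isomorphism with the symmetric property $\eta^{-1}(K_0^{\text{SSE}}(\OB)) = K_0^{\text{SSE}}(\OA)$.

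Then I would run the transport of fullness. Assume $A$ has full units, i.e.\ $K_0^{\text{SSE}}(\OA) = K_0(\OA)$. Applying $\eta$ and using its surjectivity, one obtains
\[
K_0^{\text{SSE}}(\OB) = \eta\bigl(K_0^{\text{SSE}}(\OA)\bigr) = \eta\bigl(K_0(\OA)\bigr) = K_0(\OB),
\]
so $B$ has full units. The reverse implication is identical with the roles of $A$ and $B$ exchanged, using $\eta^{-1}$ in place of $\eta$. This proves the asserted equivalence.

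I do not anticipate any genuine obstacle here, since the proposition is a purely formal consequence of the fact that the pair $(K_0(\OA), K_0^{\text{SSE}}(\OA))$ is a complete-structure invariant under topological conjugacy of the two-sided shifts. The only point that deserves a word of care is that ``full units'' is an \emph{intrinsic} condition — equality of a distinguished subset with the entire group — and such a condition is automatically preserved by any group isomorphism that respects the distinguished subset; this is exactly what the map $\eta$ from Proposition \ref{prop:K0SSEOA} supplies.
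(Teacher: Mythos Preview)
Your proof is correct and follows exactly the paper's approach: the paper states the proposition as an immediate consequence of Proposition~\ref{prop:K0SSEOA} (the invariance of the pair $(K_0(\OA), K_0^{\text{SSE}}(\OA))$ under topological conjugacy), without even giving a separate proof environment. You have simply spelled out in detail the transport-of-structure argument that the paper leaves implicit.
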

As a corollary of Theorem \ref{thm:main2}, we have the following corollary.
\begin{corollary}\label{cor:main3}
Suppose that both $A$ and $B$  have full units.
Then the following two assertions are equivalent.
\begin{enumerate}
\renewcommand{\theenumi}{\roman{enumi}}
\renewcommand{\labelenumi}{\textup{(\theenumi)}}
\item Two-sided topological Markov shifts
$(\bar{X}_A, \bar{\sigma}_A)$ and
$(\bar{X}_B, \bar{\sigma}_B)$
are topologically conjugate.
\item 
There exist an isomorphism $\Phi:\SOA \rightarrow \SOB $ 
of $C^*$-algebras
and  a unitary one-cocycle $u_t$ in $M(\SOB)$ relative to $\rho^B_t \otimes \id$
 such that
\begin{equation*}
\Phi(\SDA) = \SDB, \qquad
\Phi \circ (\rho^{A}_t \otimes\id) = \Ad(u_t) \circ (\rho^{B}_t \otimes \id) \circ \Phi. \\
\end{equation*}
\end{enumerate}
\end{corollary}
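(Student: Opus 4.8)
The plan is to read this off directly from Theorem \ref{thm:main2}, the only point being that the full-units hypothesis trivializes the $K_0$-theoretic clause appearing there. Recall from the remark following Proposition \ref{prop:algSSE} that $A$ has full units exactly when $K_0^{\text{SSE}}(\OA) = K_0(\OA)$, and similarly $B$ has full units exactly when $K_0^{\text{SSE}}(\OB) = K_0(\OB)$.

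For the implication (i) $\Longrightarrow$ (ii), I would simply invoke Theorem \ref{thm:main2}, whose hypothesis (i) coincides with the present (i). The theorem then furnishes an isomorphism $\Phi: \SOA \to \SOB$ and a unitary one-cocycle $u_t \in M(\SOB)$ relative to $\rho^B_t \otimes \id$ with $\Phi(\SDA) = \SDB$, with the intertwining relation $\Phi \circ (\rho^A_t \otimes \id) = \Ad(u_t) \circ (\rho^B_t \otimes \id) \circ \Phi$, and with the additional identity $\Phi_*(K_0^{\text{SSE}}(\OA)) = K_0^{\text{SSE}}(\OB)$. The first two statements are precisely the content of (ii) here, so this direction is immediate and the third statement is simply not used.

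For the converse (ii) $\Longrightarrow$ (i), the key step is to verify that the extra clause $\Phi_*(K_0^{\text{SSE}}(\OA)) = K_0^{\text{SSE}}(\OB)$ required by Theorem \ref{thm:main2}(ii) holds automatically, after which the theorem applies in reverse. Given $\Phi$ and $u_t$ as in (ii), the isomorphism $\Phi$ induces an isomorphism $\Phi_*: K_0(\OA) \to K_0(\OB)$ of abelian groups; in particular it is surjective, so $\Phi_*(K_0(\OA)) = K_0(\OB)$. Using that both matrices have full units, I would then obtain
\[
\Phi_*(K_0^{\text{SSE}}(\OA)) = \Phi_*(K_0(\OA)) = K_0(\OB) = K_0^{\text{SSE}}(\OB).
\]
Hence all three conditions of Theorem \ref{thm:main2}(ii) are met, and that theorem yields the topological conjugacy asserted in (i).

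I do not expect any genuine obstacle here: the whole corollary amounts to the observation that, under full units, the invariant $K_0^{\text{SSE}}$ equals all of $K_0$, so that its preservation by the isomorphism $\Phi_*$ comes for free from $\Phi_*$ being a group isomorphism. The only care needed is to quote the equivalence between full units and $K_0^{\text{SSE}}(\OA) = K_0(\OA)$ recorded after Proposition \ref{prop:algSSE}.
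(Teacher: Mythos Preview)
Your proof is correct and matches the paper's approach: the paper simply records this as a corollary of Theorem \ref{thm:main2}, and your write-up supplies exactly the short argument that the full-units hypothesis (equivalently $K_0^{\text{SSE}}(\OA)=K_0(\OA)$ and likewise for $B$) makes the $K$-theoretic clause $\Phi_*(K_0^{\text{SSE}}(\OA))=K_0^{\text{SSE}}(\OB)$ automatic.
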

\begin{example}
\hspace{6cm}

{\bf 1.}
If $K_0(\OA) =0$, then $A$ has full units. 

{\bf 2.}
 If $A = [N]$ for some $1<N \in \N$, then the matrix $A$ has full units. 
For any $0\le k \le N-1$,
let
$C$ be the $1\times (k+1)$ matrix 
$[1,\dots,1, N-k]$ and
$D$  the $(k+1)\times 1$matrix
$(1,1,\dots,1)^t.
$
Then $A = CD$ and 
$D^t [1,\dots,1]^t = k+1$.
Hence
$[k+1]\in \Z/(1 - N)\Z$ so that 
$\operatorname{K}_{\text{alg}}^{\text{SSE}}(A) 
=\Z/(1 - N)\Z = K_0(\OA)$.
\end{example}

There is no known example of irreducible, non permutation matrix $A$
such that $A$ does not have full units. 


\medskip

{\it Acknowledgments:}
This work was done during staying at the Institut Mittag-Leffler, and 
the author is grateful to the Institut Mittag-Leffler 
for its wonderful research environment, 
hospitality and support.
The author thanks Hiroki Matui for his advices in Lemma \ref{lem:5.5} and suggestions.
This work was also supported by JSPS KAKENHI Grant Number 15K04896.


\end{document}